\DeclareMathAlphabet{\mathcal}{OMS}{cmsy}{m}{n}
\definecolor{mediumtealblue}{rgb}{0.0, 0.33, 0.71}
\definecolor{red(munsell)}{rgb}{0.96, 0.12, 0.24}
\definecolor{forestgreen}{rgb}{0.13, 0.55, 0.13}
\definecolor{ginger}{rgb}{0.69, 0.4, 0.0}
\newcommand\myshade{85}
\colorlet{mylinkcolor}{red(munsell)}
\colorlet{mycitecolor}{mediumtealblue}
\colorlet{myurlcolor}{forestgreen}
\mathchardef\mhyphen="2D
\titleformat{\subsection}{\normalfont\bfseries}{\thesubsection}{1em}{}
\titleformat{\subsubsection}[runin]{\normalfont\bfseries}{\thesubsubsection}{1em}{}
\titleformat{\subsection}{\normalfont\bfseries}{\thesubsection}{1em}{}
\renewcommand{\thesubsubsection}{(\thesubsection.\arabic{subsubsection})}
\DeclareSymbolFont{cyrletters}{OT2}{wncyr}{m}{n}
\DeclareMathSymbol{\Sha}{\mathalpha}{cyrletters}{"58}
\newcommand{\mytodo}[2][]{{%
 \let\marginpar\marginnote
 \reversemarginpar
 \renewcommand{\baselinestretch}{0.8}%
 \todo[#1]{#2}}}
\theoremstyle{plain}
\newtheorem{theorem}[subsubsection]{Theorem}
\newtheorem{lemma}[subsubsection]{Lemma}
\newtheorem{proposition}[subsubsection]{Proposition}
\newtheorem*{theoremA}{Theorem A}
\newtheorem*{theoremB}{Theorem B}
\newtheorem*{theoremC}{Theorem C}
\theoremstyle{definition}
\newtheorem{definition}[subsubsection]{Definition}
\newtheorem{remark}[subsubsection]{Remark}
\newtheorem{example}[subsubsection]{Example}
\newtheorem{set-up}[subsubsection]{Set-up}
\newtheorem{notation}[subsubsection]{Notation}
\newtheorem{assumption}[subsubsection]{Assumption}
\newcommand{\IA}{\mathbb{A}}
\newcommand{\IC}{\mathbb{C}}
\newcommand{\IF}{\mathbb{F}}
\newcommand{\IG}{\mathbb{G}}
\newcommand{\IK}{\mathbb{K}}
\newcommand{\IL}{\mathbb{L}}
\newcommand{\IN}{\mathbb{N}}
\newcommand{\IP}{\mathbb{P}}
\newcommand{\IQ}{\mathbb{Q}}
\newcommand{\IR}{\mathbb{R}}
\newcommand{\IS}{\mathbb{S}}
\newcommand{\IV}{\mathbb{V}}
\newcommand{\IZ}{\mathbb{Z}}
\newcommand{\sC}{\mathcal{C}}
\newcommand{\sK}{\mathcal{K}}
\newcommand{\End}{\mathrm{End}}
\newcommand{\tr}{\mathrm{tr}}
\newcommand{\Hom}{\mathrm{Hom}}
\newcommand{\Aut}{\mathrm{Aut}}
\newcommand{\Res}{\mathrm{Res}}
\renewcommand{\deg}{\mathrm{deg}}
\newcommand{\supp}{\mathrm{supp \,}}
\newcommand{\Spec}{\mathrm{Spec}}
\newcommand\iso{\,{\cong}\,} 
\newcommand\tensor{{\otimes}}
\newcommand{\<}{\langle}
\renewcommand{\>}{\rangle}
\newcommand{\into}{\hookrightarrow}
\def\d/{/\mspace{-6.0mu}/}
\def\wt{\widetilde}
\def\what{\widehat}
\newcommand{\p}{\partial}
\newcommand{\fU}{\mathfrak{U}}
\newcommand{\sU}{\mathcal{U}}
\newcommand{\sV}{\mathcal{V}}
\newcommand{\sI}{\mathcal{I}}
\newcommand{\sF}{\mathcal{F}}
\newcommand{\sH}{\mathcal{H}}
\newcommand{\w}{\omega}
\newcommand{\Ohm}{\Omega}
\newcommand{\fN}{\mathfrak{N}}
\newcommand{\fm}{\mathfrak{m}}
\newcommand{\fp}{\mathfrak{p}}
\newcommand{\Proj}{\mathrm{Proj\,}}
\newcommand{\Pic}{\mathrm{Pic}\,}
\newcommand{\sE}{\mathcal{E}}
\newcommand{\sL}{\mathcal{L}}
\newcommand{\sM}{\mathcal{M}}
\newcommand{\sO}{\mathcal{O}}
\newcommand{\sP}{\mathcal{P}}
\newcommand{\sZ}{\mathcal{Z}}
\newcommand{\Cl}{\mathrm{Cl}}
\newcommand{\Gal}{\mathrm{Gal}}
\newcommand{\T}{\mathrm{T}}
\newcommand{\NS}{\mathrm{NS}}
\newcommand{\Hdg}{\mathrm{Hdg}}
\newcommand{\Isom}{\mathrm{Isom}}
\newcommand{\et}{\mathrm{{\acute{e}}t}}
\newcommand{\Sh}{\mathrm{Sh}}
\newcommand{\shS}{\mathscr{S}}
\newcommand{\shA}{\mathscr{A}}
\newcommand{\shB}{\mathscr{B}}
\newcommand{\CSpin}{\mathrm{CSpin}}
\newcommand{\SO}{\mathrm{SO}}
\renewcommand{\sp}{\mathrm{sp}}
\newcommand{\GSp}{\mathrm{GSp}}
\newcommand{\GL}{\mathrm{GL}}
\newcommand{\sA}{\mathcal{A}}
\newcommand{\SL}{\mathrm{SL}}
\newcommand{\val}{\mathrm{val}}
\newcommand{\Mod}{\mathsf{Mod}}
\newcommand{\sX}{\mathsf{X}}
\newcommand{\cris}{\mathrm{cris}}
\newcommand{\MT}{\mathrm{MT}}
\newcommand{\bpi}{\mathbf{\pi}}
\newcommand{\dR}{\mathrm{dR}}
\newcommand{\Nm}{\mathrm{Nm}}
\renewcommand{\Isom}{\underline{\mathrm{Isom}}}
\newcommand{\Fil}{\mathrm{Fil}}
\newcommand{\Ab}{\mathsf{Ab}}
\newcommand{\Sing}{\mathrm{Sing}}
\newcommand{\sT}{\mathcal{T}}
\newcommand{\LEnd}{\mathrm{L}}
\newcommand{\shL}{\mathscr{L}}
\renewcommand{\Spec}{\mathrm{Spec\,}}
\newcommand{\fS}{\mathfrak{S}}
\newcommand{\bH}{\mathbf{H}}
\newcommand{\bP}{\mathbf{P}}
\newcommand{\Mon}{\mathrm{Mon}}
\renewcommand{\t}{\theta}
\renewcommand{\O}{\mathrm{O}}
\newcommand{\sW}{\mathcal{W}}
\newcommand{\Def}{\mathrm{Def}}
\newcommand{\AH}{\mathrm{AH}}
\newcommand{\an}{\mathrm{an}}
\newcommand{\shP}{\mathscr{P}}
\renewcommand{\sX}{\mathcal{X}}
\newcommand{\Mot}{\mathsf{Mot}}
\renewcommand{\bpi}{\boldsymbol{\pi}}
\newcommand{\sY}{\mathcal{Y}}
\newcommand{\bxi}{\boldsymbol{\xi}}
\newcommand{\bzeta}{\boldsymbol{\zeta}}
\newcommand{\sQ}{\mathcal{Q}}
\renewcommand{\H}{\mathrm{H}}
\newcommand{\sto}{\stackrel{\sim}{\to}}
\newcommand{\sG}{\mathcal{G}}
\newcommand{\PH}{\mathrm{PH}}
\newcommand{\PNS}{\mathrm{PNS}}
\newcommand{\bL}{\mathbf{L}}
\newcommand{\shC}{\mathscr{C}}
\newcommand{\sfM}{\mathsf{M}}
\newcommand{\fh}{\mathfrak{h}}
\newcommand{\fn}{\mathfrak{n}}
\newcommand{\fu}{\mathfrak{u}}
\newcommand{\fw}{\mathfrak{w}}
\newcommand{\sfK}{\mathsf{K}}
\newcommand{\shM}{\mathscr{M}}
\newcommand{\htt}{h}
\newcommand{\ff}{\mathfrak{f}}
\newcommand{\fs}{\mathfrak{s}}
\newcommand{\ft}{\mathfrak{t}}
\newcommand{\fD}{\mathfrak{D}}
\newcommand{\red}{\mathrm{red}}
\newcommand{\sfP}{\mathsf{P}}
\newcommand{\sfQ}{\mathsf{Q}}
\newcommand{\rN}{\mathfrak{N}}
\newcommand{\Disc}{\mathrm{Disc}}
\newcommand{\bvarphi}{\overline{\varphi}}
\newcommand{\sfV}{\mathsf{V}}
\newcommand{\sfW}{\mathsf{W}}
\newcommand{\sfH}{\mathsf{H}}
\newcommand{\sfR}{\mathsf{R}}
\newcommand{\ssfV}{{\wt{\sfV}}}
\newcommand{\am}{\mathrm{am}}
\newcommand{\sfN}{\mathsf{N}}
\newcommand{\bc}{\mathrm{bc}}
\newcommand{\bv}{\mathbf{v}}
\newcommand{\geo}{\mathrm{geo}}
\newcommand{\bsfM}{\wt{\sfM}}
\title{\large{\textbf{On the Tate conjecture for divisors on varieties with $h^{2, 0} = 1$ \\ in positive characteristics}}}
\author{\normalsize{Paul Hamacher, Ziquan Yang \Letter, and Xiaolei Zhao}}
\date{\vspace{-1em}}
\begin{document}

\maketitle

\begin{abstract}
    We prove that the Tate conjecture for divisors is ``generically true'' for mod $p$ reductions of complex projective varieties with $h^{2, 0} = 1$, under a mild assumption on moduli. By refining this general result, we establish a new case of the BSD conjecture over global function fields, and the Tate conjecture for a class of general type surfaces of geometric genus $1$. 
\end{abstract}

\tableofcontents

\section{Introduction}

In this paper, we prove a general theorem stating that the Tate conjecture for divisors is “generically true” for fibers in an arithmetic family of varieties with $h^{2, 0} = 1$, under some mild assumptions, and then we provide a method to refine this theorem in concrete situations. To motivate the discussion, we begin with an example of particular interest.

For an elliptic curve $\sE$ over a global field, (the basic version of) the BSD conjecture predicts that the analytic rank is equal to the Mordell-Weil rank. When the analytic rank of is at most 1, the conjecture is known in many cases over number fields (e.g., \cite{Kolyvagin}) and in full generality over function fields (\cite{Ulmer, Qiu}). When the analytic rank is $> 1$, much less is known, even over function fields. However, a special feature of function fields is that the BSD conjecture is equivalent to the Tate conjecture for the corresponding elliptic surfaces (\cite{KatoTrihan, MilneAT}), so one can alternatively attack through the Tate conjecture. As a refinement our general theorem, we provide a new class of examples over function fields. 

%For higher analytic rank, not many systematic results are known beyond \cite{ASD}.

%(i) $\htt(\sE) = 1$ and (ii) all geometric fibers of $\pi$ are irreducible,

\begin{theoremA}
\label{thm: BSD}
Let $C$ be a smooth projective curve over a finite field $k$ of characteristic $p \ge 5$, $\sE$ be an elliptic curve over the function field $k(C)$, and $\pi : X \to C$ be the corresponding minimal elliptic surface. If $g(C) = h(\sE) = 1$, and all geometric fibers of $\pi$ are irreducible, then the BSD conjecture holds for $\sE$, i.e., $\mathrm{rank\,} \sE(k(C)) = \mathrm{ord}_{s = 1} L(\sE, s)$.   
\end{theoremA}

Here $g(C)$ denotes the genus of $C$, and $h(\sE)$ denotes the \textit{height} of $\sE$ (i.e., the degree of the \textit{fundamental line bundle} $L \coloneqq (R^1 \pi_* \sO_X)^\vee$). The above theorem is analogous to the classical result of Artin and Swinnerton-Dyer \cite{ASD} for elliptic K3 surfaces, which correspond to the case when $g(C) = 0$ and $h(\sE) = 2$. We remark that our elliptic surfaces have Kodaira dimension $1$, and the highest analytic rank achieved by the corresponding elliptic curves is $10$ (see \ref{rmk: highest MW}). The condition that all geometric fibers of $\pi$ are irreducible is saying that the Weierstrass normal form of $X$ is smooth. In the sequel of this paper by the second author and Guo \cite{GY25}, this condition is dropped in many cases, and completely removed when $p \ge 11$.

Now we introduce our general theorem. We shall consider arithmetic families of the following type: Suppose that $K$ is a field of characteristic $0$ and $S$ is a smooth connected $K$-variety. We say that a smooth projective morphism $f : \sX \to S$ is a \textbf{$\heartsuit$-family} if for every geometric point $s \to S$, the fiber $\sX_s$ is connected, $\dim \H^0(\Ohm^2_{\sX_s}) = 1$ (i.e., $h^{2,0}(\sX_s) = 1$), and the Kodaira-Spencer map 
\begin{equation}
\label{eqn: KS}
    \nabla \colon T_{S/K} \to \underline{\Hom}(R^1 f_* \Ohm^1_{\sX/S}, R^2 f_* \sO_{\sX})
\end{equation}
is nontrivial. Our general theorem states: 
\begin{theoremB}
\label{thm: generic}
Let $\sfM$ be a connected and separated scheme over $\mathrm{Spec}(\IZ)$ which is smooth and of finite type and let $f \colon \sX \to \sfM$ be a smooth projective morphism with geometrically connected fibers. If the restriction of $\sX$ to $\sfM_\IQ$ is a $\heartsuit$-family, then for $p \gg 0$, every fiber of $\sX$ over a point $s \in \sfM_{\IF_p}$ satisfies the Tate conjecture for divisors, i.e., for every prime $\ell \neq p$, the Chern class map 
$$c_1 : \Pic(\sX_{s}) \tensor \IQ_\ell \to \H^2_\et(\sX_{\bar{s}}, \IQ_\ell(1))^{\Gal(\bar{s}/s)} $$
is surjective, where $\bar{s}$ is a geometric point over $s$.\footnote{Over a finite field there is also a crystalline version of the Tate conjecture, which for divisors is known to be equivalent to the $\ell$-adic version (see \cite[Prop.~4.1]{Morrow}).} 
\end{theoremB}

% That $\sfM$ is connected implies that $\sfM_\IQ$ is connected: Suppose that $\{ p_0, \cdots, p_r \}$ is a set such that there exists a nontrivial idempotent $e$ in $\sfM[1/(p_0 \cdots p_r)]$ and assume that $r$ is minimal for such a set to exist. Let $\sfM'$ be $\sfM[1/(p_1 \cdots p_r)]$. Then by assumption $\sfM'$ is connected. We then apply stacks project Tag 055C to $\sfM' \tensor \IZ_{(p_0)}$.  

Clearly, it is both a positive characteristic analogue of Moonen's main result in \cite{Moonen}, and a generalization of the Tate conjecture for K3 surfaces, for which people made great progress in the past decade (e.g., \cite{MPTate, Maulik, Charles}). The base scheme $\sfM$ above should be thought of as the moduli of varieties of a certain type. Note that we do not require in the theorem above that $s$ is a closed point---it is allowed to have positive transcendence degree over $\IF_p$. Also, although the above theorem is non-effective in $p$, for concretely given families we often can make it effective, as in the case of Thm~A. To illustrate this, we also analyze a class of surfaces of general type: 
\begin{theoremC}
\label{thm: general type}
Assume that $k$ is a field finitely generated over $\IF_p$ for $p \ge 5$. Let $X$ be a minimal smooth projective geometrically connected surface over $k$. Let $K_X$ denote its canonical divisor, $p_g$ the geometric genus $h^0(K_X)$, and $q$ the irregularity $h^1(\sO_X)$. If $p_g = K_X^2 = 1$, $q = 0$ and $K_X$ is ample, then $X$ satisfies the Tate conjecture.
\end{theoremC}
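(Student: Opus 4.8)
The plan is to realize every surface occurring in Theorem C as a fiber of one $\heartsuit$-family defined over a base which is smooth and of finite type over $\IZ[1/6]$, and then to run the \emph{quantitative} version of the argument behind Theorem A, verifying that the only primes one must throw away are $2$ and $3$. I would start from the structure theory of minimal surfaces $X$ with $p_g = K_X^2 = 1$, $q = 0$ and $K_X$ ample. Over $\IC$ these were studied by Catanese and Todorov: Riemann-Roch gives $\chi(\sO_X) = 2$, Noether's formula gives $c_2(X) = 23$ and hence $b_2 = 21$, and the canonical ring $R = \bigoplus_{n \ge 0} \H^0(X, nK_X)$ has $P_1 = 1$ and $P_n = 2 + \binom{n}{2}$ for $n \ge 2$, which presents the canonical model in a fixed weighted projective space. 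Every step of this — Riemann-Roch, the vanishing of $\H^1(nK_X)$ and $\H^2(nK_X)$ for $n \ge 2$, the generation statement for $R$ — is available in characteristic $p \ge 5$, so surfaces as in Theorem C over such a field are exactly the fibers over the $\IF_p$-points of a single scheme $\sfM$, smooth and of finite type over $\IZ[1/6]$, carrying a universal family $f \colon \sX \to \sfM$ (after a finite \'etale cover to rigidify, which is harmless for the Tate conjecture on fibers). Over $\IC$ the family $f$ is a $\heartsuit$-family: the target of the Kodaira-Spencer map has rank $h^{1,1} = b_2 - 2 = 19$, and Catanese's analysis of the period map shows it is generically immersive onto the $18$-dimensional period domain of weight-$2$ Hodge structures on a lattice of signature $(2, 18)$; in particular (\ref{eqn: KS}) is nonzero, so Theorem A already gives the conclusion for $p \gg 0$ and the task is to pin down the bound.

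The arithmetic simplifications all come from the lattice. Because $X$ is a surface, the intersection form on $\H^2(X, \IZ)$ is unimodular, and it is odd of signature $(3, 18)$; since $c_1(K_X)$ is primitive of square $1$, its orthogonal complement $\Lambda$ is unimodular of signature $(2, 18)$. Hence $\disc(\Lambda)$ is a unit, so the orthogonal (equivalently, $\CSpin$) Shimura variety attached to $\Lambda$ has a smooth canonical integral model over $\IZ[1/2]$, and over it a Kuga-Satake abelian scheme, by the integral theory of Kisin and Madapusi-Pera. With $p \ge 5$ one is comfortably inside the range where this theory, the Fontaine-Laffaille comparison in cohomological degree $2$, and the CM-specialization inputs to Theorem A all apply with no extra restriction, and because $\Lambda$ is unimodular there is no further prime to avoid when matching the $\ell$-adic and crystalline realizations of $\Lambda \subset \H^2(\sX/\sfM)$ with those of the Kuga-Satake family. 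Finally, the generic fiber over $\sfM_\IQ$ has surjective period map, so its Mumford-Tate group is the full $\SO(\Lambda)$, and therefore (by Deligne semisimplicity together with the almost-simplicity of $\SO(\Lambda)$) the algebraic monodromy group acting on $\Lambda \subset \H^2$ over $\sfM_\IQ$ is again $\SO(\Lambda)$ — exactly the ``big monodromy'' hypothesis the quantitative argument needs. (If one preferred not to invoke infinitesimal Torelli here, Zariski density of the monodromy can alternatively be extracted from the weighted-hypersurface presentation by a standard Lefschetz-pencil argument.)

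Feeding $\sfM$, $f$, $\Lambda$, the good-reduction statement and the big-monodromy statement into the refinement of Theorem A, I conclude that every fiber of $\sX$ over a point of $\sfM_{\IF_p}$ with $p \ge 5$ satisfies the Tate conjecture in codimension $1$. Any $X$ as in Theorem C over a field $k$ finitely generated over $\IF_p$ becomes such a fiber after a finite separable extension of $k$, and the Tate conjecture in codimension $1$ descends along finite separable extensions (a transfer/Galois-descent argument on $\NS \otimes \IQ_\ell$ and on Galois invariants of $\H^2_\et$), so Theorem C follows.

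The step I expect to be the main obstacle is the characteristic-$p$ Kuga-Satake input: showing that the canonical integral model and its Kuga-Satake abelian scheme really do extend over $\IZ[1/6]$ and that Madapusi-Pera's Kuga-Satake period morphism identifies, in codimension $1$ and compatibly with the crystalline and $\ell$-adic structures, the relevant piece of the motive of the surface with the piece of the abelian scheme cut out by the spin representation — in other words, checking that this machinery applies verbatim to the unimodular lattice $\Lambda$ with excluded primes exactly $\{2, 3\}$. Closely tied to this is the preliminary but essential point that the classical structure theory and the period-map analysis of these surfaces genuinely survive into characteristic $\ge 5$, which is what makes $\sfM_{\IF_p}$ the correct moduli space there; if either of these failed at some prime $\ge 5$, one would have to enlarge the set of discarded primes and Theorem C, as stated, would need adjustment.
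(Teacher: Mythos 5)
Your setup is close to the paper's, and several of your observations are correct: the Catanese--Todorov presentation in a weighted projective space extends to characteristic $p\ge 5$ (the paper does this via Ekedahl's base-point-freeness result), the lattice $\PH^2$ is unimodular so the relevant orthogonal Shimura variety has a good canonical integral model over $\IZ[1/2]$, and the period map's generic immersiveness onto the $18$-dimensional period domain gives both the $\heartsuit$ condition and, combined with \ref{prop: dim = 1 for R2'}, maximal monodromy over $\IC$.

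However, you have misidentified the source of the effective bound $p \ge 5$, and the actual obstruction is omitted from your proposal entirely. Having a good integral Shimura variety and good $p$-adic comparison theorems is \emph{not} what pins down the prime. The refined form of Theorem~A (\ref{thm: TateStrong}) requires, in addition to the existence of an admissible period morphism, that the $\ell$-adic sheaf $\bP_{\ell_0}$ has constant $\lambda^\geo$ over $\IZ_{(p)}$ (\ref{def: good compactification}); that is, one must show that the geometric monodromy over $\sfM_{\bar\IF_p}$ fixes no more classes than the geometric monodromy over $\sfM_\IC$ --- otherwise the argument matching $\LEnd(\shA)$ with $\PNS(\sX_s)$ degenerates (cf.\ \ref{lem: key flatness lemma}). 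Big monodromy over $\IC$ says nothing a~priori about $\pi_1^\et(\sfM_{\bar\IF_p})$. In the proof of Theorem~A this comparison is achieved by Hironaka + spreading out + Abhyankar, which only yields $p \gg 0$ with no explicit bound. The whole point of Section~8 (and of Section~6, which you never invoke) is to replace this with an effective argument: using a Baire-category selection (\ref{lem: countability trick}, \ref{lem: FAP for curves}) one finds a curve $C$ over $W(\bar\IF_p)$ inside $\sfM$ that is Hodge- and monodromy-generic in both fibers and admits a good relative compactification, which in turn requires the discriminant $\fD$ to be \emph{generically reduced} modulo $p$. Establishing this reducedness is the real content: the paper proves it in \ref{prop: general type no gl sec} by a Lefschetz-pencil Euler-characteristic count inside the weighted projective space, using the weighted Lefschetz embedding result \ref{prop: Lefschetz embedding}, and it is precisely here that $p\ne 2$ enters (through the vanishing of Swan conductors at ODPs). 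A discriminant can in general become non-reduced modulo a fixed prime (the paper cites Saito's example), so this cannot simply be assumed.

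Your proposal also leans on a statement that is stronger than what the paper uses and not obviously correct as phrased: you assert that the Mumford--Tate group of the generic fiber is the full $\SO(\Lambda)$. The paper instead only needs maximal monodromy in the sense of \ref{def: maximal monodromy}, which it obtains from \ref{prop: dim = 1 for R2'} and the $18$-dimensional period image, without identifying the endomorphism field of the generic transcendental lattice. This is a cleaner route and avoids the need to rule out CM or real-multiplication phenomena at the generic point.

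In short: the gap is the absence of any argument comparing $\pi_1$-invariants in characteristic $0$ and characteristic $p$ with an explicit prime bound. Without an analogue of \ref{prop: general type no gl sec} (or some other device establishing constant $\lambda^\geo$ over $\IZ_{(p)}$ for $p\ge 5$), the proposal proves only the $p\gg 0$ statement already contained in Theorem~A, not Theorem~C.
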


We remark that condition (ii) in Thm~A is an analogue of the condition ``$K_X$ is ample'' above. Over $\IC$, surfaces with the above invariants were classified by Todorov and Catanese (\cite{Todorov, Catanese0}). They are simply connected, have a coarse moduli space of dimension $18$ and were among the first examples of $p_g = 1$ surfaces for which both the local and the global Torelli theorems fail (\cite{Catanese}).

\paragraph{Sketch of Proofs} We first explain how to prove Theorem B. We build on the overall strategy of Madapusi-Pera \cite{MPTate}, which has two main steps. The first is to construct an integral period morphism $\rho : \sfM \to \shS$, where $\shS$ is the canonical integral model of a Shimura variety $\Sh(G)$ defined by a suitable special orthogonal group $G$.\footnote{For the exposition of ideas, we temporarily suppress Hermitian symmetric domains and level structures from the notation of Shimura varieties.} Up to passing to its spinor cover, $\shS$ is equipped with a family of abelian schemes $\sA$. The second is to construct, for each geometric point $s \to \sfM$ and $t = \rho(s)$, a morphism $\t : \LEnd(\sA_{t}) \to \NS(\sX_s)$, where $\LEnd(\sA_{t})$ is a distinguished subspace of $\End(\sA_t)$. Then the Tate conjecture for $\NS(\sX_s)$ follows from a variant of Tate's theorem for $\LEnd(\sA_{t})$. As a crucial step, Madapusi-Pera proved that $\rho$ is \textit{\'etale}. This boils down to the geometric fact that a K3 surface $X$ has unobstructed deformation and $\H^1(\Ohm_X) \iso \H^1(T_X)$. Unfortunately, this is rarely true when $X$ is not a close relative of a hyperk\"ahler variety. 

The main contribution of our paper is a method to remove the dependence of the above strategy on any good local property of $\rho$ (\textit{not even flatness}). Indeed, the condition on the Kodaira-Spencer map contained in the definition of a $\heartsuit$-family just amounts to asking $\dim \mathrm{im}(\rho_\IC) > 0$, so even the situation $\dim \sfM < \dim \shS$ is allowed in Thm~B. Below we explain in more detail the difficulties in extending the two steps above and how to overcome them.

\subsubsection{} It is not hard to construct a period morphism $\rho_\IC : \sfM_\IC \to \Sh(G)_\IC$ over $\IC$, as the target is a moduli of variations of Hodge structures with some additional data. As in \cite{MPTate}, the idea to construct $\rho$ is to descend $\rho_\IC$ to a morphism $\rho_\IQ$ over $\IQ$, and then appeal to the extension property of $\shS$. If for every $s \in \sfM(\IC)$, the motive $\fh^2(\sX_s)$ defined by $\H^2(\sX_s, \IQ)$ is an abelian motive (e.g., in the K3 case), then one shows that the action of $\rho_\IC$ on $\IC$-points are $\Aut(\IC)$-equivariant, so that $\rho_\IC$ descends to $\IQ$ just as in \cite{MPTate}. 

To treat the general case, we absorb inputs from Moonen's work \cite{Moonen}. Let $b \in \sfM(\IC)$ be a point lying over the generic point of $\sfM$, and let $\T(\sX_b)$ be the orthogonal complement of $(0,0)$-classes in $\H^2(\sX_b, \IQ(1))$. Let $E$ be the endomorphism algebra of the Hodge structure $\T(\sX_b)$, which is known to be either a totally real or a  CM field. In the latter case, one can still show that $\fh^2(\sX_s)$ is abelian for each $s$ (see \ref{thm: Moonen}), so that the argument of \cite{MPTate} still applies. In the former case, we need to consider an auxiliary Shimura subvariety $\Sh(\sG)_\IC \subseteq \Sh(G)_\IC$, defined by the Weil restriction $\sG$ of a special orthogonal group over $E$. 

Up to replacing $\sfM$ by a connected \'etale cover and $b$ by a lift, the restriction of $\rho_\IC$ to $\sfM^\circ$ factors through a morphism $\varrho_\IC : \sfM^\circ \to \Sh(\sG)_\IC$, where $\sfM^\circ$ is the connected component of $\sfM_\IC$ containing $b$. We will show that the field of definition $F$ of $\sfM^\circ$ always contains $E$. Interestingly, to descend $\rho_\IC$ to $\IQ$ it suffices to descend $\varrho_\IC$ to $F$. The trick is to consider the left adjoint to the base change functor from $\IQ$-schemes to $F$-schemes. To show that $\varrho_\IC$ descends to $F$, consider the submotive $\ft(\sX_b)$ of $\fh^2(\sX_b)$ defined by $\T(\sX_b)$. When $\dim_E \T(\sX_b)$ is odd, although we do not know that $\ft(\sX_b)$ (or equivalently $\fh^2(\sX_b)$) is abelian, Moonen's work \cite{Moonen} tells us that (a slight variant of) ``the $E/\IQ$-norm'' $\Nm_{E/\IQ}(\ft(\sX_b))$ is abelian. This allows us to show that $\varrho_\IC$ descends to $F$ because considering $\Nm_{E/\IQ}(\ft(\sX_b))$ amounts to considering a different \textit{faithful} representation of $\sG$. As we document in a separate reference file \cite{YangSystem}, under some hypotheses the canonical models of Shimura varieties of abelian type over their reflex fields have a moduli interpretation attached to any faithful representation. When $\dim_E \T(\sX_b)$ is even, some adaptation is needed, which we omit here. 

\subsubsection{} Next, we explain how to construct $\t : \LEnd(\sA_t) \to \NS(\sX_s)$, where the main novelty of our method lies. As in \cite{MPTate}, the key to construct $\t$ is to find, for each $\zeta \in \LEnd(\sA_{t})$, a characteristic $0$ point $\wt{t}$ on $\shS$ lifting $t$, such that (i) $\zeta$ deforms to $\sA_{\wt{t}}$ and (ii) $\wt{t}$ comes from a lifting $\wt{s}$ of $s$ via $\rho$. The existence of $\wt{t}$ which satisfies (i) was already shown in \cite{CSpin}. When $\rho$ is \'etale (or at least smooth), (ii) is then automatically satisfied. This is where \cite{MPTate} crucially relies on the \'etaleness of $\rho$. 

The challenge to generalize this, especially when $\dim \sfM < \dim \shS$, is that there is no general way to characterize locally the image of $\rho$ in $\shS$, so for any given $\wt{t}$ one cannot decide directly whether it satisfies (ii) or not. Indeed, this is essentially a local Schottky problem, which is famously hard. To overcome this, we revisit Deligne's insight for \cite[Thm~1.6]{Del02} but replace the local crystalline analysis by a global and topological argument. 

Pretend for a moment that $\sfM_\IC$ and $\sfM_k$ are both connected, where $k$ is the (separably closed) field defining $s$, and set $p = \mathrm{char\,} k$. Let $\bar{\eta}$ and $\bar{\eta}_p$ be geometric generic points over $\sfM_\IC$ and $\sfM_k$ respectively. Pick a prime $\ell \neq p$ and restrict $f : \sX \to \sfM$ to $\IZ_{(p)}$. Suppose for the sake of contradiction that for some $\zeta$, there is no lifting $\wt{t}$ which satisfies (i) and (ii). Then using that the deformation of $\zeta$ is controlled by a single equation, we can show that $\zeta$ deforms along the formal completion of $\sfM_k$ at $s$, and hence gives rise to an element of $\LEnd(\sA_{\rho(\bar{\eta}_p)})$. This further induces an element in $(R^2 f_* \IQ_\ell)_{\bar{\eta}_p}$ which is stabilized by an open subgroup of $\pi_1^\et(\sfM_{k}, \bar{\eta}_p)$. On the other hand, we show using the theorem of the fixed part that all elements of $(R^2 f_* \IQ_\ell)_{\bar{\eta}}$ stabilized by an open subgroup of $\pi_1(\sfM_\IC, \bar{\eta})$ come from $\LEnd(\sA_{\bar{\eta}}) \tensor \IQ_\ell$. Therefore, to derive a constradiction it suffices to show that $\sfM_k$ does not have more ``$\pi_1$-invariants'' than $\sfM_\IC$. 

Using Hironaka's resolution of singularities and a spreading out argument, we can find an open subscheme $U$ of $\Spec(\IZ)$ such that $\sfM_U$ admits a compactification whose boundary is a relative normal crossing divisor. Then we apply Grothendieck's specialization theorems for tame fundamental group and Abhyankar's lemma to show that $\sfM_k$ indeed cannot have more ``$\pi_1$-invariants'' than $\sfM_\IC$, when $(p) \in U$.\footnote{We thank Aaron Landesman for pointing out to us the applicability of Abhyankar's lemma, which simplifies our original argument.} This proves Thm~B.

\subsubsection{} To prove Theorem~A and C, we need to avoid the spreading out argument above. Although compactifying moduli spaces is in general a hard geometric problem, one can find for $\sfM$ in question a \textit{partial compactification}. That is, a morphism $\sfM \to B$ and a smooth proper $\shP$ over $B$ such that $\sfM$ is an open subscheme of $\shP$. Then we can find lots of smooth proper curves in $\shP$. If the boundary $\fD = \shP - \sfM$ is generically reduced modulo $p$, then we can find a curve on $\sfM_k$ which deforms to characteristic $0$ such that by looking at the curve we can already prove that $\sfM_k$ does not have more ``$\pi_1$-invariants'' than $\sfM_\IC$. Of course, such curve needs to be chosen wisely, and we do this by repeatedly applying the Baire category theorem. We will also need $\rho$ to satisfy a stronger condition, namely $\dim \mathrm{im}(\rho_\IC) > \dim B_\IC$, as opposed to just $\dim \mathrm{im}(\rho_\IC) > 0$. This is known to hold for the surfaces in question. 

The boundary $\fD$ is essentially a discriminant scheme, i.e., $\sX$ extends to a family over $\shP$, and $\fD$ is precisely the locus where the extension fails to be smooth. In general, it is possible for a discriminant scheme over $\IZ$ to be generically non-reduced modulo a certain prime (cf. \cite[Thm~4.2]{Saitodisc}), so the task is to determine an effective range of $p$ for which this does not happen. Drawing ideas from enumerative geometry, we show that this happens only when a general fiber over $\fD_{\bar{\IF}_p}$ is ``more singular'' than that over $\fD_{\IC}$. To exclude this possibility when $p \ge 5$ for the surfaces in Thm~C, it suffices to adapt Katz' results on Lefschetz pencils. Doing this for Thm~A is much more involved. In particular, we need to develop some nonlinear Bertini theorems (see \S\ref{Sec: Nonlinear Bertini}) tailored to handle Weierstrass equations, the key input being Kodaira's classification for the singular fibers in an elliptic fibration. 

\begin{remark}
\label{rmk: GM}
Recently Fu and Moonen proved the Tate conjecture for Gushel-Mukai varieties in characteristic $p \ge 5$. The middle cohomology of these varieties behaves like that of a K3 surface up to a Tate twist. An earlier version of the paper also discussed these varieties. For our method, these varieties can be treated in a similar way as the surfaces in Thm~C. However, as Fu-Moonen \cite{FuMoonen} gave much more thorough treatment of these varieties and proved that the relevant integral period morphism $\rho$ is indeed smooth, we removed the section from the current version. In general, it is hard to determine whether $\rho$ is smooth integrally even when $\rho_\IC$ is smooth, as one can tell from \cite{FuMoonen}, but when this can be achieved, $\rho$ has many more potential applications than the divisorial Tate conjecture (e.g., CM lifting and the Tate conjecture for self-correspondences, as shown in \cite{IIK}). 

On the other hand, our main purpose is to deal with the situation when the smoothness of $\rho$ cannot be hoped for. In particular, Thm~B implies that the characteristic $p$ counterparts of the surfaces in Moonen's list \cite[Thm~9.4]{Moonen} satisfy the Tate conjecture for $p \gg 0$, and we expect that our methods for the refinements Thm A and C can be adapted to make Thm B effective for most classes of these surfaces.
\end{remark}

\paragraph{Organization of Paper} In section 2, we review and mildly extend Moonen's results in \cite{Moonen} on the motives of fibers of $\heartsuit$-families. In particular, we recap the norm functors used in \textit{loc. cit.} In section 3, we discuss the moduli interpretations of Shimura varieties of abelian type over their reflex fields, as documented in \cite{YangSystem}, and recap the integral models of orthogonal Shimura varieties from \cite{CSpin}. In section 4, we construct the period morphisms for $\heartsuit$-families in characteristic $0$, using results from sections 2 and 3. In section 5, we prove Thm~B after giving a more effective version (see \ref{prop: TateStrong}). In section 6, we set up some basic tools to analyze deformation of curves on parameter spaces, including applications of the Baire category theorem. Finally, in section 7 and 8, we use tools from section 6 as well as \ref{prop: TateStrong} to prove Thm~A and C respectively. In particular, in section 7 we study the geometry of natural parameter spaces of elliptic surfaces, which we hope to be of independent interest. 

%If $k$ is an algebraically closed field, and $X$ is a (not necessarily irreducible) $k$-variety, by ``a general $k$-point has property $P$'' we mean ``there exists an open dense subscheme $U \subseteq X$ such that every $k$-point on $U$ has property $P$''.

\subsubsection{}\label{sec: notations} Finally we introduce some notations and conventions. 
\begin{enumerate}[label=\upshape{(\alph*)}]
    \item Let $f : X \to S$ be a morphism between schemes. If $T \to S$ is another morphism, denote by $X_T$ the base change $X \times_S T$ and by $X(T)$ the set of morphisms $T \to X$ as $S$-schemes. By a geometric fiber of $X$, we mean $X_s$ for some geometric point $s \to S$. If $x$ is a point (resp. geometric point) on a scheme $X$, we write $k(x)$ for its residue field (resp. field of definition). By a variety over a field $k$ we mean a scheme which is reduced, separated and of finite type over $k$. 
    \item The letters $p$ and $\ell$ will always denote some prime numbers and $\ell \neq p$ unless otherwise noted. We write $\what{\IZ}$ for the profinite completion of $\IZ$ and $\what{\IZ}^p$ for its prime-to-$p$ part. Define $\IA_f := \what{\IZ} \tensor \IQ$ and $\IA^p_f := \what{\IZ}^p \tensor \IQ$. If $k$ is a perfect field of characteristic $p$, we write $W(k)$ for its ring of Witt vectors. 
    \item For a field $k$ of characteristic $\neq 2$, we consider a quadratic form $q$ on a finite dimensional $k$-vector space $V$ simultaneously a symmetric bilinear pairing $\<-, -\>$ such that $q(v) = \< v, v\>$ for every $v \in V$. 
    \item For a finite free module $M$ over a ring $R$, we write $M^\tensor$ the direct sum of all the $R$-modules which can be formed from $M$ by taking duals, tensor products,
    symmetric powers and exterior powers. We also use this notation for sheaves of modules on some Grothendieck topology whenever it makes sense.
    \item We use the following abbreviations: VHS for ``variations of Hodge structures'', LHS (resp. RHS) for ``left (resp. right) hand side'', and ODP for ``ordinary double point''. Unless otherwise noted, the local system in a VHS is a local system of $\IQ$-vector spaces. Moreover, we always assume that the VHS is \textit{pure}, i.e., it is a direct sum of those pure of some weight (or the weight filtration is split).
\end{enumerate}

\section{Preliminaries}
\subsection{Motives and norm functors} 

\subsubsection{} Let $k$ be a field of characteristic $0$. We denote by $\Mot_\AH(k)$ the neutral $\IQ$-linear Tannakian category of motives over $k$ with morphisms defined by absolute Hodge correspondences (cf. \cite[\S2]{Pan94} where it is denoted by $\shM_k$). We have the Tate objects $\mathbf{1}(n)$ for every $n \in \IZ$ in this category. For any object $M \in \Mot_\AH(k)$, we write $M(n)$ for $M \tensor \mathbf{1}(n)$ and by an absolute Hodge cycle on $M$ we mean a morphism $\mathbf{1} \to M$. 

Following \cite[\S2]{MPTate}, we denote by $\w_\ell$ the $\ell$-adic realization functor which sends $\Mot_\AH(k)$ to the category of finite dimensional $\IQ_\ell$-vector spaces with an action of $\Gal_k := \Gal(\bar{k}/k)$, where $\bar{k}$ is some chosen algebraic closure of $k$. Putting $\w_\ell$'s together we obtain $\w_\et$, which takes values in the category of finite free $\IA_f$-modules with a $\Gal_k$-action. Let $\w_\dR$ denote the de Rham realization functor, which takes values in the category of filtered $k$-vector spaces. If $k$ is a subfield of $\IC$, we additionally consider the Betti realization $\w_B$ after base change to $\IC$ (resp. the Hodge realization $\w_\Hdg$) which takes values in the category of $\IQ$-vector spaces (resp. Hodge structures). For a smooth projective variety $X$ over $k$, $\fh^i(X)$ denotes the object such that $\w_?(\fh^i(X))$ is the $i$th $?$-cohomology of $X$, for $? = B, \ell, \dR$ whenever applicable. 

Let $\Mot_\Ab(k) \subseteq \Mot_\AH(k)$ be the full Tannakian sub-category generated by the Artin motives and the motives attached to abelian varieties. We will repeatedly make use of the following fact (\cite[Ch~I]{LNM900}, cf. \cite[Thm~2.3]{MPTate}): 

\begin{theorem}
\label{thm: tensor on AV always AH}
     The functor $\w_\Hdg$ is fully faithful when restricted to $\Mot_\Ab(\IC)$. In particular, for every $M \in \Mot_\Ab(\IC)$, every element $s \in \w_B(M) \cap \Fil^0 \w_\dR(M)$ is given by an absolute Hodge cycle.  
\end{theorem}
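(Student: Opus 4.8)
The plan is to recognize this as Deligne's theorem that Hodge cycles on abelian varieties are absolute Hodge \cite{LNM900}, and to recall its proof. First I would observe that the two assertions are equivalent: a $\IQ$-linear map $\phi \colon \w_\Hdg(M) \to \w_\Hdg(N)$ is an element of $\w_\Hdg(\underline{\Hom}(M, N)) = \w_\Hdg(M^\vee \tensor N)$, it respects the Hodge structures exactly when it is a Hodge class there, and $M^\vee \tensor N$ again lies in $\Mot_\Ab(\IC)$; so fullness of $\w_\Hdg$ on $\Mot_\Ab(\IC)$ is precisely the ``in particular'' clause, while faithfulness is automatic since $\w_B$ (hence $\w_\Hdg$) is a fiber functor on $\Mot_\AH(\IC)$. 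Thus I would aim to show: for every $M \in \Mot_\Ab(\IC)$, every $s \in \w_B(M) \cap \Fil^0 \w_\dR(M)$ is an absolute Hodge cycle.

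Next I would reduce to a single abelian variety. By definition $\Mot_\Ab(\IC)$ is generated as a Tannakian category by Artin motives (which over $\IC$ are sums of $\mathbf{1}$) and the $\fh^1(A)$, so every object is a subquotient of a finite direct sum of objects $\fh^1(A_1) \tensor \cdots \tensor \fh^1(A_r)(n)$. Setting $A = \prod_i A_i$, each $\fh^1(A_i)$ is a direct summand of $\fh^1(A)$ cut out by an idempotent correspondence coming from the structure morphisms of $A$, so such an object is a direct summand of a tensor construction $T = \fh^1(A)^{\tensor a} \tensor (\fh^1(A)^\vee)^{\tensor b}(n)$; and since $\Mot_\AH(\IC)$ is semisimple, every subquotient of $T$ is a summand of $T$ cut out by an idempotent in $\End(T)$ which is itself an absolute Hodge cycle. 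Hence a Hodge class in such a subquotient is an absolute Hodge cycle as soon as every Hodge class in $T$ is, and it suffices to treat a Hodge class in $T$ for a single abelian variety $A/\IC$.

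I would then invoke two results from \cite{LNM900}. The first is Deligne's \emph{Principle B}: if $f\colon \sA \to S$ is an abelian scheme over a smooth connected $\IC$-variety $S$ and $\sigma$ is a global section of a tensor construction on $R^1 f_* \IQ$ which is a Hodge class in every fiber and an absolute Hodge cycle in one fiber, then it is an absolute Hodge cycle in every fiber; this is proved by spreading $(\sA, S, \sigma)$ out over a finitely generated subring of $\IC$ and using that absolute Hodge classes are flat for the Gauss-Manin connection and compatible with specialization of the $\ell$-adic realizations in order to propagate the property along $S$. The second is the \emph{CM case}: if $A$ has complex multiplication, then $\MT(A)$ is a torus, the Hodge classes in $T$ are exactly its invariants, and they are all absolute Hodge cycles; the real content here is a reduction (through further deformation arguments and the structure theory of CM types) to abelian varieties dominated by quotients of Fermat hypersurfaces, whose cohomology breaks into one-dimensional eigenspaces under a finite diagonalizable group action, together with Weil's computation of the attached Jacobi-sum Hecke characters, which exhibits the de Rham and $\ell$-adic realizations of these eigenclasses as defined compatibly over $\IQ$.

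To assemble these, given $(A, s)$ I would form the Shimura datum $(\MT(A), X)$ with $X$ the $\MT(A)(\IR)$-orbit of the Hodge cocharacter of $A$; a symplectic embedding $\MT(A) \hookrightarrow \GSp$ coming from a polarization makes the associated Shimura variety carry an abelian scheme, the invariant $s$ extends to a global flat section of the corresponding tensor construction that is a Hodge class at every point, because every Hodge structure in the family is $\MT(A)$-conjugate to that of $A$, and CM points -- where the fiber is a CM abelian variety -- are dense in each connected component. Applying the CM case at such a point and then Principle B over a connected component of the Shimura variety containing $[A]$, one concludes that $s$ is an absolute Hodge cycle at $[A]$, as desired. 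I expect the CM case to be the one serious obstacle: it is the only genuinely arithmetic input, and it is exactly there that the explicit Fermat-hypersurface/Jacobi-sum identities are needed in order to see that Hodge and absolute Hodge classes on CM abelian varieties coincide; by contrast the Tannakian reductions and the deformation principle, while technical, are formal once absolute Hodge cycles are known to behave well in families.
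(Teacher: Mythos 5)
The paper gives no proof of this statement: it is quoted directly from Deligne's \emph{Hodge cycles on abelian varieties} (\cite[Ch.~I]{LNM900}, cf.\ \cite[Thm~2.3]{MPTate}). Your sketch --- the Tannakian reduction of full faithfulness to absoluteness of Hodge cycles, the reduction to tensor constructions on a single $\fh^1(A)$, Principle B, the density of CM points on the relevant Shimura variety, and the genuinely arithmetic CM case via Fermat hypersurfaces and Jacobi sums --- is a faithful and accurate reconstruction of Deligne's argument in that reference, so it is consistent with the approach the paper takes by citing it.
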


We often refer to objects in $\Mot_\Ab(k)$ as abelian motives. 

%For any embedding $\sigma : k \into L$ of fields, there is a natural base change functor $-\tensor_k L : \Mot_\AH(k) \to \Mot_\AH(L)$ compatible with the cohomological realizations (cf.\cite[Prop.~2.2]{MPTate}).

\subsubsection{} \label{sec: twist tensors} We will often consider the automorphism $-\tensor_\sigma \IC$ on $\Mot_\AH(\IC)$ defined by an element $\sigma \in \Aut(\IC)$ (cf. \cite[{\S}II~6.7]{LNM900}, see also \cite[Prop.~2.2]{MPTate}). For $M \in \Mot_\AH(\IC)$, we write $M^\sigma$ for $M \tensor_\sigma \IC$. Base change properties of \'etale (resp. de Rham) cohomology give us a $\IA_f$-linear (resp. $\sigma$-linear) canonical isomorphism $\w_\et(M) \iso_\bc \w_\et(M^\sigma)$ (resp. $\w_\dR(M) \iso_\bc \w_\dR(M^\sigma)$). Here the subscript ``bc'' is short for ``base change''. For an absolute Hodge class $s \in \w_B(M)$, we write $s^\sigma$ for the class in $\w_B(M^\sigma)$ which has the same \'etale and de Rham realizations as $s$ under the ``$\iso_\bc$'' isomorphisms. 

Finally, we remark that for $M \in \Mot_\AH(\IC)$, it makes sense to say whether a tensor $s \in \w_B(M)^\tensor$ (see \ref{sec: notations}(d)) is absolute Hodge, because $s$ has to lie in $\w_B$ of a finite direct sum of tensorial constructions on $M$. And when $s$ is absolute Hodge, we may form $s^\sigma \in \w_B(M^\sigma)^\tensor$ for $\sigma \in \Aut(\IC)$, extending the notation in the previous paragraph.

\subsubsection{} \label{sec: norm functors} Next, we recall the basics of norm functors. The reader may refer to \cite[\S3]{Moonen} for more details. Let $k$ be a field of characteristic $0$ and $E$ be a finite \'etale $k$-algebra. Let $\shC$ be any Tannakian $k$-linear category and $\Mod_E(\shC)$ be the category of $E$-modules in $\shC$. For any object $M \in \Mod_E(\shC)$, we write $M_{(k)}$ for the underlying object in $\shC$ when we forget the $E$-linear structure. In \cite{Ferrand}, Ferrand gave a general construction of a norm functor $\Nm_{E/k} \colon \Mod_E(\shC) \to \shC$, which was summarized in \cite[\S3.6]{Moonen}.

We first consider the case when $\shC$ is the category of $k$-modules $\Mod_k$. For any $M \in \Mod_E$, there is a functorial polynomial map $\nu_M \colon M \to \Nm_{E/k}(M)$ such that $\nu_M(em) = \mathrm{Norm}_{E/k}(e) \nu_M(m)$ for any $e \in E$ and $m \in M$. The norm functor $\Nm_{E/k}$ is a $\tensor$-functor and is non-additive (unless $E = k$). However, for any $M_1, M_2 \in \Mod_E$, there is an identification 
$$ \Nm_{E/k}(\Hom_E(M_1, M_2)) = \Hom_k(\Nm_{E/k}(M_1), \Nm_{E/k}(M_2)). $$
For any $\sV \in \Mod_E$, there is a natural map 
$$ \eta : \Res_{E/k} \GL(\sV) \to \GL(\Nm_{E/k}(\sV)) $$ which sends an $E$-linear automorphism $f$ to $\Nm_{E/k}(f)$. Let $T_{E/k}$ denote the torus $\Res_{E/k} \IG_{m, E}$ and $T_{E/\IQ}^1$ denote the kernel of the norm map $T_{E/k} \to T_k$. When $\IG_{m, E}$ is viewed as the diagonal torus of $\GL(\sV)$, so that $T_{E/k}^1$ is a subgroup of $\Res_{E/k} \GL(\sV)$, we have $\ker(\eta) = T^1_{E/k}$.

%Here we use ``$\Res_{E/\IQ}$'' to denote the Weil restriction functor $\mathrm{Res}_{E/\IQ}$.

%For $V \in \Mod_E$ and $G \subseteq \GL(V_{(k)})$, we denote the intersection of $G$ with the centralizer of $T_E$ in $\GL(V)$ by $C_E(G)$. 

\begin{notation}
\label{not: totally real}
    Take $k = \IQ$ and $E$ to be a totally real field. Let $\sV$ be an $E$-vector space equipped with a quadratic form $\wt{\phi} : \sV \to E$. We often drop $\wt{\phi}$ from the notation when it is assumed.
    \begin{enumerate}[label=\upshape{(\alph*)}]
        \item Write $\sV_{(\IQ)}$ for the underlying $\IQ$-vector space of $\sV$, equipped with a quadratic form $\phi : \sV_{(\IQ)} \to \IQ$ given by $\tr_{E/\IQ} \circ \wt{\phi}$. 
        \item Write $\sG(\sV)$ for $\Res_{E/\IQ} \SO(\sV)$, $\sZ(\sV)$ for $T^1_{E/\IQ} \cap \sG(\sV)$, and $\sH(\sV)$ for $\sG(\sV)/ \sZ(\sV)$, or simply $\sG, \sZ$ and $\sH$ when $\sV$ is understood.
        \item Denote by $\Cl^+(\sV)$ the even Clifford algebra of $\sV$ over $E$ and by $\Cl^+_{E/\IQ}(\sV)$ its norm $\Nm_{E/\IQ} \Cl^+(\sV)$. 
        \item Let $\sE$ denote the $1$-dimensional quadratic form over $E$ given by equiping $E$ with the form $\wt{\phi}(\alpha) = \alpha^2$. 
    \end{enumerate}
\end{notation}
Recall our convention \ref{sec: notations}(c). The association $\sV \mapsto \sV_{(\IQ)}$ in (a) above defines an equivalence of categories between quadratic forms over $E$ and quadratic forms over $\IQ$ with an self-adjoint $E$-action (\cite[Ch~1, Thm~7.4.1]{Knus}). We call $\sV_{(\IQ)}$ the transfer of $\sV$, and $\sV$ the $E$-bilinear lift of $\sV_{(\IQ)}$. 

\subsubsection{} \label{motivic Galois group} Let us recall some formalism about motives for readers' convenience. Let $k$ be a subfield of $\IC$. For any object $\fm \in \Mot(k)$ or $\Mot_\AH(k)$, let $G_{\mathrm{mot}}(\fm)$ be the motivic Galois group of $\fm$, which is defined in \cite[\S4.6]{AndreMot}. Here we use Betti cohomology as the reference Weil cohomology theory, through which the Tannakian subcategory $\< \fm \> \subseteq \Mot_\AH(k)$ generated by $\fm$ is naturally equivalent to the category of $G_{\mathrm{mot}}(\fm)$-representations over $\IQ$. In particular, $\fm$ itself corresponds to a representation $G_{\mathrm{mot}}(\fm) \to \GL(\w_B(\fm))$, such that every vector or subspace of $\w_B(\fm)^\tensor$ invariant under $G_{\mathrm{mot}}(\fm)$ is defined by a submotive of $\fm^\tensor$. The reader may check out \cite[\S3.1]{Moonen-Fom} for an exposition of these notions.

\begin{lemma}
\label{lem: truncate}
Let $E$ be a totally real field, and let $\fw \in \Mot_\AH(\IC)$ be a motive equipped with an $E$-action and a symmetric $E$-bilinear form $\wt{\phi} : \fw \tensor_E \fw \to \mathbf{1}_E$. If $\dim_E \w_B(\fw)$ is odd, then the motive $\rN(\fw) := \Nm_{E/\IQ}(\fw) \tensor_\IQ \det(\fw_{(\IQ)})$ is (noncanonically) isomorphic to a submotive of $\Cl^+_{E/\IQ}(\fw, \wt{\phi})$.
\end{lemma}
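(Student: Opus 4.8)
The strategy is to reduce the statement to a purely representation-theoretic (or rather, ``linear-algebra over $E$'') fact and then transport it through the realization functors via Theorem~\ref{thm: tensor on AV always AH}-type rigidity. Concretely, $\fw$ is built from the quadratic space $(\sV, \wt\phi) := (\w_B(\fw), \wt\phi)$ over $E$ together with the extra structure (Hodge, $\ell$-adic, de Rham) carried by $\fw$; all the constructions in the statement---$\Nm_{E/\IQ}$, $\det(\cdot_{(\IQ)})$, $\Cl^+$, $\Cl^+_{E/\IQ}$---are functorial $\tensor$-constructions (the norm functor is a $\tensor$-functor by \ref{sec: norm functors}, $\det$ and $\Cl^+$ are standard), so each side of the desired isomorphism is the realization of a well-defined motive, and a morphism of quadratic spaces over $E$ compatible with all realizations automatically upgrades to a morphism of motives. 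Thus it suffices to (i) produce, over a field, a natural embedding of quadratic spaces $\rN(\sV) \hookrightarrow \Cl^+_{E/\IQ}(\sV, \wt\phi)$ in the odd-rank case, and (ii) check this embedding is compatible with the Hodge/$\ell$-adic/de Rham structures so that by rigidity it comes from a map of motives.

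For step (i), I would work first over the base field $E$, not $\IQ$. Let $n = \dim_E \sV$ be odd, say $n = 2m+1$. The even Clifford algebra $\Cl^+(\sV)$ of an odd-dimensional quadratic space is a central simple algebra over $E$ (after extending scalars to split the form, $\Cl^+ \cong M_{2^m}(E)$), and the spin representation realizes $\sV$ inside $\Cl^+(\sV)$ in a canonical way: namely $\sV \cdot \det(\sV) \hookrightarrow \Cl^+(\sV)$ via $v \tensor (e_1\wedge\cdots\wedge e_n) \mapsto v\cdot e_1 \cdots e_n$ (the product lands in the even part because $v\cdot e_1\cdots e_n$ is a product of $n+1$ even... one checks the parity works out, using that multiplying by the ``volume element'' $e_1\cdots e_n \in \Cl^+$ since $n$ is odd), and this map is $\SO(\sV)$-equivariant and respects the natural quadratic forms up to a scalar (the norm form on $\Cl^+$ restricted to this subspace recovers $\wt\phi$ twisted by $\det$). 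So over $E$ we get $\sV \tensor_E \det_E(\sV) \hookrightarrow \Cl^+(\sV)$. Now apply the norm functor $\Nm_{E/\IQ}$, which is a $\tensor$-functor and hence sends this into $\Nm_{E/\IQ}(\Cl^+(\sV)) = \Cl^+_{E/\IQ}(\sV)$; and $\Nm_{E/\IQ}(\sV \tensor_E \det_E\sV) = \Nm_{E/\IQ}(\sV)\tensor_\IQ \Nm_{E/\IQ}(\det_E\sV)$. The last point is to identify $\Nm_{E/\IQ}(\det_E \sV)$ with $\det_\IQ(\sV_{(\IQ)})$: both are $1$-dimensional over $\IQ$, and for a rank-one $E$-module $\Lambda$ one has $\Nm_{E/\IQ}(\Lambda) \cong \det_\IQ(\Lambda_{(\IQ)})$ (this is the standard compatibility of the norm functor with the ``forget to $\IQ$'' functor on line bundles / invertible modules; it is, e.g., in Ferrand's paper and recalled in \cite[\S3]{Moonen}). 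Hence $\Nm_{E/\IQ}(\sV)\tensor_\IQ \det_\IQ(\sV_{(\IQ)}) = \rN(\sV) \hookrightarrow \Cl^+_{E/\IQ}(\sV)$.

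For step (ii), the embedding of step (i) was defined by an explicit formula (multiplication inside the Clifford algebra, and the functorial polynomial map $\nu$ of the norm functor), so it is defined over $\IQ$ / $E$ and commutes with any extension of scalars. Applying the Betti, $\ell$-adic, and de Rham realization functors to the motive $\fw$, each of these constructions commutes with realization (the norm functor, Clifford algebra, and $\det$ are all defined by the same universal formulas in each fiber functor), so the maps $\rN(\w_?(\fw)) \to \Cl^+_{E/\IQ}(\w_?(\fw))$ for $? = B, \ell, \dR$ are the $\w_?$-realizations of a single candidate map; it is injective (split injective, even, since $\Cl^+(\sV)$ is semisimple as a module over itself and the image is an $\SO$-subrepresentation whose complement is again a sum of tensor constructions) and compatible with the Hodge filtration/weight and Galois action because it is $\SO$-equivariant and the $(\sG, \sE)$-structures on both sides are induced from $\sV$. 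Therefore by rigidity of absolute Hodge motives (the avatar of Theorem~\ref{thm: tensor on AV always AH}: a map between tensor constructions of $\fw$ that is absolute Hodge -- here even ``geometric'', being an explicit algebraic formula -- is a morphism in $\Mot_\AH(\IC)$) this is a monomorphism of motives $\rN(\fw) \hookrightarrow \Cl^+_{E/\IQ}(\fw, \wt\phi)$, and since the target is semisimple the image is a direct summand, i.e.\ a submotive. The ``noncanonically'' in the statement reflects only the choice of trivialization implicit in comparing $\Nm_{E/\IQ}(\det_E\sV)$ with $\det_\IQ \sV_{(\IQ)}$ and in identifying $\Cl^+_{E/\IQ}$ with the norm of $\Cl^+$; the embedding itself is canonical once those are fixed.

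\textbf{Main obstacle.} The only genuinely delicate point is step (i): checking that the classical spin embedding $\sV \tensor_E \det_E(\sV) \hookrightarrow \Cl^+(\sV)$ lands in the \emph{even} Clifford algebra and intertwines the quadratic forms correctly (up to the expected $\det$-twist), since the parity bookkeeping depends on $n$ being odd -- this is exactly why the hypothesis $\dim_E \w_B(\fw)$ odd is needed, and it is the reason the statement fails verbatim in the even case (where one instead gets into $\Cl^+$ only after a different, $\mathbf 1$-graded, modification, as the authors allude to in the introduction). Everything else -- the $\tensor$-functoriality of $\Nm_{E/\IQ}$, the line-bundle compatibility $\Nm_{E/\IQ}(\det_E) \cong \det_\IQ(\cdot_{(\IQ)})$, and the passage from vector spaces to motives via rigidity -- is formal given the results recalled in \ref{sec: norm functors} and Theorem~\ref{thm: tensor on AV always AH}.
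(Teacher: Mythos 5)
Your construction is correct in substance, but it takes a genuinely different route from the paper's. The paper simply reduces to the Tannakian side (viewing $\fw$ as a representation of its motivic Galois group $G_{\mathrm{mot}}(\fw)$, which respects $\wt{\phi}$ and the $E$-action) and then quotes Moonen's \S5.4: there is an ascending filtration on $\Cl^+_{E/\IQ}(\fw, \wt{\phi})$ indexed by the $\fS$-orbits of $\IN^\Sigma$ such that, when $\dim_E \w_B(\fw)$ is odd, some subquotient $\Fil_{q_1}/\Fil_{q_2}$ is $\rN(\fw)$; semisimplicity of $\Mot_\AH(\IC)$ then turns the subquotient into a (noncanonical) subobject. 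You instead build an explicit embedding: for $n$ odd, Clifford multiplication by the volume element gives a $\Res_{E/\IQ}\O(\sV)$-equivariant injection $\sV \tensor_E \det_E(\sV) \into \Cl^+(\sV)$ over $E$ (the parity count $1 + n \equiv 0$ is exactly where oddness enters; note, though, that the volume element itself lies in the \emph{odd} part, contrary to your parenthetical --- it is the product $v \cdot e_1 \cdots e_n$ that is even), and then apply $\Nm_{E/\IQ}$, which is a tensor functor and preserves injectivity (check after extending scalars to $\bar{\IQ}$, where it becomes a tensor product over the embeddings of $E$), together with the identification $\Nm_{E/\IQ}(\det_E \sV) \cong \det(\sV_{(\IQ)})$ of rank-one objects coming from the character identity $\det_\IQ(g) = \mathrm{Norm}_{E/\IQ}(\det_E g)$. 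This is more self-contained than unwinding Moonen's filtration, and it even exhibits $\rN(\fw)$ directly as a subobject, so semisimplicity is not needed (the noncanonicity only enters through the choice of trivializations you mention).

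One correction to your step (ii): Theorem~\ref{thm: tensor on AV always AH} concerns $\Mot_\Ab(\IC)$ and cannot be invoked here --- $\fw$ is not known to be an abelian motive (indeed the whole point of the lemma is to get around that), and full faithfulness of $\w_\Hdg$ on all of $\Mot_\AH(\IC)$ is not available. Fortunately you do not need any such rigidity, nor any realization-by-realization verification: $\Lambda^n_E$, the Clifford algebra, Clifford multiplication, and $\Nm_{E/\IQ}$ are all constructions internal to the Tannakian category $\Mot_\AH(\IC)_{(E)}$ (equivalently, your maps are $G_{\mathrm{mot}}(\fw)$-equivariant maps of the corresponding representations), so the embedding is a morphism of motives by construction --- the same Tannakian reduction the paper performs in its first sentence, with injectivity and the parity statement checked on the (faithful, exact) Betti realization. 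With step (ii) rephrased this way, your argument is complete.
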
 
The meaning of $\Cl^+_{E/\IQ}(\fw, \wt{\phi})$ is explained in the proof.
\begin{proof}
This follows from the content in \cite[\S5.4]{Moonen}. We give a sketch so that the reader can easily check the details from \textit{loc. cit}. Set $W:= \w_B(\fw)$ and let $\sW$ be the $E$-bilinear lift of $W$. Since the $E$-action and the pairing $\wt{\phi}$ are motivic, the representation $G_{\mathrm{mot}}(\fw) \to \GL(W)$ defined by $\fw$ takes values in $\O_{E/\IQ}(\sW, \wt{\phi})$. Then $\Cl^+_{E/\IQ}(\fw, \wt{\phi}) \in \< \fw \>$ is the motive defined by the adjoint representation of $G_{\mathrm{mot}}(\fw)$ on $\Cl^+_{E/\IQ}(\sW, \wt{\phi})$. 

Let $\Sigma$ be the set of embeddings $\sigma : E \into \IC$, $\fS$ be the symmetric group of $\Sigma$, and $Q$ be the set of $\fS$-orbits of $\IN^\Sigma$. Then under the assumption that $\dim_E \sW$ is odd, there is an ascending filtration $\Fil_\bullet$ on $\Cl^+_{E/\IQ}(\fw, \wt{\phi})$ indexed by $Q$ such that for some $q_1, q_2 \in Q$, $\Fil_{q_1} / \Fil_{q_2} \iso \rN(\fw)$. Therefore, $\rN(\fw)$ is a subquotient of $\Cl^+_{E/\IQ}(\fw, \wt{\phi})$. As $\Mot_\AH(\IC)$ is semisimple, $\fN(\fw)$ is in fact (non-canonically) a sub-object. 
\end{proof}

\subsection{Motives of varieties with $h^{2, 0} = 1$}
\label{sec: summarize Moonen}
\begin{definition}
A polarized Hodge structure $V$ of weight $0$ is said to be of K3-type if $\dim V_\IC^{(-1, 1)} = \dim V_\IC^{(1, -1)} = 1$, and $V_\IC^{i, j} = 0$ when $|i - j| > 2$. The transcendental part $T(V)$ of $V$ is the orthogonal complement of $V^{(0,0)} := V \cap V_\IC^{(0, 0)}$. 
\end{definition}

We recall the following fundamental result of Zarhin: 
\begin{theorem}[{\cite[\S2]{Zarhin}}]
\label{thm: Zarhin}
Let $V$ be a Hodge structure of K3-type such that $V^{(0,0)} = 0$ and let $\phi$ be its polarization form. Then the endomorphism algebra $E \coloneqq \End_\Hdg V$ is either a totally real field or a CM field, and the adjoint map $e \mapsto \bar{e}$ defined by $\phi(ex, y) = \phi(x, \bar{e}y)$ is the identity map when $E$ is totally real and is complex conjugation when $E$ is CM. 
\end{theorem}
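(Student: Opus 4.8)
The plan is to reproduce Zarhin's argument, which has three moves: first, show that $E := \End_\Hdg V$ is semisimple and carries a positive involution; second, use the K3-type condition together with $V^{(0,0)} = 0$ to show that $E$ is a number field; third, invoke the classification of positive involutions on number fields.

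For the first move, I would note that the category of polarizable rational Hodge structures is semisimple, so $E$ is a finite-dimensional semisimple $\IQ$-algebra and $V$ decomposes accordingly into isotypic parts. The polarization $\phi$ and the Weil operator $C$ (acting as a scalar $i^{p-q}$ on $V^{p,q}$, and self-adjoint for $\phi$ here because $p-q$ is even on a weight-$0$ K3-type structure) combine into a positive-definite symmetric form $\phi_C(u,v) := \phi(Cu,v)$ on $V_\IR$ by the Hodge--Riemann relations. Since every $e \in E$ preserves the Hodge type it commutes with $C$, so the adjoint $\bar e$ of $e$ with respect to $\phi$ is also its adjoint with respect to $\phi_C$; hence $\mathrm{Tr}_{V_\IR}(e\bar e) > 0$ for $e \neq 0$, i.e. $e \mapsto \bar e$ is a positive involution on $E$.

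For the second move, the key input is that $V^{-1,1}$ is one-dimensional. If $E$ were a nontrivial product $E_1 \times \cdots \times E_r$ with central idempotents $e_i$, then $V = \bigoplus_i e_i V$ would be a decomposition into rational sub-Hodge-structures; the line $V^{-1,1}$ lies in a single summand $e_{i_0} V$, and since that summand is defined over $\IQ$ its conjugate $V^{1,-1}$ lies there too, so every other summand has Hodge type $(0,0)$ only (by $V^{p,q} = 0$ for $|p-q|>2$) and is therefore contained in $V^{(0,0)} = 0$. Thus $E$ is simple, say $E = M_n(D)$ with $D$ a division algebra whose center is a number field $F$. The action of $E \otimes_\IQ \IC$ on the line $V^{-1,1}$ is a nonzero ring homomorphism out of a product of copies of $M_m(\IC)$, where $m = n\sqrt{\dim_F D}$; a one-dimensional representation forces $m = 1$, hence $n = 1$ and $D = F$, so $E = F$ is a number field (and $V$ is in fact a simple Hodge structure).

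For the third move, restricting the positive involution to $E = F$ and using that $\mathrm{Tr}_V(e\bar e)$ is a positive rational multiple of $\mathrm{Tr}_{F/\IQ}(e\bar e)$ (as $V$ is an $F$-vector space), I get $\mathrm{Tr}_{F/\IQ}(x\bar x) > 0$ for all nonzero $x \in F$. Then the standard classification applies: if $\bar\cdot = \mathrm{id}$ this positivity is exactly the condition that $F$ be totally real; if $\bar\cdot \neq \mathrm{id}$, its fixed field $F_0$ has index $2$ in $F$, is totally real by the same positivity, and inspecting the archimedean completions of $F$ (positivity forces $\bar\cdot$ to act as complex conjugation at each complex place and rules out real places) shows that $F$ is a CM field with $\bar\cdot$ its complex conjugation. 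The only genuinely delicate point is this last classification, which is classical and entirely parallel to the determination of endomorphism algebras of abelian varieties; everything else is bookkeeping with the Hodge decomposition. An alternative, slightly heavier route would pass through the Mumford--Tate group of $V$, but the direct computation above is cleaner.
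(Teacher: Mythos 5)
The paper offers no proof of this statement---it is imported verbatim from Zarhin \cite[\S2]{Zarhin}---and your argument is a faithful reconstruction of Zarhin's own: positivity of the $\phi$-adjoint involution via the Weil operator and the Hodge--Riemann relations, the one-dimensionality of $V_\IC^{(-1,1)}$ together with $V^{(0,0)}=0$ to rule out nontrivial central idempotents and matrix factors so that $E$ is a number field, and the classical (Albert-type) classification of positive involutions on number fields. I see no gaps; the only implicit point worth a word in a write-up is that the adjoint $\bar e$ is again a Hodge endomorphism (so that the involution really lives on $E$), which follows since $\phi$ is a morphism of Hodge structures.
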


\begin{comment}
    Moreover: 
\begin{enumerate}[label=\upshape{(\alph*)}]
    %\item If $E$ is totally real, $\MT(V) = \Res_{E/\IQ} \SO(\sV)$ where $\sV$ is $V$ when viewed as a quadratic form over $E$. 
    %\item If $E$ is CM, $\MT(V) = \Res_{E/\IQ} \mathrm{U}(\sV)$ where $\sV$ is $V$ when viewed as a Hermitian form over $E$.
\end{enumerate}
\end{comment}

To discuss motives in families, we first give a definition: 

\begin{definition}
\label{def: maximal monodromy}
    Let $S$ be a connected smooth $\IC$-variety. For every $\IQ$-local system $\sfV_B$ over $S$ and $b \in S(\IC)$, we write $\Mon(\sfV_B, b)$ for the Zariski closure of the image of $\pi_1(S,b)$ in $\GL(\sfV_{B, b})$, and $\Mon^\circ(\sfV_B, b)$ for its identity component. When $\sfV= (\sfV_B, \sfV_\dR)$ is a polarizable VHS\footnote{This means that $\sfV_B$ is the $\IQ$-local system and $\sfV_\dR$ is the filtered flat vector bundle in the VHS. Similar conventions apply throughout the paper.} over $S$, we say that $\sfV$ has \textbf{maximal monodromy} if $\Mon^\circ(\sfV_B, b)$ is equal to the derived group of the Mumford-Tate group $\MT(\sfV_b)$, where $b$ is any Hodge-generic point. 
\end{definition}

For the terminology ``Hodge-generic points'', see for example \cite[31]{Moonen-Fom}. Note that \cite[\S5]{AndreMT} says that in the above notation, $\Mon^\circ(\sfV_B, b)$ is always a normal subgroup of the derived group of $\MT(\sfV_b)$ (see also \cite[Thm~16]{Peters}). 

\subsubsection{} \label{sec: the 4 cases} For the rest of (\ref{sec: summarize Moonen}), let $S$ be a connected smooth $\IC$-variety and $f : \sX \to S$ be a $\heartsuit$-family of relative dimension $d$. Let $\sfV = (\sfV_B, \sfV_\dR)$ be the VHS on $S$ defined by $R^2 f_* \IQ(1)$. Note that by the definition of a $\heartsuit$-family, $\sfV$ satisfies condition (P) in \cite[Prop.~6.4]{Moonen}. Let $\bxi$ be a relatively ample line bundle on $\sX/S$, which defines a symmetric bilinear pairing $\< -, -\>$ on $\sfV$ such that $\< \alpha, \beta \> = \alpha \cup \beta \cup c_1(\bxi)^{d-2}$ for local sections $\alpha, \beta$. Choose a Hodge-generic base point $b \in S(\IC)$.

Suppose for a moment that $\Mon(\sfV_B, b)$ is connected. Then for $\rho := \dim \NS(\sX_b)_\IQ$, $\sfV$ admits an orthogonal decomposition $\mathbf{1}_S^{\oplus \rho} \oplus \sfP$, where $\sfP$ is a VHS polarized by the pairing induced by $\bxi$ and $\mathbf{1}_S$ is the unit VHS. Note that the Hodge structure $\sfP_b = (\sfP_{B, b}, \sfP_{\dR, b})$, together with its polarization, is of K3-type and satisfies the hypothesis of \ref{thm: Zarhin}. Let $E$ be the endomorphism field of $\sfP_b$. As $\Mon(\sfV_B, b) = \Mon^\circ(\sfV_B, b) \subseteq \MT(\sfV_{b}) = \MT(\sfP_b)$ and $\MT(\sfP_{b})$ commutes with $E$, the $E$-action on $\sfP_{B, b}$ commutes with $\pi_1(S, b)$ and hence extends to an action on $\sfP$ (see e.g., \cite[Cor.~12]{Peters}). If $\sfV$ (or equivalently $\sfP$) has maximal monodromy, we say that $(\sX/S, \bxi)$ is in case (R+) if $E$ is totally real and (CM) if $E$ is CM; we further divide (R+) into case (R1) for $\dim_E \sfP_{B, b}$ odd and case (R2) for $\dim_E \sfP_{B, b}$ even. We say we are in case (R2') if $\sfV$ has non-maximal monodromy, which can only happen when $E$ is totally real and $\dim_E \sfP_{B, b} = 4$. See \cite[Prop.~6.4(iii)]{Moonen} and its proof.

If $\Mon(\sfV_B, b)$ is not connected, we say that $(\sX/S, \bxi)$ is in case $?$ for $?$ = (R1), (R2), (CM) or (R2') if it is in case $?$ up to replacing $S$ by a connected \'etale cover $S'$ and $b$ by a lift such that $\Mon(\sfV_B, b)$ becomes connected. The definition is clearly independent of these choices. 

\begin{proposition}
\label{prop: dim = 1 for R2'}
    Suppose that $\sfV$ has non-maximal monodromy, or equivalently $(\sX/S, \bxi)$ belongs to case (R2'). Then for a general $s \in S$ and $X := \sX_s$, the Kodaira-Spencer map $\nabla_s : T_s S \to \Hom(\H^1(\Ohm^1_X), \H^2(\sO_X))$ has rank $1$. 
\end{proposition}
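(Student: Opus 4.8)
The plan is to recognise $\nabla$ as a graded piece of the Gauss--Manin derivative of the variation $\sfP$ of (\ref{sec: the 4 cases}), and to bound its rank by the dimension of the Hermitian domain through which the period map of $\sfP$ factors, which in case (R2$'$) is a curve.

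First I would reduce to the situation where $\Mon(\sfV_B, b)$ is connected. Replacing $S$ by a connected finite \'etale cover $S' \to S$ and $b$ by a lift changes neither the rank of $\nabla_s$ at corresponding points (an \'etale morphism is an isomorphism on tangent spaces, and $R^1 f_* \Ohm^1_{\sX/S}$, $R^2 f_* \sO_{\sX/S}$ pull back compatibly by base change, which applies because the Hodge numbers of the fibers are constant) nor the meaning of ``general point''; and it preserves the property of being a $\heartsuit$-family, since \eqref{eqn: KS} for $f'$ is the faithfully flat pullback of \eqref{eqn: KS} for $f$. So assume $H := \Mon^\circ(\sfV_B, b) = \Mon(\sfV_B, b)$; as in (\ref{sec: the 4 cases}) we then have an orthogonal decomposition $\sfV = \mathbf{1}_S^{\oplus \rho} \oplus \sfP$, with $\sfP$ carrying an action of the totally real field $E$, $\dim_E \sfP_{B, b} = 4$, and $H$ a \emph{proper} normal subgroup of $\MT(\sfP_b)^{\der}$ (properness being non-maximality of the monodromy). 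Moreover $H$ is \emph{nontrivial}: otherwise, by rigidity, $\sfP$ would become constant after a further \'etale cover, whence $\nabla = 0$ and $f$ would fail to be a $\heartsuit$-family.

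Next I would invoke the structure behind (\ref{sec: the 4 cases}) and \cite[Prop.~6.4(iii)]{Moonen}. Since $\sfP_b$ is a polarised Hodge structure of K3 type with $\sfP_b^{(0,0)} = 0$, totally real endomorphism field $E$, and $\dim_E \sfP_{B,b} = 4$, exactly one real place $\sigma_0$ of $E$ contributes a non-trivial summand to $\sfP_{B, b} \tensor \IR$, there of signature $(2,2)$, and the exceptional isogeny $\Spin_4 \iso \SL_2 \times \SL_2$ forces $\MT(\sfP_b)^{\der}$ to be isogenous to $\Res_{E/\IQ}(G_1 \times G_2)$ with $G_1, G_2$ rank-one $E$-groups satisfying $G_{i, \sigma_0} \cong \SL_{2, \IR}$. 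A proper non-trivial normal subgroup of such a group is, after relabeling, isogenous to $\Res_{E/\IQ} G_1$, so $H$ is; hence the period map of $\sfP$ factors through the $H(\IR)$-orbit $D_H \subseteq D$ of the Hodge cocharacter $h_b$ of $\sfP_b$ (apply rigidity to the endomorphism variation $\sfP^\vee \tensor_E \sfP$: its $\ad(G_2)$-isotypic sub-variation has trivial monodromy, hence is constant, so the $G_2$-component of $h_s$ does not vary with $s$). Because $h_b$ is trivial at every real place of $E$ other than $\sigma_0$ and $G_{1, \sigma_0} \cong \SL_{2, \IR}$, the orbit $D_H$ is biholomorphic to the upper half plane; in particular $\dim_\IC D_H = 1$.

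Finally I would translate this back. By Griffiths transversality, the graded pieces of the Gauss--Manin connection on $\sfV_\dR$ give, over $S$, the maps $T_S \to \underline{\Hom}(\gr_F^1 \sfV_\dR, \gr_F^0 \sfV_\dR)$ and $T_S \to \underline{\Hom}(\gr_F^0 \sfV_\dR, \gr_F^{-1} \sfV_\dR)$. Under the identifications $\gr_F^0 \sfV_\dR = R^1 f_* \Ohm^1_{\sX/S}$ and $\gr_F^{-1} \sfV_\dR = R^2 f_* \sO_{\sX/S}$ (degeneration of the Hodge--de Rham spectral sequence, with the Tate twist in $\sfV = R^2 f_* \IQ(1)$ producing the index shift), the second map is precisely \eqref{eqn: KS}; and since the flat summand $\mathbf{1}_S^{\oplus \rho}$ has vanishing $(-1,1)$-part and is annihilated by $\nabla$, it coincides with the corresponding graded piece of the Gauss--Manin derivative of $\sfP$, i.e.\ a component of the derivative of the period map of $\sfP$. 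As that period map factors through the curve $D_H$, its derivative has rank $\le 1$, so $\nabla_s$ has rank $\le 1$ for every $s \in S$. On the other hand, $f$ being a $\heartsuit$-family means \eqref{eqn: KS} is nonzero, hence nonzero --- so of rank exactly $1$ --- on a dense Zariski-open subset of $S$. Therefore $\nabla_s$ has rank $1$ for general $s$, as claimed. The main obstacle is the structural input of the previous paragraph: that in case (R2$'$) the algebraic monodromy group is $\Res_{E/\IQ}$ of a single rank-one factor, so that the period map of $\sfP$ takes values in a $1$-dimensional domain. I would extract this from \cite[Prop.~6.4(iii)]{Moonen} and its proof rather than reprove it; the auxiliary facts used (rigidity of variations with trivial monodromy, degeneration of Hodge--de Rham, cohomology and base change for the Hodge bundles) are standard.
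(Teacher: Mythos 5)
Your proof is correct, and it takes a genuinely different route from the paper's. Both arguments rely on the same key structural input from \cite[\S8.1]{Moonen}: in case (R2$'$), the connected algebraic monodromy group is $\Res_{E/\IQ} L$ for an $E$-form $L$ of $\SL_2$, sitting as a proper normal factor of $\MT(\sfP_b)^{\der} \sim \Res_{E/\IQ}(G_1 \times G_2)$. From there the two arguments diverge. You argue directly: using the theorem of the fixed part applied to the complementary factor $\mathfrak{h}'$ inside $\End_E(\sfP)$, you show the $H'$-component of the Hodge cocharacter is locally constant, so the lifted period map of $\sfP$ factors through the $H(\IR)^+$-orbit $D_H$; since $L$ is compact at all real places but $\sigma_0$ (where $\sV$ is negative definite) and $\SL_{2, \IR}$ at $\sigma_0$, $D_H$ is the upper half-plane, so the derivative of the period map — in particular its graded piece \eqref{eqn: KS} — has rank $\le 1$, with equality generically by the $\heartsuit$-hypothesis. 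The paper instead argues by contradiction: if $\mathrm{rank}\,\nabla_s \ge 2$, one can pass to a codimension-one Noether–Lefschetz locus $Z$ (Green/Voisin) on which $\nabla$ still does not vanish; one then shows $\dim_E T = 3$ for the transcendental part there, computes $\Mon^\circ(\sfP_B|_Z) \iso \Res_{E/\IQ}\SO(\sT)$ and observes that this $\Res_{E/\IQ}\SO_3$ cannot embed in $\Res_{E/\IQ} L \iso \Res_{E/\IQ}\SL_2$ (same dimension, not isomorphic). Your version is shorter and more conceptual, at the price of invoking the period-map factorization through the weak Mumford–Tate domain; the paper's avoids that machinery but needs the Noether–Lefschetz codimension estimate and the Mumford–Tate computation on the sublocus. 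Both are valid; yours would be a reasonable alternative write-up of the proposition.
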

\begin{proof}
    We may assume that $\Mon(\sfV_B, b)$ is conncted, so that there is a decomposition $\mathbf{1}_S^{\oplus \rho} \oplus \sfP$ as above. The rank of $\nabla_s$ achieves its maximum on an open dense $U \subseteq S$. Choose some $s \in U$. By \cite[Thm~3.5]{Voisin} (cf. \cite[Prop.~6.4]{Moonen}), for some point $z$ in a small analytic neighborhood of $s$, $\sfP_{B, z}^{(0, 0)}$ contains a nonzero class $\zeta$. Let $Z' \subseteq S$ be the irreducible component of the Noether-Lefschetz loci defined by $(z, \zeta)$ and $Z$ be its smooth locus. Up to replacing $z$ by a different point on $Z'$, we may assume that $z$ lies in $Z$, is Hodge-generic for the VHS $\sfP|_{Z}$, and $\mathrm{rank\,} \nabla_s = \mathrm{rank\,} \nabla_{z}$. Let $W$ be the Hodge structure of K3-type defined by the fiber of $\sfP$ at $z$ and let $T$ be its transcendental part. Then $\dim_E T < \dim_E W = 4$. Set $F := \End_{\Hdg} T$, which contains $E$. 

    By assumption on a $\heartsuit$-family, $\mathrm{rank\,} \nabla_s > 0$. Suppose by way of contradiction that $\mathrm{rank\,} \nabla_s > 1$. Then as $Z\subseteq S$ has codimension $1$ (cf. \cite[Lem.~3.1]{Voisin}), $\nabla_z$ does not vanish on $T_{z} Z$. This implies that $\Mon^\circ(\sfP_B|_Z, z) \neq 1$ and $\dim_E T = 3$ by \cite[Prop.~6.4]{Moonen} and its proof. Indeed, if $F$ is CM, then $\dim_F T \ge 2$, so that $\dim_E T \ge 4$, which is impossible. Therefore, $F$ is totally real and $\dim_F T \ge 3$. This forces $E = F$ and $\dim_E T = 3$. Now let $\sT$ be the $E$-bilinear lift of $T$. Then by \cite[Thm~2.2.1]{Zarhin} $\MT(T) = \Res_{E/\IQ} \SO(\sT)$. As this group is simple, $\Mon^\circ(\sfP_B|_Z, z) \iso \Res_{E/\IQ} \SO(\sT)$. On the other hand, \cite[\S8.1]{Moonen} tells us that $\Mon^\circ(\sfP_B, s) \iso \Res_{E/\IQ} L$ for some $E$-form $L$ of $\SL_2$. However, as $\SL_2$ and $\SO(3)$ are not even isomorphic over $\IC$, $\Res_{E/\IQ} L$ cannot have a subgroup isomorphic to $\Res_{E/\IQ} \SO(\sT)$, which constradicts the fact that parallel transport (noncanonically) sends $\Mon^\circ(\sfP_B|_Z, z)$ into $\Mon^\circ(\sfP_B, s)$. 
\end{proof}

%\begin{definition}
    %Let $S$ be a connected smooth variety over a characteristic zero field $F$, and let $f : \sX \to S$ be a $\heartsuit$-family. We say that $\sX/S$ belongs to case $?$ if the following condition holds: Suppose that we choose a polarization $\bxi$ and $\tau : F \into \IC$, and a connected component $S^\circ$ of $S \tensor_\tau \IC$. Then 
%\end{definition}

\subsubsection{} The following statements are the key inputs we will use from Moonen's paper \cite{Moonen}. A little adaptation we make is that we will uniformly use the category of motives $\Mot_\AH(\IC)$ with absolute Hodge cycles, whereas Moonen used Andr\'e's category with motivated cycles (\cite{AndreMot}). This adaptation makes a difference only for \ref{thm: Moonen}(d) below. We use $\Mot(\IC)$ to denote Andr\'e's category (for base field $\IC$) when explaining this difference, but otherwise all motives are considered in $\Mot_\AH(\IC)$. Note that motivated cycles are automatically absolute Hodge, so that $\Mot(\IC)$ is a subcategory of $\Mot_\AH(\IC)$.

%Moonen uses Andr\'e's category of motives with motivated cycles. When the base field is $\IC$, let us denote this category by $\Mot(\IC)$ when explaining his results. In this paper, we will uniformly use $\Mot_\AH(\IC)$, which is good enough for our applications. 

%Let $\mathbf{1}$ denote the unit motive in $\Mot_\AH(\IC)$ or $\Mot(\IC)$. 

Recall our notations in \ref{sec: norm functors} and \ref{sec: the 4 cases} for the theorem below: 

\begin{theorem} \label{thm: Moonen} Assume that $\Mon(\sfV_B, b)$ is connected and let $\sfP$ and $E$ be as in \ref{sec: the 4 cases}. Let $s \in S(\IC)$ be any point and write $\fp_s \in \Mot_\AH(\IC)$ for the submotive of $\fh^2(\sX_s)(1)$ such that $\w_\Hdg(\fp_s) = \sfP_{s}$. Then the action of $E$ on $\sfP_{s}$ is absolute Hodge, i.e., induced by an action of $E$ on $\fp_s$. Moreover:
\begin{enumerate}[label=\upshape{(\alph*)}]
    \item In case (R1), $\Nm_{E/\IQ}(\fp_s) \tensor_\IQ \det(\fp_{s, (\IQ)})$ is an object of $\Mot_\Ab(\IC)$. 
    \item In case (R2), for $\mathbf{1}_E := \mathbf{1} \tensor E$ and $\fp_s^\sharp := \fp_s \oplus \mathbf{1}_E$, $\Nm_{E/\IQ}(\fp^\sharp_s) \tensor_\IQ \det(\fp^\sharp_{s, (\IQ)})$ is an object of $\Mot_\Ab(\IC)$. 
    \item In case (R2'), $\Nm_{E/\IQ}(\fp_s)$ is an object of $\Mot_\Ab(\IC)$. 
    \item In case (CM), $\fp_s$ is an object of $\Mot_\Ab(\IC)$. 
\end{enumerate}
\end{theorem}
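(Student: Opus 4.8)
The plan is to deduce Theorem~\ref{thm: Moonen} from Moonen's structural results in \cite{Moonen}, the key point being that absolute Hodge cycles and motivated cycles agree on the classes we care about, so that all of Moonen's conclusions carry over to $\Mot_\AH(\IC)$ verbatim except possibly for part (d). First I would establish that the $E$-action on $\sfP_s$ is absolute Hodge. Since $E$ acts on $\sfP$ as a VHS (as recalled in \ref{sec: the 4 cases}), each element $e \in E$ defines a global section of $\underline{\End}(\sfP_B)$ that is flat and of Hodge type $(0,0)$; by the theorem of the fixed part (or the global invariant cycle theorem), at the Hodge-generic point $b$ this class is a Hodge class on $\End(\sfP_b)$, and Moonen shows (via the Kuga--Satake-type construction, or directly because $\End$-classes on VHS of K3-type satisfying (P) are motivated) that it is in fact a motivated, hence absolute Hodge, cycle; specialization along a path then shows the same at every $s$. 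This gives the $E$-action on $\fp_s$.

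Next, for parts (a), (b), (c) I would invoke the norm-functor machinery of \ref{sec: norm functors} together with \cite[\S5]{Moonen}: Moonen proves that in case (R1) the motive $\Nm_{E/\IQ}(\fp_s) \tensor \det(\fp_{s,(\IQ)})$ — which by Lemma~\ref{lem: truncate} is a submotive of the Clifford construction $\Cl^+_{E/\IQ}(\fp_s, \wt\phi)$ — is abelian, because it is cut out inside the even Clifford algebra of $\fp_s$, and the even Clifford algebra of a K3-type motive carries a Kuga--Satake abelian variety. The point is that Moonen's arguments produce absolute Hodge (indeed motivated) isomorphisms, so they are statements in $\Mot_\AH(\IC)$ as well. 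Cases (R2) and (R2') are handled the same way after the standard device of adjoining the one-dimensional form $\mathbf 1_E$ (resp. passing directly to $\Nm_{E/\IQ}$), which reduces the even-dimensional situation to an odd-dimensional one so that Lemma~\ref{lem: truncate} applies; I would just cite \cite[\S5--\S6]{Moonen} for the bookkeeping.

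For part (d), the CM case, I would argue that $\fp_s$ itself is abelian. Over $\IC$ a polarized Hodge structure of K3-type with CM by $E$ has Mumford--Tate group a torus, so $\w_\Hdg(\fp_s) = \sfP_s$ is a sum of one-dimensional Hodge structures of CM type, each of which is realized by a CM abelian variety (up to Tate twist and Artin motives); hence $\sfP_s$ is the Hodge realization of an abelian motive. The one subtlety — and this is exactly the place where I expect to actually need an argument rather than a citation — is that Moonen works in Andr\'e's category $\Mot(\IC)$ of motivated cycles, and I want the conclusion in $\Mot_\AH(\IC)$. Here I would use Theorem~\ref{thm: tensor on AV always AH}: the fully faithfulness of $\w_\Hdg$ on $\Mot_\Ab(\IC)$ means that once we know $\sfP_s$ is, as a Hodge structure with its $E$-action and polarization, isomorphic to the Hodge realization of an object of $\Mot_\Ab(\IC)$, the abstract Hodge-theoretic isomorphism is automatically induced by an absolute Hodge (even motivated) cycle, and the comparison of the two candidate motives $\fp_s$ and the abelian motive proceeds through their de Rham and $\ell$-adic realizations. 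So the CM case reduces to: the Hodge structure $\sfP_s$ is of CM type, which follows from Zarhin's theorem \ref{thm: Zarhin} and the maximality/connectedness hypotheses, plus the classical fact that CM Hodge structures come from CM abelian varieties.

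The main obstacle, as indicated, is purely a matter of category-bookkeeping in part (d): making sure that the passage from motivated cycles to absolute Hodge cycles does not lose the abelian-motive property, and that the $E$-linear, polarized structure is respected. Everything else is a direct transcription of \cite{Moonen}, since the inputs there (the theorem of the fixed part, the Kuga--Satake construction, the norm-functor identities) all produce cycles that are already absolute Hodge. I would therefore keep the proof short: prove the $E$-action statement via the fixed-part theorem and specialization, cite \cite[\S5--\S6]{Moonen} for (a)--(c) through Lemma~\ref{lem: truncate}, and for (d) combine Zarhin's classification with Theorem~\ref{thm: tensor on AV always AH} to upgrade Moonen's motivated-cycle statement to one in $\Mot_\AH(\IC)$.
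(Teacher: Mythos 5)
Your handling of the $E$-action and parts (a)--(c) is in the same spirit as the paper's (which cites \cite[Prop.~6.6, \S6.9--6.12, Prop.~8.5]{Moonen} and applies Lemma~\ref{lem: truncate}), and you correctly identify (d) as the place where a genuine argument is needed rather than a citation. But your proposed argument for (d) has a real gap. You observe that $\sfP_s$ is a CM Hodge structure and hence abstractly isomorphic, as a polarized Hodge structure, to the Hodge realization of an abelian motive $M$; you then want to invoke Theorem~\ref{thm: tensor on AV always AH} to upgrade this Hodge isomorphism to an absolute Hodge one. This doesn't work: Theorem~\ref{thm: tensor on AV always AH} says that $\w_\Hdg$ is fully faithful on $\Mot_\Ab(\IC)$, i.e.\ on pairs of motives both already known to be abelian. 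Here $\fp_s$ is a piece of $\fh^2(\sX_s)(1)$ and is precisely \emph{not} known to lie in $\Mot_\Ab(\IC)$ --- that is what we are trying to prove --- so the full faithfulness statement gives no handle on the Hodge isomorphism $\sfP_s \iso \w_\Hdg(M)$. Knowing that $\sfP_s$ is of CM type as a Hodge structure says nothing about which Hodge cycles on $\fp_s$ are absolute Hodge; that is exactly the content one would need and exactly what cannot be assumed.

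The paper's actual argument for (d) takes a different route and is worth contrasting. Moonen constructs (in the motivated category) an $E$-linear isomorphism $\fp_s \tensor_E \fu \iso \fh_s := \underline{\Hom}_E(\fh^1(A), \fh^1(\shB_s))$ with $\fh_s$ abelian and $\fu$ an auxiliary motive depending only on the family (not on $s$). Moonen could only show $\fu$ has trivial Hodge and $\ell$-adic realizations, not that $\fu \iso \mathbf{1}_E$ motivically. The present paper shows $\fu \iso \mathbf{1}_E$ \emph{in} $\Mot_\AH(\IC)$ by evaluating at a special point $s_0$ where $\w_B(\fp_{s_0})^{(0,0)}\neq 0$: at $s_0$ the classes in $\w_B(\fp_{s_0})^{(0,0)}$ come from line bundles (Lefschetz $(1,1)$) and are therefore absolute Hodge, the classes in $\w_B(\fh_{s_0})^{(0,0)}$ are absolute Hodge because $\fh_{s_0}$ is abelian, and then the purely linear-algebraic Proposition~\ref{prop: N is abs Hdg} (via Lemma~\ref{lem: descend n to Q}) forces the classes in $\w_B(\fu)$ to be absolute Hodge as well. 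This ``factor out $\fu$ via an algebraic class at a Noether--Lefschetz-special point'' step is the essential ingredient missing from your sketch, and it is what makes the passage from Andr\'e's motivated category to $\Mot_\AH(\IC)$ actually go through for (d).
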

\begin{proof}
The statement that the action of $E$ on $\sfP_{s}$ is absolute Hodge is implied by \cite[Prop.~6.6]{Moonen}. Note that our $\fp_s$ is Moonen's $\boldsymbol{V}_s$, and our $\sfP$ is Moonen's $\IV$. Below we view $\fp_s$ as an object of $\Mot_\AH(\IC)_{(E)}$. In this proof, $\underline{\End}$ (or $\underline{\Hom}$) always mean internal $\End$ (or $\Hom$) in the category $\Mot_\AH(\IC)$.

(a) We first treat the case (R+). Let $\sV$ be the $E$-bilinear lift of the quadratic form given by $\sfP_{B, b}$ with its self-dual $E$-action. By \cite[\S6.9, 6.10]{Moonen} there is a family of abelian schemes $\shA \to S$ with multiplication by $D := \Cl^+_{E/\IQ}(\sV)$ (see \ref{not: totally real}) such that there is an isomorphism
\begin{equation}
    \label{eqn: promoted}
    \Cl^+_{E/\IQ}(\fp_s) \sto \underline{\End}_D(\fh^1(\shA_s))
\end{equation}
of algebra objects in $\Mot_\AH(\IC)$. In case (R1), $\dim_E \sV$ is odd, and by \ref{lem: truncate} $\Nm_{E/\IQ}(\fp_s) \tensor \det(\fp_{s, (\IQ)})$ is non-canonically a submotive of $\Cl^+_{E/\IQ}(\fp_s)$, and hence is an object of $\Mot_\Ab(\IC)$. 

(b) In case (R2), (\ref{eqn: promoted}) still holds, so that $\Cl^+_{E/\IQ}(\fp_s)$ is still an object of $\Mot_\Ab(\IC)$, but a further trick is needed to recover (a variant of) $\Nm_{E/\IQ}(\fp_s)$ from $\Cl^+_{E/\IQ}(\fp_s)$. Recall $\sE$ defined in \ref{not: totally real}(d). As in \cite[\S6.11, 6.12]{Moonen}, we consider the VHS $\sfP^\sharp := \sfP \oplus (\mathbf{1}_S \tensor \sE_{(\IQ)})$, where $\mathbf{1}_S$ stands for the unit VHS on $S$. Let $\sV^\sharp := \sV \oplus \sE$ and $D^\sharp := \Cl^+_{E/\IQ}(\sV^\sharp)$. Then by \cite[\S6.9, 6.10]{Moonen} again, there is an abelian scheme $\shA^\sharp \to S$ with multiplication by $D^\sharp$ such that 
\begin{equation}
\label{eqn: iso for E sharp}
\Cl^+_{E/\IQ}(\sfP^\sharp) \iso \underline{\End}_{D^\sharp}(\sfH)
\end{equation}
where $\sfH$ is the VHS given by the first relative cohomology of $\shA^\sharp$. It is shown in \textit{loc. cit.} that the fiber of the isomorphism (\ref{eqn: iso for E sharp}) at every $\IC$-point on $S$ is induced by an absolute Hodge cycle. Here is a summary of the argument in our notations: Choose a point $s_0 \in S(\IC)$ such that $\sfP_{s_0}^{(0, 0)} \neq 0$. By the last paragraph of \cite[\S6.12]{Moonen}, there is an isomorphism 
$$ \Cl^+_{E/\IQ}(\fp^\sharp_{s_0}) \iso \Cl^+_{E/\IQ}(\fp_{s_0})^{\oplus 2^{[E : \IQ]}} $$
of objects in $\Mot_\AH(\IC)$. As $\Cl^+_{E/\IQ}(\fp_{s_0})$ is an object of $\Mot_\Ab(\IC)$, so is $\Cl^+_{E/\IQ}(\fp_{s_0}^\sharp)$. Therefore, by \ref{thm: tensor on AV always AH} the isomorphism (\ref{eqn: iso for E sharp}) is absolute Hodge at $s_0$, and hence so at every other $s$ by Deligne's Principle B. This implies $\Cl^+_{E/\IQ}(\fp^\sharp_s) \in \Mot_\Ab(\IC)$, so that (b) follows from \ref{lem: truncate} again.

(c) This follows directly from \cite[Prop.~8.5]{Moonen}.

(d) In case (CM), \cite[\S7.4]{Moonen} tells us that there exists a motive $\fu$ (denoted by $\boldsymbol{U}$ therein), an abelian variety $A$ over $\IC$ and an abelian scheme $\shB \to S$, all equipped with multiplication by $E$, such that there is an isomorphism 
$$ \fp_s \tensor_E \fu \sto \fh_s := \underline{\Hom}_E(\fh^1(A), \fh^1(\shB_s)). $$
This implies that $\fp_s \tensor \fu \in \Mot_\Ab(\IC)$. In order to show $\fp_s \in \Mot_\Ab(\IC)$, it suffices to argue that $\fu \iso \mathbf{1}_E$ in $\Mot_\AH(\IC)$.

In \textit{loc. cit.}, $\fu$ is in fact constructed as an object $\Mot(\IC)$. Moonen remarked that conjecturally there should be an isomorphism $\fu \iso \mathbf{1}_E$ in $\Mot(\IC)$ and proved that $\fu$ indeed has trivial Hodge and $\ell$-adic realizations. We note that his argument in \cite[Lem.~7.5]{Moonen} in fact implies that $\fu \iso \mathbf{1}_E$ in $\Mot_\AH(\IC)$, i.e., $\w_B(\fu)$ is spanned by absolute Hodge classes: The idea is to take advantage of the fact that $\fu$ is independent of $s$, and that for some $s_0 \in S$, the algebraic part $\w_B(\fp_{s_0})^{(0, 0)}$ of $\w_B(\fp_{s_0})$ is nonempty. Since every class in $\w_B(\fu)$ is of type $(0, 0)$, we have $\w_B(\fp_{s_0})^{(0, 0)} \tensor_E \w_B(\fu) = \w_B(\fh_{s_0})^{(0, 0)}$. By Lefschetz $(1, 1)$-theorem, every class in $\w_B(\fp_{s_0})^{(0, 0)}$ comes from a line bundle on $\sX_s$ and hence is absolute Hodge. As classes in $\w_B(\fh_{s_0})^{(0, 0)}$ are also absolute Hodge, we may now conclude by \ref{prop: N is abs Hdg} below. 
\end{proof}

\begin{proposition}
\label{prop: N is abs Hdg}
Let $E$ be a number field and let $\fm, \fn$ be objects of $\Mot_\AH(\IC)$ with $E$-action. Let $\fh \coloneqq \fm \tensor_E \fn$. Let $m \in \w_B(\fm), n \in \w_B(\fn)$ be nonzero Hodge cycles and define $h \in \w_B(\fh)$ to be $m \tensor_E n$. If $h$ and $m$ are both absolute Hodge, then so is $n$. 
\end{proposition}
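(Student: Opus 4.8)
The plan is to reduce the claim to a splitting argument in the category $\Mot_\AH(\IC)_{(E)}$ of $E$-modules in $\Mot_\AH(\IC)$. Since $m$ is absolute Hodge it is, by definition, the Betti realization of a morphism $\mathbf{1} \to \fm$ in $\Mot_\AH(\IC)$; post-composing with the endomorphisms of $\fm$ coming from the $E$-action promotes it to an $E$-linear morphism $\iota \colon \mathbf{1}_E \to \fm$ in $\Mot_\AH(\IC)_{(E)}$ with $\w_B(\iota)(1) = m$. As $m \neq 0$ and $\w_B(\fm)$ is an $E$-vector space, the assignment $e \mapsto e\cdot m$ is injective, so $\iota$ is a monomorphism.

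Next I would split $\iota$. Because $\Mot_\AH(\IC)$ is semisimple (as already used in the proof of \ref{lem: truncate}), the underlying monomorphism of $\IQ$-objects admits a retraction, and averaging that retraction against a trace-dual pair of $\IQ$-bases of $E$ turns it into an $E$-linear one; equivalently, $\Mot_\AH(\IC)_{(E)}$ is itself semisimple since $E/\IQ$ is separable. Either way one gets a decomposition $\fm \cong \mathbf{1}_E \oplus \fm'$ in $\Mot_\AH(\IC)_{(E)}$ with $\iota$ the inclusion of the first summand.

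Now apply $-\tensor_E \fn$. Since $\w_B$ is a faithful exact $\tensor$-functor it commutes with $\tensor_E$, so $\fh = \fm \tensor_E \fn \cong \fn \oplus (\fm' \tensor_E \fn)$ in $\Mot_\AH(\IC)$, and under this isomorphism $h = \iota(1)\tensor_E n$ is carried to $(n,0)$ in $\w_B(\fn) \oplus \w_B(\fm' \tensor_E \fn)$ (because $1 \tensor_E n = n$ under $E \tensor_E \w_B(\fn) = \w_B(\fn)$). Finally, since $h$ is absolute Hodge it is the Betti realization of a morphism $\alpha \colon \mathbf{1} \to \fh$ in $\Mot_\AH(\IC)$; composing $\alpha$ with the projection $\fh \to \fn$, which is a morphism in $\Mot_\AH(\IC)$ being a direct-summand projection, produces a morphism $\mathbf{1} \to \fn$ in $\Mot_\AH(\IC)$ with Betti realization $n$, i.e.\ an absolute Hodge cycle giving $n$.

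I do not anticipate a real obstacle here. The step most likely to require a word of care, though it is routine, is the upgrade of the $\IQ$-linear splitting to an $E$-linear one (i.e.\ the semisimplicity of $\Mot_\AH(\IC)_{(E)}$), together with the bookkeeping that $\w_B$ identifies $\w_B(\fm\tensor_E\fn)$ with $\w_B(\fm)\tensor_E\w_B(\fn)$ compatibly with $\iota$ and the projections. Note that the hypothesis that $n$ is a Hodge cycle is not actually needed: the argument directly produces an absolute Hodge cycle realizing $n$, which a fortiori forces $n$ to be Hodge.
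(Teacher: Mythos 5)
Your proof is correct, but it is a genuinely different route from the paper's. The paper's proof unwinds the definition of absolute Hodge directly: for each $\sigma \in \Aut(\IC)$ it transports $m$, $n$, $h$ to de Rham/\'etale realizations of the $\sigma$-conjugate motives, reduces to checking that $n' \in \w_B(\fn^\sigma)\tensor(\IC\otimes\IA_f)$ actually lies in $\w_B(\fn^\sigma)$, and deduces this from the elementary linear-algebra statement Lemma~\ref{lem: descend n to Q} (divide coordinates of $h$ by a nonzero coordinate of $m$). Your argument instead works entirely inside the category: since $m\neq0$ is absolute Hodge, it gives a monomorphism $\iota\colon\mathbf{1}_E\hookrightarrow\fm$ in $\Mot_\AH(\IC)_{(E)}$; semisimplicity of this category (which holds because $\Mot_\AH(\IC)$ is semisimple and $E/\IQ$ is separable — the averaging against a trace-dual basis works since $\sum_i e_ie_i^\ast=1$) splits $\iota$, and applying $-\otimes_E\fn$ exhibits $\fn$ as a direct summand of $\fh$ with $h$ projecting to $n$; composing the absolute Hodge morphism $\mathbf{1}\to\fh$ given by $h$ with the summand projection then realizes $n$ as absolute Hodge. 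Both approaches are fine; the paper's is more elementary and self-contained (the linear-algebra lemma is then reusable and explicit), while yours is more conceptual, makes the mechanism transparent (one is literally cancelling off the $\fm$-factor by its $\mathbf{1}_E$-piece), and needs no excursion through $\Aut(\IC)$-twists. You are also right that the hypothesis that $n$ is a Hodge cycle is not needed in either argument — the conclusion produces it.
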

\begin{proof}
Recall notations in \ref{sec: twist tensors}. We need to show that for every $\sigma \in \Aut(\IC)$, the image $n'$  of $n \tensor 1$ under the canonical isomorphisms 
$$ \w_B(\fn) \otimes (\IC \otimes \IA_f) \cong \w_\dR(\fn) \times \w_{\IA_f}(\fn) \iso_\bc \w_\dR(\fn^\sigma) \times \w_{\IA_f}(\fn^\sigma) \cong \w_B(\fn^\sigma) \otimes (\IC \otimes \IA_f). $$
is contained in $\w_B(\fn^\sigma)$ (i.e., is of the form $n^\sigma \tensor 1$ for some $n^\sigma \in \w_B(\fn^\sigma)$). Since $m$ and $h$ are absolute Hodge, we know that $m^\sigma \in \w_B(\fm^\sigma)$ and $h^\sigma = m^\sigma \otimes n' \in \w_B(\fh^\sigma)$. Then apply \ref{lem: descend n to Q} with $k = \IQ$, $R = \IA_f \times \IC$, $M = \w_B(\fm^\sigma)$ and $N = \w_B(\fn^\sigma)$ to deduce that $n' \in \w_B(\fn^\sigma)$. 
\end{proof}

\begin{lemma}
\label{lem: descend n to Q}
Let $E/k$ be a field extension, $R$ be any $k$-algebra and $M, N$ be finite dimensional $E$-vector spaces. Let $H \coloneqq M \tensor_E N$ and $h \in H \tensor_k R$ be a nonzero element of the form $m \tensor n$ under the canonical isomorphism 
$$ H \tensor_k R \iso (M \tensor_k R) \tensor_{E \tensor_k R} (N \tensor_k R), $$
where $m \in M \tensor_k R$ and $n \in N \tensor_k R$. If $h \in H$ and $m \in M$, then $n \in N$. 
\end{lemma}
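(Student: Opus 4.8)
The plan is to reduce the statement to a coordinate comparison by choosing an $E$-basis of $M$ adapted to $m$. First I would note that since $h = m \otimes n \neq 0$ we have $m \neq 0$, $n \neq 0$ and $R \neq 0$; the latter forces the structure map $k \to R$ to be injective, so the base-change maps $M \to M \otimes_k R$, $N \to N \otimes_k R$ and $H \to H \otimes_k R$ are all injective, which is what makes the hypotheses ``$m \in M$'', ``$h \in H$'' and the conclusion ``$n \in N$'' literally meaningful. Since $m$ is a nonzero vector of the $E$-vector space $M$, I would extend it to an $E$-basis $e_1 = m, e_2, \dots, e_d$ of $M$.

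Next I would record the decompositions this basis produces and check they are compatible. The basis $\{e_i\}$ gives $H = M \otimes_E N \cong N^{\oplus d}$ via $\sum_i e_i \otimes_E x_i \mapsto (x_i)_i$; it also exhibits $M \otimes_k R$ as free over $E \otimes_k R$ on $\{e_i \otimes 1\}$, whence $(M \otimes_k R) \otimes_{E \otimes_k R} (N \otimes_k R) \cong (N \otimes_k R)^{\oplus d}$; and base-changing the first isomorphism along $k \to R$ identifies $H \otimes_k R \cong (N \otimes_k R)^{\oplus d}$ as well, compatibly with the coordinatewise inclusion $N^{\oplus d} \hookrightarrow (N \otimes_k R)^{\oplus d}$, $x \mapsto x \otimes 1$. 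The one thing to verify is that the canonical isomorphism $H \otimes_k R \sto (M \otimes_k R) \otimes_{E \otimes_k R} (N \otimes_k R)$ of the statement matches these two presentations as $(N \otimes_k R)^{\oplus d}$, i.e.\ that it sends $(e_i \otimes_E x) \otimes 1$ to $(e_i \otimes 1) \otimes_{E \otimes_k R} (x \otimes 1)$; this is immediate from the definition of that isomorphism on elementary tensors.

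With this in hand the argument is a comparison of coordinates. Since $h \in H$ I can write $h = \sum_i e_i \otimes_E h_i$ with $h_i \in N$, and its image in $(N \otimes_k R)^{\oplus d}$ is $(h_1 \otimes 1, \dots, h_d \otimes 1)$. On the other hand $h = m \otimes n = (e_1 \otimes 1) \otimes_{E \otimes_k R} n$ has image $(n, 0, \dots, 0)$. Equating the two, $n = h_1 \otimes 1$ lies in the image of $N \hookrightarrow N \otimes_k R$, which is precisely the assertion $n \in N$; as a byproduct $h_i = 0$ for $i \geq 2$, so in fact $h \in m \otimes_E N$.

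I do not expect any real obstacle: the entire content is the elementary fact that base change along $k \to R$ of $M \otimes_E N$ is computed using the same $E$-basis of $M$ one would use before base change, and the only care needed is the bookkeeping of the canonical isomorphism on basis vectors (plus the trivial remark about $k \hookrightarrow R$ that legitimizes the inclusions appearing in the statement).
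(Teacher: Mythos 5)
Your proof is correct. It is in the same spirit as the paper's — both reduce the claim to a coordinate comparison after a choice of basis — but the execution differs in a small and pleasant way: the paper fixes bases of both $M$ and $N$, identifies $H$ with $\alpha\times\beta$ matrices over $E$, extracts the relation $h_{ij}=m_i n_j$, and divides by some nonzero $m_i\in E$ to recover each $n_j\in E$; you instead pick an $E$-basis of $M$ \emph{adapted} to $m$ (taking $e_1=m$), which avoids choosing a basis of $N$ altogether, avoids the division step, and lets you read off $n$ directly as the first $N$-coordinate of $h$, with the extra byproduct that $h\in m\otimes_E N$. Your opening remark about $R\neq 0$ forcing $k\hookrightarrow R$ (so the inclusions $M\hookrightarrow M\otimes_k R$, etc., are injective and the hypotheses are meaningful) is the same implicit normalization the paper relies on, and your verification that the canonical isomorphism respects the two presentations as $(N\otimes_k R)^{\oplus d}$ is exactly the bookkeeping one needs.
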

\begin{proof}
 Let $\alpha = \dim_E M$ and $\beta = \dim_E N$. By choosing bases of $M$ and $N$, we may assume $M = E^{\alpha}$ and $N = E^{\beta}$. Identify $H$ with the space of $(\alpha \times \beta)$-matrices over $E$ and $h$ with $m \cdot n^T$. We denote by $(m_i),(n_j),(h_{i,j})$ the respective $E \tensor_k R$-coordinates of $m,n$ and $h$ and fix $i$ such that $m_i \not= 0$. Then for every $j$, $h_{i,j} = m_i \cdot n_j$ and hence $n_j = m_i^{-1} h_{i,j}  \in E$ because $m_i,h_{i, j} \in E \subseteq E \tensor_k R$ by assumption. 
\end{proof}

\section{Moduli interpretations of Shimura varieties}

In this section, we first recall the moduli description for the canonical models of some Shimura varieties of abelian type from \cite{YangSystem}. Then we give some prelimary results on those of orthogonal type over totally real fields and review their integral models when the reflex field is $\IQ$.  

%as well as integral models of those of orthogonal type from \cite{CSpin}. 

\subsection{Systems of realizations}
\begin{definition}
\label{def: system of realizations}
    Let $k$ be a subfield of $\IC$ and $S$ be a smooth $k$-variety. By a system of realizations we mean a tuple $\sfV = (\sfV_B, \sfV_\dR, \sfV_\et, i_\dR, i_\et)$ where 
    \begin{itemize}
        \item $\sfV_B$ is a $\IQ$-local system over $S_\IC := S \tensor_k \IC$; 
        \item $\sfV_\dR$ is a filtered flat vector bundle over $S$; 
        \item $\sfV_\et$ is an \'etale local system of $\IA_f$-coefficients over $S$; 
        \item $i_\dR : (\sfV_B \tensor \sO^\an_{S_\IC}, \mathrm{id} \tensor d) \sto (\sfV_\dR|_{S_\IC})^\an$ is an isomorphism of  flat (holomorphic) vector bundles such that $(\sfV_B, \sfV_\dR|_{S_\IC})$ is a \textit{polarizable} VHS;
        \item $i_\et : \sfV_B  \tensor \IA_f \sto \sfV_\et|_{S_\IC}$ is an isomorphism between (the pro-\'etale sheaf associated to) $\sfV_B  \tensor \IA_f$ and $\sfV_\et|_{S_\IC}$.
    \end{itemize}  
\end{definition}
We may often omit $(i_\dR, i_\et)$ in the notation and write $\sfV_B \tensor \IA_f = \sfV_\et|_{S_\IC}$ and $\sfV_B \tensor \sO_{S_\IC}^\an = \sfV_\dR|_{S_\IC}$ a little abusively. Let $\sfR(S)$ denote the category of systems of realizations over $S$, with morphisms defined in the obvious way. Then $\sfR(S)$ is a naturally a Tannakian category. Let $\mathbf{1}_S$ be its unit object and for each $n \in \IZ$ let $\mathbf{1}_S(n)$ be the Tate object. For every $\sfV \in \sfR(S)$, set $\sfV(n) := \sfV \tensor \mathbf{1}_S(n)$. Note that if $k = \IC$, then $\sfR(S)$ is naturally identified with the category of polarizable VHS over $S$. We write $\H^0(\sfV)$ for $\Hom(\mathbf{1}, \sfV)$.

Our definition is a little different from \cite[\S6.1]{FuMoonen} because we \textit{fixed an embedding $k \into \IC$}, but see \cite[(3.1.3)]{YangSystem} for a comparison. Also, recall our convention in \ref{sec: notations}(e) for VHS. We remind the reader that since $S$ is defined over $k \subseteq \IC$, when we write $s \in S(\IC)$, we mean a \textit{$k$-linear} morphism $\Spec(\IC) \to S$, which we view simultaneously as a \textit{closed point} on $S_\IC = S \tensor_k \IC$.

%There are some changes of topology here: Technically, $\sfV_B \tensor \IA_f$ is defined for the analytic topology, but it has a canonically associated \'etale local system, which we also denote by $\sfV_B \tensor \IA_f$. Similarly, we often do not distinguish $\sfV_\dR|_{S_\IC}$ with its analytification $\sfV_B \tensor \sO^\an_{S_\IC}$, because over a smooth complex variety there is a natural equivalence of categories between flat algebraic vector bundles with regular singularities and flat holomorphic vector bundles (\cite[Thm~II.5.9]{DelVB}). In particular, a flat global section of $\sfV_B \tensor \sO_{S_\IC}^\an$ is also a flat global section of $\sfV_{\dR}|_{S_\IC}$.

Next, we recall how to define a $\sfK$-level structure in this context. Interestingly this definition can be leveraged to define additional structures on $\sfV$, as \ref{ex: PEL AV} below shows.

\begin{definition}
\label{def: level structures}
 Let $G$ be a reductive group over $\IQ$, let $V \in \mathrm{Rep}(G)$ be a finite dimensional $G$-representation and $\sfK \subseteq G(\IA_f)$ be a compact open subgroup. For any subfield $k \subseteq \IC$, smooth $k$-variety $S$ and any system of realizations $\sfV = (\sfV_B,\sfV_\dR,\sfV_\et)$ on $S$ we make the following definitions.
 \begin{enumerate}[label=\upshape{(\alph*)}]
     \item A $(G,V,\sfK)$-level structure on $\sfV_\et$, or simply a $\sfK$-level structure, is a global section $[\eta]$ of the quotient sheaf $\sfK \backslash \Isom(V\tensor \IA_f, \sfV_\et)$. Here $\Isom(V\tensor \IA_f, \sfV_\et)$ denotes the pro-\'etale sheaf consisting of isomorphisms $V \tensor \IA_f \sto \sfV_\et$, and $\sfK$ acts by pre-composition through its image in $\GL(V\tensor\IA_f)$. For each geometric point $s \to S$, write the $\sfK$-orbit of isomorphisms $V \tensor \IA_f \sto \sfV_{\et, s}$ determined by $[\eta]$ as $[\eta]_s$. 
     \item If $[\eta]$ is a $(G,V,\sfK)$-level structure on $\sfV_\et$, every element $v \in I := (V^\otimes)^G$ gives rise to a global section of $\sfV_\et$, which we denote by $\eta(v)_\et$, such that for every geometric point $s \to S$, every representative of $[\eta]_s$ sends $v \tensor 1$ to $\eta(v)_{\et, s}$. We say that $[\eta]$ is $\sfV$-rational if (i) for all $v \in I$, there exists a global section $\bv = (\bv_B, \bv_\dR, \bv_\et) \in \H^0(\sfV^\tensor)$ such that $\bv_\et = \eta(v)_\et$, and (ii) for every $s \in S(\IC)$, $(\sfV_{B, s}, \{ \bv_{B, s}\}_{v \in I}) \iso (V, \{ v\}_{v \in I})$. 
     \item Let $\Omega$ be a $G(\IR)$-conjugacy class of morphisms $\IS \to G_\IR$. When $V$ is faithful, we say that a $\sfV$-rational $\sfK$-level structure is of type $\Omega$ if for every $s \in S(\IC)$, under some (and hence every) isomorphism $(\sfV_{B, s}, \{ \bv_{B, s}\}_{v \in I}) \iso (V, \{ v\}_{v \in I})$ the Hodge structure on $\sfV_{B, s}$ is defined by an element of $\Omega$.
     \item If there is a morphism $G' \to G$ from another reductive subgroup $G'$, and $\sfK' \subseteq G'(\IA_f)$ is a compact open subgroup whose image is contained in $\sfK$, then we say that a $\sfK'$-level structure $[\eta']$ on $\sfV_\et$ refines $[\eta]$ if $[\eta]$ is induced by $[\eta']$ via the natural (forgetful) map $\sfK' \backslash \Isom(V \tensor \IA_f, \sfV_\et) \to \sfK \backslash \Isom(V \tensor \IA_f, \sfV_\et)$. 
     \end{enumerate}
\end{definition}

The definitions (b) and (c) above are used to simplify the formalism of the moduli interpretation of Shimura varieties. To explain this, we give a PEL-type example:

\begin{example} \label{ex: PEL AV}
    Let $(B,\ast,(V,\psi))$ be a simple PEL-datum (i.e., $B$ is a simple $\IQ$-algebra with positive involution $\ast$ of type $A$ or $C$ and $(V,\psi)$ is a symplectic $B$-module). We denote by $G \coloneqq \GSp_B(V)$ the $\IQ$-group of $B$-linear similitudes and fix an open compact subgroup $\sfK \subset G(\IA_f)$. Then there exists a unique $G(\IR)$-conjugacy class $\Omega$ such that $(G,\Omega)$ is a Shimura datum and $\Sh_\sfK(G,\Omega)(\IC)$ is in canonical bijection with the set $\sI$ of isomorphism classes of tuples $(\sfV,\lambda,i,[\eta])$ where
    \begin{itemize}
        \item $\sfV$ is a $\IQ$-Hodge structure $(\sfV_B,\sfV_{\dR})$ of type $\{(1,0),(0,1)\}$,
        \item $i\colon B \hookrightarrow \End(\sfV)$ is an algebra morphism,
        \item $\lambda$ is a $\IQ^\times$-equivalence class of $B$-linear symplectic pairing of $\sfV$ and
        \item $[\eta]$ is a $\sfK$-orbit of a $B$-linear similitude $V \otimes \IA_f \to \sfV_\et$ 
    \end{itemize}
    such that $(\spadesuit)$ there exists a $B$-linear similitude $V \iso \sfV_B$ through which the Hodge structure on $\sfV$ is defined by an element of $\Ohm$ (see e.g.\ \cite[Prop.~8.14, Thm.~8.17]{MilIntro}). Here we implicitly applied the equivalence $A \mapsto \H^1(A,\IQ)$ between the category of complex abelian varieties up to isogeny and that of polarizable Hodge structures of type $\{(1,0),(0,1)\}$. 

    Note that an object of $\sfR(\IC) := \sfR(\Spec(\IC))$ is nothing but a polarizable Hodge structure. We can more concisely define $\sI$ as the isomorphism classes of pairs $(\sfV, [\eta])$ where 
    \begin{itemize}
        \item $\sfV \in \sfR(\IC)$ and
        \item $[\eta]$ is a $\sfV$-rational $(G, V, \sfK)$-level structure on $\sfV_\et$ of type $\Ohm$.
    \end{itemize}
    Indeed, if we view each $b \in B$ as a tensor in $\End_\IQ(V) = V^\vee \otimes V$ and $c \coloneqq \IQ^\times \cdot \psi$ as a tensor in $V^2 \otimes (V^\vee)^2$ (cf.~\cite[Ex.~2.1.6]{Kim:RZ}), we have $i(b) \tensor \IA_f = \eta(b)_\et$ and $\lambda \tensor \IA_f = \eta(c)_\et$ in the notation of \ref{def: level structures}(b). In particular, the datum of $i$ and $\lambda$ is remembered by the condition that $\eta(b)_\et$ and $\eta(c)_\et$ come from global sections (i.e., elements) of $\sfV_B^\otimes$ of Hodge type $(0,0)$. Since $G$ is the stabilizer of $B$ and $c$ in $\GL(V)$, we may equivalently say that  this is true for all $v \in (V^\otimes)^G$. Therefore, the existence of $i$ and $\lambda$ such that there exists a $B$-linear similitude $V \iso \sfV_B$ is equivalent to $[\eta]$ being $\sfV$-rational. Then the condition $(\spadesuit)$ can be simply stated as ``$[\eta]$ is of type $\Omega$'' in the sense of \ref{def: level structures}(c).
\end{example}

%This illustrates the point that, \textit{instead of remembering $i$ and $\lambda$ as additional data on $\sfV$, one can alternatively require that the level structure $[\eta]$ satisfies properties (b) and (c) of \ref{def: level structures}.}

%\cite[Prop.~8.14]{MilIntro} says that if $\sfV \in \sfR(\IC)$ can be equipped with $i$ and $\lambda$ as in the definition for $\sI$, then the morphism $h : \IS \to \GL(\sfV_B)_\IR$ giving the Hodge structure is in fact an element in $\Ohm$. The converse is clear again because $G$ stabilizes $i$ and $\lambda$. 

%In order to generalize the above example to full-fletched moduli functors over number fields and a larger class of Shimura varieties, we restrict ourselves to a well-behaved subclass of systems of realisations whose ``descent datum'' is determined by abelian motives. But to facilitate discussion we first introduce some notations for Galois descent. 

Now we introduce some notation to keep track of Galois descent data in a system of realizations: 
\begin{notation} \label{not: galois descent} Let $S$ be a smooth variety over a subfield $k$ of $\IC$. Let $s \in S(\IC)$ and $\sigma \in \Aut(\IC/k)$ be any elements. Denote by $\sigma(s) \in S(\IC)$ the point given by pre-composing the $k$-linear morphism $s : \Spec(\IC) \to S$ with $\Spec(\sigma)$. Given $(\sfV_B, \sfV_\et, \sfV_\dR) \in \sfR(S)$, we write $\sigma_{\sfV_\et, s} : \sfV_{B, s} \tensor \IA_f \sto \sfV_{B, \sigma(s)} \tensor \IA_f$ the natural isomorphism induced by $\sfV_\et$, viewed as a descent of $\sfV_B \tensor \IA_f$ from $S_\IC$ to $S$; similarly, we write $\sigma_{\sfV_\dR, s} : \sfV_{B, s} \tensor \IC \sto \sfV_{B, \sigma(s)} \tensor \IC$ for the natural $\sigma$-linear isomorphism induced by the descent $\sfV_\dR$ of (the algebraization of) $\sfV_B \tensor \sO_{S_\IC}^{\mathrm{an}}$. 
\end{notation}

To clarify the meaning of $\sigma_{\sfV_\et, s}$, we remark that $s$ and $\sigma(s)$ are usually different closed points on $S_\IC$, and they are equal if and only if $\sigma$ fixes the residue field $k(s_0)$, where $s_0 \in S$ is the image of $s$. The collection of $\sigma_{\sfV_\et, s}$ as $\sigma$ runs through $\Aut(k(s)/k(s_0))=\Aut(\IC/k(s_0))$ is nothing but the Galois action on the stalk $\sfV_{B, s} \tensor \IA_f = \sfV_{\et, s}$.

Using the notations of \ref{sec: twist tensors} and \ref{not: galois descent}, we define: 
\begin{definition}
\label{def: compatible with AM}
 We say that a system of realizations $\sfV$ is \textbf{weakly abelian-motivic (weakly AM)} if for every $s \in S(\IC)$, there exists $M \in \Mot_\Ab(\IC)$ such that $\w_{\Hdg}(M) \iso (\sfV_{B, s}, \sfV_{\dR, s})$; moreover, for any such isomorphism $\gamma : \w_{\Hdg}(M) \sto (\sfV_{B, s}, \sfV_{\dR, s})$ and $\sigma \in \Aut(\IC/k)$, there exists an isomorphism $\gamma^\sigma : \w_\Hdg(M^\sigma) \sto (\sfV_{B, \sigma(s)}, \sfV_{\dR, \sigma(s)})$ such that the Betti components $\gamma_{B}, \gamma^\sigma_{B}$ of $\gamma, \gamma^\sigma$ fit into commutative diagrams:
    \begin{equation}
    \label{diag: defining AM compatible}
        \begin{tikzcd}
	{\w_{\et}(M) } & {\sfV_{B,s} \tensor \IA_f} \\
	{\w_{\et}(M^\sigma)} & {\sfV_{B, \sigma(s)} \tensor \IA_f}
	\arrow["{\iso_{\mathrm{bc}}}"', from=1-1, to=2-1]
	\arrow["{\sigma_{\sfV_\et, s}}", from=1-2, to=2-2]
	\arrow["{\gamma^\sigma_{B} \tensor \IA_f}", from=2-1, to=2-2]
	\arrow["{\gamma_{B} \tensor \IA_f}", from=1-1, to=1-2]
    \end{tikzcd}
    \text{ and }
    \begin{tikzcd}
	{\w_{\dR}(M) } & {\sfV_{B,s} \tensor \IC} \\
	{\w_{\dR}(M^\sigma)} & {\sfV_{B, \sigma(s)} \tensor \IC}
	\arrow["{\iso_{\mathrm{bc}}}"', from=1-1, to=2-1]
	\arrow["{\sigma_{\sfV_\dR, s}}", from=1-2, to=2-2]
	\arrow["{\gamma^\sigma_B \tensor \IC}", from=2-1, to=2-2]
	\arrow["{\gamma_B \tensor \IC}", from=1-1, to=1-2]
    \end{tikzcd}
    \end{equation} 
    We note that $\gamma^\sigma$ is uniquely determined by $\gamma$ provided that it exists. Denote the full subcategory of $\sfR(S)$ given by these objects by $\sfR^*_\am(S)$. It is easy to check that if $T \to S$ is a morphism between smooth $k$-varieties, the natural pullback functor $\sfR(S) \to \sfR(T)$ sends $\sfR^*_\am(S)$ to $\sfR^*_\am(T)$. 
\end{definition}

\begin{lemma}
\emph{(\cite[(3.4.1)]{YangSystem})}
\label{lem: descent of morphisms of systems} 
    Let $S$ be a smooth variety over $k \subseteq \IC$ and take $\sfV, \sfW \in \sfR_\am^*(S)$. Let $\varphi_\IC$ be a morphism $\sfV|_{S_\IC} \to \sfW|_{S_\IC}$. Then $\varphi_\IC$ descends to a morphism $\sfV \to \sfW$ if and only if either the \'etale or the de Rham component of $\varphi_\IC$ descends to $S$. 
\end{lemma}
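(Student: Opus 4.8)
The plan is to reduce the statement to an instance of Proposition \ref{prop: N is abs Hdg} applied pointwise, combined with a rigidity argument showing that a morphism of local systems (resp.\ flat bundles) is determined by its value at a single point once it is known to be a morphism in a neighborhood. First observe the easy direction: if $\varphi_\IC$ descends to a morphism $\sfV \to \sfW$ of systems of realizations, then by definition its \'etale and de Rham components descend. So the content is the converse, and by symmetry of the roles of $B$, $\dR$, $\et$ in Definition \ref{def: system of realizations} it suffices to treat, say, the case where the \'etale component $\varphi_\et$ descends to an $\IA_f$-linear map $\sfV_\et \to \sfW_\et$ over $S$; the de Rham case is entirely parallel (using $\sigma$-linearity in place of $\IA_f$-linearity), and the $B$-component of $\varphi_\IC$ is already defined over $S_\IC$, so what must be produced is a $\Gal$-equivariant, i.e.\ $S$-rational, structure on it.

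The key step is descent of the Betti component. Over $S_\IC$ we already have $\varphi_B \colon \sfV_B \to \sfW_B$, a morphism of $\IQ$-local systems (in particular of VHS, since $\varphi_\IC$ is a morphism in $\sfR(S_\IC)$). To show $\varphi_B$ is stable under the descent data, I would fix $s \in S(\IC)$ and $\sigma \in \Aut(\IC/k)$ and compare $\varphi_{B,\sigma(s)}$ with the conjugate of $\varphi_{B,s}$ under the isomorphisms $\sigma_{\sfV_\et,s}$, $\sigma_{\sfW_\et,s}$ (and their de Rham analogues). Because $\sfV,\sfW \in \sfR^*_\am(S)$, choose $M_V, M_W \in \Mot_\Ab(\IC)$ with isomorphisms $\gamma^V, \gamma^W$ realizing their fibers at $s$, and the canonically-determined $\gamma^{V,\sigma},\gamma^{W,\sigma}$ at $\sigma(s)$ as in Definition \ref{def: compatible with AM}. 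Transport $\varphi$ through these to a morphism $\psi \colon M_V \to M_W$ of Hodge realizations; by Theorem \ref{thm: tensor on AV always AH} (full faithfulness of $\w_\Hdg$ on $\Mot_\Ab(\IC)$), $\psi$ is a morphism of abelian motives, hence $\psi^\sigma \colon M_V^\sigma \to M_W^\sigma$ is defined, and by construction its \'etale realization is compatible with $\iso_\bc$. Now the hypothesis that $\varphi_\et$ descends says precisely that the \'etale component of $\varphi_{\sigma(s)}$ equals the $\iso_\bc$-transport of that of $\varphi_s$; unwinding the two commutative squares of \eqref{diag: defining AM compatible} for $M_V$ and $M_W$, this forces the \'etale component of $\gamma^{W,\sigma}\circ \w_\et(\psi^\sigma)\circ(\gamma^{V,\sigma})^{-1}$ to agree with $\varphi_{\et,\sigma(s)}$. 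Since an $\IA_f$-linear map between the free $\IA_f$-modules $\sfV_{\et,\sigma(s)}$, $\sfW_{\et,\sigma(s)}$ determines, and is determined by, a $\IQ$-linear map between the fibers $\sfV_{B,\sigma(s)}$, $\sfW_{B,\sigma(s)}$ provided that map is already known to exist rationally (here via $\psi^\sigma$, i.e.\ via Proposition \ref{prop: N is abs Hdg}/Lemma \ref{lem: descend n to Q} applied with $k=\IQ$, $R=\IA_f$), we conclude that $\varphi_{B,\sigma(s)}$ equals the conjugate of $\varphi_{B,s}$. As this holds for all $s$ and all $\sigma$, the Betti morphism is compatible with the descent datum defining $S$ from $S_\IC$, hence descends to a $\IQ$-local system morphism $\sfV_B \to \sfW_B$ over... well, carrying the descended Betti and \'etale data; the compatibility isomorphisms $i_\dR, i_\et$ for the source and target then automatically intertwine them, and together with the given $\varphi_\dR$ this assembles to a morphism $\sfV \to \sfW$ in $\sfR(S)$.

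The main obstacle is the bookkeeping in the previous paragraph: one must be careful that "$\varphi_\et$ descends" is used correctly — a priori it only gives $\Gal(\IC/k(s_0))$-equivariance of the \'etale stalk maps for each $s_0 \in S$, and one needs to leverage this simultaneously with the $\am$-property to get the Betti statement at \emph{every} $\IC$-point and every $\sigma \in \Aut(\IC/k)$, not just those fixing a given residue field. The clean way is to work with the descent datum directly: a morphism over $S_\IC$ descends to $S$ iff it is fixed by all of $\Aut(\IC/k)$ acting on $\Mor_{S_\IC}(\sfV|_{S_\IC},\sfW|_{S_\IC})$, and the argument above checks exactly this fixedness at the level of the Betti (equivalently, since $\w_\Hdg$ is faithful on $\Mot_\Ab$, at the level of abelian motives) componentwise in $s$. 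A secondary subtlety is that $\varphi_B$ need not be injective or an isomorphism, so one cannot simply invoke rigidity of isomorphisms; this is precisely why Lemma \ref{lem: descend n to Q} is stated for a single (possibly degenerate) tensor $m\otimes n$ rather than for isomorphisms, and it is what makes the pointwise descent of an arbitrary morphism go through. Once these points are handled the remaining verifications — that the descended $B$-, $\dR$-, $\et$-pieces satisfy the compatibility and polarizability axioms of Definition \ref{def: system of realizations}, which they inherit from $\sfV,\sfW$ — are routine.
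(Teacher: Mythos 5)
Your strategy --- work pointwise, use the weakly-AM data to transport $\varphi_{s}$ to a morphism $\psi\colon M_V\to M_W$ of abelian motives (via full faithfulness of $\w_\Hdg$), form $\psi^\sigma$, and read off the desired compatibilities from the two squares in \eqref{diag: defining AM compatible} --- is the right one, and the first half of your argument (from the \'etale square, concluding $\varphi_{B,\sigma(s)} = \gamma^{W,\sigma}_B\circ\w_B(\psi^\sigma)\circ(\gamma^{V,\sigma}_B)^{-1}$) is correct. However, you never complete the payoff: the whole point was to show, under the hypothesis that $\varphi_\et$ descends, that the de Rham component of $\varphi_\IC$ is $\Aut(\IC/k)$-equivariant, i.e.\ that for all $s,\sigma$
\[
\sigma_{\sfW_\dR, s}\circ\bigl(\varphi_{B,s}\otimes\IC\bigr) = \bigl(\varphi_{B,\sigma(s)}\otimes\IC\bigr)\circ\sigma_{\sfV_\dR,s}.
\]
Having obtained $\varphi_{B,\sigma(s)} = \gamma^{W,\sigma}_B\circ\w_B(\psi^\sigma)\circ(\gamma^{V,\sigma}_B)^{-1}$, you must plug this into the \emph{right-hand} (de Rham) square of \eqref{diag: defining AM compatible}, using $\iso_{\bc}\circ\w_\dR(\psi) = \w_\dR(\psi^\sigma)\circ\iso_{\bc}$ and $\w_\dR(\psi^\sigma)=\w_B(\psi^\sigma)\otimes\IC$, to deduce the displayed identity. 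Your write-up stops at the \'etale square and then asserts that things ``automatically intertwine'' and ``assemble together with the given $\varphi_\dR$'' --- but $\varphi_\dR$ over $S$ is not given, it is precisely what is being constructed. That is the actual gap. (One must also say a word about why pointwise agreement at all $\IC$-points suffices to conclude agreement of morphisms of filtered flat bundles over $S_\IC$; since $S$ is a smooth hence reduced variety of finite type, $\IC$-points are dense and this is routine, but it should be stated.)

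Two smaller issues. First, your citation of Proposition \ref{prop: N is abs Hdg} and Lemma \ref{lem: descend n to Q} for the step ``two $\IA_f$-linear maps induced by $\IQ$-linear maps that agree over $\IA_f$ already agree over $\IQ$'' is a misattribution: that step uses only the injectivity of $\IQ\hookrightarrow\IA_f$ (finite-dimensionality), whereas the cited results are about recovering a rational tensor \emph{factor} from a rational product, a different phenomenon. Second, your framing of the goal as ``produce a $\Gal$-equivariant, i.e.\ $S$-rational, structure on the Betti component'' and the closing remark about ``polarizability axioms'' of the descended pieces are both off: the Betti local system lives over $S_\IC$ by fiat and is not subject to descent, and morphisms of systems of realizations carry no polarizability condition. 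What is subject to descent is $\varphi_{\dR,\IC}$ (or $\varphi_{\et,\IC}$), and the content of the lemma is exactly that descent of one forces descent of the other.
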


When applied to the case $\sfV = \mathbf{1}_S$, the above lemma says that for $\sfW \in \sfR^*_\am(S)$, if the \'etale realization of a global section of $\sfW_B|_{S_\IC}$ which is everywhere of Hodge type $(0,0)$ descends to $S$, then so does the de Rham realization, and vice versa. This is a global version of the following statement: Suppose that $M \in \Mot_\AH(k)$ and $v \in \w_B(M_\IC)$ is absolute Hodge. Then $v \tensor \IA_f \in \w_\et(M_\IC)$ descends to $k$ (i.e., is $\Aut(\IC/k)$-invariant) if and only if $v \tensor \IC \in \w_\dR(M_\IC)$ descends to $\w_{\dR}(M)$. Note that if $M \in \Mot_\Ab(k)$, then any Hodge cycle $v \in \w_B(M_\IC)$ is automatically absolute Hodge.

For future reference we give a handy lemma on (b) and (c) in \ref{def: level structures}.  
\begin{lemma}
\label{lem: situation in practice}
    Suppose that in \ref{def: level structures}(a), the representation $V \in \mathrm{Rep}(G)$ is faithful, $S$ is geometrically connected as a $k$-variety, and for some $b \in S(\IC)$ there is an isomorphism $\eta_b : V \sto \sfV_{B, b}$ such that $[\eta]_b = \sfK \cdot (\eta_b \tensor \IA_f)$; moreover, for some $\Ohm$ as in \ref{def: level structures}(c), the Hodge structure on $\sfV_{B, b}$ is defined by an element of $\Ohm$ via $\eta_b$. 
    
    Then, assuming either $k = \IC$ or $\sfV \in \sfR^*_\am(S)$, $[\eta]$ is $\sfV$-rational and of type $\Ohm$. 
\end{lemma}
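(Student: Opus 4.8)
The plan is to check conditions (i) and (ii) of Definition~\ref{def: level structures}(b) and then the type-$\Omega$ condition of Definition~\ref{def: level structures}(c), in each case transporting the datum at $b$ to an arbitrary $s \in S(\IC)$; note $S_\IC$ is connected since $S$ is geometrically connected over $k$. Fix $v \in I := (V^\tensor)^G$. As $v$ is $G$-invariant and $\sfK \subseteq G(\IA_f)$, the section $\eta(v)_\et$ of $\sfV_\et^\tensor$ is well defined and, under $i_\et$, has stalk $\eta_b(v) \tensor 1$ at $b$. Restricting to $S_\IC$ and regarding $\eta(v)_\et|_{S_\IC}$ through $i_\et$ as a flat section of $\sfV_B^\tensor \tensor \IA_f$ whose value at $b$ lies in the $\IQ$-structure $\sfV_{B,b}^\tensor$, and using that the $\pi_1(S_\IC,b)$-invariants of $\sfV_{B,b}^\tensor \tensor \IA_f$ are $(\sfV_{B,b}^\tensor)^{\pi_1(S_\IC,b)} \tensor \IA_f$, one sees that $\eta_b(v)$ is monodromy-invariant; hence it extends to a flat global section $\bv_B$ of $\sfV_B^\tensor$ over $S_\IC$ with $\bv_{B,b} = \eta_b(v)$ and $\bv_B \tensor \IA_f = \eta(v)_\et|_{S_\IC}$ (both being flat sections on the connected $S_\IC$ agreeing at $b$). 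Since every $h \in \Omega$ fixes $v$ and the Hodge structure at $b$ is given via $\eta_b$ by such an $h$, the section $\bv_B$ has Hodge type $(0,0)$ at $b$; by the theorem of the fixed part (the flat global sections of $\sfV^\tensor|_{S_\IC}$ form a constant sub-VHS) it has type $(0,0)$ everywhere, so it underlies a morphism $\mathbf{1}_{S_\IC} \to \sfV^\tensor|_{S_\IC}$ in $\sfR(S_\IC)$ with de Rham component some $\bv_\dR$ and \'etale component $\eta(v)_\et|_{S_\IC}$.

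Next I descend this to $S$. If $k = \IC$ there is nothing to do and $\bv := (\bv_B, \bv_\dR) \in \H^0(\sfV^\tensor)$ with $\bv_\et = \eta(v)_\et$. If instead $\sfV \in \sfR^*_\am(S)$, then so is $\sfV^\tensor$ (the operation $(-)^\tensor$ preserves $\Mot_\Ab(\IC)$, and the comparison diagrams of Definition~\ref{def: compatible with AM} are functorial in $M$), so Lemma~\ref{lem: descent of morphisms of systems}, applied to $\mathbf{1}_S \to \sfV^\tensor$, lets me descend the morphism above once I know its \'etale component descends---which it does, being the restriction of the global section $\eta(v)_\et$ of $\sfV_\et^\tensor$ over $S$. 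This produces $\bv = (\bv_B,\bv_\dR,\bv_\et) \in \H^0(\sfV^\tensor)$; a global section of the lisse sheaf $\sfV_\et^\tensor$ on $S$ is determined by its restriction to $S_\IC$, so $\bv_\et = \eta(v)_\et$. This is condition (i) for all $v \in I$. For condition (ii), given $s \in S(\IC)$ I take parallel transport along a path in $S_\IC^{\an}$ from $b$ to $s$: it is an isomorphism $\sfV_{B,b} \iso \sfV_{B,s}$ carrying each $\bv_{B,b}$ to $\bv_{B,s}$, and precomposing with $\eta_b : V \iso \sfV_{B,b}$ (which sends $v$ to $\eta_b(v) = \bv_{B,b}$) gives $(\sfV_{B,s}, \{\bv_{B,s}\}_{v\in I}) \iso (V, \{v\}_{v\in I})$. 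Hence $[\eta]$ is $\sfV$-rational.

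For the type, transport the Hodge structure of $\sfV_{B,s}$ back to $V$ along the isomorphism just produced to obtain $h_s : \IS \to \GL(V_\IR)$. Since $V$ is faithful and $G$ (being reductive) is the pointwise stabilizer in $\GL(V)$ of all tensors in $I$, and each $\bv_{B,s}$ is of Hodge type $(0,0)$, the map $h_s$ factors through $G_\IR$; by hypothesis $h_b \in \Omega$. The Hodge numbers of $\sfV_B$ are constant on the connected $S_\IC$, so all $h_s$ share one $G(\IC)$-conjugacy class of Hodge cocharacter, and $\mathrm{ad}\circ h_s$ polarizes $\mathfrak{g}$, because $\mathfrak{g} \subseteq \underline{\End}(\sfV_B)|_{S_\IC}$ is a flat sub-VHS (flat as the infinitesimal stabilizer of the flat tensors $\bv_B$, and a sub-Hodge structure at each point since $h_s$ factors through $G_\IR$). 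Thus the Hodge filtration at $s$ determines a point of the $G$-period domain $D_G \subseteq \check{D}_G = G_\IC/P$, which is open in $\check{D}_G$ and whose connected components are $G(\IR)^+$-homogeneous; consequently the $G(\IR)$-orbit $\Omega$ is open and closed in $D_G$, and since $S_\IC^{\an}$ is connected, the period map is continuous, and $b$ maps into $\Omega$, every $s$ satisfies $h_s \in \Omega$. By Definition~\ref{def: level structures}(c), $[\eta]$ is of type $\Omega$.

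The hard part is this last step: one must be sure that the period map of $\sfV_B|_{S_\IC}$ cannot leave the union of the \emph{open} $G(\IR)$-orbits of $\check{D}_G$---equivalently, that $\Omega$ is open and closed inside the locus of Hodge cocharacters it meets---which is exactly where the Shimura-type (Hermitian) structure of $\Omega$ enters. The remaining ingredients are routine: transport of structure along paths in $S_\IC^{\an}$, the theorem of the fixed part over $S_\IC$, and the descent Lemma~\ref{lem: descent of morphisms of systems} together with the functoriality of the weakly-AM condition under $(-)^\tensor$.
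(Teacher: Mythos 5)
Your argument is correct and, for the core steps, follows the same route as the paper: monodromy-invariance of $\eta_b(v)$ over the connected $S_\IC$ producing the flat sections $\bv_B$ and $\bv_\et$; the observation that these are of type $(0,0)$ at $b$ because the Hodge structure there is defined via $\eta_b$ by an element of $\Ohm$ (the paper phrases this as containment of the Mumford--Tate group in $G$); the theorem of the fixed part to spread type $(0,0)$ over $S_\IC$; and, when $k \neq \IC$, descent of the de Rham component via \ref{lem: descent of morphisms of systems}, using exactly your point that $\sfV^\tensor$ is again weakly AM. (The paper additionally invokes Deligne's algebraization theorem to pass from the flat holomorphic section to a section of $(\sfV_\dR|_{S_\IC})^\tensor$; you leave this implicit, which is harmless.) The one place you genuinely diverge is the propagation of the type-$\Ohm$ condition over the connected base: the paper disposes of this by citing Deligne, \emph{Travaux de Shimura}, (1.1.12), whereas you sketch a direct period-domain argument. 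Your sketch is essentially a re-derivation of that reference, and it has one step stated too quickly: constancy of the Hodge numbers of $\sfV_B$ only fixes the $\GL(V_\IC)$-conjugacy class of the Hodge cocharacter, not immediately its $G(\IC)$-conjugacy class, and the openness (hence open-and-closedness) of the $G(\IR)$-orbit $\Ohm$ inside the relevant space of homomorphisms $\IS \to G_\IR$ is precisely the rigidity/openness statement that the citation supplies (openness of $G(\IR)$-orbits in the compact dual, equivalently rigidity of homomorphisms of $\IS$ into a reductive group). These facts are standard, so the issue is one of justification rather than substance; what your approach buys is a self-contained argument, what the paper's buys is brevity, since the whole step is a one-line reference.
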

\begin{proof}
    Note that by assumption $S_\IC$ is connected, and the fact that $\eta_b \tensor \IA_f$ represents a $\sfK$-level structure at $b$ implies that its $\sfK$-orbit is $\pi^\et_1(S, b)$-stable. In particular, for each $v \in (V^\tensor)^G$, $\pi_1(S_\IC, b)$ fixes $\eta_b(v)$, so there exists a $\bv_B \in \H^0(\sfV_B^\tensor)$ such that $\bv_{B, b} = \eta_b(v)$. Likewise, there exists $\bv_\et \in \H^0(\sfV_\et^\tensor)$ such that $\bv_{\et, b} = \eta_b(v) \tensor 1$, and $\bv_\et$ is precisely the $\eta(v)_\et$ in \ref{def: level structures}(b). 

    Note that $\bv_B$ is necessarily of Hodge type $(0, 0)$ at $b$, because the Mumford-Tate group of the Hodge structure $(\sfV_{B, b}, \sfV_{\dR, b})$ is contained in $G$ via $\eta_b$. This implies that $\bv_B$ is of Hodge type $(0, 0)$ everywhere by the theorem of the fixed part. Let $\bv_{\dR, \IC}$ be the global section in $(\sfV_B \tensor \sO_{S_\IC}^{\mathrm{an}})^\tensor$ induced by $\bv_B$. Then $\bv_{\dR, \IC}$ algebraizes to a global section of $(\sfV_\dR|_{S_\IC})^\tensor$ (cf. \cite[II~Thm~5.9]{DelVB}). 
    
    If $k = \IC$ (so that $S = S_\IC$), then we have already shown that $[\eta]$ is $\sfV$-rational; moreover, one deduces from the connectedness of $S_\IC$ and \cite[(1.1.12)]{DelVdShimura} that $[\eta]$ remains of type $\Ohm$ on $S_\IC$. If $k \subseteq \IC$ is a general subfield, it remains to show that $\bv_{\dR, \IC}$ descends to $S$ under the additional assumption that $\sfV \in \sfR^*_\am(S)$. However, as the \'etale realization of $\bv_B$ descends to a global section of $\sfV_\et^\tensor$ (i.e., $\bv_\et$), this follows from \ref{lem: descent of morphisms of systems}. 
     \end{proof}

\subsection{Shimura varieties}
Let $(G, \Omega)$ be a Shimura datum which satisfies the axioms in \cite[II~(2.1)]{Milne:CanonicalModels}. Let $E(G, \Ohm)$ be the reflex field, $Z$ be the center of $G$ and $Z_s$ be the maximal anisotropic subtorus of $Z$ that is split over $\IR$. In this paper, we always assume that 
\begin{equation}
\label{eqn: assumption on center}
    \text{the weight is defined over $\IQ$ and $Z_s$ is trivial}. 
\end{equation}
Note that in particular the latter condition ensures that $Z(\IQ)$ is discrete in $Z(\IA_f)$ (\cite[Rmk~5.27]{MilIntro}). We will often drop the Hermitian symmetric domain $\Ohm$ from the notation of Shimura varieties when no confusion would arise. 

For any compact open subgroup $\sfK \subseteq G(\IA_f)$, let $\Sh_\sfK(G)_\IC$ denote the resulting Shimura variety with a complex uniformization $G(\IQ) \backslash \Ohm \times G(\IA_f) / \sfK$ and let $\Sh_\sfK(G)$ denote the canonical model over $E(G, \Ohm)$. Let $\Sh(G)$ denote the inverse limit $\varprojlim_\sfK \Sh_\sfK(G)$ as $\sfK$ runs through all compact open subgroups. Under our assumptions, $\Sh(G)(\IC)$ is described by $G(\IQ) \backslash \Ohm \times G(\IA_f)$ (\cite[(5.28)]{MilIntro}). Note that $\Sh_\sfK(G) = \Sh(G)/ \sfK$.

\subsubsection{} \label{sec: automorphic sheaves} Let $G \to \GL(V)$ be a representation. For any neat compact open subgroup $\sfK \subseteq G(\IA_f)$, we can attach to $\Sh_\sfK(G)_\IC$ an \textbf{automorphic VHS} $(\sfV_B, \sfV_{\dR, \IC})$ (cf. \cite[Ch. II~3.3]{Milne:CanonicalModels}, \cite[\S2.2]{Taelman2}). In particular, $\sfV_B$ is defined to be the contraction product 
\begin{equation}
    \label{eqn: contraction for V_B}
    \sfV_B := V \times^{G(\IQ)} [\Ohm \times G(\IA_f) / \sfK]
\end{equation}
The filtration on $\sfV_{\dR, \IC}$ is obtained by descending the filtration on the tautological VHS on $V \times \Ohm$. Analogously, the \textbf{automorphic \'etale local system} $\sfV_\et$ on the pro\'etale site of $\Sh_\sfK(G)$ is defined as the contraction product 
\begin{equation}
    \label{eqn: contraction for V_et}
    \sfV_\et := V_{\IA_f} \times^\sfK \Sh(G)
\end{equation}
which comes with a comparison isomorphism $\sfV_B \otimes \IA_f \iso \sfV_\et|_{\Sh_\sfK(G)_\IC}$. Moreover, by construction $\sfV_\et$ over $\Sh_\sfK(G)$ comes with a \textbf{tautological $\sfK$-level structure} $[\eta_V]$, i.e., a global section of $\sfK \backslash \Isom(V \tensor \IA_f, \sfV_\et)$.  

Define $[\eta^\an_V]$ to be the restriction of $[\eta_V]$ to ${\Sh_\sfK(G)_\IC}$. Using $(\sfV_B, \sfV_{\dR, \IC})$, we can already give a moduli interpretation of $\Sh_\sfK(G)_\IC$. Below is a reformulation of \cite[Prop.~3.10]{Mil94} in our terminology (cf. \cite[(4.1.2)]{YangSystem})\footnote{Technically, the example discussed in \ref{ex: PEL AV} may not satisfy (\ref{eqn: assumption on center}), but it helps illustrate how to compare our formulation and Milne's.}.
\begin{theorem}
\label{thm: moduli over C}
    Assume that $V$ is faithful. For every smooth $\IC$-variety $T$, let $\sM_V(T)$ be the groupoid of tuples of the form $(\sfW, [\xi^\an])$ where $\sfW = (\sfW_B, \sfW_\dR)$ is a VHS over $T$ and $[\xi^\an]$ is a $\sfK$-level structure on $\sfW_B \tensor \IA_f$ which is $\sfW$-rational and is of type $\Ohm$. 
    
    Then $(\sfV_\IC := (\sfV_B, \sfV_{\dR, \IC}),  [\eta^\an_V])$ is an object of $\sM_V(\Sh_\sfK(G)_\IC)$ and for every object $(\sfW, [\xi^\an]) \in \sM_V(T)$ there exists a unique morphism $\rho : T \to \Sh_\sfK(G)_\IC$ such that $\rho^* (\sfV_\IC, [\eta^\an_V]) \iso (\sfW, [\xi^\an])$. 
\end{theorem}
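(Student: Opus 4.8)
The plan is to prove this by recognising it, as the statement itself advertises, as a reformulation in the language of \S3.1 of Milne's moduli description of $\Sh_\sfK(G)_\IC$ \cite[Prop.~3.10]{Mil94} (see also \cite[Ch.~II~\S3]{Milne:CanonicalModels}, \cite[(4.1.2)]{YangSystem}); the work is to set up the dictionary and then carry out the period-map construction in this language.

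First I would check that $(\sfV_\IC,[\eta^\an_V])$ genuinely lies in $\sM_V(\Sh_\sfK(G)_\IC)$. Since $\sfV_B$ is the contraction product $V\times^{G(\IQ)}[\Ohm\times G(\IA_f)/\sfK]$ and, $G$ being reductive with $V$ faithful, $G$ is the pointwise stabiliser in $\GL(V)$ of $I:=(V^\tensor)^G$ (Chevalley, as used in \ref{ex: PEL AV}), each $v\in I$ yields a flat global section $\bv_B$ of $\sfV_B^\tensor$ that is everywhere of Hodge type $(0,0)$, has étale realization $\eta_V(v)_\et$, and satisfies $(\sfV_{B,s},\{\bv_{B,s}\})\iso(V,\{v\})$ fibrewise; this makes $[\eta^\an_V]$ $\sfV_\IC$-rational, and it is of type $\Ohm$ because the Hodge structure on each fibre of $\sfV_B$ is by construction defined by a point of $\Ohm$.

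Next, given $(\sfW,[\xi^\an])\in\sM_V(T)$, I would build $\rho$ from a period map. Pass to the universal cover $\pi\colon\wt T\to T$ with a chosen base point over $t_0\in T(\IC)$, noting that $\wt T$ is connected and that all of its points lie over $\IC$-points of $T$. Using $\sfW$-rationality at $t_0$, pick $a_0\colon V\sto\sfW_{B,t_0}$ carrying each $v\in I$ to the value at $t_0$ of the corresponding flat global section $\bv_B$ of $\sfW_B^\tensor$, and pick a representative $\xi_{t_0}$ of the $\sfK$-orbit $[\xi^\an]_{t_0}$; because $\sfK\subseteq G(\IA_f)$ fixes the $v$, the element $g_0:=(a_0\tensor\IA_f)^{-1}\xi_{t_0}$ fixes all $v\in I$ and so lies in $G(\IA_f)$. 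Parallel transport of $a_0$ trivialises $\pi^*\sfW_B\cong\underline V$, and the variation of the Hodge filtration on $\pi^*\sfW_\dR$ becomes a holomorphic map $\wt\phi\colon\wt T\to\check\Ohm$; I would argue that $\wt\phi$ in fact takes values in $\Ohm$, since $[\xi^\an]$ is of type $\Ohm$ and the transported trivialisation still witnesses rationality at every point of $\wt T$ (the $\bv_B$ being flat). Then $(\wt\phi,g_0)$, composed with the projection to $\Sh_\sfK(G)_\IC(\IC)=G(\IQ)\backslash(\Ohm\times G(\IA_f)/\sfK)$, descends along $\pi$: for $\gamma\in\pi_1(T,t_0)$ with monodromy $\mu(\gamma)$ on $\sfW_{B,t_0}$, the conjugate $\gamma':=a_0^{-1}\mu(\gamma)a_0$ fixes the flat $\bv_B$ and so lies in $G(\IQ)$, the period map is $\gamma'$-equivariant, and the fact that $[\xi^\an]$ is a global section forces $g_0^{-1}\gamma' g_0\in\sfK$ — precisely what is needed for descent. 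Finally I would invoke Borel's algebraicity theorem to promote the resulting holomorphic map $T^{\an}\to\Sh_\sfK(G)_\IC^{\an}$ to a morphism of varieties $\rho$, and verify $\rho^*(\sfV_\IC,[\eta^\an_V])\iso(\sfW,[\xi^\an])$ directly from the construction (pulling back the contraction-product local system along the period map recovers $\sfW_B$ with its filtration, and the tautological level structure pulls back to $[\xi^\an]$).

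For uniqueness I would observe that the data at a single point $s\in T(\IC)$ already pins down $\rho(s)$: choosing any rationality isomorphism $a\colon V\sto\sfW_{B,s}$ produces, from the transported Hodge structure and the class $(a\tensor\IA_f)^{-1}\xi_s\in G(\IA_f)$, a point of $G(\IQ)\backslash(\Ohm\times G(\IA_f)/\sfK)$ that is independent of $a$, because two such $a$ differ by an element of $G(\IQ)$ acting diagonally; this forces any admissible $\rho$ to agree on $T(\IC)$, hence to be unique since $T$ is reduced. The step I expect to be the main obstacle is the descent: tracking precisely which cosets land in $G(\IA_f)$, in $G(\IQ)$, and in $\sfK$, and checking that $\wt\phi$ lands in the Hermitian symmetric domain $\Ohm$ rather than merely its compact dual — together with the appeal to Borel's theorem for algebraicity; the rest is formal unwinding of the definitions in \S3.1.
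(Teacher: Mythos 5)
The paper does not give its own proof of this theorem: it states that the result ``is a reformulation of \cite[Prop.~3.10]{Mil94} in our terminology (cf.\ \cite[(4.1.2)]{YangSystem})'' and leaves the verification to the dictionary set up in \S3.1 (see in particular \ref{ex: PEL AV}, which explains how $\sfV$-rationality plus type $\Ohm$ encodes Milne's PEL-style data). Your proposal reconstructs the content of Milne's proof from scratch: trivialise $\pi^*\sfW_B$ on the universal cover via a rationality isomorphism chosen compatibly with the flat sections $\bv_B$, show the period map lands in $\Ohm$ rather than merely $\check\Ohm$ using the type-$\Ohm$ hypothesis, descend via the computation $g_0^{-1}\gamma' g_0 = \xi_{t_0}^{-1}(\mu(\gamma)\tensor\IA_f)\,\xi_{t_0}\in\sfK$, and algebraise by Borel. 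This is the standard argument behind Milne's statement and your account of it is correct, including the Tannakian fact that for reductive $G$ with faithful $V$ the group $G$ is exactly the pointwise stabiliser of $(V^\tensor)^G$ in $\GL(V)$, and the observation that two rationality isomorphisms at a point differ by $G(\IQ)$. So the approach is the same as what the paper intends by its citation; the difference is only that you spell it out while the paper defers to \cite{Mil94}.
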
 

We remark that as $\sfK$ is neat, assumption (\ref{eqn: assumption on center}) ensures that the objects in $\sM_V(T)$ above have no nontrivial automorphisms (cf. \cite[Rmk~3.11]{Mil94}).  

%\begin{remark}
%    Unlike the set up in \cite[Prop.~3.10]{Mil94}, we do not keep a collection of global Hodge tensors ($\mathfrak{s}$ in Milne's notation) of $\IW$ as part of the data for objects parameterized by $\Sh_\sfK(G)_\IC$. Instead, we impose the condition that the $\sfK$-level structure $[\xi^\an]$ needs to be $\IW$-rational. Doing so does not change the mathematical content because the tensors $\mathfrak{s}$'s in Milne's notation is recovered as $\xi^\an(v)_B$ for $v \in I$. 
%\end{remark}

To describe the moduli problem for the canonical model, we restrict to the following subclass of Shimura data, which contains all cases we will consider in the following chapters.
\begin{assumption} \label{assumption: Abelian type}
    There exists a morphism of Shimura data $(\wt{G}, \wt{\Ohm}) \to (G, \Ohm)$ such that 
    \begin{enumerate}[label=\upshape{(\roman*)}]
        \item $(\wt{G}, \wt{\Ohm})$ is of Hodge type and also satisfies assumption (\ref{eqn: assumption on center});
        \item $\wt{G} \to G$ is surjective and the kernel lies in the center of $\wt{G}$; 
        \item the embedding of reflex fields $E(G, \Ohm) \subseteq E(\wt{G}, \wt{\Ohm})$ is an equality.
    \end{enumerate}
\end{assumption}
Below we assume that $(G,\Omega)$ is a Shimura datum which satisfies the above assumptions. 

\subsubsection{}
\label{def: automorphic systems} 
For any representation $G \to \GL(V)$, and $\sfV_B, \sfV_{\dR, \IC}, \sfV_\et$ set up in \ref{sec: automorphic sheaves}, there exists a unique descent $\sfV_\dR$ of $\sfV_{\dR, \IC}$ to $\Sh_\sfK(G)$ such that $(\sfV_B, \sfV_\dR, \sfV_\et)$ is a weakly AM system of realizations (see (\cite[(4.2.2)]{YangSystem})). We call $\sfV := (\sfV_B, \sfV_\dR, \sfV_\et) \in \sfR^*_{\mathrm{am}}(\Sh_\sfK(G))$ the \textbf{automorphic system (of realizations)} on $\Sh_\sfK(G)$ attached to $V \in \mathrm{Rep}(G)$. The tautological $\sfK$-level structure $[\eta_V]$ on $\sfV$ is $\sfV$-rational, and is of type $\Ohm$ when $V$ is faithful.

\begin{theorem} \label{thm: shimura as moduli}
\emph{(\cite[(4.3.1)]{YangSystem})}
    Assume that $V$ is faithful. Let $E' \subseteq \IC$ be a subfield which contains $E = E(G, \Ohm)$ and $T$ be a smooth $E'$-variety. Let $\sM_{V, E'}(T)$ be the groupoid of pairs $(\sfW, [\xi])$ where $\sfW = (\sfW_B, \sfW_\dR, \sfW_\et) \in \sfR^*_\am(T)$, and $[\xi]$ is a $\sfK$-level structure on $\sfW_\et$ such that $[\xi]$ is $\sfW$-rational and $(\sfW, [\xi])$ is of type $\Ohm$. 
    
    Then $(\sfV = (\sfV_B, \sfV_\dR, \sfV_\et), [\eta_V])$ is an object of $\sM_{V, E}(\Sh_\sfK(G))$, and for each $(\sfW, [\xi]) \in \sM_{V, E'}(T)$, there exists a unique $\rho : T \to \Sh_\sfK(G)_{E'}$ such that $(\sfW, [\xi]) \iso \rho^* (\sfV, [\eta_V])$. 
\end{theorem}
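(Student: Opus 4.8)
The plan is to bootstrap from the complex-analytic case, Theorem~\ref{thm: moduli over C}, by a Galois-descent argument in which the weakly abelian-motivic hypothesis on the inputs is used to control Galois conjugation of the underlying Hodge structures. First note that $(\sfV, [\eta_V])$ is indeed an object of $\sM_{V, E}(\Sh_\sfK(G))$: that $\sfV \in \sfR^*_\am(\Sh_\sfK(G))$ is part of the construction recalled in \ref{def: automorphic systems}, where it is also recorded that the tautological $[\eta_V]$ is $\sfV$-rational and of type $\Ohm$ (one may alternatively deduce this from \ref{lem: situation in practice}). Since pullback and base change preserve weakly AM systems and carry level structures to level structures, $\rho^*(\sfV, [\eta_V])$ makes sense for every $\rho : T \to \Sh_\sfK(G)_{E'}$; and because $\sfK$ is neat and (\ref{eqn: assumption on center}) holds, objects of $\sM_{V, E'}(T)$ have no non-trivial automorphisms (cf.\ the remark after \ref{thm: moduli over C}), so it suffices to produce, for a given $(\sfW, [\xi]) \in \sM_{V, E'}(T)$, a unique $\rho$ with $\rho^*(\sfV, [\eta_V]) \iso (\sfW, [\xi])$.

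Base-changing $(\sfW, [\xi])$ to $\IC$ gives $(\sfW|_\IC, [\xi^\an]) \in \sM_V(T_\IC)$, so Theorem~\ref{thm: moduli over C} produces a unique $\rho_\IC : T_\IC \to \Sh_\sfK(G)_\IC$ with $\rho_\IC^*(\sfV|_\IC, [\eta_V^\an]) \iso (\sfW|_\IC, [\xi^\an])$; since the comparison isomorphisms $i_\et$ of $\sfV$ and $\sfW$ identify their étale realizations with $\sfV_B \tensor \IA_f$ and $\sfW_B \tensor \IA_f$, this upgrades to an isomorphism of systems of realizations over $T_\IC$. The heart of the matter is to show that $\rho_\IC$ descends to $\rho : T \to \Sh_\sfK(G)_{E'}$; by Galois descent for morphisms of quasi-projective varieties over $E' \subseteq \IC$ it is enough to prove that $\rho_\IC$ is $\Aut(\IC/E')$-equivariant, i.e.\ that $\rho_\IC(\sigma s) = \sigma(\rho_\IC(s))$ for every $s \in T(\IC)$ and $\sigma \in \Aut(\IC/E')$.

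Fix $s$, $\sigma$ and put $t := \rho_\IC(s)$. By Theorem~\ref{thm: moduli over C} (applied over $\Spec \IC$), the points $\rho_\IC(\sigma s)$ and $\sigma(t)$ classify $(\sfW_{B, \sigma s}, \sfW_{\dR, \sigma s}, [\xi]_{\sigma s})$ and $(\sfV_{B, \sigma t}, \sfV_{\dR, \sigma t}, [\eta_V]_{\sigma t})$ respectively, so it suffices to produce an isomorphism between these data. As $\sfW, \sfV$ lie in $\sfR^*_\am$ (over $T$, resp.\ over $\Sh_\sfK(G)_{E'}$, on whose base field $\sigma$ acts trivially), pick $N, M \in \Mot_\Ab(\IC)$ with $\w_\Hdg(N) \iso (\sfW_{B, s}, \sfW_{\dR, s})$ and $\w_\Hdg(M) \iso (\sfV_{B, t}, \sfV_{\dR, t})$; the diagrams (\ref{diag: defining AM compatible}) identify $(\sfW_{B, \sigma s}, \sfW_{\dR, \sigma s})$ with $\w_\Hdg(N^\sigma)$ and $(\sfV_{B, \sigma t}, \sfV_{\dR, \sigma t})$ with $\w_\Hdg(M^\sigma)$, compatibly with the Galois-transport isomorphisms $\sigma_{\sfW_\et, s}$ and $\sigma_{\sfV_\et, t}$ on the étale realizations (available because $\sfW_\et, \sfV_\et$ are defined over $E'$ and $\sigma$ fixes $E'$). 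Now $\rho_\IC^*(\sfV|_\IC) \iso \sfW|_\IC$ gives at $s$ an isomorphism of Hodge structures $\w_\Hdg(M) \sto \w_\Hdg(N)$ lying in the image of $\w_\Hdg$; by full faithfulness of $\w_\Hdg$ on $\Mot_\Ab(\IC)$ (Theorem~\ref{thm: tensor on AV always AH}) it equals $\w_\Hdg(\alpha)$ for a unique $\alpha : M \sto N$ in $\Mot_\Ab(\IC)$, whence $\w_\Hdg(\alpha^\sigma) : \w_\Hdg(M^\sigma) \sto \w_\Hdg(N^\sigma)$. Chasing (\ref{diag: defining AM compatible}) shows $\w_\Hdg(\alpha^\sigma)$ identifies $(\sfV_{B, \sigma t}, \sfV_{\dR, \sigma t})$ with $(\sfW_{B, \sigma s}, \sfW_{\dR, \sigma s})$, and — comparing étale realizations through the $\iso_\bc$ maps and the compatibility of $\alpha$ with $\sigma_{\sfW_\et, s}$, $\sigma_{\sfV_\et, t}$, together with the fact that $\rho_\IC$ matches $[\xi]_s$ with $[\eta_V]_t$ — that it carries $[\xi]_{\sigma s}$ to $[\eta_V]_{\sigma t}$. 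Hence $\rho_\IC(\sigma s) = \sigma(t)$.

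For uniqueness, if $\rho$ and $\rho'$ both pull $(\sfV, [\eta_V])$ back to $(\sfW, [\xi])$ then $\rho_\IC = \rho'_\IC$ by Theorem~\ref{thm: moduli over C}, and $\rho = \rho'$ since $\Spec \IC \to \Spec E'$ is faithfully flat. The step I expect to be the main obstacle is the equivariance of $\rho_\IC$: one must transport the Hodge, de Rham and étale data simultaneously under $\sigma$, and it is exactly the weakly AM condition (\ref{def: compatible with AM}) together with full faithfulness of $\w_\Hdg$ on abelian motives that makes the conjugate Hodge structures rigid enough to be reconstructed from their étale and de Rham realizations, which are algebraic and therefore transform predictably. (Behind the scenes, Assumption~\ref{assumption: Abelian type} is what guarantees the existence of the weakly AM automorphic system $\sfV$ over $E$ used throughout, via descent along the Hodge-type cover $\wt{G} \to G$ where the fibres of the tautological VHS are $\H^1$'s of abelian varieties.)
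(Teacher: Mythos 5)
Your argument for the $\Aut(\IC/E')$-equivariance of $\rho_\IC$ is essentially the right idea: you correctly identify that the weakly abelian-motivic structure supplies the conjugation isomorphisms $\gamma^\sigma$ and that full faithfulness of $\w_{\Hdg}$ on $\Mot_\Ab(\IC)$ (Theorem~\ref{thm: tensor on AV always AH}) lets you lift the fiberwise Hodge isomorphism to a morphism in $\Mot_\Ab(\IC)$ that then transports predictably under $\sigma$. This matches the outline the paper gives (``$\rho_\IC$ is $\Aut(\IC/E')$-equivariant on $\IC$-points'').

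However there is a genuine gap: you show that $\rho_\IC$ descends to $\rho : T \to \Sh_\sfK(G)_{E'}$, then jump straight to the uniqueness of $\rho$. But the theorem asserts the existence of an isomorphism $(\sfW, [\xi]) \iso \rho^*(\sfV, [\eta_V])$ \emph{over $T$}, not merely over $T_\IC$. Knowing that $\rho$ exists and that the pullback becomes isomorphic to $(\sfW, [\xi])$ after base change to $\IC$ does not immediately give the isomorphism over $T$: a system of realizations has a de Rham component and an étale component over $T$, and one has to verify that the unique isomorphism $\alpha_\IC$ given by Theorem~\ref{thm: moduli over C} has de Rham and étale components defined over $T$. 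The paper explicitly flags this as the step where it diverges from Madapusi-Pera's argument and invokes a rigidity result (see the discussion after the theorem referencing~\ref{lem: descent of morphisms of systems} and its variants in \cite{YangSystem}). Concretely, one argues that the étale component of $\alpha_\IC$ is the unique isomorphism matching $[\eta_V]$ with $[\xi]$ (using neatness of $\sfK$ and assumption (\ref{eqn: assumption on center}) to kill automorphisms); since both level structures are already defined over $T$, uniqueness forces this étale isomorphism to be $\Aut(\IC/E')$-equivariant and hence to descend, after which Lemma~\ref{lem: descent of morphisms of systems} carries the de Rham component along. Without this your construction leaves open the possibility that $(\sfW, [\xi])$ and $\rho^*(\sfV, [\eta_V])$ are nonisomorphic $E'$-forms that happen to agree over $\IC$.
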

Note that any object in $\sM_{V, E'}(T)$ above has no nontrivial automorphisms because this is already true when $E' = \IC$. This is a key fact that we shall use repeatedly throughout the paper. The above is proved by first constructing a morphism $\rho_\IC : T_\IC \to \Sh_\sfK(G)_\IC$ such that $(\sfW, [\xi])|_{T_\IC} \iso \rho_\IC^*(\sfV, [\eta_V])$ and then showing that the action of $\rho_\IC$ on the $\IC$-points are $\Aut(\IC/E')$-equivariant. This is clearly inspired by the proof of \cite[Cor.~5.4]{MPTate}. However, unlike \cite[Prop.~5.6(1)]{MPTate}, we show that $(\sfW, [\xi])|_{T_\IC} \iso \rho_\IC^*(\sfV, [\eta_V])$ descends over $T$ using a rigidity lemma about weakly-AM systems of realizations (see \cite[(3.4.4), (3.4.5) and (4.3.2)]{YangSystem}).

%\begin{proof}
    %Again we give a sketch and refer the reader to TBA for details. Using \ref{thm: moduli over C}, we first construct a morphism $\rho_\IC : T_\IC \to \Sh_\sfK(G)_\IC$ such that $(\sfW, [\xi])|_{T_\IC} \iso \rho_\IC^*(\sfV, [\eta_V])$. Then we use that both $\sfW$ and $\sfV$ are weakly AM to show that $\rho_\IC$ descends to $E'$. Finally, the isomorphism $(\sfW, [\xi])|_{T_\IC} \iso \rho_\IC^*(\sfV, [\eta_V])$ over $T_\IC$ necessarily descends to an isomorphism $(\sfW, [\xi]) \iso \rho^* (\sfV, [\eta_V])$ over $T$ by a rigidity property of weakly AM system of realizations equipped with level structures killing all automorphisms. 
%\end{proof}

\subsection{Orthogonal Shimura varieties over totally real fields}
\label{sec: totally real SV}
\subsubsection{} \label{sssect: quadratic forms setup}
Let $E$ be a totally real number field and let $\sV$ be a quadratic form over $E$ which has signature $(2, \dim_E \sV -2)$ at a unique real place $\tau$ and is negative definitive at every other real place. We set $\sG:= \Res_{E/\IQ} \SO(\sV)$ and
\[
    \Omega_\sV \coloneqq \{ v \in \IP(\sV \tensor_{\tau} \IC) \mid \< v, v \> = 0, \< v, \bar{v} \> > 0 \}.
\] 

Let $V$ be the transfer $\sV_{(\IQ)}$ (recall \ref{not: totally real}). Set $G := \SO(V)$ and define $\Ohm := \Ohm_V$ as above (applied to the $E = \IQ$ case). Note that $\sG$ is the identity component of the centeralizer of the $E$-action in $G$. We view $\Ohm$ as a $G(\IR)$-conjugacy class of morphisms $\IS \to G_\IR$ such that an element $v \in \Ohm$ corresponds to the morphism $h$ which gives $V$ a Hodge structure of K3-type with $v = V_\IC^{(1, -1)}$ (cf. \cite[\S3.1]{CSpin}). Likewise we view $\Ohm_\sV$ as a $\sG(\IR)$-conjugacy class of morphisms $\IS \to \sG_\IR$, which are those whose composition with $\sG_\IR \to G_\IR$ defines a Hodge structure on $V$ which is preserved by the $E$-action on $V = \sV_{(\IQ)}$. It is well-known that $(\sG,\Omega_\sV)$ is a Shimura datum with reflex field $E$, viewed as a subfield of $\IC$ under $\tau$. 

Let $\wt{G}$ denote $\CSpin(V)$. Then $(\wt{G}, \Ohm)$ is a Shimura datum of Hodge type with reflex field $\IQ$ which admits a natural morphism to $(G, \Ohm)$. 

\begin{lemma}
    $(\sG,\Omega_\sV)$ satisfies Assumption~\ref{assumption: Abelian type}.
\end{lemma}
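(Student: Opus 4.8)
The plan is to verify the three conditions of Assumption~\ref{assumption: Abelian type} directly, using the Shimura datum morphism $(\wt{G}, \Ohm) = (\CSpin(V), \Ohm) \to (G, \Ohm) = (\SO(V), \Ohm)$ introduced just above, together with the inclusion $(\sG, \Ohm_\sV) \hookrightarrow (G, \Ohm)$ coming from $\sG = \Res_{E/\IQ}\SO(\sV)$ being (the identity component of) the centralizer of the $E$-action inside $G = \SO(V)$. The natural candidate for the auxiliary Hodge-type datum dominating $(\sG, \Ohm_\sV)$ is the fiber product $\wt{\sG} := \wt{G} \times_G \sG = \CSpin(V) \times_{\SO(V)} \sG$, i.e., the preimage of $\sG$ under the spinor covering $\CSpin(V) \to \SO(V)$; one takes the $\wt{\sG}(\IR)$-conjugacy class $\wt{\Ohm}_\sV$ of cocharacters lifting those in $\Ohm_\sV$.

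First I would check (ii): the kernel of $\wt{G} \to G$ is the central $\IG_m$ (the kernel of $\CSpin \to \SO$), so pulling back along $\sG \hookrightarrow G$ keeps the same central kernel $\IG_m \subseteq \wt\sG$, and $\wt\sG \to \sG$ is surjective because $\CSpin(V) \to \SO(V)$ is. For (i), I would first confirm that $(\wt\sG, \wt\Ohm_\sV)$ is a Shimura datum of Hodge type: since $(\wt G,\Ohm)$ is of Hodge type (it embeds in a $\GSp$ via the Clifford construction) and $\wt\sG \subseteq \wt G$ is a subgroup whose associated cocharacters land in the $\Ohm$-conjugacy class, the composite embedding $\wt\sG \hookrightarrow \wt G \hookrightarrow \GSp$ exhibits $(\wt\sG, \wt\Ohm_\sV)$ as a Hodge-type datum; one then has to check the SV-axioms, which follow from those for $(\sG,\Ohm_\sV)$ (known, as stated in the excerpt) since they are conditions on the adjoint group and the weight/center, and the adjoint group of $\wt\sG$ equals that of $\sG$. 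I would also verify that $\wt\sG$ satisfies \eqref{eqn: assumption on center}: the weight is defined over $\IQ$ (it factors through the central $\IG_m$, just as for $\CSpin$), and the maximal $\IR$-split anisotropic subtorus of the center is trivial — here one uses that the center of $\sG = \Res_{E/\IQ}\SO(\sV)$ has this property (part of $(\sG,\Ohm_\sV)$ being an honest Shimura datum satisfying our running hypotheses) and that passing to the spinor cover only enlarges the center by a $\IG_m$ which is $\IR$-isotropic.

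The step I expect to be the genuine content is (iii): the equality of reflex fields $E(\sG, \Ohm_\sV) = E(\wt\sG, \wt\Ohm_\sV)$. One inclusion is automatic from the morphism of data. For the reverse, the reflex field is computed from the $G(\bar\IQ)$-conjugacy class (really the $\sG(\bar\IQ)$-, resp. $\wt\sG(\bar\IQ)$-conjugacy class) of the Hodge cocharacter $\mu$; since $\wt\sG \to \sG$ is a central isogeny, it induces a bijection on conjugacy classes of cocharacters of the derived/adjoint type and the reflex field is unchanged — more precisely, the field of definition of the conjugacy class of $\mu$ depends only on its image in the adjoint group together with the central part, and the central $\IG_m$ is split over $\IQ$, so contributes nothing. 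Concretely I would note $E(\sG,\Ohm_\sV) = E$ (stated in the excerpt: $(\sG,\Ohm_\sV)$ has reflex field $E$ via $\tau$) and then argue $E(\wt\sG, \wt\Ohm_\sV) = E$ by the same cocharacter computation, the point being that the spinor multiplier does not split the special place $\tau$ any further. This is essentially the totally-real analogue of the classical fact that $\CSpin$- and $\SO$-Shimura data share the reflex field $\IQ$, and I would cite the analogous reasoning in \cite{CSpin} (e.g., the computation in \cite[\S3]{CSpin}) adapted to the Weil-restricted setting.

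Assembling these, the morphism $(\wt\sG, \wt\Ohm_\sV) \to (\sG, \Ohm_\sV)$ witnesses Assumption~\ref{assumption: Abelian type}, so $(\sG, \Ohm_\sV)$ is of abelian type in the sense used in the paper. The main obstacle, as noted, is pinning down (iii) cleanly without re-deriving reflex-field formulas from scratch; everything else is bookkeeping with central isogenies and restriction of scalars. \QED
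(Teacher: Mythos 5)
Your proposal is correct and uses essentially the same auxiliary group as the paper, namely $\wt\sG = \wt{G}\times_G\sG$. The paper, however, first explains why the naive candidate $\Res_{E/\IQ}\CSpin(\sV)$ does \emph{not} work for Assumption~\ref{assumption: Abelian type} — its connected center $\Res_{E/\IQ}\IG_{m,E}$ has nontrivial $\IR$-split anisotropic part unless $E=\IQ$, violating~(\ref{eqn: assumption on center}) — and arrives at $\wt\sG$ as the quotient $\Res_{E/\IQ}\CSpin(\sV)/\ker(\Nm\colon\Res_{E/\IQ}\IG_{m,E}\to\IG_m)$, only afterwards identifying this quotient with the fiber product you took as the starting point. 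For the reflex-field condition (iii), the paper's argument is tighter than your cocharacter sketch: since $(\sG,\Omega_\sV)$ and $(\Res_{E/\IQ}\CSpin(\sV),\Omega_\sV)$ are both classically known to have reflex field $E$, and $(\wt\sG,\Omega_\sV)$ sits between them via two morphisms of Shimura data, its reflex field must also be $E$; this sandwich observation replaces the reasoning you left at the level of ``the spinor multiplier does not split the special place.'' One small cosmetic point: the conjugacy class upstairs should again be written $\Omega_\sV$ (as in the paper), not a new $\wt\Omega_\sV$, since the Hermitian symmetric domain is unchanged under the central isogeny $\wt\sG\to\sG$.
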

\begin{proof}
    Consider $\wt{\sG}' := \Res_{E/\IQ} \CSpin(\sV)$. It is well known that $(\wt{\sG}', \Ohm_\sV)$ is a Shimura datum with reflex field $E$. Unfortunately, it does not satisfy condition~\eqref{eqn: assumption on center} unless $E=\IQ$ as the center $Z'$ of $\wt{\sG}'$ has identity component $\Res_{E/\IQ} \IG_{m, E}$. Thus we modify this approach by dividing out the maximal anisotropic torus of $\Res_{E/\IQ} \IG_{m, E}$; explicitly we consider ($\Nm=$ the norm map)
    \[
        \wt\sG \coloneqq \wt{\sG}' / \ker(\Nm \colon \Res_{E/\IQ}\IG_{m, E} \to \IG_m)).
    \]
    Note that $\ker(\wt{\sG'} \to \sG) = \Res_{E/\IQ} \IG_{m, E}$, so we obtain a morphism $(\wt\sG,\Omega_\sV) \to (\sG,\Omega_\sV)$ of Shimura data. It remains to check (i) and (iii) in \ref{assumption: Abelian type}. First, note that $\wt{\sG}$ can be canonically identified with the fiber product $\sG \times_G \wt{G}$ (see \cite[\S4.2, 4.3]{Moonen}), so that there is a diagram with exact rows
    \[\begin{tikzcd}
	1 & {\IG_m} & {\wt{\sG}} & \sG & 1 \\
	1 & {\IG_m} & {\wt{G}} & G & 1.
	\arrow[from=1-1, to=1-2]
	\arrow[from=1-2, to=1-3]
	\arrow[from=1-3, to=1-4]
	\arrow[from=1-4, to=1-5]
	\arrow[equal, from=1-2, to=2-2]
	\arrow[hook, from=1-3, to=2-3]
	\arrow[from=2-3, to=2-4]
	\arrow[hook, from=1-4, to=2-4]
	\arrow[from=2-2, to=2-3]
	\arrow[from=2-4, to=2-5]
	\arrow[from=2-1, to=2-2]
	\arrow["\lrcorner"{anchor=center, pos=0.125}, draw=none, from=1-3, to=2-4]
    \end{tikzcd}\]
    Now $Z_s(\wt{\sG}) = 1$ is clear. As $(\wt{G}, \Ohm)$ is of Hodge type and $(\wt{\sG}, \Ohm_\sV)$ embeds into $(\wt{G}, \Ohm)$, $(\wt{\sG}, \Ohm_\sV)$ is also of Hodge type. As the reflex fields of $(\sG, \Ohm_\sV)$ and $(\wt{\sG}', \Ohm_\sV)$ are both well known to be $E$, the same must be true for $(\wt{\sG}, \Ohm_\sV)$. 
\end{proof}

\subsubsection{} \label{sec: preparations} We do some preparations to future reference. Below for any $\IQ$-linear Tannakian category $\shC$, we write $\rN : \Mod_E(\shC) \to \shC$ for either one of the following two functors: $M \mapsto \Nm_{E/\IQ}(M)$ or $M \mapsto \Nm_{E/\IQ}(M) \tensor_\IQ \det(M_{(\IQ)})$ (see notations in \ref{sec: norm functors}). Any conclusion applies to both functors.

Recall the notations from \ref{not: totally real}, and write $\sZ(\sV)$ and $\sH(\sV)$ simply as $\sZ$ and $\sH$. Then $N:= \rN(\sV)$ is a faithful representation of $\sH$, as the composite $\sG \to \Res_{E/\IQ} \GL(\sV) \to \GL(N)$ has kernel exactly $\sZ$. Let $\sK \subseteq \sG(\IA_f)$ and $\sC \subseteq \sH(\IA_f)$ be compact open subgroups such that $\sC$ contains the image of $\sK$. For each prime $\ell$, let $\sK_\ell$ be the image of $\sK$ under the projection $\sG(\IA_f) \to \sG(\IQ_\ell)$.

Let $\Ohm_\sH$ be the $\sH(\IR)$-conjugacy class of morphisms $\IS \to \sH_\IR$ which contains the image of $\Ohm_\sV$. Then $(\sH, \Ohm_\sH)$ is a Shimura datum which admits a natural morphism $(\sG, \Ohm_\sV) \to (\sH, \Ohm_\sH)$. Since $\sZ$ lies in the center of $\sG$ and is discrete, by \cite[Prop.~3.8]{DeligneTdShimura} $(\sH, \Ohm_\sH)$ has the same reflex field as $(\sG, \Ohm_\sV)$. Moreover, one easily checks that $(\sH, \Ohm_\sH)$ satisfies \ref{assumption: Abelian type} using that $(\sG, \Ohm_\sV)$ does.

\begin{remark}
\label{rmk: odd case sH = sG}
    Note that if $\dim_E \sV$ is odd, then $\sZ$ is trivial; \textit{in this case, the reader should read the content below with $(\sG, \Ohm_\sV, \sK) = (\sH, \Ohm_\sH, \sC)$ in mind.} The case when $\dim_E \sV$ is even will only be used for \S\ref{sec: non-maximal monodromy}. 
\end{remark} 

\subsubsection{}\label{sec: apply N to level} Let $k \subseteq \IC$ be a subfield and $T$ be a smooth $k$-variety. Note that the identification $V = \sV_{(\IQ)}$ gives $V$ an $E$-action, which commutes with $\sG$. Suppose that $\sfW \in \sfR(T)$ is a system equipped with a $(\sG, V, \sK)$-level structure $[\mu]$. Then $[\mu]$ transports the $E$-action on $V \tensor \IA_f$ to one on $\sfW_\et$, through which we may view $[\mu]$ as a global section of $\sK \backslash \Isom_E(\sV \tensor_\IQ \IA_f, \sfW_\et)$. Moreover, there is a natural map $$\sK \backslash \Isom_E(\sV \tensor_\IQ \IA_f, \sfW_\et) \to \sC \backslash \Isom(\rN(\sV) \tensor \IA_f, \rN(\sfW_\et)) $$
through which $[\mu]$ defines a $(\sH, N, \sC)$-level structure on $\rN(\sfW_\et)$, which we denote by $\rN([\mu])$. 

%an isomorphism $\rN(\sfW_\et, [\mu]) \iso \rN(\sfW'_\et, [\mu'])$

\begin{lemma}
\label{lem: pi_1 equivariance}
    Let $T$, $\sfW$ and $[\mu]$ be as above. Let $\sfW' \in \sfR(T)$ be another system with $\sK$-level structure $[\mu']$. Suppose that there is an isomorphism $\gamma_\et : (\sfW_\et, [\mu]) |_{T_\IC} \iso (\sfW_\et', [\mu'])|_{T_\IC}$ such that $\rN(\gamma_\et)$ descends to an isomorphism $(\rN(\sfW_\et), \rN([\mu])) \iso (\rN(\sfW_\et'), \rN([\mu']))$ over $T$.
    
    If $\ell$ is a prime such that $\sK_\ell \cap \sZ(\IQ_\ell) = 1$, then the $\ell$-adic component $\gamma_\ell$ of $\gamma_\et$ descends to an isomorphism $\sfW_\ell \iso \sfW_\ell'$ over $T$. 
\end{lemma}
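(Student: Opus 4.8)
The plan is to fix a geometric point of $T$, translate both the hypothesis on $\rN(\gamma_\et)$ and the desired conclusion into equalities of continuous homomorphisms into $\sK$, and then let the hypothesis $\sK_\ell \cap Z^1_E(\IQ_\ell) = 1$ finish the job. Throughout I will work at the level of stalks and monodromy, which is the natural arena for a ``$\pi_1$-equivariance'' statement.

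First I would fix a geometric point $\bar{t}$ of $T_\IC$, hence of $T$, and assume $T$ is geometrically connected over $k$ (otherwise one argues componentwise and descends). An isomorphism of \'etale local systems on $T_\IC$ descends to $T$ exactly when the induced isomorphism of stalks at $\bar{t}$ is $\pi_1^\et(T, \bar{t})$-equivariant, so it suffices to reason with stalks. Choose an $E$-linear trivialization $\mu_{\bar{t}} \in [\mu]_{\bar{t}}$. Since $[\mu]$ is a global section over $T$ and the monodromy is continuous, the relation $\sigma_g \circ \mu_{\bar{t}} = \mu_{\bar{t}} \circ \kappa(g)$ (with $\sigma_g$ the action of $g \in \pi_1^\et(T, \bar{t})$ on $\sfW_{\et, \bar{t}}$) defines a continuous homomorphism $\kappa \colon \pi_1^\et(T, \bar{t}) \to \sK$; pick $\mu'_{\bar{t}} \in [\mu']_{\bar{t}}$ and define $\kappa'$ analogously. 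Because $\gamma_\et$ carries $[\mu]$ to $[\mu']$, its stalk $\gamma_{\et, \bar{t}}$ carries $[\mu]_{\bar{t}}$ to $[\mu']_{\bar{t}}$, so after replacing $\mu'_{\bar{t}}$ by a suitable element of its $\sK$-orbit I may and do assume $\gamma_{\et, \bar{t}} \circ \mu_{\bar{t}} = \mu'_{\bar{t}}$. With this normalization a direct computation shows that $\gamma_{\ell, \bar{t}}$ is $\pi_1^\et(T, \bar{t})$-equivariant --- equivalently $\gamma_\ell$ descends to an isomorphism $\sfW_\ell \iso \sfW'_\ell$ over $T$ --- if and only if $\kappa_\ell = \kappa'_\ell$, where the subscript $\ell$ denotes composition with the projection $\sG(\IA_f) \to \sG(\IQ_\ell)$.

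Next I would extract the consequence of the hypothesis on $\rN(\gamma_\et)$. Applying the norm functor to $\sigma_g \circ \mu_{\bar{t}} = \mu_{\bar{t}} \circ \kappa(g)$ and using that $\sG$ acts on $N = \rN(\sV)$ through the structural homomorphism $\eta \colon \sG \to \GL(N)$ --- which by \ref{sec: preparations} has kernel $Z^1_E$ --- one sees that via the trivialization $\rN(\mu_{\bar{t}})$ the monodromy on the stalk $\rN(\sfW_\et)_{\bar{t}}$ is recorded by $\eta \circ \kappa$, and similarly that on $\rN(\sfW'_\et)_{\bar{t}}$ by $\eta \circ \kappa'$. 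Since $\rN(\gamma_{\et, \bar{t}}) \circ \rN(\mu_{\bar{t}}) = \rN(\mu'_{\bar{t}})$ by functoriality, and $\rN(\gamma_\et)$ descends to $T$ (so $\rN(\gamma_{\et, \bar{t}})$ is $\pi_1^\et(T, \bar{t})$-equivariant), the same bookkeeping gives $\eta \circ \kappa = \eta \circ \kappa'$ on all of $\pi_1^\et(T, \bar{t})$. Hence for every $g$ the $\ell$-components satisfy $\kappa(g)_\ell\, \kappa'(g)_\ell^{-1} \in \ker(\eta_\ell) = Z^1_E(\IQ_\ell)$; on the other hand $\kappa(g)_\ell$ and $\kappa'(g)_\ell$ both lie in the group $\sK_\ell$, so their product lies in $\sK_\ell$ as well. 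Therefore $\kappa(g)_\ell\, \kappa'(g)_\ell^{-1} \in \sK_\ell \cap Z^1_E(\IQ_\ell) = 1$, i.e.\ $\kappa_\ell = \kappa'_\ell$, and by the previous paragraph $\gamma_\ell$ descends to $T$.

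I do not anticipate a genuine obstacle: the mathematical content is concentrated in the identity $\sK_\ell \cap Z^1_E(\IQ_\ell) = 1$ together with the elementary fact that $\sK_\ell$, being a group, is closed under multiplication --- which is exactly what makes $\kappa(g)_\ell \kappa'(g)_\ell^{-1}$ land in $\sK_\ell$. The only steps requiring care are formal ones: that $\rN$ commutes with restriction to $T_\IC$ and with passage to stalks, so that ``$\rN(\gamma_\et)$ descends over $T$'' genuinely amounts to $\pi_1^\et(T, \bar{t})$-equivariance of $\rN(\gamma_{\et, \bar{t}})$; and that the $E \tensor \IA_f$-structures transported onto the stalks via the level structures are respected by all the monodromy operators $\sigma_g$ and by $\gamma_{\et, \bar{t}}$, so that every application of $\rN$ above is legitimate and functorial.
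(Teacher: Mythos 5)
Your proof is correct and, modulo bookkeeping, is the same argument as the paper's. Where you track two homomorphisms $\kappa, \kappa' \colon \pi_1^\et(T,\bar t) \to \sK$ and show $\kappa(g)_\ell\,\kappa'(g)_\ell^{-1} \in \sK_\ell \cap Z^1_E(\IQ_\ell) = 1$, the paper forms the commutator $\delta = g^{-1}\gamma^{-1}g\gamma$ and writes $\mu^{-1}\delta\mu = [\mu^{-1}g^{-1}\mu]\,[(\mu')^{-1}g\mu'] = \kappa(g)_\ell^{-1}\kappa'(g)_\ell$ --- exactly your element, inverted --- before appealing to the same identity; the normalization $\mu' := \gamma\mu$ and the use of $\ker(\eta) = Z^1_E$ coincide as well.
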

\begin{proof}
    We may assume that $T$ is connected. Choose a base point $b \in T(\IC)$. Set $\gamma := \gamma_{\ell, b}$ and let $g \in \pi_1^\et(T, b)$ be any element. Our goal is to show that $\delta := g^{-1} \gamma^{-1} g \gamma = 1$. Note that as $\rN(\gamma_\et)$ descends to $T$, we already know that $\rN(\delta) = 1$. Let $\mu : \sV \tensor_\IQ \IA_f \sto \sfW_{\et, b}$ be a representative of $[\mu]_b$. Then $\mu^{-1} g \mu \in \sK_\ell$, because the $\sK$-orbit $[\mu]_b$ is $\pi_1^\et(S, b)$-stable. Set $\mu' := \gamma \mu$. Then $\mu'$ represents $[\mu']_b$, so that $(\mu')^{-1} g \mu' \in \sK_\ell$. Now we have 
    $$ \mu^{-1} \delta \mu = [\mu^{-1} g^{-1} \mu ][(\mu')^{-1} g \mu'] \in \sK_\ell. $$
    Note that $\rN(\mu^{-1} \delta \mu) = 1 \in \GL(N \tensor \IQ_\ell)$. However, as the kernel of the $\sK_\ell$-action on $N \tensor \IQ_\ell$ lies in $\sZ(\IQ_\ell)$, we must have $\mu^{-1} \delta \mu = 1$, i.e., $\delta = 1$. 
\end{proof}

\subsubsection{}\label{sec: N commutes with pullback} We will often consider the following diagram of Shimura data: 
\[\begin{tikzcd}
	{(\sG, \Ohm_\sV)} & {(G, \Ohm)} \\
	{(\sH, \Ohm_\sH)}
	\arrow["i", hook, from=1-1, to=1-2]
	\arrow["\pi"', from=1-1, to=2-1]
\end{tikzcd}\]
Let $\sfK \subseteq G(\IA_f), \sK \subseteq \sG(\IA_f)$ and $\sC \subseteq \sH(\IA_f)$ be neat compact open subgroups such that $\sK \subseteq \sfK$ and $\sC$ contains the image of $\sK$. Then we have Shimura morphisms $i\colon \Sh_\sK(\sG) \to \Sh_\sfK(G)_E$ and $\pi\colon \Sh_\sK(\sG) \to \Sh_\sC(\sH)$ defined over $E$. Let $\sfV$ (resp. $\wt{\sfV}$) be the automorphic system on $\Sh_\sfK(G)$ (resp. $\Sh_\sK(\sG)$) attached to the standard representation $V$ of $G$ (resp. $\sG$) and $[\eta_V]$ (resp. $[\eta_\sV]$) be the tautological $\sfK$-level structure (resp. $\sK$-level structure), as defined in \ref{def: automorphic systems}. Then there is a natural identification $\wt{\sfV} = i^*(\sfV)$ and $[\eta_\sV]$ refines $i^*([\eta_V])$ in the sense of \ref{def: level structures}(d). 

Note that $[\eta_\sV]$ endows $\wt{\sfV}$ with a canonical $E$-action. A priori it only defines an $E$-action on $\wt{\sfV}_\et$, but since $[\eta_\sV]$ is $\wt{\sfV}$-rational, its restriction to $\Sh_\sK(\sG)_\IC$ comes from an $E$-action on $\wt{\sfV}_B$. Then by \ref{lem: descent of morphisms of systems}, one deduces that the resulting $E$-action on $\wt{\sfV}_\dR|_{\Sh_\sK(\sG)_\IC}$ via the Riemann-Hilbert correspondence descends to $\Sh_\sK(\sG)$. Therefore, it makes sense to form $\rN(\wt{\sfV})$. Since $\wt{\sfV}$ is weakly AM in the sense of \ref{def: compatible with AM}, so is $\rN(\wt{\sfV})$. This is simply because we may apply the functor $\rN(-)$ to $\Mot_\Ab(\IC)_{(E)}$ and it commutes with the cohomological realizations. 

Now let $\sfN$ be the automorphic system on $\Sh_\sC(\sH)$ attached to $N \in \mathrm{Rep}(\sH)$ and let $[\eta_N]$ be its tautological $\sC$-level structure. We claim that there is an (necessarily unique) isomorphism 
\begin{equation}
    \label{eqn: N commutes with pullback}
        \pi^*( \sfN, [\eta_N]) \iso (\rN(\ssfV), \rN([\eta_\sV])).
\end{equation}
Indeed, to verify this isomorphism one first checks its implications on the automorphic VHS and \'etale local systems, which follow from the explicit descriptions in \ref{sec: automorphic sheaves}, then applies \ref{lem: descent of morphisms of systems}.

\subsection{Integral model}
\label{sec: integral model}
Let $V$ be a quadratic form over $\IQ$ and suppose that there is a self-dual $\IZ_{(p)}$-lattice $L_{(p)} \subseteq V$ for a prime $p > 2$. Then $G := \SO(V)$ extends to the reductive $\IZ_{(p)}$-group $\SO(L_{(p)})$, which we still write as $G$ by abuse of notation. Let $\sfK \subseteq G(\IA_f)$ be a neat compact open subgroup of the form $\sfK_p \sfK^p$ with $\sfK_p = G(\IZ_p)$ and $\sfK^p \subseteq G(\IA^p_f)$. Then by \cite{KisinInt} and \cite{CSpin}, $\Sh_\sfK(G)$ admits a canonical integral model $\shS_\sfK(G)$ over $\IZ_{(p)}$. 

The Shimura variety $\shS_\sfK(G)$ is typically studied via the corresponding spinor Shimura variety, which is of Hodge type. Let $\wt{G} \coloneqq \CSpin(L_{(p)})$. Set $\IK_p$ to be $\wt{G}(\IZ_p)$, $\IK^p \subseteq \wt{G}(\IA^p_f)$ to be a small enough compact open subgroup whose image in $G(\IA^p_f)$ is contained in $\sfK^p$, and $\IK$ to be the product $\IK_p \IK^p$. The reflex field of $(\wt{G}, \Ohm)$ is $\IQ$ and by \cite{KisinInt} there is an canonical integral model $\shS_{\IK}(\wt{G})$ over $\IZ_{(p)}$. There is a suitable sympletic space $(H, \psi)$ and a Siegel half space $\sH^\pm$ such that there is an embedding of Shimura data $(\wt{G}, \Ohm) \into (\GSp(\psi), \sH^\pm)$ which eventually equips $\shS_\IK(\wt{G})$ with a universal abelian scheme $\shA$.\footnote{Technically, in \cite{KisinInt} and \cite{CSpin}, $\shA$ is only defined as a sheaf of abelian schemes up to prime-to-$p$ quasi-isogeny. However, for $\IK^p$ sufficiently small, we can take $\shA$ to be an actual abelian scheme (cf. \cite[(2.1.5)]{KisinInt}).} Let $a : \shA \to \shS_\IK(\wt{G})$ be the structural morphism. Define the sheaves $\bH_B := R^1 a_{\IC *} \IZ_{(p)}$, $\bH_\ell := R^1 a_* \underline{\IQ}_\ell$ ($\ell \neq p$), $\bH_p := R^1 a_{\IQ *} \underline{\IZ}_p$, $\bH_\dR := R^1 a_* \Ohm^\bullet_{\shA / \shS_\IK(\wt{G})}$ and $\bH_\cris := R^1 \bar{a}_{\cris *} \sO_{\shA_{\IF_p}/ \IZ_p}$ ($\bar{a} := a \tensor \IF_p$). The abelian scheme $\shA$ is equipped with a ``CSpin-structure'': a $\IZ / 2 \IZ$-grading, $\Cl(L)$-action and an idempotent projector $\bpi_? : \End(\bH_?) \to \End(\bH_?)$ for $? = B, \ell, p, \dR, \cris$ on (various applicable fibers of) $\shS_{\IK}(\wt{G})$. We use $\bL_?$ to denote the images of $\bpi_?$, and recall the definition of special endomorphisms (\cite[Def.~5.2, see also Lem.~5.4, Cor.~5.22]{CSpin}): 
\begin{definition}
\label{def: special endomorphisms}
For any $\shS_\IK(\wt{G})$-scheme $T$, $f \in \End(\shA_T)$ is called a \textit{special endomorphism} if for some (and hence all) $\ell \in \sO_T^\times$, the $\ell$-adic realization of $f$ lies in $\bL_\ell|_T \subseteq \End(\bH_\ell|_T)$; if $\sO_T = k$ for a perfect field $k$ in characteristic $p$, then equivalently $f$ is called a special endomorphism if the crystalline realization of $f$ lies in $\bL_{\cris, T}$. We write the submodule of $\End(\shA_T)$ consisting of special endomorphisms as $\LEnd(\shA_T)$. 
\end{definition}

\subsubsection{} \label{sec: the L sheaves} It is explained in \cite[\S5.24]{CSpin} that the sheaves $\bL_?$ on (applicable fibers of) $\shS_\IK(\wt{G})$ in fact descend to the corresponding fibers of $\shS_\sfK(G)$. We denote the descent of these sheaves by the same letters. It is not hard to see that $\bH_B[1/p]$ together with the restrictions of $\prod_{\ell \neq p} \bH_\ell \times \bH_p[1/p]$ and $\bH_\dR$ to $\Sh_\IK(\wt{G})$ is nothing but the automorphic system attached to $H \in \mathrm{Rep}(\wt{G})$, in our terminology \ref{def: automorphic systems}. Similarly, $\bL_B[1/p]$ together with the restrictions of $\prod_{\ell \neq p} \bL_\ell \times \bL_p[1/p]$ and $\bL_\dR$ to $\Sh_\IK(\wt{G})$ (or $\Sh_\sfK(G)$) is precisely the automorphic system attached to $V \in \mathrm{Rep}(\wt{G})$ (or $\mathrm{Rep}(G)$). Readers who wish to check this can look at how the automorphic systems are constructed in \cite[\S4.2]{YangSystem}, which is essentially a generalization of \cite[\S5.24]{CSpin}. The construction of the $\bL_?$-sheaves are also summarized in more detail in \cite[(3.1.3)]{Yang}. In particular, there are natural identifications of $\sfV_B$ with $\bL_B[1/p]$, and $\sfV_\et$ (resp. $\sfV_\dR$) with the restriction of $\prod_{\ell \neq p} \bL_\ell \times \bL_p[1/p]$ (resp. $\bL_\dR$) to $\Sh_\sfK(G)$, where $(\sfV_B, \sfV_\dR, \sfV_\et)$ was the notation we used in \ref{sec: N commutes with pullback}. 

\subsubsection{} Let $\Sh_{\sfK_p}(G)$ be the limit $\varprojlim_{\sfK^p} \Sh_{\sfK_p \sfK'^p}(G)$ as $\sfK'^p$ runs through the compact open subgroups of $G(\IA^p_f)$ and define $\shS_{\sfK_p}(G)$ similarly. The canonical extension property of $\shS_{\sfK_p}(G)$ is that for any regular, formally smooth $\IZ_{(p)}$-scheme $S$, every morphism $S_\IQ \to \shS_{\sfK_p}(G)$ extends to $S$. We give an extension property for finite level, which is certainly well known to experts: 

\begin{theorem}
\label{thm: extension property}
    Let $S$ be a smooth $\IZ_{(p)}$-scheme which admits a morphism $\rho : S_\IQ \to \Sh_\sfK(G)$. If $\rho^* \sfV_\ell$ extends to a local system $\sfW_\ell$ over $S$ for every prime $\ell \neq p$, then $\rho$ extends to a morphism $S \to \shS_\sfK(G)$, through which $\sfW_\ell$ is identified with the pullback of $\bL_\ell$. 
\end{theorem}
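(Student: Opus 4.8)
The plan is to reduce the assertion to the canonical extension property of $\shS_{\sfK_p}(G)$ at infinite prime-to-$p$ level. After replacing $S$ by one of its connected components we may assume $S$ is connected; being smooth over $\IZ_{(p)}$ it is then irreducible, regular (hence normal) and flat over $\IZ_{(p)}$, so that $S_\IQ$ is a dense open subscheme. Recall that $\shS_\sfK(G) = \shS_{\sfK_p}(G)/\sfK^p$ and $\Sh_\sfK(G) = \Sh_{\sfK_p}(G)/\sfK^p$, where $\Sh_{\sfK_p}(G) \to \Sh_\sfK(G)$ is a pro-finite-\'etale $\sfK^p$-torsor and $\shS_{\sfK_p}(G)$ enjoys the canonical extension property with respect to regular formally smooth $\IZ_{(p)}$-schemes. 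We proceed in three steps: (i) extend the $\sfK^p$-torsor $Y_\IQ \coloneqq \rho^*\Sh_{\sfK_p}(G) \to S_\IQ$ to a pro-finite-\'etale $\sfK^p$-torsor $Y \to S$; (ii) apply the extension property to the tautological map $Y_\IQ \to \Sh_{\sfK_p}(G)$; (iii) descend the resulting morphism along $Y \to S$.

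For step (i), fix a geometric point $\bar{s} \to S_\IQ$. Forgetting the $p$-component, $\rho^*[\eta_V]$ induces a $\sfK^p$-level structure on the prime-to-$p$ \'etale local system $\prod_{\ell\neq p}\rho^*\sfV_\ell$; choosing a trivialization $\phi$ of its fibre at $\bar s$ representing this structure, the prime-to-$p$ monodromy representation $r\colon \pi_1^\et(S_\IQ,\bar s)\to \GL\big(\prod_{\ell\neq p}(\sfV_\ell)_{\bar s}\big)$ takes values in $\phi\sfK^p\phi^{-1}$, and for each open normal $\sfK^p_n\subseteq\sfK^p$ the finite \'etale cover $Y^{(n)}_\IQ\coloneqq\rho^*\Sh_{\sfK_p\sfK^p_n}(G)\to S_\IQ$ is the one cut out by $r$ composed with $\phi\sfK^p\phi^{-1}\twoheadrightarrow\phi(\sfK^p/\sfK^p_n)\phi^{-1}$. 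By hypothesis each $\rho^*\sfV_\ell = \rho^*\bL_\ell$ (see \ref{sec: the L sheaves}) extends to a lisse $\IQ_\ell$-sheaf $\sfW_\ell$ on $S$, i.e.\ the $\ell$-component of $r$ factors through $\pi_1^\et(S,\bar s)$; as this holds for every $\ell \neq p$, the whole of $r$ factors through $\pi_1^\et(S, \bar s)$. Consequently each $Y^{(n)}_\IQ \to S_\IQ$ extends, compatibly with the transition maps, to a finite \'etale $(\sfK^p/\sfK^p_n)$-torsor $Y^{(n)} \to S$, and we set $Y \coloneqq \varprojlim_n Y^{(n)}$. Then $Y \to S$ is a pro-finite-\'etale $\sfK^p$-torsor restricting to $Y_\IQ \to S_\IQ$; being a pro-\'etale cover of $S$, the scheme $Y$ is again regular, formally smooth over $\IZ_{(p)}$, and flat over $\IZ_{(p)}$, so $Y_\IQ$ is dense in $Y$.

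For steps (ii) and (iii), the canonical extension property of $\shS_{\sfK_p}(G)$, applied to $Y$, extends the natural $\sfK^p$-equivariant map $Y_\IQ = \rho^*\Sh_{\sfK_p}(G) \to \Sh_{\sfK_p}(G) = \shS_{\sfK_p}(G)_\IQ$ to a morphism $\tilde\rho \colon Y \to \shS_{\sfK_p}(G)$. For every $g \in \sfK^p$ the two morphisms $g\circ\tilde\rho$ and $\tilde\rho\circ g$ agree on the dense open $Y_\IQ$ and $\shS_{\sfK_p}(G)$ is separated over $\IZ_{(p)}$, so they coincide; hence $\tilde\rho$ is $\sfK^p$-equivariant and descends along the torsor $Y \to S$ to a morphism $\rho^+\colon S = Y/\sfK^p \to \shS_{\sfK_p}(G)/\sfK^p = \shS_\sfK(G)$, which restricts to $\rho$ over $S_\IQ$ by construction. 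Finally, $(\rho^+)^*\bL_\ell$ and $\sfW_\ell$ are lisse $\IQ_\ell$-sheaves on $S$ agreeing with $\rho^*\sfV_\ell$ on the dense open $S_\IQ$; since $\pi_1^\et(S_\IQ, \bar s) \twoheadrightarrow \pi_1^\et(S,\bar s)$ (as $S$ is normal and irreducible with $S_\IQ$ dense open), a lisse $\IQ_\ell$-sheaf on $S$ is determined by its restriction to $S_\IQ$, whence $\sfW_\ell \cong (\rho^+)^*\bL_\ell$, as required.

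The crux is step (i): turning the hypothesis that the $\ell$-adic realizations extend into the statement that the tautological prime-to-$p$ level cover extends over $S$ --- this is precisely what makes $Y$ a legitimate test object for the infinite-level extension property, even though $\Sh_\sfK(G)$ at finite level has no such property of its own. Steps (ii) and (iii) are then formal, relying only on the separatedness of $\shS_{\sfK_p}(G)$ and on the density of $Y_\IQ$ in the reduced scheme $Y$.
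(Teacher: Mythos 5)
Your proof is correct and follows essentially the same plan as the paper's: both arguments construct a pro-finite-\'etale $\sfK^p$-torsor over $S$ extending $\rho^*\Sh_{\sfK_p}(G)\to S_\IQ$, then invoke the canonical extension property of $\shS_{\sfK_p}(G)$ at infinite prime-to-$p$ level, and finally descend along the $\sfK^p$-action. The only difference is cosmetic: you build the extended torsor $Y$ by noting that the prime-to-$p$ monodromy representation factors coordinate-by-coordinate through $\pi_1^\et(S,\bar s)$ and hence does so as a whole, then take the inverse limit of the corresponding finite-level torsors, whereas the paper instead extends the section $\rho^*[\eta_V^{(p)}]$ of the pro-\'etale cover $\sfK^p\backslash\Isom(V\otimes\IA_f^p,\sfW_\et^{(p)})\to S$ and forms $\wt{S}$ by pullback --- these two constructions yield the same object by uniqueness of pro-\'etale extensions over a normal irreducible base.
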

\begin{proof}
    To simplify notation, let $\sfV_\et^{(p)}\coloneqq \prod_{\ell \neq p} \sfV_\ell$ and $\sfW_\et^{(p)} := \prod_{\ell \neq p} \sfW_\ell$. Note that $\sfW_\et^{(p)}|_{S_\IQ} = \rho^*(\sfV_\et^{(p)})$. Now the prime-to-$p$ part of the tautological level structure $[\eta_V]$ gives us a section $[\eta_V^{(p)}] : \Sh_\sfK(G) \to \sfK^p \backslash \Isom(V \tensor \IA^p_f, \sfV_\et^{(p)})$, where the target is viewed as a pro-\'etale cover of $\Sh_\sfK(G)$. By \cite[0BQM]{stacks-project}, $\rho^\ast[\eta^{(p)}_V]$ extends to a morphism $S \to \sfK^p \backslash \Isom(V \otimes \IA_f^p, \sfW_\et^{(p)})$, where the target is a pro-\'etale cover of $S$. We define $\wt{S}$ as the pull-back
    \[\begin{tikzcd}
	{\wt{S}} & {\Isom(V \otimes \IA_f, \sfW_\et^{(p)})} \\
	S & {\sfK \backslash \Isom(V \otimes \IA_f, \sfW_\et^{(p)})}
	\arrow[from=1-1, to=1-2]
	\arrow[from=1-1, to=2-1]
	\arrow[from=2-1, to=2-2]
	\arrow[from=1-2, to=2-2]
	\arrow["\lrcorner"{anchor=center, pos=0.125}, draw=none, from=1-1, to=2-2]
    \end{tikzcd}\]
    Note that since $\wt{S} \to S$ is a $\sfK^p$-torsor for the pro\'etale topology, $\wt{S}$ is representable by a scheme. As $\Sh_{\sfK_p}(G) \to \Sh_{\sfK}(G)$ can be defined by an analogous construction, $\rho$ lifts to a $\sfK^p$-equivariant morphism $\wt\rho\colon \wt{S}_\IQ \to \Sh_{\sfK_p}(G)$. The canonical extension property allows us to extend it to a (necessarily $\sfK^p$-equivariant) morphism $\wt{S} \to \shS_{\sfK_p}(G)$. Now the $\sfK^p$-action defines an \'etale descend datum which yields the desired morphism $S \to \shS_\sfK(G)$.
\end{proof}

\section{Period morphisms}
\subsection{The basic set-up}
We first state a few basic definitions and results which will be needed to construct the period morphism. 

\begin{definition}
\label{def: \'etale locally isomorphic}
    Let $T$ be a connected noetherian normal scheme with geometric point $t$, $\ell$ be a prime with $\ell \in \sO_T^\times$ and $\sfW_\ell$ be an \'etale $\IQ_\ell$-local system over $T$. We denote by $\Mon(\sfW_\ell, t)$ the Zariski closure of the image of $\pi_1^\et(T, t)$ in $\GL(\sfW_{\ell, t})$, and denote by $\Mon^\circ(\sfW_\ell, t)$ the identity component of $\Mon(\sfW_\ell, t)$.

    If $\sfW_\ell'$ is another $\IQ_\ell$-local system, then we say that $\sfW_\ell$ and $\sfW_\ell'$ are \textit{\'etale locally isomorphic} if they are isomorphic over some finite connected \'etale cover $T'$ of $T$; or equivalently, there is an isomorphism $\sfW_{\ell, t} \sto \sfW'_{\ell, t}$ which is equivariant under an open subgroup of $\pi^\et_1(T, t)$. 
\end{definition}

\begin{lemma}
\label{lem: extend LB on generic}
Let $T$ be a noetherian integral normal scheme with generic point $\eta$. Let $f : \sY \to T$ be a smooth proper morphism.
\begin{enumerate}[label=\upshape{(\alph*)}]
    \item The natural map $\Pic(\sY) \to \Pic(\sY_\eta)$ is surjective with kernel $\mathrm{im}(\Pic(T) \to \Pic(\sY))$.
    \item If for some geometric point $b$ over $\eta$ and prime $\ell \in \sO_T^\times$, $\Mon(R^2 f_{*} \IQ_\ell, b)$ is connected, then the natural map $\NS(\sY_\eta)_\IQ \to \NS(\sY_b)_\IQ$ is an isomorphism. 
\end{enumerate} 
\end{lemma}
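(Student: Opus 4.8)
The plan is to read (a) as a statement about the relative Picard functor and (b) as a monodromy argument.

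For (a) one may assume $\dim T \ge 1$, as otherwise $\sY = \sY_\eta$ and $\Pic(T) = 0$. Since $f$ is smooth, $\sY$ is $T$-flat and normal, and $\sY_\eta \hookrightarrow \sY$ is a dense open whose complement $f^{-1}(T \smallsetminus \{\eta\})$ has codimension $\ge 1$. For surjectivity, given $L_\eta \in \Pic(\sY_\eta)$ I would first extend it across the codimension-one points of $T$: localizing $T$ at such a point yields a discrete valuation ring, over which $\sY$ is \emph{regular}, so Weil and Cartier divisors coincide and $L_\eta = \sO(D_\eta)$ extends by taking the Zariski closure of $D_\eta$. This gives an extension over $\sY_U$ with $T \smallsetminus U$ of codimension $\ge 2$; the extension over the rest comes from the relative Picard scheme, using that $\Pic^0_{\sY/T}$ is an abelian scheme and $\Pic_{\sY/T}/\Pic^0_{\sY/T}$ is \'etale and separated over $T$, so that a section defined away from a codimension $\ge 2$ subset of the normal scheme $T$ extends over $T$ (Weil's extension theorem). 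For the kernel, the projection formula and $f_*\sO_\sY = \sO_T$ show that $f^* \colon \Pic(T) \to \Pic(\sY)$ is injective with image inside the kernel; conversely a line bundle on $\sY$ trivial on $\sY_\eta$ has the form $\sO_\sY(\sum_i n_i f^{-1}(\xi_i))$ for codimension-one points $\xi_i$ of $T$ (each $f^{-1}(\xi_i)$ being irreducible, since the fibre over $\xi_i$ is geometrically connected and smooth), and faithfully flat descent along $f$ identifies this with the pullback of the line bundle $\sO_T(\sum_i n_i\overline{\{\xi_i\}})$.

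For (b), injectivity of $\NS(\sY_\eta)_\IQ \to \NS(\sY_b)_\IQ$ is clear since numerical triviality is detected on geometric fibres. For surjectivity write $K \coloneqq k(\eta)$, $G \coloneqq \Gal(\bar K/K) = \pi_1^\et(\eta, b)$; since $T$ is normal and integral, $\pi_1^\et(\eta, b) \to \pi_1^\et(T, b)$ is surjective, so the Zariski closure of the image of $G$ on $\H^2_\et(\sY_b, \IQ_\ell)$ is exactly $\Mon(R^2 f_*\IQ_\ell, b)$, which is connected by hypothesis. Via the cycle class map, $\NS(\sY_b)_{\IQ_\ell}$ is a $G$-stable — hence $\Mon(R^2 f_*\IQ_\ell, b)$-stable — subspace of $\H^2_\et(\sY_b, \IQ_\ell(1))$. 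On the other hand, $G$ acts on the lattice $\NS(\sY_b)$ fixing the class of a $K$-rational ample line bundle on $\sY_\eta$ (which exists as $f$ is projective, the case of all our applications) and preserving the cup-product pairing it determines; by the Hodge index theorem for divisors this pairing has signature $(1, \rho - 1)$, and the stabilizer of an ample class in the orthogonal group of such a lattice is finite. Hence the image of $\Mon(R^2 f_*\IQ_\ell, b)$ in $\GL(\NS(\sY_b)_{\IQ_\ell})$ is finite; being connected, it is trivial. Thus $G$ acts trivially on $\NS(\sY_b)_{\IQ_\ell}$, hence — the action being defined over $\IQ$ — on $\NS(\sY_b)_\IQ$. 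Finally I descend: taking $G$-invariants in $0 \to \Pic^0(\sY_b) \to \Pic(\sY_b) \to \NS(\sY_b) \to 0$ and comparing with the analogous sequence for $\sY_\eta$, all the error terms that arise are torsion ($\coker(\Pic(\sY_\eta) \to \Pic(\sY_b)^G) \hookrightarrow \Br(K)$, $\H^1(G, \Pic^0(\sY_b))$, while $\H^1(G, \bar K^\times) = 0$), so after $\otimes \IQ$ we get $\NS(\sY_\eta)_\IQ \iso (\NS(\sY_b)^G)_\IQ = (\NS(\sY_b)_\IQ)^G = \NS(\sY_b)_\IQ$, the last step by the previous sentence. (Alternatively one may invoke part (a), which gives $\Pic(\sY) \onto \Pic(\sY_\eta)$, to reduce surjectivity to this same descent.)

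The step I expect to be the main obstacle is the one in the previous paragraph deducing triviality of the Galois action on $\NS(\sY_b)_\IQ$ from the connectedness of the \emph{global} monodromy group: it relies on the finiteness of that action, i.e.\ on the Hodge index theorem and the presence of a rational ample class. In (a), the only delicate point is extending line bundles across the codimension $\ge 2$ locus of $T$, which is handled by representability and properness properties of $\Pic_{\sY/T}$ rather than by a naive divisor-closure argument.
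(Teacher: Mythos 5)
Your proof takes essentially the same route as the paper's, but spells out more of the mechanics; a few points are worth flagging.

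For (a), the paper's proof is terser: spread out to an open $\sY_U\subseteq\sY$ and then quote Hartshorne II.6.5 for $\Cl(\sY)\twoheadrightarrow\Cl(\sY_U)$, with the kernel handled via the erratum to EGA IV 21.4.13. You instead extend across codimension-one points of $T$ (where $\sY$ is regular since it is smooth over a DVR) and then invoke the relative Picard scheme plus Weil's extension theorem over the remaining codimension $\ge 2$ locus. Both approaches are sound in the cases of interest, where $T$ is regular; be aware that Hartshorne II.6.5 is a statement about Weil divisor class groups, so the paper's one-line citation is silently using that $\sY$ is locally factorial (automatic when $T$ is regular), while your appeal to "Weil's extension theorem" for the \'etale $\NS$-part of $\Pic_{\sY/T}$ over a codimension $\ge 2$ open is slightly loose (Weil's theorem is about rational homomorphisms of group schemes over a smooth base; extending a section of a separated \'etale group scheme across a codimension $\ge 2$ closed subset of a normal base works, but via normality and finiteness of $\NS$-components, not literally Weil extension). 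Neither is a real gap, but it is worth noting what is being assumed.

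For (b), your argument is the paper's, with two added details. First, you re-derive the standard fact that the Galois action on $\NS(\sY_b)_\IQ$ factors through a finite quotient by fixing a rational ample class and using the Hodge index theorem; the paper simply cites this fact. Your route needs $f$ projective (you say so), whereas the more elementary route — that $\NS(\sY_{\bar\eta})$ is finitely generated and hence all its classes are defined over a single finite extension of $\eta$, so $\Gal$ acts through a finite quotient with no geometry — works for $f$ merely proper, matching the stated hypotheses. Second, you make the final descent explicit via Hilbert 90, the Brauer group, and the Weil--Ch\^atelet group being torsion; the paper simply asserts descent. One small slip: in positive characteristic $\Gal(\bar K/K)\ne\pi_1^{\et}(\eta,b)=\Gal(K^{\sep}/K)$, and the paper introduces the intermediate separably closed point $\bar\eta$ exactly to handle this — your argument is fine once $\bar K$ is replaced by $K^{\sep}$ throughout, since passing from $K^{\sep}$ to $\bar K$ changes $\NS_\IQ$ by nothing. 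You also note the injectivity, which the paper leaves implicit; it is cleanest to say that numerical and algebraic equivalence agree up to torsion (Matsusaka), so injectivity reduces to numerical equivalence being geometric.
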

\begin{proof}
(a) We may always extend a line bundle on $\sY_\eta$ to $\sY_U$ for some open dense subscheme $U \subseteq T$, and then to a line bundle on $\sY$ (use e.g., \cite[Prop.~II.6.5]{Hartshorne}). But any two extensions to $\sY$ differ by an element of $\Pic(T)$ by \cite[Err\textsubscript{IV} Cor.~21.4.13]{EGAIV4}. 

(b) Let $\bar{\eta}$ be the geometric point over $\eta$ obtained by taking the separable closure of $k(\eta)$ in $k(b)$. Then every class of $\NS(\sY_b)_\IQ$ descends to $\bar{\eta}$. As the natural morphism $\Gal(\bar{\eta}/\eta) = \pi^\et_1(\eta, b) \to \pi^\et_1(T, b)$ is surjective, and $\Gal(\bar{\eta}/\eta)$ acts on $\NS(\sY_{\bar{\eta}})_\IQ$ through a finite quotient, the connectedness assumption on $\Mon(R^2 f_{*} \IQ_\ell, b)$ implies that $\Gal(\bar{\eta}/\eta)$ in fact acts trivially. This implies that every class in $\NS(\sY_{\bar{\eta}})_\IQ$ descends to $\eta$. 
\end{proof}

%We start with a definition:  

%\begin{definition}
%\label{def: \'etale locally isomorphic}
%    Let $T$ be a connected noetherian normal scheme with geometric point $t$, $\ell$ be a prime with $\ell \in \sO_T^\times$ and $\sfW_\ell$ be an \'etale $\IQ_\ell$-local system over $T$. We denote by $\Mon(\sfW_\ell, t)$ the Zariski closure of the image of $\pi_1^\et(T, t)$ in $\GL(\sfW_{\ell, t})$, and denote by $\Mon^\circ(\sfW_\ell, t)$ the identity component. 

%    If $\sfW_\ell'$ is another $\IQ_\ell$-local system, then we say that $\sfW_\ell$ and $\sfW_\ell'$ are \textit{\'etale locally isomorphic} if they are isomorphic over some finite connected \'etale cover $T'$ of $T$; or equivalently, there is an isomorphism $\sfW_{\ell, t} \sto \sfW'_{\ell, t}$ which is equivariant under an open subgroup of $\pi^\et_1(T, t)$. 
%\end{definition}

Now we state the set-up we will work with for the entire section. 

\begin{set-up}
\label{set-up: char 0 base}
Let $F \subseteq \IC$ be a subfield finitely generated over $\IQ$ and let $S$ be a connected smooth $F$-variety with generic point $\eta$. Let $f\colon \sX \to S$ be a $\heartsuit$-family with a relatively ample line bundle $\bxi$.  We fix a subspace $\Lambda \subseteq \NS(\sX_\eta)_\IQ$  containing the class of $\bxi_\eta$. Now choose an $F$-linear embedding $k(\eta) \into \IC$ and let $b \in S(\IC)$ be the resulting point, which we also view as a closed point on $S_\IC$. Let $S^\circ \subset S_\IC$ denote the connected component containing $b$. \textit{We assume that for some (and hence every, see \ref{lem: Larsen-Pink} below) prime $\ell$, $\Mon(R^2 f_* \IQ_\ell, b)$ is connected.} %Let $\Lambda \subseteq \NS(\sX_\eta)_\IQ$ be a subspace containing $\bxi_\eta$.

Define a pairing on $\H^2_\dR(\sX/S) := R^2 f_* \Ohm^\bullet_{\sX/S}$ by $(x, y) \mapsto x \cup y \cup c_1(\bxi)^{d - 2}$ for local sections $x, y$ and $d = \dim \sX/S$. By \ref{lem: extend LB on generic}, every line bundle on $\sX_\eta$ extends to a relative line bundle on $\sX/S$. By taking Chern classes, we obtain a well defined embedding $\underline{\Lambda} \into \H^2_\dR(\sX/S)$, where $\underline{\Lambda}$ is the constant sheaf with fiber $\Lambda$. Define $\sfP_\dR$ to be the orthogonal complement of $\underline{\Lambda}$ in $\H^2_\dR(\sX/S)(1)$. We define the primitive Betti cohomology $\sfP_B$ over $S_\IC$ and \'etale cohomology $\sfP_\et$ over $S$ analogously. Then $\sfP := (\sfP_B, \sfP_\dR, \sfP_\et) \in \sfR(S)$ is a system of realizations over $S$ in the sense of \ref{def: system of realizations}. 
Note that we applied a Tate twist so that the VHS $\sfP|_{S_\IC}$ has weight $0$. Since we assumed $\Mon(R^2 f_* \IQ_\ell, b)$ is connected for some $\ell$, $\NS(\sX_\eta)_\IQ = \NS(\sX_b)_\IQ$ by \ref{lem: extend LB on generic}. This implies that $\sfP$ orthogonally decomposes into $(\Lambda^\perp \tensor \mathbf{1}_S) \oplus \sfP_0$ where $\Lambda^\perp$ is the orthogonal complement of $\Lambda$ in $\NS(\sX_\eta)_\IQ$ and $\sfP_0 = (\sfP_{0, B}, \sfP_{0, \dR}, \sfP_{0, \et})$ is another object in $\sfR(S)$. Moreover, there are no nonzero $(0,0)$-classes in $\sfP_{0, B, b}$. Note that $\sfP_0$ is nothing but $\sfP$ when $\Lambda = \NS(\sX_\eta)_\IQ$. 
\end{set-up}

\begin{lemma}
\label{lem: Larsen-Pink}
    In the above set-up, $\Mon(R^2 f_* \IQ_\ell, b)$ is connected for every prime $\ell$, and $b$ is a Hodge-generic point for the VHS $\sfP|_{S^\circ}$. 
\end{lemma}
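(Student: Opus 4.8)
The plan is to prove the two assertions separately, as they rest on different inputs.

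\emph{$\ell$-independence of connectedness.} The essential point is that $\pi_0\big(\Mon(R^2 f_* \IQ_\ell, b)\big)$ is independent of $\ell$. I would deduce this from the $\ell$-independence theorems for algebraic monodromy groups of compatible systems of geometric origin (Larsen--Pink); alternatively one argues directly by reducing to the complex-analytic picture: after replacing $S$ by a connected finite \'etale cover one may assume $S$ is geometrically connected, so that $\pi_1^\et(S_{\bar F},b)$ is the profinite completion of $\pi_1(S^\circ,b)$ and hence $\Mon^\circ(R^2 f_*\IQ_\ell,b)$ is the base change to $\IQ_\ell$ of a $\IQ$-algebraic group, whose $\pi_0$ is manifestly $\ell$-independent, while the residual ``arithmetic'' contribution to $\pi_0$, coming from the image of $\Gal(\bar F/F)$, is again $\ell$-independent by the same circle of ideas. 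Granting this, the set-up provides a prime $\ell_0$ with $\Mon(R^2 f_* \IQ_{\ell_0}, b)$ connected, whence $\pi_0(\Mon(R^2 f_* \IQ_\ell, b)) = 1$ for every $\ell$.

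\emph{Hodge-genericity.} By the theorem of Cattani--Deligne--Kaplan the non-Hodge-generic locus of the polarizable VHS $\sfP|_{S^\circ}$ is a countable union $\bigcup_{i} Z_i$ of \emph{proper} closed algebraic subvarieties of $S^\circ$, so it suffices to show $b\notin\bigcup_i Z_i$. By construction $b$ lies over the generic point $\eta$ of $S$, and $S$ is irreducible with $\dim S = \dim S^\circ$. The key claim is that each $Z_i$ is defined over a finite extension $F_i/F$: the $Z_i$ are cut out by Hodge tensors in $\sfP^\tensor$, and for a $\heartsuit$-family such tensors are absolute Hodge --- the algebraic (Noether--Lefschetz) ones by the Lefschetz $(1,1)$-theorem, and those detecting the jumps of $\End(\sfP_b)$ and hence of $\MT(\sfP_b)$ by Zarhin's theorem together with the structural results on motives of $\heartsuit$-families recalled in \S\ref{sec: summarize Moonen} --- so each $Z_i$ is $\Aut(\IC/F_i)$-stable and descends to a closed subvariety $Z_i'\subseteq S_{F_i}$. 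As $S_{F_i}\to S$ is finite, the image of $Z_i$ in $S$ is closed of dimension $\le \dim Z_i < \dim S^\circ = \dim S$, hence a proper closed subset, so it misses $\eta$; since $b$ maps to $\eta$ under $S^\circ\hookrightarrow S_\IC\to S$, it lies on no $Z_i$, and $b$ is Hodge-generic.

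The step I expect to be the main obstacle is exactly this descent of the $Z_i$ to finite extensions of $F$ --- i.e.\ the absolute-Hodge property of the tensors computing $\MT(\sfP_b)$ --- which is where the geometry of the $\heartsuit$-family genuinely enters; without it the argument only yields that $b$ avoids the Noether--Lefschetz loci. One can also side-step it entirely: because the embedding $k(\eta)\into\IC$ defining $b$ is ours to choose (its restriction to the finite field of constants of $S$ fixes the component $S^\circ$, after which the extension to $k(\eta)$ is free), one may choose that extension so that $k(\eta)$ is algebraically independent over the countably generated subfield of $\IC$ generated by $F$ and the coefficients of defining equations of all the $Z_i$; a transcendence-degree count then forces $b\notin\bigcup_i Z_i$ with no input on absolute Hodge classes. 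Finally, the results of \cite{AndreMT} and \cite{Peters} enter only to give $\Mon^\circ(\sfP_B|_{S^\circ},b)\trianglelefteq \MT(\sfP_b)^\der$, which is what makes Hodge-genericity interact correctly with ``maximal monodromy'' in later sections, but plays no role in the genericity statement itself.
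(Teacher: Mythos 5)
Your treatment of the first assertion is fine and matches the paper: the $\ell$-independence of $\pi_0(\Mon(R^2 f_* \IQ_\ell, b))$ is exactly what \cite[Prop.~6.14]{Larsen-Pink} provides (using that $F$ is finitely generated over $\IQ$), and the connectedness assumption for one $\ell$ then propagates to all.

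Your treatment of Hodge-genericity misses the tool the paper actually uses and both of your alternatives have gaps. The paper invokes the \emph{Mumford--Tate conjecture} for $\H^2$ of the fibres of a $\heartsuit$-family (the main theorem of \cite{Moonen}), together with the notion of \emph{Galois-generic} points from \cite{Moonen-Fom}: after reducing to $S$ geometrically connected, $\eta$ is automatically Galois-generic, and the MT conjecture then forces any $b$ lying over $\eta$ to be Hodge-generic. Your first route requires each Hodge-locus component $Z_i$ to be defined over $\bar F$, which you try to extract from absolute-Hodge-ness of the relevant tensors on $\sfP_b$. But nothing in \S\ref{sec: summarize Moonen} gives that; in the totally real (R+) cases, \ref{thm: Moonen} only says that the \emph{norm} $\Nm_{E/\IQ}(\fp_s) \otimes \det(\fp_{s,(\IQ)})$ (or the $\sharp$-variant) is abelian, not $\fp_s$ itself, so the tensors cutting out the MT group on $\sfP_b$ are not known to be absolute Hodge. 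In fact, precisely because this fails, \S4 has to route everything through norm functors and an auxiliary faithful representation rather than descending $\rho_\IC$ directly. The MT conjecture sidesteps this: one only needs an $\ell$-adic, not an absolute-Hodge, comparison, since $\Mon^\circ(\sfP_\ell,b)$ and its identification with $\MT(\sfP_b)^\circ \otimes \IQ_\ell$ via Galois-genericity are visibly independent of the embedding $k(\eta) \hookrightarrow \IC$.

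Your second route --- re-choosing the embedding so that $b$ avoids the $Z_i$ --- proves only that \emph{some} point over $\eta$ is Hodge-generic, not that the $b$ fixed in Set-up \ref{set-up: char 0 base} is. The set-up first fixes $b$ and then assumes connectedness of $\Mon(R^2f_*\IQ_\ell,b)$; the lemma asserts a property of that $b$, which you cannot re-choose after the fact. (As it happens, the existence of one Hodge-generic point over $\eta$ together with the $\ell$-independence of $\Mon^\circ$ and the MT conjecture \emph{does} imply that every $b$ over $\eta$ is Hodge-generic; but you never invoke the MT conjecture, so the transcendence count leaves you a statement strictly weaker than what is claimed.)
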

%The word ``Hodge-generic'' is explained in \cite[16]{Moonen-Fom}.
\begin{proof}
    The first statement follows from the assumption that $F$ is finitely generated over $\IQ$ and \cite[Prop.~6.14]{Larsen-Pink}, which implies that the \'etale group scheme of connected components of $\Mon(R^2 f_* \IQ_\ell, b)$ is independent of $\ell$. The second statement follows from the main theorem of \cite{Moonen}.\footnote{The main theorem of \cite{Moonen} is an overkill for the purpose here and only used to avoid a case by case discussion of Mumford-Tate groups.} Since the statement only concerns $S^\circ$, we may replace $S$ by $S^\circ$ and $F$ by its field of definition and thus assume that $S$ is geometrically connected. Then we may make use of the notion of Galois-generic points (\cite[Def.~4.2.1]{Moonen-Fom}). As the Mumford-Tate conjecture is known for $H^2$ of the fibers of $\sX/S$ and $b$ lies above $\eta$, the fact that $\eta$ is Galois-generic implies that $b$ is Hodge-generic.
\end{proof}

\subsubsection{} \label{sec: define G for period} Let $V$ be a quadratic form over $\IQ$ which is isomorphic to $\sfP_{B, b}$ and fix an isometry $\mu_b : V \sto \sfP_{B, b}$. Let $G := \SO(V)$. Let $V_0 := \mu_b^{-1}(\sfP_{0, B, b})$.
Note that via $\mu_b$, the monodromy representation of $\pi^\et_1(S, b)$ takes values in $G(\IA_f)$. We say that $\sfK$ is \textit{admissible} if the image of $\pi_1^\et(S, b)$ in $\GL(\sfP_{B, b} \tensor \IA_f)$ lies in $\sfK$ via $\mu_b$, or equivalently, $\mu_b$ extends to a $(G, V, \sfK)$-level structure $[\mu]$ on $\sfP_\et$ with $[\mu]_b = \sfK \cdot (\mu_b \tensor \IA_f)$. Define $\Ohm := \{ v \in \IP(V \tensor \IC) \mid \< v, v \> = 0, \< v, \bar{v} \> > 0 \}$ as in \ref{sec: preparations}. Then $(G, \Ohm)$ is a Shimura datum of abelian type with reflex field $\IQ$. Again let $\sfV = (\sfV_B, \sfV_\dR, \sfV_\et)$ be the automorphic system on $\Sh_\sfK(G)$ attached to $V \in \mathrm{Rep}(G)$, and let $[\eta_V]$ be the tautological $\sfK$-level structure on $\sfV_\et$ (see \ref{def: automorphic systems}).

\subsubsection{} \label{def: condition sharp} Suppose that $(\sX/S, \bxi)|_{S^\circ}$ belongs to case (R+) or (R2') described in \ref{sec: the 4 cases}, i.e., the endomorphism field $E$ of the Hodge structure on $\sfP_{0, B, b}$ is totally real. By \ref{thm: Zarhin}, the $E$-action on $\sfP_{0, B, b}$ is self-adjoint. Let $E$ act on $V_0$ through $\mu_b$. Recall the notations in \ref{not: totally real}. Let $\sV$ be the $E$-bilinear lift of $V_0$, i.e., $V_0 = \sV_{(\IQ)}$ and set $\sG := \Res_{E/\IQ} \SO(\sV)$, $\sH := \sG/ Z_E^1$. In addition, set $\sV^\sharp := \sV \oplus \sE$ and $\sG^\sharp := \Res_{E/\IQ} \SO(\sV^\sharp)$. Embed $\SO(\sV)$ into $\SO(\sV^\sharp)$ by acting trivially on $\sE$. This induces an embedding $\sG \into \sG^\sharp$.

%We say that $\sfK$ satisfies condition $(\sharp)$ when $(\sX/S, \bxi)|_{S^\circ}$ belongs to case (R1), i.e., $\dim_E \sV$ is odd, or if $(\sX/S, \bxi)|_{S^\circ}$ belongs to case (R2) (resp. (R2')), and the image of $\sK := \sfK \cap \sG(\IA_f)$ in $\sG^\sharp(\IA_f)$ (resp. $\sH(\IA_f)$) is contained in some neat compact open subgroup. 
We say that a neat compact open subgroup $\sfK \subset G(\IA_f)$ satisfies condition $(\sharp)$ depending on the particular case ($\sK := \sfK \cap \sG(\IA_f)$ below):
\begin{itemize}
    \item[(R1)] Always.
    \item[(R2)] If the image of $\sK$ in $\sG^\sharp(\IA_f)$ lies in some neat compact open subgroup.
    \item[(R2')] If the image of $\sK$ in $\sH(\IA_f)$ lies in some neat compact open subgroup.
\end{itemize}
In case (R2'), we say that for a prime $\ell_0$, $\sfK_{\ell_0}$ is sufficiently small if $\sK_{\ell_0} \cap Z_E^1(\IQ_{\ell_0}) = 1$. Here $\sfK_{\ell_0}$ is the image of $\sfK$ under the projection $G(\IA_f) \to G(\IQ_{\ell_0})$ and $\sK_{\ell_0}$ is defined similarly.

\subsubsection{} \label{eqn: fp taut diagram} For any subfield $F' \subseteq \IC$ which contains $F$ and $s \in S(F')$, we denote by $\fp_s \in \Mot_\AH(F')$ the submotive of $\fh^2(\sX_s)(1)$ such that $\w_?(\fp_s) = \sfP_{?, s}$ for $? = B$ (when $F' = \IC$), or $\dR, \et$.  Note that for any $\sigma \in \Aut(\IC/F)$ and $s \in S(\IC)$, there is a natural isomorphism $(\fp_{s})^\sigma \iso \fp_{\sigma(s)}$ in $\Mot_\AH(\IC)$, which we shall label as $\sigma_{\fp, s}$; moreover, the following diagrams tautologically commute (recall the notations in \ref{sec: twist tensors} and \ref{not: galois descent}): 
\begin{equation*}
\hspace*{-0.7cm}
    \begin{tikzcd}
	{\w_\et(\fp_s)} & {\sfP_{\et, s}} & {\sfP_{\dR, s}} & {\sfP_{\dR, s}} \\
	{\w_\et((\fp_s)^\sigma)} & {\w_\et(\fp_{\sigma(s)}) = \sfP_{\et, \sigma(s)}} & {\w_\dR((\fp_s)^\sigma)} & {\w_\dR(\fp_{\sigma(s)}) = \sfP_{\dR, \sigma(s)}}
	\arrow[from=1-1, to=1-2, equal]
	\arrow["{\iso_\bc}"', from=1-1, to=2-1]
	\arrow["{\sigma_{\fp, s}}"', from=2-1, to=2-2]
	\arrow["{\sigma_{\sfP_\et, s}}", from=1-2, to=2-2]
	\arrow["{\sigma_{\fp, s}}"', from=2-3, to=2-4]
	\arrow["{\iso_\bc}"', from=1-3, to=2-3]
	\arrow[from=1-3, to=1-4, equal]
	\arrow["{\sigma_{\sfP_\dR, s}}", from=1-4, to=2-4]
\end{tikzcd}
\end{equation*}

\subsubsection{} 
\label{sec: define alpha circ} \textit{For the rest of section 4, we will put a standing assumption that $S$ is \textbf{geometrically connected} as an $F$-variety, so that $S_\IC = S^\circ$.}

%We remark that, as the point $b : \Spec(\IC) \to S$ by assumption dominates the generic point $\eta$, when we view $b$ as a closed point on $S_\IC$ it is necessarily Hodge-generic for the VHS $\sfP|_{S_\IC}$ (e.g., use \cite[Thm~4.1.2]{Moonen-Fom}). 

Let $\sfK \subseteq G(\IA_f)$ be a neat compact open subgroup and assume that it is also admissible, so that $\mu_b$ extends to a $\sfK$-level structure $[\mu]$ on $\sfP$ with $[\mu]_b = \sfK (\mu_b \tensor \IA_f)$. Now \ref{lem: situation in practice} guarantees that the restriction of $[\mu]$ to $S_\IC$ is $(\sfP|_{S_\IC})$-rational and is of type $\Ohm$. Therefore, \ref{thm: moduli over C} gives us a unique morphism $\rho_\IC : S_\IC \to \Sh_\sfK(G)_\IC$ such that 
\begin{equation}
    \label{eqn: construct rho circ}
    (\rho_\IC)^* (\sfV, [\eta_V]) \iso (\sfP, [\mu])|_{S_\IC}. 
\end{equation}

Note that as $\sfK$ is neat, the above isomorphism is unique; we denote its \'etale component by $\alpha^\circ_\et : \rho_\IC^*(\sfV_\et) \sto \sfP_\et|_{S_\IC}$ and let $\alpha^\circ_\ell$ be its $\ell$-adic component for a prime $\ell$. 

\begin{theorem}
\label{thm: period char 0}
In the notations above, assume either
\begin{enumerate}[label=\upshape{(\alph*)}]
    \item $\fp_s \in \Mot_\Ab(\IC)$ for every $s \in S_\IC$ (e.g., when the family $(\sX/S, \bxi)|_{S_\IC}$ belongs to case (CM)), or
    \item the family $(\sX/S, \bxi)|_{S_\IC}$ belongs to case (R+) = (R1) + (R2) and $\sfK$ satisfies condition $(\sharp)$ as defined in \ref{def: condition sharp}. 
\end{enumerate}
Then $\rho_\IC$ descends to a morphism $\rho : S \to \Sh_\sfK(G)_F$ over $F$. Moreover, $\alpha^\circ_\et$ descends to an isomorphism $\alpha_\et : \rho^* (\sfV_\et) \sto \sfP_\et$ over $S$. 
\end{theorem}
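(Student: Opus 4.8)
The plan is to descend $\rho_\IC$ by showing that its action on $\IC$-points is $\Aut(\IC/F)$-equivariant, exploiting the moduli interpretations of abelian-type Shimura varieties over their reflex fields as in \cite{MPTate} and \cite{YangSystem}. By the uniqueness built into \ref{thm: moduli over C}, it suffices to exhibit \emph{some} morphism $\rho\colon S\to\Sh_\sfK(G)_F$ together with an isomorphism $(\rho\times_F\IC)^*(\sfV,[\eta_V])\iso(\sfP,[\mu])|_{S_\IC}$; then $\rho\times_F\IC=\rho_\IC$ automatically, and the \'etale component of the accompanying isomorphism over $S$ is the sought-after descent $\alpha_\et$ of $\alpha^\circ_\et$, since objects in the relevant moduli groupoids have no nontrivial automorphisms. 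So everything reduces to building $\rho$ over $F$.

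\textbf{Case (a).} I would check directly that $\sfP\in\sfR^*_\am(S)$: for every $s\in S(\IC)$ the motive $\fp_s\in\Mot_\Ab(\IC)$ realizes $(\sfP_{B,s},\sfP_{\dR,s})$, and the tautologically commuting squares of \ref{eqn: fp taut diagram}, with $\sigma_{\fp,s}\colon\fp_s^\sigma\iso\fp_{\sigma(s)}$ supplying the datum $\gamma^\sigma$ required in \ref{def: compatible with AM}, are exactly the compatibility making $\sfP$ weakly abelian-motivic. Since the standard representation $V$ of $G$ is faithful, $[\mu]$ is $\sfP$-rational and of type $\Ohm$ by \ref{lem: situation in practice}, and $E(G,\Ohm)=\IQ\subseteq F$; hence \ref{thm: shimura as moduli} with $E'=F$ produces $\rho$ over $F$ together with an isomorphism $\rho^*(\sfV,[\eta_V])\iso(\sfP,[\mu])$ over $S$, which finishes this case.

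\textbf{Case (b).} Here $\sfP_0$ is not known to be weakly AM, so $V$ cannot be used directly; instead I would route through the auxiliary groups $\sG\subseteq G$ and $\sH=\sG/Z_E^1$ and the faithful $\sH$-representation $N=\rN(\sV)$, whose associated system \emph{is} abelian-motivic. The $E$-action on $\sfP_{0,B,b}$ commutes with monodromy (\ref{sec: the 4 cases}), so the monodromy lands in $\sG(\IA_f)$; applying \ref{thm: moduli over C} to $(\sG,\Ohm_\sV)$, the faithful representation $\sV$, and the object $(\sfP_0,[\mu_0])|_{S_\IC}$ (with $\sK=\sfK\cap\sG(\IA_f)$, rational and of type $\Ohm_\sV$ by \ref{lem: situation in practice}), we obtain $\varrho_\IC\colon S_\IC\to\Sh_\sK(\sG)_\IC$ lifting $\rho_\IC$, and composing with $\pi\colon\Sh_\sK(\sG)\to\Sh_\sC(\sH)$ gives $\varphi_\IC:=\pi_\IC\circ\varrho_\IC$ with $\varphi_\IC^*(\sfN,[\eta_N])\iso(\rN(\sfP_0),\rN([\mu_0]))|_{S_\IC}$ by \ref{eqn: N commutes with pullback}. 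Now \ref{thm: Moonen}(a) in case (R1) — and, in case (R2), \ref{thm: Moonen}(b) applied to $\fp_{0,s}^\sharp$ and the group built from $\sV^\sharp=\sV\oplus\sE$, which is where the (R2) clause of condition $(\sharp)$ is used to keep levels neat — shows that the motives $\rN(\fp_{0,s})$ (resp. $\rN(\fp_{0,s}^\sharp)$) are abelian for all $s$; pushing the squares of \ref{eqn: fp taut diagram} through the $\tensor$-functor $\rN$ (which commutes with realizations and with $\sigma$-twists) then gives $\rN(\sfP_0)\in\sfR^*_\am(S)$ (resp. $\rN(\sfP_0^\sharp)$), with $\rN([\mu_0])$ rational and of type $\Ohm_\sH$. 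Along the way I would also check that $F$ contains the image of $E$ and that the $E$-action on $\sfP_0$ is defined over $S$ (not merely over $S_\IC$) — this uses the Mumford--Tate conjecture for $H^2$, as in \ref{lem: Larsen-Pink}, which forces $E$ to commute with the full $\pi_1^\et(S,b)$ — so that the reflex field $E$ of $(\sH,\Ohm_\sH)$ lies in $F$. Then \ref{thm: shimura as moduli} applied to $(\sH,\Ohm_\sH)$, the faithful $N$, and $E'=F$ descends $\varphi_\IC$ to $\varphi\colon S\to\Sh_\sC(\sH)_F$; in particular $\varphi_\IC$ is $\Aut(\IC/F)$-equivariant.

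\textbf{From $\varphi_\IC$ to $\rho_\IC$, and $\alpha_\et$.} It remains to upgrade the equivariance of $\varphi_\IC$ to that of $\varrho_\IC$, hence of $\rho_\IC=i_\IC\circ\varrho_\IC$; this is the main obstacle. For $\sigma\in\Aut(\IC/F)$, the maps $\varrho_\IC$ and $\sigma\circ\varrho_\IC\circ\sigma^{-1}$ are two lifts of $\varphi_\IC$ through $\pi_\IC$ that both pull $\wt{\sfV}$ back to $\sfP_0$, a system of realizations already defined over $S$; since $\ker\pi=Z_E^1$ is exactly the kernel killed by $\rN$, Lemma~\ref{lem: pi_1 equivariance} — whose hypothesis $\sK_{\ell_0}\cap Z_E^1(\IQ_{\ell_0})=1$ is the sufficiently-small clause of $(\sharp)$ — forces the two lifts to agree (in case (R1) one has $\sH=\sG$, $\pi=\mathrm{id}$, and this step is vacuous). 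Hence $\rho_\IC$ descends to $\rho\colon S\to\Sh_\sfK(G)_F$. Finally, with $\rho$ defined over $F$ the systems $\rho^*\sfV_\et$ and $\sfP_\et$ both live on $S$, and $\alpha^\circ_\et$ is an isomorphism of their restrictions to $S_\IC$ whose image under $\rN$ descends to $S$ (being the \'etale part of $\varphi^*(\sfN,[\eta_N])\iso(\rN(\sfP_0),\rN([\mu_0]))$ via \ref{eqn: N commutes with pullback}); another application of Lemma~\ref{lem: pi_1 equivariance} together with \ref{lem: descent of morphisms of systems} then shows $\alpha^\circ_\et$ descends to $\alpha_\et\colon\rho^*\sfV_\et\iso\sfP_\et$ over $S$. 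I expect the genuine difficulties to be the two flagged points: verifying $F\supseteq E$ and the $E$-rationality of the $\sfP_0$-data over $S$, and controlling the $Z_E^1$-torsor when lifting the descent from $\Sh_\sC(\sH)$ back to $\Sh_\sK(\sG)$.
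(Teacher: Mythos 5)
Your handling of case (a), and of sub-case (R1) of case (b), is essentially the paper's: in (R1) one has $Z_E^1(\sV)=1$ so $N=\rN(\sV)$ is faithful on $\sG$ itself, $\rN(\fp_{0,s})\in\Mot_\Ab(\IC)$ by \ref{thm: Moonen}(a) makes $\rN(\sfP_0)$ weakly AM, and \ref{thm: shimura as moduli} together with \ref{lem: pi_1 equivariance} (whose hypothesis is automatic) does everything. The gap is in sub-case (R2). There you route through $\sH=\sG/Z_E^1$ and $N=\rN(\sV)$ faithful on $\sH$, but this breaks on two counts. First, in case (R2) Theorem~\ref{thm: Moonen}(b) gives that $\rN(\fp_{0,s}^\sharp)$ is abelian, \emph{not} $\rN(\fp_{0,s})$, so you cannot conclude $\rN(\sfP_0)\in\sfR^*_\am(S)$ and therefore cannot invoke \ref{thm: shimura as moduli} for $(\sH,\Ohm_\sH,N)$. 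Second, to pass from equivariance of $\varphi_\IC$ back to $\varrho_\IC$ you invoke Lemma~\ref{lem: pi_1 equivariance} under the hypothesis ``$\sK_{\ell_0}\cap Z_E^1(\IQ_{\ell_0})=1$''; but this ``sufficiently small'' condition is \emph{not} part of $(\sharp)$ in case (R2) --- in the paper it is an extra hypothesis introduced only for case (R2') (and Theorem~\ref{thm: period in R2' case}), and even there it only yields descent to a finite extension $F'/F$, not to $F$. The remark following \ref{thm: period in R2' case} explicitly flags this: the $\sH$-route has a finite kernel obstruction, and the paper avoids it in (R2) precisely by \emph{not} using $\sH$.

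The correct route for (R2), as in the paper, is to pass to $\sV^\sharp=\sV\oplus\sE$ and $\sG^\sharp$: here $\dim_E\sV^\sharp$ is odd, so $N=\rN(\sV^\sharp)$ is a faithful representation of $\sG^\sharp$ and $Z_E^1(\sV^\sharp)=1$. Condition $(\sharp)$ in case (R2) is exactly what lets you choose a neat $\sU\subseteq\sG^\sharp(\IA_f)$ containing the image of $\sK$ and (after shrinking) making $j\colon\Sh_\sK(\sG)\into\Sh_\sU(\sG^\sharp)$ a closed embedding over $E\subseteq F$. One then runs the odd-rank argument for $\sG^\sharp$ with $\sfQ=\sfP_0\oplus(\sE\tensor\mathbf{1}_S)$, descends $\beta_\IC=j_\IC\circ\varrho_\IC$ to $F$, and deduces descent of $\varrho_\IC$ to $F$ because $j_\IC$ is an \emph{embedding} (not merely finite \'etale, as $\pi$ is); finally $\lambda_\et$ descends via \ref{lem: pi_1 equivariance} applied to $\sV^\sharp$, which requires no ``sufficiently small'' clause, and $\alpha_\et$ is obtained by restricting $\lambda_\et$ along the split decompositions. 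You mention $\sV^\sharp$ and \ref{thm: Moonen}(b) in passing, but then continue through $\sH$, which is where the argument goes wrong.
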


Case (a) is easy: The diagrams in \ref{eqn: fp taut diagram} readily imply that $\sfP$ over $S$ is weakly AM, so that the conclusion follows from \ref{thm: shimura as moduli}. In fact, we have that the de Rham component of the isomorphism (\ref{eqn: construct rho circ}) also descends to $S$. Note that case (a) in particular covers the case when $(\sX/S, \bxi)|_{S_\IC}$ belongs to case (CM) by \ref{thm: Moonen}(d). The proof of case (b) is the content of \S\ref{sec: proof of (R+)} below. If $(\sX/S, \bxi)|_{S_\IC}$ belongs to case (R2') and it is not known that $\fp_s \in \Mot_\Ab(\IC)$ for every $s \in S_\IC$, a slightly weaker version of the above theorem holds:
\begin{theorem}
\label{thm: period in R2' case}
    Assume that the family $(\sX/S, \bxi)|_{S^\circ}$ belongs to case (R2'), $\Lambda = \Lambda_0$, $\sfK$ satisfies condition $(\sharp)$, and for a prime $\ell_0$, $\sfK_{\ell_0}$ is sufficiently small as defined in \ref{def: condition sharp}. 

    Then $\rho_\IC : S_\IC \to \Sh_\sfK(G)_\IC$ descends to a morphism $\rho : S_{F'} \to \Sh_\sfK(G)_{F'}$ for a finite extension $F' / F$ in $\IC$. Moreover, $\alpha^\circ_{\ell_0}$ descends to an isomorphism $\alpha_{\ell_0} : \rho^* \sfV_{\ell_0} \sto \sfP_{\ell_0}|_{S_{F'}}$, and $\rho^* \sfV_{\ell}$ is \'etale locally isomorphic to $\sfP_{\ell}|_{S_{F'}}$ for every other $\ell$ in the sense of \ref{def: \'etale locally isomorphic}. 
\end{theorem}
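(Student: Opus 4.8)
The plan is to reduce the statement, as in the proof of \ref{thm: period char 0}(b), to the moduli interpretation of the auxiliary Shimura variety $\Sh_\sC(\sH)$ attached to $\sH = \sG/Z^1_E$ and its faithful representation $N = \rN(\sV)$ (here $\rN = \Nm_{E/\IQ}$), exploiting that $\rN(\fp_s) = \Nm_{E/\IQ}(\fp_s)$ is an abelian motive for every $s$ by \ref{thm: Moonen}(c); the non‑triviality of the kernel $Z^1_E$ of $\sG \to \sH$ in case (R2') (recall $\dim_E\sV = 4$) is exactly what prevents the full conclusion of \ref{thm: period char 0} and forces the hypothesis that $\sfK_{\ell_0}$ be sufficiently small. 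Concretely, I would first replace $F$ by a finite extension $F'\subseteq\IC$ (and $S$ by $S_{F'}$) so that: $E\subseteq F'$; the action of $E$ on $\sfP_{B,b}$, which is absolute Hodge by \ref{thm: Moonen} and hence has only finitely many $\Aut(\IC/F)$‑conjugates, descends to actions on $\sfP_\dR$ and $\sfP_\et$ over $S_{F'}$; and $\Sh_\sfK(G)$, $\Sh_\sK(\sG)$ (with $\sK := \sfK\cap\sG(\IA_f)$) and $\Sh_\sC(\sH)$ (with $\sC$ the neat compact open subgroup furnished by condition $(\sharp)$) are all defined over $F'$. Since $\Mon(R^2 f_*\IQ_\ell,b)$ is connected and the $E$‑action on $\sfP_\et$ is $\Gal(\overline{F'}/F')$‑invariant, the image of $\pi_1^{\et}(S_{F'},b)$ in $\GL(\sfP_{\et,b})$ centralizes $E$ and preserves the (Tate‑twisted) cup‑product pairing, hence lies in $\sK$; so $\mu_b$ is $E$‑linear and $[\mu]$ refines to a $(\sG,V,\sK)$‑level structure on $\sfP_\et|_{S_{F'}}$. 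By \ref{lem: situation in practice} over $\IC$, its restriction to $S_\IC$ is $(\sfP|_{S_\IC})$‑rational and of type $\Ohm_\sV$, so \ref{thm: moduli over C} applied to $(\sG,\Ohm_\sV)$ and $V$ gives a unique $\varrho_\IC:S_\IC\to\Sh_\sK(\sG)_\IC$ with $\varrho_\IC^*(\ssfV,[\eta_\sV])\iso(\sfP,[\mu])|_{S_\IC}$; composing with $i:\Sh_\sK(\sG)\to\Sh_\sfK(G)_E$ recovers $\rho_\IC$ by the uniqueness in \ref{thm: moduli over C} for $(G,\Ohm)$.

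Next I would apply the norm functor. Using the commuting diagrams of \ref{eqn: fp taut diagram} together with $\rN(\fp_s)\in\Mot_\Ab(\IC)$ from \ref{thm: Moonen}(c), one checks that $\rN(\sfP)$ — formed over $S_{F'}$ as in \ref{sec: N commutes with pullback}, since the $E$‑action descends there — is a weakly AM system of realizations over $S_{F'}$, and that the induced $(\sH,N,\sC)$‑level structure $\rN([\mu])$ is $\rN(\sfP)$‑rational and, by \ref{lem: situation in practice} applied to $\rN(\sfP)\in\sfR^*_\am(S_{F'})$, of type $\Ohm_\sH$. Thus $(\rN(\sfP),\rN([\mu]))$ is an object of $\sM_{N,F'}(S_{F'})$, and \ref{thm: shimura as moduli} produces a unique $\psi:S_{F'}\to\Sh_\sC(\sH)_{F'}$ with $\psi^*(\sfN,[\eta_N])\iso(\rN(\sfP),\rN([\mu]))$ over $F'$. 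Comparing over $\IC$ with \ref{thm: moduli over C} and with the identification $\pi^*(\sfN,[\eta_N])\iso(\rN(\ssfV),\rN([\eta_\sV]))$ of \ref{eqn: N commutes with pullback}, uniqueness forces $\psi|_{S_\IC}=\pi\circ\varrho_\IC$; in particular $\pi\circ\varrho_\IC$ descends to $F'$.

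The decisive step is to descend $\rho_\IC=i\circ\varrho_\IC$ itself. Because $Z^1_E$ is a central torus of $\sG$ acting trivially on $\Ohm_\sV$, the map $\pi:\Sh_\sK(\sG)\to\Sh_\sC(\sH)$ is finite étale; hence $Y:=\Sh_\sK(\sG)_{F'}\times_{\Sh_\sC(\sH)_{F'},\,\psi}S_{F'}$ is a finite étale $S_{F'}$‑scheme defined over $F'$, and $\varrho_\IC$ is a section of $Y_\IC\to S_\IC$. I would then argue — following the blueprint of the proof of \ref{thm: period char 0}(b) and the rigidity of weakly AM systems from \cite{YangSystem} — that for each $\sigma\in\Aut(\IC/F')$ the two lifts $\varrho_\IC$ and $\varrho_\IC^\sigma$ of the descended morphism $\psi|_{S_\IC}$ have the same image under $i$; equivalently, that the $(G,V,\sfK)$‑level VHS $(\sfP,[\mu])^\sigma|_{S_\IC}$ and $(\sfP,[\mu])|_{S_\IC}$ classified by $\rho_\IC^\sigma$ and $\rho_\IC$ via \ref{thm: moduli over C} are isomorphic. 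Here the weak‑AM property of $\rN(\sfP)$ is what controls the Betti conjugation of $\sfP$, whose étale and de Rham parts are already over $F'$. Galois descent for morphisms of quasi‑projective $F'$‑schemes then yields $\rho:S_{F'}\to\Sh_\sfK(G)_{F'}$ with $\rho|_{S_\IC}=\rho_\IC$, factoring through a morphism $\varrho:S_{F'}\to\Sh_\sK(\sG)_{F'}$ that descends $\varrho_\IC$ up to the $Z^1_E$‑ambiguity. I expect this reconciliation of the \emph{a priori} non‑weakly‑AM system $\sfP$ with its abelian‑motivic norm to be the main obstacle, and it is the reason why one can recover only part of the isomorphism of étale realizations and why the extension $F'$ seems unavoidable.

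Finally, with $\varrho$ in hand the descent of $\alpha^\circ_{\ell_0}$ is an application of \ref{lem: pi_1 equivariance}: take $T=S_{F'}$, $\sfW=\varrho^*\ssfV$, $\sfW'=\sfP$ with their $\sK$‑level structures, and $\gamma_\et$ the étale component of $\varrho^*(\ssfV,[\eta_\sV])|_{S_\IC}\iso(\sfP,[\mu])|_{S_\IC}$; then $\rN(\gamma_\et)$ descends because it is the étale part of $\psi^*(\sfN,[\eta_N])\iso(\rN(\sfP),\rN([\mu]))$ through \ref{eqn: N commutes with pullback}. Since $\sfK_{\ell_0}$ is sufficiently small, i.e. $\sK_{\ell_0}\cap Z^1_E(\IQ_{\ell_0})=1$, the lemma gives the descended isomorphism $\alpha_{\ell_0}:\rho^*\sfV_{\ell_0}\sto\sfP_{\ell_0}|_{S_{F'}}$. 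For $\ell\neq\ell_0$, the local systems $\rho^*\sfV_\ell$ and $\sfP_\ell$ on $S_{F'}$ become isomorphic on $S_\IC$ via $\alpha^\circ_\ell$; the space $\Hom_{\pi_1(S_\IC,b)}(\rho^*\sfV_{\ell,b},\sfP_{\ell,b})$ is a finite‑dimensional continuous $\IQ_\ell$‑representation of $\pi_1^{\et}(S_{F'},b)$ containing isometries, so the stabilizer of the line through $\alpha^\circ_{\ell,b}$ is open; passing to the corresponding finite connected étale cover, and then to a further cover of degree $\le 2$ to trivialize the resulting quadratic character, shows that $\rho^*\sfV_\ell$ and $\sfP_\ell|_{S_{F'}}$ are étale locally isomorphic in the sense of \ref{def: \'etale locally isomorphic}.
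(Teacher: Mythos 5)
Your overall plan is the right one: reduce to the quotient datum $(\sH,\Omega_\sH)$ and the faithful representation $N=\Nm_{E/\IQ}(\sV)$, use \ref{thm: Moonen}(c) to make $\Nm_{E/\IQ}(\sfP)$ weakly AM, and run \ref{thm: shimura as moduli} to descend $\pi\circ\varrho_\IC$. Your final paragraph on $\alpha_{\ell_0}$ via \ref{lem: pi_1 equivariance} is also what the paper does. However, there are two substantive points where you differ from, or fall short of, the paper.

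First, a small inefficiency: you enlarge $F$ to a finite extension $F'$ at the very start in order to arrange $E\subseteq F'$ and to descend the $E$-action on $\sfP$. This is unnecessary. The lemma in \ref{sec: preparation for period over E} shows that, already over $F$, the $E$-action on $\sfP_{B,b}$ extends to an action on $\sfP\in\sfR(S)$ and that $\tau(E)\subseteq F$; the paper explicitly remarks that this discussion "applies without change in the (R2') case." The extension $F'/F$ enters only at the very end, and for a different reason.

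Second, and more seriously, your ``decisive step'' has a gap. You propose to prove that for each $\sigma\in\Aut(\IC/F')$ the level VHS $(\sfP,[\mu])^\sigma|_{S_\IC}$ and $(\sfP,[\mu])|_{S_\IC}$ are isomorphic, and you rightly flag this as the main obstacle you cannot close. Indeed that is exactly the statement one does not have in case (R2'): if one could establish it, one would descend $\rho_\IC$ outright by the same mechanism as in cases (a) and (R+), and there would be no need to work with the auxiliary $\Sh_\sC(\sH)$ at all. The paper circumvents this entirely via a simple lemma (\ref{lem: finite descent}): given $f:A_\IC\to B_\IC$, an \'etale $g:B\to C$ over $k$, and $h:A\to C$ with $g_\IC\circ f=h_\IC$, the morphism $f$ descends to a finite extension of $k$, because the graph of $f$ identifies $A_\IC$ with a connected component of $(A\times_C B)_\IC$, and connected components descend (first pass to $\bar k$, then spread out). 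Applying this with $(A,B,C,g,h)=(S,\Sh_\sK(\sG),\Sh_\sC(\sH),\pi,\rho_N)$ immediately gives the descent of $\varrho_\IC$ to $S_{F'}$ for some finite $F'/F$, with no need to compare $\sigma$-conjugate level structures. This is the piece your argument is missing; without it, the finite \'etale structure you correctly identify does not by itself yield the descent, because $S_{F'}\times_{\Sh_\sC(\sH)_{F'}}\Sh_\sK(\sG)_{F'}$ may have connected components that only become geometrically connected after a finite extension.

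Your treatment of the $\ell\neq\ell_0$ case is a bit more roundabout than the paper's: there, one simply notes that $\sK_\ell$ contains an open $\sK'_\ell$ with $\sK'_\ell\cap Z^1_E(\IQ_\ell)=1$ and applies \ref{lem: pi_1 equivariance} after passing to the corresponding finite \'etale cover, which directly yields the \'etale-local isomorphism. Your Hom-space argument would work but is less direct and re-proves what \ref{lem: pi_1 equivariance} already packages.
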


\begin{remark}
In literature, to define period morphisms to orthogonal Shimura varieties, one usually keeps track of a trivialization of the determinants (cf. \cite[Prop.~4.3]{MPTate}). Such a trivialization (i.e., an isometry $\underline{\det(V)} \sto \det(\sfP_B)$ in our notations) is implicit in the statement that (the restriction to $S_\IC$ of) $[\mu]$ is $(\sfP|_{S_\IC})$-rational as a $(G, V, \sfK)$-level structure, because $\det(V)$ is $G$-invariant. More concretely, one obtains this trivialization by globalizing $\det(\mu_b)$, using that $\pi_1(S_\IC, b)$ fixes $\det(\sfP_{B, b})$ (cf. the proof of \ref{lem: situation in practice}). However, we remark that if $S$ were not geometrically connected, we would not be able to show that $[\mu]$ is $(\sfP|_{S_\IC})$-rational over the connected components of $S_\IC$ other than $S^\circ$, unless we know $\det(\sfP_{B, b})$ is spanned by an absolute Hodge class (e.g., in case (a) of \ref{thm: period char 0}). 

On the other hand, for the purpose of putting level structures, the lack of absolute-Hodgeness of $\det(\sfP_{B, b})$ is partially remedied by independence-of-$\ell$ type results on algebraic monodromy (e.g., \cite[Lem.~3.2]{Saitodisc}, cf. \cite[Cor.~5.9]{Taelman2}) which implies that if for some $\ell$, $\det(\sfP_{B, b}) \tensor \IQ_\ell$ is $\pi_1^\et(S, b)$-invariant, then the same is true for all $\ell$. In \ref{lem: Larsen-Pink} we used Larsen-Pink' result \cite[Prop.~6.14]{Larsen-Pink} to achieve a similar effect. Later in \ref{sec: preparation for period over E 2} this is used to overcome a similar difficulty: We do not know that the tensors which cut out $\Res_{E/\IQ}\SO(\sV)$ from $\Res_{E/\IQ} \O(\sV)$ are given by absolute-Hodge tensors on $\sfP_{B, b}$ via $\mu_b$. As far as we are aware of, this cannot be deduced from \ref{thm: Moonen}. However, we can put $\sK$-level structures on $\sfP_\et$ in question and proceed. 
\end{remark}

\subsection{Case (R+): maximal monodromy}
\label{sec: proof of (R+)}
%Note that ${}^F S_{(\IQ)} = S$. By the adjointness of the $(-)_{(\IQ)}$-functor, to show that $\rho_\IC_\IC$ extends to a morphism of $\IQ$-varieties $S \to \Sh_\sfK(G)$ it suffices to show that $\rho_\IC_\IC$ descends to a morphism of $F$-varieties ${}^F S \to \Sh_\sfK(G)_F$. 

%Now we prepare to prove the same theorem for case (R).

\begin{lemma}
\label{sec: throw away (0,0) classes} 
    It suffices to prove \ref{thm: period char 0} when $\Lambda = \NS(\sX_\eta)_\IQ$. 
\end{lemma}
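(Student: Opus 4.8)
The plan is to factor the period morphism through a sub-Shimura datum, thereby replacing $\Lambda$ by the whole Néron--Severi group, i.e.\ $\sfP$ by $\sfP_0$. Since the conclusion of \ref{thm: period char 0} is insensitive to the chosen isometry $\mu_b \colon V \sto \sfP_{B,b}$, I would pick it compatibly with the orthogonal decomposition $\sfP_{B,b} = \Lambda^\perp \oplus \sfP_{0,B,b}$ of \ref{set-up: char 0 base}, so that $V = \Lambda^\perp \oplus V_0$ as quadratic $\IQ$-spaces with $\Lambda^\perp$ negative definite. Then $j \colon G_0 := \SO(V_0) \hookrightarrow \SO(V) = G$, acting by the identity on $\Lambda^\perp$, identifies $G_0$ with the pointwise stabilizer of $\Lambda^\perp$ in $G$, and since a positive $2$-plane in $V_{0,\IR}$ is still positive in $V_\IR$, it induces a morphism of Shimura data $(G_0, \Ohm_0) \to (G, \Ohm)$ with reflex field $\IQ$ on both sides. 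I would set $\sfK_0 := \sfK \cap j(G_0)(\IA_f)$, viewed inside $G_0(\IA_f)$. Because $\Lambda^\perp \subseteq \sfP_\et$ is a constant subsystem (its classes extend to relative line bundles on $\sX/S$ by \ref{lem: extend LB on generic}), the monodromy of $\sfP_\et$ fixes $\Lambda^\perp$, hence lands in $j(G_0)(\IA_f) \cap \sfK = j(\sfK_0)$; it follows that $\sfK_0$ is neat and admissible for $(\sX/S, \bxi)$ with $\Lambda$ replaced by $\NS(\sX_\eta)_\IQ$ (whose associated system of realizations is $\sfP_0$).

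Next I would verify that the hypotheses of \ref{thm: period char 0} for $(\sX/S, \bxi, \Lambda)$ pass to $(\sX/S, \bxi, \NS(\sX_\eta)_\IQ)$. By semisimplicity of $\Mot_\AH(\IC)$, the motive $\fp_s$ splits as $\mathbf{1}^{\oplus \dim \Lambda^\perp} \oplus \fp^0_s$, where $\fp^0_s$ is the submotive of $\fh^2(\sX_s)(1)$ realizing $\sfP_{0,s}$; hence $\fp_s \in \Mot_\Ab(\IC)$ for all $s \in S_\IC$ if and only if $\fp^0_s \in \Mot_\Ab(\IC)$ for all $s$, which settles hypothesis (a). The case among (R1), (R2), (CM), (R2'), the endomorphism field $E$, and the auxiliary groups $\sG = \Res_{E/\IQ}\SO(\sV)$, $\sH$, $\sG^\sharp$ are all defined in terms of $\sfP_{0,B,b}$ alone, and since $\sG(\IA_f) \subseteq G_0(\IA_f)$ one has $\sfK \cap \sG(\IA_f) = \sfK_0 \cap \sG(\IA_f)$; therefore $\sfK$ satisfies condition $(\sharp)$ of \ref{def: condition sharp} if and only if $\sfK_0$ does, which settles hypothesis (b).

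Granting \ref{thm: period char 0} for $\Lambda = \NS(\sX_\eta)_\IQ$, applying it to $(\sX/S, \bxi)$ with the data $\mu_b|_{V_0}$, $G_0$, $\sfK_0$ descends the period morphism $\rho^0_\IC \colon S_\IC \to \Sh_{\sfK_0}(G_0)_\IC$ to $\rho^0 \colon S \to \Sh_{\sfK_0}(G_0)_F$, together with an étale comparison $\alpha^0_\et \colon (\rho^0)^*\sfV^0_\et \sto \sfP_{0,\et}$ over $S$. The morphism of Shimura data above, together with $j(\sfK_0) \subseteq \sfK$, yields a Shimura morphism $c \colon \Sh_{\sfK_0}(G_0) \to \Sh_\sfK(G)$ over $\IQ$; since the standard representation $V$ of $G$ restricts to $\Lambda^\perp \oplus V_0$ with $\Lambda^\perp$ a trivial $G_0$-representation, $c^*$ of the automorphic system $\sfV$ and its tautological level structure is identified with $\underline{\Lambda^\perp} \oplus \sfV^0$ equipped with the level structure refining $c^*[\eta_V]$ and inducing $[\eta_{V_0}]$ on the second summand. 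Pulling back along $\rho^0_\IC$ then recovers exactly $(\sfP, [\mu])|_{S_\IC}$, so by the uniqueness in \ref{thm: moduli over C} (applicable as $\sfK$ is neat) we get $\rho_\IC = c_\IC \circ \rho^0_\IC$ and $\alpha^\circ_\et = \id_{\underline{\Lambda^\perp}} \oplus (\alpha^0_\et)|_{S_\IC}$. Consequently $\rho := c_F \circ \rho^0$ and $\alpha_\et := \id_{\underline{\Lambda^\perp}} \oplus \alpha^0_\et$ descend $\rho_\IC$ and $\alpha^\circ_\et$ over $F$ --- the summand $\id_{\underline{\Lambda^\perp}}$ being a morphism of constant systems spanned by algebraic cycles, hence automatically defined over $F$. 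The only step requiring genuine care is this last bookkeeping: that pullback of automorphic systems and of tautological level structures along $c$ splits compatibly with the orthogonal decompositions. This is routine from the constructions in \S\ref{sec: automorphic sheaves} and the moduli description \ref{thm: moduli over C}, so no real obstacle remains.
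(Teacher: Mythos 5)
Your proposal is correct and takes essentially the same route as the paper: decompose $V = M \oplus V_0$ with $M = \mu_b^{-1}(\Lambda^\perp)$, factor $\rho_\IC$ through the Shimura morphism $j\colon \Sh_{\sfK_0}(G_0) \to \Sh_\sfK(G)$ for $\sfK_0 = \sfK \cap G_0(\IA_f)$, invoke uniqueness in \ref{thm: moduli over C} to get $\rho_\IC = j_\IC \circ \rho_{0,\IC}$, and then glue the descent of $\rho_{0,\IC}$ and $\alpha^\circ_{0,\et}$ with the constant identity on $\underline{\Lambda^\perp}$. You spell out the transfer of hypotheses (a) and (b) a bit more explicitly than the paper does, but the content is the same.
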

\begin{proof}
Let $\Lambda^\perp$ be the orthogonal complement of $\Lambda$ in $\Lambda_0 := \NS(\sX_\eta)_\IQ$ and set $M = \mu_b^{-1}(\Lambda^\perp)$. Then $V = V_0 \oplus M$ and we view $G_0$ as the stabilizer of $M$ in $G$. The image of $\pi_1^\et(S, b)$ in $G(\IA_f)$ via $\mu_b$ actually lies in $G_0(\IA_f)$. Therefore,  $\sfK_0 = \sfK \cap G_0(\IA_f)$ satisfies condition $(\sharp)$ for $\Lambda_0$.

Let $\sfV_0$ be the automorphic system on $\Sh_0 := \Sh_{\sfK_0}(G_0)$ given by $V_0 \in \mathrm{Rep}(G_0)$, $[\eta_{V_0}]$ be the $\sfK_0$-level structure on $\sfV_{0}$, and $[\mu_0]$ be the $\sfK_0$-level structure on $\sfP_0$ defined by $\mu_{0, b} := \mu_b |_{V_0}$. As in the paragraph above \ref{thm: period char 0}, we obtain a morphism $\rho_{0, \IC} : S_\IC \to \Sh_{0, \IC}$ such that $(\sfV_0, [\eta_{V_0}])|_{S_\IC} \iso (\sfP_0, [\mu_0])|_{S_\IC}$. Define $\alpha^\circ_{0, \et}$ accordingly. 

Consider the Shimura morphism $j : \Sh_0 \to \Sh := \Sh_{\sfK}(G)$. Then we have natural identifications 
\begin{equation}
\label{eqn: j^*}
    j^*(\sfV) = \sfV_0 \oplus (M \tensor \mathbf{1}_{\Sh_0}), \text{ and } \sfP = \sfP_0 \oplus (\Lambda^\perp \tensor \mathbf{1}_{S})
\end{equation}
Moreover, the level structure $[\eta_{V_0}]$ (resp. $[\mu_0]$) refines $j^*([\eta_V])$ (resp. $[\mu]$) in the sense of \ref{def: level structures}(d). Therefore, one easily checks that $$(j_\IC \circ \rho_{0, \IC})^* (\sfV, [\eta_V]) \iso (\sfP, [\mu])|_{S_\IC}.$$
By the uniqueness statement in \ref{thm: moduli over C}, this implies that $\rho_\IC = j_\IC \circ \rho_{0, \IC}$.

Assume now that $\rho_{0, \IC}$ descends to $\rho_0$ over $F$, and $\alpha^\circ_{0, \et}$ descends to $\alpha_{0, \et}$ over $S$. Then $\rho_\IC$ descends to $j_F \circ \rho_0$, and $\alpha^\circ_\et$ descends to an isomorphism $\alpha_{0,\et} \oplus (\mathrm{id}_{\Lambda^\perp} \tensor \underline{\IA}_f)$ over $S$. 
\end{proof}

\subsubsection{} \label{sec: preparation for period over E}
In this section we prove \ref{thm: period char 0}. By \ref{sec: throw away (0,0) classes}, we may assume that $\Lambda = \Lambda_0$, so that $V = V_0$ and $\sfP = \sfP_0$. Let $E, \sG, \sV, \sG^\sharp, \sfK, \sK$ be as introduced in \ref{def: condition sharp}. In particular, $E$ is the endomorphism field of the Hodge structure $\sfP_b$, and $\sV$ is the $E$-bilinear lift of $V$, which carries an $E$-action via the isometry $\mu_b : V \sto \sfP_{B, b}$ fixed in \ref{sec: define G for period}.  

Note that by \ref{thm: Moonen} each $e \in E$, viewed as an element of $\End(\sfP_{B, b}) = \End(\w_B(\fp_b))$, is absolute Hodge, so its image in $\End(\sfP_{\et, b})$ is stabilized by an open subgroup of $\pi_1^\et(S, b)$. Since we assumed that $\Mon(\sfP_\ell, b)$ is connected for every $\ell$, the $E$-action on $\sfP_{\et,b}$ must already be $\pi_1^\et(S, b)$-equivariant. This has the following consequence: 

\begin{lemma}
    The action of $E$ on $\sfP_{B, b}$ extends to an action on $\sfP$. If $\tau : E \into \IC$ is the embedding induced by the action of $E$ on $\Fil^1 \sfP_{\dR, b}$, or equivalently the unique indefinite real place of $\sV$, then $\tau(E) \subseteq F$. 
\end{lemma}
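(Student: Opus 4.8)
The plan is to treat the two assertions separately: first that the $E$-action extends over all of $\sfP\in\sfR(S)$, and then that $\tau(E)\subseteq F$.

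For the extension, the Betti and \'etale components are already in hand: the $E$-action on $\sfP_B$ over $S_\IC$ was produced in \ref{sec: the 4 cases} (using $E\subseteq\End_\Hdg(\sfP_b)$ and $\Mon(\sfP_B,b)\subseteq\MT(\sfP_b)$), and the $E$-action on $\sfP_\et$ over $S$ in the paragraph preceding this lemma (from \ref{thm: Moonen} together with the connectedness of every $\Mon(\sfP_\ell,b)$ recorded in \ref{set-up: char 0 base}). So the remaining content is to descend the de Rham action to $S$ over $F$. I would first apply the Riemann--Hilbert correspondence and algebraize, using regular singularities exactly as in the proof of \ref{lem: situation in practice} (via \cite[II Thm 5.9]{DelVB}), to obtain an action of $E$ on $\sfP_\dR|_{S_\IC}$ by horizontal, filtration-preserving endomorphisms, compatible with the Betti action through $i_\dR$. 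Since the $F$-vector space of horizontal endomorphisms of $\sfP_\dR$ is finite dimensional and its formation commutes with the field extension $F\subseteq\IC$, and since $\IC^{\Aut(\IC/F)}=F$, this action descends to $S$ if and only if, for every $e\in E$ and every $\sigma\in\Aut(\IC/F)$, the descent datum of $\sfP_\dR$ intertwines the action of $e$ at $b$ with its action at $\sigma(b)$; as $S_\IC$ is connected it suffices to verify this on the fibre over $b$, i.e. that $\sigma_{\sfP_\dR,b}\circ e_{\dR,b}=e_{\dR,\sigma(b)}\circ\sigma_{\sfP_\dR,b}$.

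The input that makes this work is \ref{thm: Moonen}: applied at $b$ and at $\sigma(b)$ it says the action of $e$ is absolute Hodge on $\fp_b$ and on $\fp_{\sigma(b)}$, so it is a morphism in $\Mot_\AH(\IC)$ in both places; hence the $\sigma$-twist $e^\sigma$ (in the sense of \ref{sec: twist tensors}) is a morphism of $\fp_b^\sigma$, and $\sigma_{\fp,b}\colon\fp_b^\sigma\xrightarrow{\sim}\fp_{\sigma(b)}$ is an isomorphism in $\Mot_\AH(\IC)$. Using the commutative de Rham square of \ref{eqn: fp taut diagram}, the displayed identity becomes equivalent to the conjugation identity $(\text{action of }e\text{ at }\sigma(b))=\sigma_{\fp,b}\circ e^\sigma\circ\sigma_{\fp,b}^{-1}$ in $\Mot_\AH(\IC)$; and since $\w_\et$ is faithful on the semisimple category $\Mot_\AH(\IC)$, it is enough to check this after applying $\w_\et$. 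That last check is a short diagram chase combining the commutative \'etale square of \ref{eqn: fp taut diagram} with the fact that the global endomorphism $e_\et$ of $\sfP_\et$ over $S$ is fixed by the descent datum, so $\sigma_{\sfP_\et,b}$ sends $e_{\et,b}$ to $e_{\et,\sigma(b)}$. Compatibility of the resulting $\sfP_\dR$-action with $i_\dR$ and $i_\et$ is then automatic. I expect this to be the main obstacle: because $\sfP$ is not known to be weakly abelian-motivic outside case (CM) one cannot invoke \ref{lem: descent of morphisms of systems}, and \ref{thm: Moonen}---the absolute-Hodgeness of the $E$-action on every fibre---is exactly what substitutes for it.

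For the last assertion, note that the $E$-action on $\sfP_\dR$ preserves the Hodge filtration, in particular the line bundle $\Fil^1\sfP_\dR$ on $S$ (it is a line bundle because $h^{2,0}(\sX_s)=1$ and $\sfP$ has weight $0$). This gives a ring homomorphism $E\to\End_{\sO_S}(\Fil^1\sfP_\dR)=\Gamma(S,\sO_S)$, which is injective, hence an embedding of $F$-algebras $E\hookrightarrow\Gamma(S,\sO_S)$. Since $S$ is smooth, hence geometrically reduced, and geometrically connected over $F$, the field $F$ is algebraically closed in $\Gamma(S,\sO_S)$; as $E$ is a number field, its image consists of elements algebraic over $F$, so the embedding factors through $F\subseteq\Gamma(S,\sO_S)$. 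Finally $\tau$ is, by definition, the composite of $E\to\Gamma(S,\sO_S)$ with evaluation at $b$, and evaluation at $b$ restricts on $F$ to the fixed embedding $F\subseteq\IC$; therefore $\tau(E)\subseteq F$. (This is consistent with the description of $\tau$ as the unique indefinite real place of $\sV$, since that place is precisely the embedding of $E$ through which it acts on the line $V_\IC^{(1,-1)}=\Fil^1$.)
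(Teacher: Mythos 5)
Your proof is correct and hits the same two underlying points as the paper's (absolute Hodgeness of $e$ from \ref{thm: Moonen}, plus connectedness to globalize), but the bookkeeping differs in both halves. For the de Rham descent, the paper invokes the Kisin-style argument (cf. \cite[(2.2.2)]{KisinInt}) \emph{at the generic point $\eta$}: since $e$ is absolute Hodge and its \'etale component is $\pi_1^\et(\eta,b)$-invariant, the de Rham class descends to $\sfP_{\dR,\eta}^\otimes$, and this globalizes over $S$ by horizontality. You instead check $\Aut(\IC/F)$-equivariance of the de Rham endomorphism directly at the fibre over $b$, translating the needed identity into $\Mot_\AH(\IC)$ via the commutative squares of \ref{eqn: fp taut diagram} and then verifying it after applying the faithful functor $\w_\et$. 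Both are sound; the paper's route is shorter because the Kisin argument packages exactly the descent step you carry out by hand. For $\tau(E)\subseteq F$, the paper tracks $\tau_s$ as a function of $s\in S(\IC)$, using $E$-equivariance of $\sigma_{\sfP_\dR,s}$ to get $\tau_{\sigma(s)}=\sigma\circ\tau_s$, and parallel transport to get $\tau_{\sigma(b)}=\tau_b$, forcing $\sigma\circ\tau=\tau$. Your observation---that the $E$-action on the line bundle $\Fil^1\sfP_\dR$ over $S$ furnishes an embedding $E\hookrightarrow\End_{\sO_S}(\Fil^1\sfP_\dR)=\Gamma(S,\sO_S)$ which must land in $F$ because $F$ is algebraically closed in $\Gamma(S,\sO_S)$ for a geometrically connected, geometrically reduced $F$-variety---is a clean alternative that avoids the equivariance-plus-constancy step; it also makes the geometric connectedness hypothesis do all the work in a single place.
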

\begin{proof}
    For the first statement, it is clear that the $E$-action on $\sfP_{B, b}$ (resp. $\sfP_{\et, b}$) extends (necessarily uniquely) to $\sfP_B$ and (resp. $\sfP_\et$) because it is $\pi_1(S_\IC, b)$ (resp. $\pi_1^\et(S, b)$)-equivariant. It remains to show that the $E$-action on $\sfP_{\dR}|_{S_\IC}$, obtained via the Riemann-Hilbert correspondence, descends to an action on $\sfP_\dR$. This follows from the fact that the de Rham realization of every $e \in E$, as an element of $\sfP_{\dR, b}^\tensor$, descends to $\sfP_{\dR, \eta}^\tensor$: Since $e$ is absolute Hodge and its \'etale component is $\pi^\et_1(\eta, b)$-invariant, its de Rham component descends to $\eta$ (cf. the argument for \cite[(2.2.2)]{KisinInt}). 

    We remind the reader that by $S(\IC)$ we mean the set of \textbf{$F$-linear} morphisms $\Spec(\IC) \to S$. The first statement implies that for every $s \in S(\IC)$, $\sfP_{B, s}$ carries an action of $E$, which is self-adjoint by \ref{thm: Zarhin}; moreover, for every $\sigma \in \Aut(\IC/F)$, the $\sigma$-linear isomorphism $\sigma_{\sfP_\dR, s} : \sfP_{\dR, s} \iso \sfP_{\dR, \sigma(s)}$ of filtered vector spaces is $E$-equivariant (see \ref{not: galois descent} for this notation). Therefore, if we let $\tau_s : E \to \IR$ be the place through which $E$ acts on $\Fil^1 \sfP_{\dR, s}$, then $\tau_{\sigma(s)} = \sigma \circ \tau_s$. Now we use that $\tau_s$ can also be characterized as the unique real place of $E$ such that $\sfP_{B, s} \tensor_{\tau_s} \IR$ is indefinite. Parallel transport implies that $\tau_s$ is constant on $S(\IC)$, which by assumption is connected. As $\tau = \tau_b$ and $\sigma(b) \in S(\IC)$, $\sigma \circ \tau = \tau$ for every $\sigma \in \Aut(\IC/F)$. This implies that $\tau(E) \subseteq F$. 
\end{proof}

\subsubsection{} \label{sec: preparation for period over E 2} Below we shall view $E$ as a subfield of $F$ (and hence of $\IC$) via $\tau$ as above and drop $\tau$ from the notation. Now we recall the discussion in \ref{sec: N commutes with pullback}: Let $\Ohm_\sV \subseteq \Ohm$ be the Hermitian symmetric subdomain $\{ w \in \IP(\sV \tensor_E \IC) \mid \< w, \bar{w} \> > 0, \< w, w \> = 0\}$. Then $(\sG, \Ohm_\sV)$ is a Shimura subdatum of $(G, \Ohm)$ with reflex field $E$. Therefore, there is a Shimura morphism $i_\IC : \Sh_\sK(\sG)_\IC \to \Sh_\sfK(G)_\IC$ which descends to $i : \Sh_\sK(\sG) \to \Sh_\sfK(G)_E$ over $E$. Let $\wt{\sfV}$ be the automorphic system on $\Sh_\sK(\sG)$ defined by $V$ and let $[\eta_\sV]$ be its tautological $\sK$-structure. Recall that $\wt{\sfV}$ is identified with $i^* \sfV$, and $[\eta_\sV]$ refines $i^*([\eta_V])$.  

Consider the situation in \ref{sec: define alpha circ}. Recall that we assumed that $\Mon(\sfP_\ell, b)$ is connected for every $\ell$. Since the centralizer of the $E$-action in $\O(V)$ can be identified with $\Res_{E/\IQ} \O(\sV)$, which contains $\sG$ as the identity component, the monodromy action of $\pi_1^\et(S, b)$ must take values in $\sK := \sfK \cap \sG(\IA_f)$ via $\mu_b$. Therefore, there exists a $(\sG, V, \sK)$-level structure $[\wt{\mu}]$ on $\sfP_\et$ such that $[\wt{\mu}]_b = \sK \cdot (\mu_b \tensor \IA_f)$. As the Hodge structure on $\sfP_{B, b}$ is defined by a point on $\Ohm_\sV$ via $\mu_b$, by \ref{lem: situation in practice} the restriction of $[\wt{\mu}]$ to $S_\IC$ is $(\sfP|_{S_\IC})$-rational and of type $\Ohm_\sV$. One easily checks that: 

\begin{lemma}
\label{lem: define varrho over C}
    Let $\varrho_\IC : S_\IC \to \Sh_\sK(\sG)_\IC$ be the unique morphism such that 
    \begin{equation}
    \label{eqn: define wt rho}
        (\varrho_\IC)^* (\ssfV, [\eta_\sV]) \iso (\sfP, [\wt{\mu}])|_{S_\IC}
    \end{equation}
    given by \ref{thm: moduli over C}. Then $\rho_\IC = i_\IC \circ \varrho_\IC$.
\end{lemma}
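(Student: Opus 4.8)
The plan is to invoke the uniqueness clause of Theorem~\ref{thm: moduli over C}. Both $\rho_\IC$ and $i_\IC \circ \varrho_\IC$ are morphisms $S_\IC \to \Sh_\sfK(G)_\IC$, and by (\ref{eqn: construct rho circ}) the morphism $\rho_\IC$ is characterized as the \emph{unique} one for which $(\rho_\IC)^*(\sfV, [\eta_V])$ is isomorphic to $(\sfP, [\mu])|_{S_\IC}$ in the groupoid $\sM_V(S_\IC)$. So it suffices to produce an isomorphism $(i_\IC \circ \varrho_\IC)^*(\sfV, [\eta_V]) \iso (\sfP, [\mu])|_{S_\IC}$ in $\sM_V(S_\IC)$.

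First I would dispose of the underlying VHS. By the natural identification $\wt{\sfV} = i^*\sfV$ recalled in \ref{sec: N commutes with pullback} and \ref{sec: preparation for period over E 2}, restriction to $\IC$ gives $\ssfV|_{\Sh_\sK(\sG)_\IC} = i_\IC^*(\sfV|_{\Sh_\sfK(G)_\IC})$, hence $(i_\IC \circ \varrho_\IC)^*\sfV \iso \varrho_\IC^*\ssfV$, and (\ref{eqn: define wt rho}) identifies the right-hand side with $\sfP|_{S_\IC}$ as a VHS. It remains to match level structures. Here I would pull back the refinement relation: since $[\eta_\sV]$ refines $i^*[\eta_V]$ in the sense of \ref{def: level structures}(d) and refinement is preserved by pullback, $\varrho_\IC^*[\eta_\sV]$ refines $\varrho_\IC^*(i_\IC^*[\eta_V]) = (i_\IC \circ \varrho_\IC)^*[\eta_V]$. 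By (\ref{eqn: define wt rho}) the former corresponds, under the VHS isomorphism of the previous sentence, to $[\wt{\mu}]|_{S_\IC}$, so $(i_\IC \circ \varrho_\IC)^*[\eta_V]$ corresponds to the $(G, V, \sfK)$-level structure $[\mu']$ on $\sfP_\et|_{S_\IC}$ induced from $[\wt{\mu}]$ via the inclusion $\sK \subseteq \sfK$. Evaluating at the base point $b$ gives $[\mu']_b = \sfK \cdot (\mu_b \tensor \IA_f) = [\mu]_b$, because $[\wt{\mu}]_b = \sK \cdot (\mu_b \tensor \IA_f)$ by the construction in \ref{sec: preparation for period over E 2}. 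As $S_\IC$ is connected (our standing assumption $S_\IC = S^\circ$) and $\sfK$ is admissible, a $(G, V, \sfK)$-level structure on $\sfP_\et|_{S_\IC}$ is determined by its stalk at $b$ — exactly as in the proof of \ref{lem: situation in practice} — so $[\mu'] = [\mu]|_{S_\IC}$. Combining this with the VHS comparison yields the desired isomorphism, and Theorem~\ref{thm: moduli over C} then forces $\rho_\IC = i_\IC \circ \varrho_\IC$.

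The argument is essentially formal: it only unwinds the definitions of pullback and of the refinement relation in \ref{def: level structures}(d), then appeals once to the uniqueness in Theorem~\ref{thm: moduli over C}. The single step that uses geometric input is the last one, where connectedness of $S_\IC$ upgrades the equality of stalks $[\mu']_b = [\mu]_b$ to an equality of global level structures; I do not anticipate any genuine obstacle, the main thing to watch being the bookkeeping between $(\sG, V, \sK)$- and $(G, V, \sfK)$-level structures.
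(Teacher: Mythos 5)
Your proof is correct and is exactly the verification the paper leaves to the reader when it writes ``One easily checks that'': pull back the refinement relation $[\eta_\sV] \text{ refines } i^*[\eta_V]$ along $\varrho_\IC$, transport along the isomorphism (\ref{eqn: define wt rho}) to see that $(i_\IC\circ\varrho_\IC)^*[\eta_V]$ corresponds to the $\sfK$-level structure induced from $[\wt\mu]|_{S_\IC}$, identify that with $[\mu]|_{S_\IC}$ by comparing stalks at $b$ (a connected base forces a section of a pro-\'etale sheaf to be determined by one stalk), and conclude by the uniqueness clause of Theorem~\ref{thm: moduli over C}.
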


Hence we reduce \ref{thm: period char 0} to: 
\begin{theorem}
    \label{thm: period char 0 F} 
    Under the hypothesis of \ref{thm: period char 0} and notations above, the morphism $\varrho_\IC$ descends to an $F$-morphism $\varrho : S \to \Sh_\sK(\sG)_F$; moreover, the \'etale component $\alpha^\circ_\et : \varrho_\IC^* \wt{\sfV}_\et \sto \sfP_\et|_{S_\IC}$ of (\ref{eqn: define wt rho}) descends to an isomorphism of $\alpha_\et : \varrho^*\ssfV_\et \sto \sfP_\et$ over $S$. 
\end{theorem}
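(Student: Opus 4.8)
\emph{Strategy and set-up.} Although $\sfP$ over $S$ need not be weakly abelian-motivic, a suitable norm of it is; so the plan is to feed into the moduli description \ref{thm: shimura as moduli} of the canonical model not the standard representation but the faithful representation $N=\rN(\sV)$ of \ref{sec: preparations}. Recall from \ref{sec: preparation for period over E} that the $E$-action on $\sfP$ is defined over $S$ and $\tau(E)\subseteq F$, so that $E\subseteq F$ and $(\sG,\Ohm_\sV)$ has reflex field $E\subseteq F$. I treat first case (R1), where $\dim_E\sV$ is odd, so $Z^1_E=1$ and $(\sG,\sK)=(\sH,\sC)$ (see \ref{rmk: odd case sH = sG}). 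Using the $E$-action and the norm functor (see \ref{sec: apply N to level}), I form the system $\rN(\sfP)\in\sfR(S)$ together with the $(\sG,N,\sK)$-level structure $\rN([\wt\mu])$ on $\rN(\sfP_\et)$.

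\emph{Key step.} I will check that $\rN(\sfP)\in\sfR^*_\am(S)$ and that $\rN([\wt\mu])$ is $\rN(\sfP)$-rational of type $\Ohm_\sH$. The latter two properties follow pointwise over $\IC$ from the corresponding properties of $(\sfP,[\wt\mu])|_{S_\IC}$ (already established via \ref{lem: situation in practice}) together with (\ref{eqn: N commutes with pullback}), so the crux is $\rN(\sfP)\in\sfR^*_\am(S)$. Fix $s\in S(\IC)$. By \ref{thm: Moonen}(a) the motive $\rN(\fp_s)$ is abelian and $\w_\Hdg(\rN(\fp_s))\iso(\rN(\sfP)_{B,s},\rN(\sfP)_{\dR,s})$, which supplies the abelian motive demanded by \ref{def: compatible with AM}. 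For the $\Aut(\IC/F)$-compatibility, note that since $E\subseteq F$ every $\sigma\in\Aut(\IC/F)$ fixes $E$, so $(-)^\sigma$ is defined on $\Mot_\AH(\IC)_{(E)}$; because $\rN$ is a $\otimes$-functor commuting with $(-)^\sigma$ and with all realization functors, applying $\rN$ to the tautological commutative squares of \ref{eqn: fp taut diagram} produces precisely the commutative squares of \ref{def: compatible with AM} with $M=\rN(\fp_s)$.

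\emph{Conclusion in case (R1), and case (R2).} Granting the key step, \ref{thm: shimura as moduli} applied to the faithful representation $N$ and the object $(\rN(\sfP),\rN([\wt\mu]))\in\sM_{N,F}(S)$ produces a unique $F$-morphism $\varrho:S\to\Sh_\sK(\sG)_F$ with $\varrho^*(\sfN,[\eta_N])\iso(\rN(\sfP),\rN([\wt\mu]))$, where $\sfN$ is the automorphic system attached to $N$; since $\pi=\id$ here, (\ref{eqn: N commutes with pullback}) identifies $(\sfN,[\eta_N])$ with $(\rN(\ssfV),\rN([\eta_\sV]))$, so applying $\rN$ to (\ref{eqn: define wt rho}) shows $\varrho_\IC$ classifies the same object over $\IC$; as $N$ is faithful, the uniqueness in \ref{thm: moduli over C} forces $\varrho_\IC=\varrho\otimes_F\IC$. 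The étale component of the descended isomorphism above is $\rN(\alpha^\circ_\et)$ (it coincides with it by uniqueness of isomorphisms of automorphic data at neat level), so $\rN(\alpha^\circ_\et)$ descends to $S$; since $Z^1_E=1$, \ref{lem: pi_1 equivariance} applied with $\sfW:=\varrho^*\ssfV$, $\sfW':=\sfP$ and $\gamma_\et:=\alpha^\circ_\et$ then shows that each $\alpha^\circ_\ell$, hence $\alpha^\circ_\et$, descends to $\alpha_\et:\varrho^*\ssfV_\et\sto\sfP_\et$ over $S$. For case (R2) one runs the same argument after replacing $\sV$ by $\sV^\sharp=\sV\oplus\sE$ (of odd $E$-dimension, so $Z^1_E(\sV^\sharp)=1$), $\sG$ by $\sG^\sharp=\Res_{E/\IQ}\SO(\sV^\sharp)$, and $\sfP$ by the $\sG^\sharp$-system $\sfP^\sharp:=\sfP\oplus(\mathbf{1}_S\otimes\sE_{(\IQ)})$ — whose monodromy still factors through $\sG$ — invoking \ref{thm: Moonen}(b) in place of (a) and condition $(\sharp)$ of \ref{def: condition sharp} to choose a neat $\sK'\subseteq\sG^\sharp(\IA_f)$ containing the image of $\sK$. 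This first yields an $F$-morphism to $\Sh_{\sK'}(\sG^\sharp)_F$ descending $\varrho_\IC$ composed with the natural map $\Sh_\sK(\sG)\to\Sh_{\sK'}(\sG^\sharp)$; a reduction-of-structure argument over $S$ — using that the étale realization and the level structure of $\sfP^\sharp$ are defined over $S$, together with \ref{lem: descent of morphisms of systems} applied to the weakly-AM system $\rN(\sfP^\sharp)$ and \ref{lem: pi_1 equivariance} (with $Z^1_E(\sV^\sharp)=1$) — then pins the descent down to an $F$-morphism $\varrho:S\to\Sh_\sK(\sG)_F$ and descends $\alpha^\circ_\et$.

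\emph{Main obstacle.} I expect the key step to be where the real work lies: recognizing $\rN(\sfP)$ (resp.\ $\rN(\sfP^\sharp)$) as weakly abelian-motivic is precisely where Moonen's theorem \ref{thm: Moonen}(a)--(b) enters, and it requires checking that forming the norm functor is compatible with the $\Aut(\IC/F)$-descent datum recorded in \ref{eqn: fp taut diagram}. A secondary technical point, specific to case (R2), is the reduction of structure from the auxiliary group $\sG^\sharp=\sH^\sharp$ — in which the norm construction naturally lands — back to $\sG$ at the prescribed level $\sK$.
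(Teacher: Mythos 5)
Your treatment of case (R1) matches the paper's essentially step for step: you form $\rN(\sfP)$ with the level structure $\rN([\wt\mu])$, identify the automorphic system $(\sfN,[\eta_N])$ with $(\rN(\ssfV),\rN([\eta_\sV]))$ via \ref{sec: N commutes with pullback}, verify $\rN(\sfP)\in\sfR^*_\am(S)$ by applying $\rN$ to the diagram \ref{eqn: fp taut diagram} together with \ref{thm: Moonen}(a), feed this into \ref{thm: shimura as moduli} to get the descent of $\varrho_\IC$, and then use $Z^1_E(\sV)=1$ with \ref{lem: pi_1 equivariance} to descend $\alpha^\circ_\et$. This is exactly the paper's argument.

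For case (R2), however, your ``reduction-of-structure argument'' is left too vague, and the one ingredient that actually closes the paper's proof is missing. Once you apply \ref{thm: shimura as moduli} with $N=\rN(\sV^\sharp)$ you obtain an $F$-morphism $\beta_N:S\to\Sh_\sU(\sG^\sharp)_F$ descending $\beta_\IC=j_\IC\circ\varrho_\IC$; the issue is how to conclude that $\varrho_\IC$ itself is $\Aut(\IC/F)$-equivariant on $\IC$-points. The paper's decisive observation is that, by Deligne's \cite[(1.15)]{DeligneTdShimura}, after shrinking $\sU$ one can arrange that the Shimura morphism $j:\Sh_\sK(\sG)\to\Sh_\sU(\sG^\sharp)$ is an \emph{embedding}; once $j_\IC$ is injective, $\varrho_\IC$ is determined by $\beta_\IC$ and both $\beta_\IC$ and $j$ are defined over $F$, so $\varrho_\IC$ descends. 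Your invocation of \ref{lem: descent of morphisms of systems} and \ref{lem: pi_1 equivariance} does not by itself give you a factorization of $\beta_N$ through $\Sh_\sK(\sG)_F$ without this injectivity. You also elide a second small but necessary point in descending $\alpha^\circ_\et$: after \ref{lem: pi_1 equivariance} gives you a descent $\lambda_\et$ of the étale component of $\beta_\IC^*(\sfW,[\eta_W])\iso(\sfQ,[\nu])|_{S_\IC}$, you need to argue that $\lambda_\et$ respects the $F$-rational splittings $\beta_N^*\sfW=\varrho^*\ssfV\oplus(\sE\otimes\mathbf{1}_S)$ and $\sfQ=\sfP\oplus(\sE\otimes\mathbf{1}_S)$ before you can restrict it to $\alpha_\et:\varrho^*\ssfV_\et\sto\sfP_\et$; this holds because $\lambda^\circ_\et$ respects them over $S_\IC$ and the decompositions are defined over $S$, but it is a step that should be stated.
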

Note that $\alpha_\et^\circ$ above agrees with the one in \ref{thm: period char 0} because $\ssfV_\et = i^* \sfV_\et$. \\\\
Below for any $\IQ$-linear Tannakian category $\shC$, we write $\rN(-)$ for the functor $\Mod_E(\shC) \to \shC$ which sends every $M \in \Mod_E(\shC)$ to $\Nm_{E/\IQ}(M) \tensor_\IQ \det(M_{(\IQ)})$ (cf. \ref{sec: norm functors}). Recall that in \ref{sec: apply N to level} we explained how to apply $\rN(-)$ to a $\sK$-level structure on a system of realizations.

%Note that as $\sG$ commutes with the $E$-action on $V = \sV_{(\IQ)}$, $[\eta_\sV]$ induces an $E$-action on $\ssfV_\et$; moreover, as $[\eta_\sV]$ is $\ssfV$-rational, the $E$-action on $\ssfV_\et$ extends to an $E$-action on $\ssfV$. %\todo{(cf. \cite[TBA]{YangSystem})}. 

\subsubsection{} \label{sec: proof in odd case} We first treat the case when $m := \dim_E \sV$ is odd. In this case, $N := \rN(\sV) \in \mathsf{Rep}(\sG)$ is \textit{faithful}. Let $\sfN$ be the automorphic system on $\Sh_\sfK(\sG)$ associated to $N$. Let $[\eta_N]$ be the tautological $\sK$-level structure on $\sfN$. Then by \ref{sec: N commutes with pullback}, there is a unique isomorphism 
\begin{equation}
    \label{eqn: N of V is N}
    (\rN(\ssfV), \rN(\eta_\sV)) \iso (\sfN, [\eta_N]). 
\end{equation}
As remarked in \ref{rmk: odd case sH = sG}, one should read \ref{sec: N commutes with pullback} with $(\sH, \sC, \Ohm_\sH) = (\sG, \sK, \Ohm_\sV)$. 
\begin{lemma}
    \label{lem: norm of P is weakly AM}
    $\rN(\sfP) \in \sfR(S)$ is weakly AM in the sense of \ref{def: compatible with AM}. 
\end{lemma}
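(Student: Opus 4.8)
The plan is to produce the witnessing abelian motives fiberwise directly from \ref{thm: Moonen}(a), and to transport the Galois-compatibility squares already recorded for $\fp_s$ in \ref{eqn: fp taut diagram} through the norm functor. Fix $s \in S(\IC)$. By \ref{thm: Moonen} (together with \ref{eqn: fp taut diagram}), $\fp_s$ is an object of $\Mot_\AH(\IC)$ carrying an $E$-action, with $\w_?(\fp_s) = \sfP_{?, s}$ as $E$-modules for $? = B, \dR, \et$; set $M_s := \rN(\fp_s)$, which lies in $\Mot_\Ab(\IC)$ by \ref{thm: Moonen}(a). Since the norm functor $\rN(-)$ is defined on an arbitrary $\IQ$-linear Tannakian category and commutes with the realization functors (exactly as used in \ref{sec: N commutes with pullback}), the identifications $\w_\Hdg(M_s) = \rN(\w_\Hdg(\fp_s))$ and $\w_\Hdg(\fp_s) = (\sfP_{B, s}, \sfP_{\dR, s})$ give a distinguished isomorphism $\gamma_0 \colon \w_\Hdg(M_s) \iso (\rN(\sfP)_{B, s}, \rN(\sfP)_{\dR, s})$, which supplies the abelian motive demanded by \ref{def: compatible with AM} at $s$.

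Now fix $\sigma \in \Aut(\IC/F)$. Because $E$ is identified with a subfield of $F$ (via $\tau$), the twist functor $-\tensor_\sigma \IC$ fixes $E$, preserves $\Mot_\AH(\IC)_{(E)}$, and commutes with $\rN(-)$, so $M_s^\sigma = \rN((\fp_s)^\sigma)$. The isomorphism $\sigma_{\fp, s} \colon (\fp_s)^\sigma \iso \fp_{\sigma(s)}$ of \ref{eqn: fp taut diagram} is $E$-equivariant, since it is induced by the $E$-equivariant descent isomorphisms $\sigma_{\sfP_\et, s}$ and $\sigma_{\sfP_\dR, s}$ (the $E$-action on $\sfP$ having been established in \ref{sec: preparation for period over E}) and $\sigma$ fixes $E$. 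Applying $\w_\Hdg \circ \rN(-)$ gives $\gamma_0^\sigma \colon \w_\Hdg(M_s^\sigma) \iso (\rN(\sfP)_{B, \sigma(s)}, \rN(\sfP)_{\dR, \sigma(s)})$, and the two squares of (\ref{diag: defining AM compatible}) for the pair $(\gamma_0, \gamma_0^\sigma)$ are obtained simply by applying $\rN(-)$ to the two tautologically commuting squares of \ref{eqn: fp taut diagram}. It remains to pass from $\gamma_0$ to an arbitrary isomorphism $\gamma \colon \w_\Hdg(M_s) \iso (\rN(\sfP)_{B, s}, \rN(\sfP)_{\dR, s})$: write $\gamma = \gamma_0 \circ \w_\Hdg(\psi)$ for a unique $\psi \in \Aut(M_s)$ (full faithfulness of $\w_\Hdg$ on $\Mot_\Ab(\IC)$, \ref{thm: tensor on AV always AH}), put $\gamma^\sigma := \gamma_0^\sigma \circ \w_\Hdg(\psi^\sigma)$, and observe that, $\psi$ being a morphism of abelian motives, its realizations and those of $\psi^\sigma$ are intertwined by the base-change comparisons (\ref{sec: twist tensors}); hence the squares of (\ref{diag: defining AM compatible}) persist for $(\gamma, \gamma^\sigma)$. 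This yields $\rN(\sfP) \in \sfR^*_\am(S)$.

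The one point that needs genuine care is the compatibility of $\rN(-)$ with the comparison isomorphisms $\iso_\bc$, $\sigma_{\sfP_\et, s}$ and $\sigma_{\sfP_\dR, s}$ used in the second paragraph; this is routine from the polynomial-law description of Ferrand's norm functor together with the tensor-compatibility of the \'etale and de Rham base-change comparisons, and parallels the verification of (\ref{eqn: N commutes with pullback}) carried out in \ref{sec: N commutes with pullback}. Everything else is a direct transcription of the data already assembled for $\fp_s$ in \ref{eqn: fp taut diagram}, plus Moonen's input \ref{thm: Moonen}(a).
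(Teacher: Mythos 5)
Your proof is correct and follows essentially the same route as the paper's: take $M_s = \rN(\fp_s)$, invoke \ref{thm: Moonen}(a) to land in $\Mot_\Ab(\IC)$, use that $\rN$ commutes with cohomological realizations and with the twist $(-)^\sigma$, and apply $\rN$ to the tautological commuting squares of \ref{eqn: fp taut diagram}. The one thing you add beyond the paper's (terse) argument is the explicit reduction from an arbitrary isomorphism $\gamma$ to the distinguished $\gamma_0$ via full faithfulness of $\w_\Hdg$ on $\Mot_\Ab(\IC)$ and naturality of the base-change comparisons; the paper leaves this step tacit since the definition's universality over $\gamma$ follows formally once it holds for one choice. (Minor remark: the compatibility $(\rN_{E/\IQ}\,\cdot)^\sigma = \rN_{E/\IQ}(\,\cdot^\sigma)$ holds for every $\sigma \in \Aut(\IC)$, as the norm is a purely categorical construction; the hypothesis $\tau(E)\subseteq F$ is what you actually use—correctly—to ensure that the $E$-action on $\sfP$ descends to $F$, hence that $\sigma_{\fp,s}$ is $E$-equivariant.)
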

\begin{proof}
    As the formation of $\rN(-)$ on $\Mot_\AH(\IC)_{(E)}$ commutes with cohomological realizations, for every $s \in S(\IC)$, $\rN((\sfP_{B, s}, \sfP_{\dR, s})) = \w_\Hdg(\rN(\fp_s))$. By \ref{thm: Moonen}(a), $\rN(\fp_s) \in \Mot_\Ab(\IC)$. Therefore, in \ref{def: compatible with AM} we may take $M = \rN(\fp_s)$, so that $M^\sigma = (\rN(\fp_s))^\sigma = \rN((\fp_s)^\sigma)$. By appling $\rN(-)$ to the objects in the diagram \ref{eqn: fp taut diagram}, one checks that the diagrams in \ref{def: compatible with AM} commute for $\fN(\sfP)$. 
\end{proof} 

\begin{lemma}
    There exists a unique morphism $\rho_N : S \to \Sh_\sK(\sG)_F$ such that 
    \begin{equation}
    \label{eqn: iso for rho N}
    \rho_N^* (\sfN, [\eta_N]) \iso (\rN(\sfP), \rN([\wt{\mu}])).
    \end{equation}
    Moreover, $\varrho_\IC = \rho_N |_{S_\IC}$. 
\end{lemma}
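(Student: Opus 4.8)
The plan is to obtain $\rho_N$ from the moduli description \ref{thm: shimura as moduli} of the canonical model $\Sh_\sK(\sG)_E$, applied to the faithful representation $N = \rN(\sV)$ of $\sG$ (here $\sG = \sH$ and $\sK = \sC$ since $Z^1_E$ is trivial when $m = \dim_E \sV$ is odd, cf.~\ref{rmk: odd case sH = sG}), to the base $T = S$, and to the pair $(\sfW, [\xi]) = (\rN(\sfP), \rN([\wt{\mu}]))$; we are permitted to take the intermediate field $E' = F$ in \ref{thm: shimura as moduli} because $E \subseteq F$ via $\tau$. Two hypotheses must be verified: that $\rN(\sfP)$ lies in $\sfR^*_\am(S)$, and that $\rN([\wt{\mu}])$, regarded as a $(\sG, N, \sK)$-level structure via the recipe of \S\ref{sec: apply N to level}, is $\rN(\sfP)$-rational and of type $\Ohm_\sV$. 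The former is precisely \ref{lem: norm of P is weakly AM}; it is the only genuinely non-formal ingredient and is already available.

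For the second hypothesis I would invoke \ref{lem: situation in practice}. Since $S$ is geometrically connected over $F$ (the standing assumption of \S\ref{sec: define alpha circ}), $N$ is faithful, and $\rN(\sfP) \in \sfR^*_\am(S)$, it suffices to check its conclusion at the base point $b$. Applying $\rN(-)$ to the $E$-linear isometry $\mu_b : \sV \sto \sfP_{B,b}$ that represents $[\wt{\mu}]_b$ (cf.~\ref{lem: define varrho over C}) yields an isometry $\rN(\mu_b) : N \sto \rN(\sfP)_{B,b}$ with $\rN([\wt{\mu}])_b = \sK \cdot (\rN(\mu_b) \tensor \IA_f)$; moreover, since $\rN(-)$ commutes with the Hodge realization and carries the point of $\Ohm_\sV$ defining the Hodge structure on $\sfP_{B,b}$ to the point of $\Ohm_\sH = \Ohm_\sV$ defining that on $\rN(\sfP)_{B,b}$, the Hodge-type condition in \ref{lem: situation in practice} is met. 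Hence $\rN([\wt{\mu}])$ is $\rN(\sfP)$-rational and of type $\Ohm_\sV$, and \ref{thm: shimura as moduli} produces a unique $\rho_N : S \to \Sh_\sK(\sG)_F$ satisfying (\ref{eqn: iso for rho N}).

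For the last assertion, I would show that $\varrho_\IC$ and $\rho_N|_{S_\IC}$ represent the same object over $\IC$ and conclude by the uniqueness in \ref{thm: moduli over C} applied to $(\sG, N)$. The functor $\rN(-)$ commutes with pullback of systems of realizations and of level structures (a formal consequence of its construction together with \S\ref{sec: apply N to level}), so using the canonical identification (\ref{eqn: N of V is N}) and the defining isomorphism (\ref{eqn: define wt rho}) one gets
\begin{equation*}
\varrho_\IC^*(\sfN, [\eta_N]) \iso \varrho_\IC^*\bigl(\rN(\ssfV), \rN([\eta_\sV])\bigr) \iso \rN\bigl(\varrho_\IC^*(\ssfV, [\eta_\sV])\bigr) \iso \rN\bigl((\sfP, [\wt{\mu}])|_{S_\IC}\bigr) = (\rN(\sfP), \rN([\wt{\mu}]))|_{S_\IC},
\end{equation*}
whereas the right-hand side is also isomorphic to $(\rho_N|_{S_\IC})^*(\sfN, [\eta_N])$ by (\ref{eqn: iso for rho N}). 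Since $\Sh_\sK(\sG)_\IC$ carries the universal object for the functor $\sM_N$ of \ref{thm: moduli over C}, this forces $\varrho_\IC = \rho_N|_{S_\IC}$.

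I expect the only real work to be bookkeeping around $\rN(-)$: checking that it is compatible with base change of the three realizations and with the descent of the de Rham datum — which is what legitimizes the assertions ``$\rN(\sfP) \in \sfR(S)$'' and the level-structure construction of \S\ref{sec: apply N to level} — and that it sends $\Ohm_\sV$ onto $\Ohm_\sH$. The substantive point, abelianness of $\rN(\fp_s)$ and hence weak abelian-motivicity of $\rN(\sfP)$, is already in hand via \ref{lem: norm of P is weakly AM}, so there is no conceptual obstacle here.
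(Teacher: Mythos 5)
Your proof is correct and follows essentially the same route as the paper: you verify the hypotheses of \ref{thm: shimura as moduli} for $(\rN(\sfP), \rN([\wt{\mu}]))$ using \ref{lem: norm of P is weakly AM} for weak abelian-motivicity and \ref{lem: situation in practice} (checked at $b$) for rationality and type, then obtain $\varrho_\IC = \rho_N|_{S_\IC}$ by applying $\rN(-)$ to (\ref{eqn: define wt rho}) together with (\ref{eqn: N of V is N}) and the uniqueness in \ref{thm: moduli over C}. The only difference is that you spell out the pullback compatibility of $\rN(-)$ more explicitly, which the paper leaves implicit.
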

\begin{proof}
    One easily checks from \ref{sec: apply N to level} that $\rN([\wt{\mu}]) = \sK \cdot (\rN(\mu_b) \tensor \IA_f)$, so by \ref{lem: situation in practice} $\rN([\wt{\mu}])$ is $\rN(\sfP)$-rational. Moreover, as $[\wt{\mu}]$ is of type $\Ohm_\sV$, so is $\rN([\wt{\mu}])$. As $\rN(\sfP)$ is weakly AM, \ref{thm: shimura as moduli} gives the $\rho_N$ for which (\ref{eqn: iso for rho N}) holds. On the other hand, by appling $\rN(-)$ to (\ref{eqn: define wt rho}), we see that 
    \begin{equation}
        \label{eqn: rho pullback N}
        (\varrho_\IC)^* (\sfN, [\eta_\rN]) \iso (\rN(\sfP), \rN([\wt{\mu}]))|_{S_\IC}.
    \end{equation}
By the uniquness statement in \ref{thm: moduli over C}, this implies that $\varrho_\IC = \rho_N |_{S_\IC}$.
\end{proof}

\noindent \textit{Proof of \ref{thm: period char 0 F} for $m$ odd}: To affirm the first statement, set $\varrho := \rho_N$. The second statement now follows from \ref{lem: pi_1 equivariance}. Indeed, comparing (\ref{eqn: N of V is N}), (\ref{eqn: iso for rho N}) and (\ref{eqn: rho pullback N}), we see that $\rN(\alpha^\circ_\et)$ descends to $S$. However, as $\dim_E \sV$ is odd, $\sZ(\sV) = 1$ (see \ref{not: totally real}). By \ref{lem: pi_1 equivariance}, $\alpha^\circ_\et$ descends to $S$. \qed

\subsubsection{} Recall that in \ref{def: condition sharp} $\sV^\sharp = \sV \oplus \sE$ for $\sE$ defined in \ref{not: totally real}. If $m = \dim_E \sV$ is even, we let $\sV^\sharp$ play the role of $\sV$ in the above proof. Recall that in \ref{thm: period char 0} we assumed that $\sfK$ satisfies condition $(\sharp)$ and we are currently in situation $V = V_0$, so that $\sK = \sfK \cap \sG(\IA_f) \subseteq \sU$ for some neat compact open $\sU \subseteq \sG^\sharp(\IA_f)$. Define the Hermitian symmetric domain $\Ohm_{\sV^\sharp}$ with $\sV$ replaced by $\sV^\sharp$ in $\Ohm_\sV$. Then we obtain an embedding of Shimura data $(\sG, \Ohm_\sV) \into (\sG^\sharp, \Ohm_{\sV^\sharp})$. By \cite[(1.15)]{DeligneTdShimura}, for some $\sU' \supseteq \sK$, the Shimura morphism $\Sh_\sK(\sG) \to \Sh_{\sU'}(\sG^\sharp)$ is an embedding. Replacing $\sU$ by $\sU \cap \sU'$ if necessary, we may assume that the Shimura morphism $\Sh_\sK(\sG) \to \Sh_{\sU}(\sG^\sharp)$ is also an embedding. Below we write this embedding simply as $j : \Sh_\sK \to \Sh_{\sU}$. Let $\sfW$ be the automorphic system of realizations on $\Sh_{\sU}$ given by $W := (\sV^\sharp)_{(\IQ)} \in \mathsf{Rep}(\sG^\sharp)$ and let $[\eta_W]$ be the tautological $\sU$-level structure on $\sfW$. 

The reader should now apply the discussion in \ref{sec: N commutes with pullback} with $(\sG, \Ohm_\sV)$ replaced by $(\sG^\sharp, \Ohm_{V^\sharp})$, $\wt{\sfV}$ replaced by $\sfW$, $(\sH, \sC, \Ohm_\sH) = (\sG^\sharp, \sU, \Ohm_{\sV^\sharp})$ and $\pi = \mathrm{id}$ (cf. \ref{rmk: odd case sH = sG}). In particular, $\sfW$ is equipped with a natural $E$-action. This time we set $N = \rN(\sV^\sharp) \in \mathrm{Rep}(\sG^\sharp)$. Let $\sfN$ be the automorphic system on $\Sh_\sU$ given by $N$ and let $[\eta_N]$ be the tautological $\sU$-level structure. Note that $N \in \mathrm{Rep}(\sG^\sharp)$ is \textit{faithful} as $\dim_E \sV^\sharp$ is odd. Now \ref{sec: N commutes with pullback} tells us that 
\begin{equation}
    \label{eqn: N on W}
    (\sfN, [\eta_N]) = (\rN(\sfW), \rN([\eta_W]))
\end{equation}
It is not hard to see that the restriction of $\sfW$ to $\Sh_\sK$ is naturally identified with $\ssfV \oplus (\sE \tensor \mathbf{1}_{\Sh_\sK})$. Correspondingly, we set $\sfQ := \sfP \oplus (\sE \tensor \mathbf{1}_S)$ and define $\nu_b : W \sto \sfQ_{B, b}$ by $\mu_b \oplus \mathrm{id}_\sE$. Then $\nu_b$ defines a $\sU$-level structure $[\nu]$ with $[\nu]_b = \sU \cdot (\nu_b \tensor \IA_f)$. Define $\beta_\IC := j_\IC \circ \varrho_\IC$. Then we have:
\begin{equation}
\label{eqn: beta on W}
(\beta_\IC)^*(\sfW, [\eta_W]) \iso (\sfQ, [\nu])|_{S_\IC}    
\end{equation}
\begin{lemma}
    There exists a unique morphism $\beta_N : S \to (\Sh_\sU)_F$ such that 
    \begin{equation}
    \label{eqn: N for beta N}
    \beta_N^* (\sfN, [\eta_N]) \iso (\rN(\sfQ), \rN([\nu])).
    \end{equation}
    Moreover, $\beta_\IC = \beta_N |_{S_\IC}$. 
\end{lemma}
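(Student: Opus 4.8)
The strategy is to transport \emph{verbatim} the argument given for the case $m$ odd in \ref{sec: proof in odd case}, replacing the triple $(\sG, \Ohm_\sV, \sK)$ by $(\sG^\sharp, \Ohm_{\sV^\sharp}, \sU)$, the system $\ssfV$ by $\sfW$, the system $\sfP$ by $\sfQ = \sfP \oplus (\sE \tensor \mathbf{1}_S)$, the level structure $[\wt{\mu}]$ by $[\nu]$, and using throughout the \emph{faithful} representation $N = \rN(\sV^\sharp)$ of $\sG^\sharp$ (faithful because $\dim_E \sV^\sharp$ is odd). Concretely, three things have to be checked: (i) $\rN(\sfQ) \in \sfR^*_\am(S)$; (ii) $\rN([\nu])$ is an $\rN(\sfQ)$-rational $(\sG^\sharp, N, \sU)$-level structure of type $\Ohm_{\sV^\sharp}$; (iii) the identification of pullbacks that yields $\beta_\IC = \beta_N|_{S_\IC}$.

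For (i), observe that the submotive of $\fh^2(\sX_s)(1) \oplus \mathbf{1}_E$ which realizes $\sfQ_{B,s}$ (with its $E$-action) is exactly Moonen's motive $\fp_s^\sharp = \fp_s \oplus \mathbf{1}_E$ from \ref{thm: Moonen}(b). That theorem tells us $\rN(\fp_s^\sharp) = \Nm_{E/\IQ}(\fp_s^\sharp) \tensor_\IQ \det(\fp^\sharp_{s,(\IQ)})$ lies in $\Mot_\Ab(\IC)$. Since $\rN(-)$ on $\Mot_\AH(\IC)_{(E)}$ commutes with all cohomological realizations, we have $\w_\Hdg(\rN(\fp_s^\sharp)) = \rN\bigl((\sfQ_{B,s}, \sfQ_{\dR,s})\bigr)$, so in \ref{def: compatible with AM} we may take $M = \rN(\fp_s^\sharp)$ at each $s \in S(\IC)$; the required commutativities are obtained by applying $\rN(-)$ to the tautological diagrams of \ref{eqn: fp taut diagram} (for $\fp_s^\sharp$ in place of $\fp_s$), exactly as in the proof of \ref{lem: norm of P is weakly AM}. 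For (ii), by \ref{sec: apply N to level} and $[\nu]_b = \sU \cdot (\nu_b \tensor \IA_f)$ we get $\rN([\nu])_b = \sU \cdot (\rN(\nu_b) \tensor \IA_f)$; since $\rN(\sfQ)$ is weakly AM and $N$ is faithful, \ref{lem: situation in practice} shows $\rN([\nu])$ is $\rN(\sfQ)$-rational, and, checking the Hodge cocharacter at $b$ through $\rN(\nu_b)$ just as before, of type $\Ohm_{\sV^\sharp}$. Then \ref{thm: shimura as moduli} (applicable since $E \subseteq F$ and $N$ is faithful) produces a unique $\beta_N : S \to (\Sh_\sU)_F$ with $\beta_N^*(\sfN, [\eta_N]) \iso (\rN(\sfQ), \rN([\nu]))$.

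For (iii), apply $\rN(-)$ to (\ref{eqn: beta on W}), i.e.\ to $\beta_\IC^*(\sfW, [\eta_W]) \iso (\sfQ, [\nu])|_{S_\IC}$; since $\rN(-)$ commutes with pullback along $\beta_\IC$ (it is a functor defined fibrewise on $E$-modules in a Tannakian category, and pullback is a tensor functor), and $(\sfN, [\eta_N]) = (\rN(\sfW), \rN([\eta_W]))$ by (\ref{eqn: N on W}), we obtain $\beta_\IC^*(\sfN, [\eta_N]) \iso (\rN(\sfQ), \rN([\nu]))|_{S_\IC}$. Restricting the defining isomorphism of $\beta_N$ to $S_\IC$ shows $(\beta_N|_{S_\IC})^*(\sfN, [\eta_N])$ is isomorphic to the same object, so the uniqueness clause of \ref{thm: moduli over C} forces $\beta_\IC = \beta_N|_{S_\IC}$. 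The only genuinely substantive input here is \ref{thm: Moonen}(b), guaranteeing $\rN(\fp_s^\sharp) \in \Mot_\Ab(\IC)$; everything else is a formal replay of \ref{sec: proof in odd case}, the one point needing care being that the $\sU$-level structures, the $E$-actions, and the Hodge types all match under the embedding $j : \Sh_\sK \to \Sh_\sU$ and the identification $\sfW|_{\Sh_\sK} \cong \ssfV \oplus (\sE \tensor \mathbf{1}_{\Sh_\sK})$ arranged in the paragraph preceding the lemma.
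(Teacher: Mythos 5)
Your proof is correct and follows the same route as the paper: reduce the weakly-AM verification to \ref{thm: Moonen}(b) applied to $\fp_s^\sharp$ (this is exactly what the paper calls a ``slight variant of the argument for \ref{lem: norm of P is weakly AM}''), then invoke \ref{sec: apply N to level}, \ref{lem: situation in practice}, and \ref{thm: shimura as moduli} for the faithful representation $N$ of $\sG^\sharp$, and finally compare with $\rN(-)$ applied to \eqref{eqn: beta on W} via the uniqueness clause of \ref{thm: moduli over C}. The only cosmetic divergence is that you correctly write $\rN(\nu_b)$ where the paper writes $\rN(\mu_b)$; the paper is being slightly sloppy and your version is the more careful one.
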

\begin{proof}
    By \ref{thm: Moonen}(b) and a slight variant of the argument for \ref{lem: norm of P is weakly AM}, $\rN(\sfQ)$ is weakly AM. As $\rN([\nu])_b = \sU \cdot (\rN(\mu_b) \tensor \IA_f)$, \ref{lem: situation in practice} says that the $(\sG^\sharp, N, \sU)$-level structure $\rN([\nu])$ is $\rN(\sfQ)$-rational. As $[\wt{\mu}]$ is of type $\Ohm_{\sV}$, $[\nu]$ is of type $\Ohm_{\sV^\sharp}$, so that $\rN([\nu])$ is also of type $\Ohm_{\sV^\sharp}$. Applying \ref{thm: shimura as moduli} to the faithful representation $N$ of $\sG^\sharp$, we obtain the desired map $\beta_N$ such that (\ref{eqn: N for beta N}) holds. By the uniqueness statement in \ref{thm: moduli over C}, to show $\beta_\IC = \beta_N |_{S_\IC}$ it suffices to observe that 
    $$ \beta_\IC^*(\rN(\sfW), \rN([\eta_W])) \iso (\rN(\sfQ), \rN([\nu]))|_{S_\IC}. $$
    One checks this by applying $\rN(-)$ to (\ref{eqn: beta on W}). 
\end{proof}

\noindent \textit{Proof of \ref{thm: period char 0 F} for $m$ even:} The above implies that $\beta_\IC$ descends to $\beta_N$ over $F$. Recall that $\beta_\IC = j_\IC \circ \varrho_\IC$ and $j_\IC$ is an embedding. As the actions on $\IC$-points of both $\beta_\IC$ and $j_\IC$ are $\Aut(\IC/F)$-equivariant, the same is true for $\varrho_\IC$, so that $\varrho_\IC$ descends to a morphism $\varrho$ over $F$ with $\beta_N = j_F \circ \varrho$. 

Let $\lambda^\circ_\et :  (\sfW_\et, [\eta_W])|_{S_\IC} \iso (\sfQ_\et, [\nu])|_{S_\IC}$ be the \'etale component of (\ref{eqn: beta on W}). Then $\rN(\lambda^\circ_\et)$ is the \'etale component of (\ref{eqn: N for beta N}) restricted to $S_\IC$. Therefore, $\rN(\lambda^\circ_\et)$ descends to $S$. Applying \ref{lem: pi_1 equivariance} to $\sV^\sharp$, we have that $\lambda^\circ_\et$ descends to an isomorphism $\lambda_\et : \beta_N^* \sfW_\et \iso \sfQ_\et$ over $S$. Note the decompositions $\beta_N^* \sfW = \varrho^* \ssfV \oplus (\sE \tensor \mathbf{1}_S)$ and $\sfQ = \sfP \oplus (\sE \tensor \mathbf{1}_S)$. Since $\lambda_\et^\circ$ respects these decompositions over $S_\IC$ by construction, its descent $\lambda_\et$ over $S$ must also respect these decompositions, which are defined over $S$. Hence $\lambda_\et$ restricts to the sought after $\alpha_\et : \varrho^* \ssfV_\et \sto \sfP_\et$ in \ref{thm: period char 0 F}. \qed

\subsection{Case (R2'): non-maximal monodromy}
\label{sec: non-maximal monodromy}
By \ref{prop: dim = 1 for R2'}, we expect this case to be rare in practice. Readers who are not particularly interested in this case might skip to the next section.

\begin{lemma}
\label{lem: finite descent}
    Let $k \subseteq \IC$ be a subfield and $A, B$ be $k$-varieties with $A$ being geometrically connected. Suppose that there is a morphism $f : A_\IC \to B_\IC$ over $\IC$, and an \'etale morphism $g: B \to C$ over $k$ such that for some $h : A \to C$, $g_\IC \circ f = h_\IC$. Then $f$ descends to a subfield $k'$ of $\IC$ which is finite over $k$ such that $g_{k'} \circ f = h_{k'}$. 
\end{lemma}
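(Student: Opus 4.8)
The plan is to recognize $f$ as a section of an étale morphism and to exploit the rigidity of such sections together with the fact that connected components of a finite-type scheme are defined over a finite extension. Set $W \coloneqq A \times_C B$, the fibre product formed via $h \colon A \to C$ and $g \colon B \to C$; it is a $k$-variety, and we write $p \colon W \to A$ and $q \colon W \to B$ for the two projections, so that $g \circ q = h \circ p$. Since $g$ is étale, $p$ is étale; since $B$ is separated over $k$, the morphism $g$ (hence its base change $p$) is separated. The hypothesis $g_\IC \circ f = h_\IC$ says exactly that $(\mathrm{id}_{A_\IC}, f)$ is a section $s \colon A_\IC \to W_\IC$ of $p_\IC$, with $q_\IC \circ s = f$.

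First I would record the shape of $s$. A section of a separated morphism is a closed immersion, and a section of an unramified morphism is an open immersion (pull back the open immersion $\Delta_p$ along the graph of $s \circ p$); as $p_\IC$ is étale, both apply, so $s$ identifies $A_\IC$ with an open-and-closed subscheme $Z \subseteq W_\IC$. Because $A$ is geometrically connected over $k$, the scheme $A_\IC$ is connected, so $Z$ is a single connected component of $W_\IC$, $p_\IC$ restricts to an isomorphism $Z \xrightarrow{\sim} A_\IC$, and $s$ is its inverse; in particular $s$ is the \emph{unique} section of $p_\IC$ with image $Z$. Next I would descend $Z$ to a finite extension: since $W$ is of finite type over the characteristic-zero field $k$, the set $\pi_0(W_{\bar k})$ is finite and $\Gal(\bar k / k)$ acts on it through a finite quotient, so there is a finite subextension $k' \subseteq \IC$ of $\bar k / k$ over which this action is trivial; then each connected component of $W_\IC$ (equivalently of $W_{\bar k}$) is the base change of a unique connected component of $W_{k'}$. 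Let $Z' \subseteq W_{k'}$ be the component with $Z'_\IC = Z$. Restricting $p_{k'}$ to the clopen $Z'$ gives an étale morphism $Z' \to A_{k'}$ which becomes an isomorphism after $\otimes_{k'}\IC$, hence is an isomorphism over $k'$ by faithfully flat descent of isomorphisms. Let $\sigma \colon A_{k'} \to W_{k'}$ be its inverse, a section of $p_{k'}$, and put $f_{k'} \coloneqq q_{k'} \circ \sigma$.

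It remains to check $f_{k'}$ works. One computes $g_{k'} \circ f_{k'} = g_{k'} \circ q_{k'} \circ \sigma = h_{k'} \circ p_{k'} \circ \sigma = h_{k'}$, using $g \circ q = h \circ p$ and $p_{k'} \circ \sigma = \mathrm{id}$. After base change to $\IC$, $\sigma_\IC$ is a section of $p_\IC$ with image $Z$, so by the uniqueness noted above $\sigma_\IC = s$, whence $(f_{k'})_\IC = q_\IC \circ \sigma_\IC = q_\IC \circ s = f$. This proves the lemma. The only nonformal ingredients are two standard facts invoked without proof: the rigidity of sections of separated étale morphisms (a section is an open-and-closed immersion), and the fact that the finitely many connected components of $W_\IC$ are defined over a finite extension of $k$; I expect the (mild) bookkeeping to make these precise in the fibre-product setup to be the only real work, everything else being faithfully flat descent of isomorphisms.
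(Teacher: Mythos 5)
Your proof is correct and takes essentially the same route as the paper: form the fibre product $W=A\times_C B$, interpret the graph of $f$ as a section of the \'etale projection $W_\IC\to A_\IC$, identify its image with a connected component of $W_\IC$, and descend that component. The paper shortens the bookkeeping by first reducing to $k=\bar k$ (the finite extension $k'$ is then left implicit in the fact that the finitely many connected components of $W_{\bar k}$ are defined over a finite subextension of $\bar k/k$), whereas you make the rigidity of sections and the descent of the component to $k'$ explicit; the underlying argument is the same.
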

\begin{proof}
    We assume without loss of generality that $k$ is algebraically closed. 
    The graph $\Gamma_f \colon A_\IC \to (A \times_C B)_\IC$ of $f$ (as a morphism between $C_\IC$-schemes) defines a section of the \'etale morphism $(g \times_C h)_\IC \colon (A \times_C B)_\IC \to A_\IC$. Hence $\Gamma_f$ maps $A_\IC$ isomorphically onto a connected component $D_\IC$ of $(A\times_C B)_\IC$. Since $k$ is algebraically closed, $D_\IC$ comes from an extension of scalars of a connected component $D \subset (A \times_C B)$. As the natural projection $D \to A$ is defined over $k$ and its base change to $\IC$ is the inverse of $\Gamma_f$, we must have that $\Gamma_f$ is also defined over $k$, and hence so is $f$. 
\end{proof}

Below for any $\IQ$-linear Tannakian category $\shC$, we write $\rN(-)$ for the functor $\Nm_{E/\IQ} : \Mod_E(\shC) \to \shC$ (cf. \ref{sec: norm functors}).\\
\textit{Proof of \ref{thm: period in R2' case}}. As before, set $\sK := \sfK \cap \sG(\IA_f)$ and let $i : \Sh_\sK(\sG) \to \Sh_\sfK(G)_E$ be the Shimura morphism. Our discussions in \ref{sec: preparation for period over E} up to \ref{lem: define varrho over C} apply without any change in the (R2') case, so that $\rho_\IC$ factors through a morphism $\varrho_\IC : S_\IC \to \Sh_\sK(\sG)_\IC$ such that 
\begin{equation}
    \label{eqn: varrho iso R2'}
    \varrho_\IC^*(\ssfV, [\eta_{\sV}]) \iso (\sfP, [\wt{\mu}])|_{S_\IC},
\end{equation}
and we still have $E \subseteq F$. We first show that $\varrho_\IC$ descends to a morphism $\varrho : S_{F'} \to \Sh_\sK(\sG)_{F'}$ for some finite extension $F'/F$ in $\IC$. 

Set $N := \rN(\sV)$. Then $N$ is a faithful representation of $\sH = \sH(\sV)$ \ref{not: totally real}. Since in \ref{thm: period in R2' case} we assumed that $\sfK$ satisfies condition $(\sharp)$, there exists a neat $\sC \subseteq \sH(\IA_f)$ such that the image of $\sK$ lies in $\sC$. Now recall our discussion in \ref{sec: N commutes with pullback} and the notations therein. Let $\pi : \Sh_\sK(\sG) \to \Sh_\sC(\sH)$ be the natural Shimura morphism over $E$. Let $\rN([\wt{\mu}])$ denote the $\sC$-level structure on $\rN(\sfP_\et)$ such that $\rN([\wt{\mu}])_b = \sC \cdot (\rN(\wt{\mu}_b) \tensor \IA_f)$. By applying $\rN(-)$ to the diagrams in \ref{eqn: fp taut diagram}, \ref{thm: Moonen}(c) implies that $\rN(\sfP)$ is weakly AM. One checks using \ref{lem: situation in practice} that $\rN(\wt{\mu})$ is $\rN(\sfP)$-rational, and is of type $\Ohm_\sH$. Then by \ref{thm: shimura as moduli}, we obtain a morphism $\rho_N : S \to \Sh_\sC(\sH)_F$ such that 
    \begin{equation}
    \label{eqn: N in appendix}
    (\rho_N)^* (\sfN, [\eta_N]) \iso (\rN(\sfP), \rN([\wt{\mu}])).
    \end{equation}
    By applying $\rN(-)$ to (\ref{eqn: varrho iso R2'}) and comparing with (\ref{eqn: N commutes with pullback}) in \ref{sec: N commutes with pullback}, for $\beta_\IC := \pi_\IC \circ \varrho_\IC$ we obtain 
    $$\beta_\IC^*(\sfN, [\eta_N]) = \varrho_\IC^* \pi_\IC^* (\sfN, [\eta_N]) \iso \varrho_\IC^* (\rN(\ssfV), \rN([\eta_\sV])) \iso (\rN(\sfP), \rN([\wt{\mu}]))|_{S_\IC}.  $$ 
    Therefore, by the uniqueness statement in \ref{thm: moduli over C}, $\beta_\IC = \rho_N |_{S_\IC}$, i.e., $\beta_\IC$ is defined over $F$. As $\pi$ is \'etale and is defined over $E \subseteq F$. By \ref{lem: finite descent}, $\varrho_\IC$ descends to a morphism $\varrho$ over some finite extension $F'/F$ in $\IC$ such that $(\rho_N)_{F'} = \pi_{F'} \circ \varrho$. 
    
    The above gives the first statement of \ref{thm: period in R2' case} and we now turn to the second. Recall that we defined $\alpha^\circ_\et : \rho_\IC^* (\sfV_\et, [\eta_V]) \sto (\sfP, [\mu])|_{S_\IC}$ in \ref{sec: define alpha circ}. Since $\wt{\sfV}_\et = i^* \sfV_\et$ and $\rho_\IC = i_\IC \circ \varrho_\IC$, we may alternatively view $\alpha^\circ_\et$ as the \'etale component of (\ref{eqn: varrho iso R2'}), i.e., an isomorphism $\varrho_\IC^* \wt{\sfV}_\et \sto \sfP_\et|_{S_\IC}$, which sends $\varrho_\IC^* [\eta_\sV]$ to $[\wt{\mu}]$. As $(\rho_N)_{F'} = \pi_{F'} \circ \varrho$,  (\ref{eqn: N in appendix}) gives us an isomorphism
    $$ \varrho^*(\rN(\wt{\sfV}_\et), \rN([\eta_\sV])) = \varrho^* \pi^* (\sfN_\et, [\eta_N]) \iso (\rN(\sfP_\et), \rN([\wt{\mu}]))|_{S_{F'}} $$
    whose restriction to $S_\IC$ is $\rN(\alpha^\circ_\et)$. This implies that $\rN(\alpha^\circ_\et)$ descends to $S_{F'}$. As we assumed that $\sK_{\ell_0} \cap \sZ(\IQ_{\ell_0}) = 1$, \ref{lem: pi_1 equivariance} tells us that the $\ell_0$-adic component $\alpha^\circ_{\ell_0} : \ssfV_{\ell_0}|_{S_\IC} \sto \sfP_{\ell_0}|_{S_\IC}$ descends to $S_{F'}$. For every other $\ell$, $\sK_\ell$ still contains an open subgroup $\sK'_\ell$ such that $\sK'_\ell \cap \sZ(\IQ_\ell) = 1$. Hence \ref{lem: pi_1 equivariance} implies that $\varrho^* \ssfV_\ell$ is \'etale-locally isomorphic to $\sfP_\ell|_{S_{F'}}$. 
    \qed

\begin{remark}
    We remark that \ref{thm: period in R2' case} is slightly weaker than \ref{thm: period char 0} (e.g., one cannot descend $\rho_\IC$ to $F$ but only to some finite extension) fundamentally because the representation $\sG \to \GL(N)$ is not faithful, but has a finite kernel. In the (R2) case, this was avoided because we worked with $\sV^\sharp$ instead.
\end{remark}

% This means, over some finite connected \'etale cover $T$ of $({}^F S)_{F'}$, $[\mu]$ and $[\mu']$ can be refined to $\sK'$-level structures, where $\sK'$ is obtained from $\sK$ by replacing $\sK_\ell$ by $\sK_\ell'$. Applying \ref{lem: pi_1 equivariance} again, we have that the restriction of $\alpha^\circ_\ell$ to $T_\IC$ descends an isomorphism $\ssfV_\ell|_T \iso \sfP_{\ell}|_{T}$.

\section{Proof of Theorem B}
\subsection{A specialization lemma for monodromy} 

\begin{definition}
\label{def: lambda number}
    Let $S$ be a noetherian integral normal scheme. Let $\ell \in \sO_S^\times$ be a prime and $\sfW_\ell$ be an \'etale $\IQ_\ell$-local system. We denote by $\lambda(\sfW_\ell)$ the dimension $\dim \varinjlim_U \sfW_{\ell, s}^{U}$ where $s$ is a geometric point on $S$ and $U$ runs through open subgroups of $\pi_1^\et(S, s)$.
\end{definition}

It is clear that the definition is independent of the choice of $s$. 

\begin{definition}
\label{def: good compactification}
    Let $T$ be a noetherian base scheme and $S$ be a smooth $T$-scheme of finite type. 
    \begin{enumerate}[label=\upshape{(\alph*)}]
        \item Let $\ell \in \sO_T^\times$ be a prime and $\sfW_\ell$ be an \'etale $\IQ_\ell$-local system. We say that $\sfW_\ell$ \textbf{has constant $\lambda^\geo$} if there exists a number $\lambda$ such that for every geometric point $t \to T$ and every connected component $S^\circ$ of $S_t$, $\lambda(\sfW_\ell|_{S^\circ}) = \lambda$. When this condition is satisfied, write $\lambda^\geo(\sfW_\ell)$ for $\lambda$. 
        \item We say that $\overline{S}$ is a \textbf{good relative compactification} of $S$ if $\overline{S}$ is a smooth proper $T$-scheme and there exists a relative normal crossing divisor $D$ of $\overline{S}$ such that $S = \overline{S} - D$. 
    \end{enumerate}
\end{definition}

\begin{lemma}
\label{lem: constant lambda geo}
    Let $T$ be a DVR with special point $t$ and generic point $\eta$. Assume $\mathrm{char\,} k(\eta) = 0$ and $\ell \in \sO_T^\times$. Let $S \to T$ be a smooth morphism of finite type with $S$ being connected. Let $\sfW_\ell$ be a $\IQ_\ell$-local system over $S$. If $S$ admits a good relative compactification $\overline{S}$ over $T$, then $\sfW_\ell$ has constant $\lambda^\geo$ over $T$. 
\end{lemma}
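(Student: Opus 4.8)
The plan is to reduce the statement to a comparison of the monodromy of $\sfW_\ell$ on the geometric generic and special fibres, and then to feed that comparison into Grothendieck's specialization theorem for tame fundamental groups.

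\emph{Reduction.} Since $\lambda^\geo$ only involves geometric fibres, I am free to replace $T$ by the spectrum of any discrete valuation ring dominating $\sO_T$; such a base change preserves smoothness and properness of $\overline{S}$ and the property that $D=\overline{S}-S$ is a relative normal crossing divisor. First replace $T$ by the normalization of $T$ in $\H^0(\overline{S},\sO)$ — a discrete valuation ring because $\overline{S}$ is normal and connected (it is connected since $S$ is dense in it) — so that $\overline{S}\to T$ acquires geometrically connected fibres; then replace $T$ by its strict henselization. Now $T$ is strictly henselian and $\overline{S}\to T$ is smooth, proper, with geometrically connected, hence (being smooth) irreducible, fibres; therefore $S_{\overline{v}}=\overline{S}_{\overline{v}}-D_{\overline{v}}$, a dense open in an irreducible scheme, is connected for every geometric point $\overline{v}\to T$. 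It thus suffices to produce a single number equal to $\lambda(\sfW_\ell|_{S_{\overline{\eta}}})$ and to $\lambda(\sfW_\ell|_{S_{\overline{t}}})$ for the geometric generic point $\overline{\eta}$ and geometric special point $\overline{t}$; the remaining geometric points of $T$ are handled by the invariance of $\pi_1^\et$ in characteristic $0$ on the generic side, and — once I know $\sfW_\ell|_{S_{\overline{t}}}$ is tame — by the invariance of the tame fundamental group under extension of the algebraically closed base field on the special side.

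\emph{A formula for $\lambda$, and tameness on the total space.} For a connected normal variety $Y$ over an algebraically closed field, with geometric point $y$ and a $\IQ_\ell$-local system $\sfW_\ell$, one has $\lambda(\sfW_\ell|_Y)=\dim_{\IQ_\ell}\sfW_{\ell,y}^{\Mon^\circ(\sfW_\ell|_Y,y)}$: once an open (equivalently, finite-index) subgroup of the monodromy image lands in $\Mon^\circ$ its invariants agree with those of $\Mon^\circ$ by Zariski density, and $\Mon^\circ$ is always reached this way; in particular $\lambda$ depends only on the monodromy image up to finite index. Next I claim the monodromy $\rho\colon\pi_1^\et(S,s)\to\GL(\sfW_{\ell,s})$ factors through the tame fundamental group $\pi_1^t(S,s)$ of $S$ relative to $D$. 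By Abhyankar's lemma — applicable since $\overline{S}$ is regular and $D$ has normal crossings — a finite \'etale cover of $S$ is tame along $D$ as soon as it is tamely ramified at the generic points of the irreducible components of $D$; but each such component, being part of a \emph{relative} normal crossing divisor, is smooth and dominant over $T$, so its generic point lies over $\eta$ and has residue characteristic $0$, whence that tameness is automatic. Thus $\rho$ factors as $\overline{\rho}\colon\pi_1^t(S,s)\to\GL(\sfW_{\ell,s})$, and restricting along $\pi_1^t(S_{\overline{t}},s)\to\pi_1^t(S,s)$ shows $\sfW_\ell|_{S_{\overline{t}}}$ is tame relative to $D_{\overline{t}}$ (using Abhyankar once more to see tameness along $D$ restricts to tameness along $D_{\overline{t}}$); while $\pi_1^t(S_{\overline{\eta}},s)=\pi_1^\et(S_{\overline{\eta}},s)$ since $\mathrm{char\,}\kappa(\eta)=0$.

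\emph{Specialization and conclusion.} Because $D$ is a relative normal crossing divisor over the strictly henselian trait $T$ with $\mathrm{char\,}\kappa(\eta)=0$, Grothendieck's specialization theorem for tame fundamental groups (SGA~1, Exp.~XIII, refined by Grothendieck--Murre) gives that the natural maps $\pi_1^t(S_{\overline{t}},s)\to\pi_1^t(S,s)$ and $\pi_1^t(S_{\overline{\eta}},s)\to\pi_1^t(S,s)$ are both surjective. (Both surjectivities can also be seen directly in the present situation: a connected finite \'etale cover $S'\to S$ has connected normalization $\overline{S}'$ in $\overline{S}$, which is geometrically connected over $T$ — on the special side by proper base change over the strictly henselian $T$, and on the generic side because \'etaleness of $\overline{S}'\to\overline{S}$ at the generic point of $\overline{S}_t$ forces the ``constant field'' extension of $T$ to be unramified, hence trivial — so $S'_{\overline{t}}$ and $S'_{\overline{\eta}}$ stay connected.) Hence
\[
\im\bigl(\rho|_{\pi_1^t(S_{\overline{\eta}})}\bigr)=\im(\overline{\rho})=\im\bigl(\rho|_{\pi_1^t(S_{\overline{t}})}\bigr)
\]
as subgroups of $\GL(\sfW_{\ell,s})$; these subgroups therefore have the same Zariski closure, so $\Mon^\circ(\sfW_\ell|_{S_{\overline{\eta}}},s)=\Mon^\circ(\sfW_\ell|_{S_{\overline{t}}},s)$, and the formula above yields $\lambda(\sfW_\ell|_{S_{\overline{\eta}}})=\lambda(\sfW_\ell|_{S_{\overline{t}}})$. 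Combined with the reduction step this proves the lemma.

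\emph{Main obstacle.} The crux — and the only place the relative normal crossing hypothesis and Abhyankar's lemma are genuinely needed — is the tameness claim in the third paragraph, together with citing Grothendieck's specialization theorem in exactly the right generality (tame $\pi_1$ of an open variety relative to a relative normal crossing divisor, over a strictly henselian trait with characteristic-$0$ generic fibre). Once these are in place, everything else is formal.
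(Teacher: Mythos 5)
Your argument is correct and runs along the same main axis as the paper's: reduce to a strictly henselian $T$ whose fibres are geometrically connected, observe via Abhyankar's lemma (using that $D$ is a \emph{relative} NCD, so the generic points of its components have residue characteristic $0$) that the monodromy $\rho$ factors through $\pi_1^t(S)$, and then compare the special- and generic-fibre monodromies through the tame $\pi_1$ of the total space. Where you diverge is in how the generic-fibre comparison is actually carried out. The paper chooses a section $\sigma\colon T\to S$ (available because $T$ is strictly henselian and $S\to T$ is smooth with nonempty special fibre), uses it to split the exact sequence for $\pi_1^\et(S_\eta)$, and observes that $\sigma^*\sfW_\ell$ is trivial on the strictly henselian $T$, so the $\Gal_{k(\eta)}$ factor acts trivially; together with the easy surjection $\pi_1^\et(S_\eta)\onto\pi_1^\et(S)$ and the isomorphism $\pi_1^t(S_{\bar t})\cong\pi_1^t(S)$ of \cite[XIII~Ex.~2.10]{SGA1}, this closes the argument without ever needing to know that $\pi_1^t(S_{\bar\eta})\onto\pi_1^t(S)$. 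You instead invoke, and also re-derive, precisely that surjectivity (the other half of the tame specialization theorem), which makes the section superfluous and the two fibres play perfectly symmetric roles. Both routes are sound; the paper's is a bit more elementary in the sense that it only needs the special-fibre half of specialization, yours is more uniform and arguably cleaner to state.

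One imprecision worth repairing in your reduction: the Stein factorization $T'=\Spec\,\H^0(\overline{S},\sO_{\overline{S}})$ is a normal domain finite over $\sO_T$, hence a semi-local Dedekind domain, but it need not be local before you henselianize — your parenthetical claim that it is ``a discrete valuation ring because $\overline{S}$ is normal and connected'' only gives absence of idempotents, not a unique maximal ideal. Since you pass to the strict henselization immediately afterwards this does no harm, but the steps are better performed in the opposite order: strictly henselianize $T$ first (as the paper does), so that any finite normal $\sO_T$-domain is automatically local, and only then pass to a connected component of $S_{\wt{T}}$; the paper's appeal to \cite[055J, 0E0N]{stacks-project} then takes care of the geometric connectedness of both fibres of that component. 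Your direct re-derivation of the two surjectivities is also a bit compressed — e.g.\ to see that removing $D'_{\bar t}$ from $\overline{S}'_{\bar t}$ keeps it connected one really does need to know $\overline{S}'$ is well-behaved along $D$, which is again Abhyankar — but the cited specialization theorem covers this cleanly, so the sketch is acceptable as a cross-check rather than a load-bearing argument.
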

\begin{proof}
Let $\wt{T}$ be the strict Henselianization of $T$ and let $\wt{t}$ and $\wt{\eta}$ be the special and generic point of $\wt{T}$. Let $S'$ be a connected component of $S_{\wt{T}}$. Then \cite[055J]{stacks-project} tells us that $S'_{\wt{\eta}}$ is connected. As $S'_{\wt{\eta}}$ necessarily contains a $k(\wt{\eta})$-rational point, $S'_{\wt{\eta}}$ is geometrically connected. Now by applying \cite[0E0N]{stacks-project} to $\overline{S}$, $S_{\wt{\eta}}$ and $S_{\wt{t}}$ have the same number of geometric connected components, so $S'_{\wt{t}}$ must be connected. To prove the lemma we may replace $T$ by $\wt{T}$ and $S$ by $S'$, so that $S_t$ and $S_{\eta}$ are both geometrically connected. Let $\bar{\eta}$ be the geometric point over $\eta$ defined by a chosen algebraic closure of $k(\eta)$. 

Choose a section $\sigma : T \to S$ and set $a = \sigma(t), b = \sigma(\bar{\eta})$. Note that $\sigma$ provides an \'etale path between $a$ and $b$, through which we identify $\sfW_{\ell, a}$ with $\sfW_{\ell, b}$ and $\pi^\et_1(S, a)$ with $\pi^\et_1(S, b)$. Let $\rho : \pi^\et_1(S, a) \to \GL(\sfW_{\ell, a})$ be the monodromy representation. For any group $G$ with a morphism $G \to \pi^\et_1(S, a)$ implicitly understood, write $\rho(G)^\circ$ for the identity component of the Zariski closure of the image of $G$ in $\GL(\sfW_{\ell, a})$. Clearly, $\rho(G)^\circ$ remains unchanged if we replace $G$ by a finite index subgroup. It suffices to show that $\rho(\pi_1^\et(S_t, a))^\circ = \rho(\pi_1^\et(S_{\bar{\eta}}, b))^\circ$, i.e., $\Mon^\circ(\sfW_\ell|_{S_t}, a) = \Mon^\circ(\sfW_\ell|_{S_{\bar{\eta}}}, b)$ in the notation in \ref{def: \'etale locally isomorphic}. 

Take a sequence $\sF_n$ of locally constant free $\IZ / \ell^n \IZ$-modules over $S$ such that $\sfW_\ell \iso (\varprojlim_n \sF_n) \tensor \IQ_\ell$. As $\mathrm{char\,}k(\eta) = 0$, each $\sF_n$ is tamely ramified over $\overline{S}$ by Abhyankar's lemma \cite[XIII~App.~Prop.~5.5]{SGA1}\footnote{There is a typo in the statement: $Y$ should be $\mathrm{Supp}(D)$, not $X - \mathrm{Supp}(D)$.}. Let us use a superscript ``$t$'' to indicate tame fundamental group. Then we know that $\rho(\pi^\et_1(S_?, a))^\circ = \rho(\pi_1^\et(S_?, a)^t)^\circ$ for $? = \emptyset, t$. By \cite[Exp.~XIII~2.10]{SGA1}, the natural map $\pi_1^\et(S_t, a)^t \to \pi_1^\et(S, a)^t$ is an isomorphism, so it remains to show that $\rho(\pi_1^\et(S_{\bar{\eta}}, b))^\circ = \rho(\pi_1^\et(S, b)^t)^\circ$. 

The section $\sigma(\eta)$ induces an isomorphism $\pi^\et_1(S, b) = \pi^\et_1(S_{\bar{\eta}}, b) \rtimes \Gal_{k(\eta)}$. As $\sigma^*(\sfW_\ell)$ is necessarily trivial, the subgroup $\Gal_{k(\eta)} \subseteq \pi^\et_1(S, b)$ acts trivially on $\sfW_{\ell, b}$. Therefore, $\rho(\pi^\et_1(S_{\bar{\eta}}, b))^\circ = \rho(\pi^\et_1(S_{\eta}, b))^\circ = \rho(\pi^\et_1(S, b)^t)$ as desired. Note that the second equality follows from the simple fact that $\pi_1^\et(S_\eta, b)$ maps surjectively to $\pi_1^\et(S, b)$, and $\pi^\et_1(S, b)^t$ is a quotient of $\pi_1^\et(S, b)$. 
\end{proof}

\begin{proposition}
\label{prop: thm of fixed part}
    Let $S$ be a connected smooth $\IC$-variety and $f : \sX \to S$ be a $\heartsuit$-family. Let $s$ be any Hodge-generic point on $S$. Then for any prime $\ell$ and $\sfV_\ell := R^2 f_* \IQ_\ell(1)$, $\dim \NS(\sX_s)_\IQ = \lambda(\sfV_\ell)$. 
\end{proposition}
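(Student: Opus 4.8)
The plan is to reduce everything to the case where $\Mon(\sfV_\ell, s)$ is connected, and then to compare two subspaces of $\H^2(\sX_s, \IQ)$. Passing from $S$ to a connected finite \'etale cover $S' \to S$ and replacing $s$ by a lift changes neither $\dim_\IQ \NS(\sX_s)_\IQ$ (the fiber $\sX_s$ is unchanged), nor $\lambda(\sfV_\ell)$ (open subgroups of $\pi_1^\et(S', s)$ are cofinal among those of $\pi_1^\et(S, s)$, so the colimit in Definition~\ref{def: lambda number} is the same), nor the hypotheses that $s$ is Hodge-generic and that $f$ is a $\heartsuit$-family (both survive \'etale pullback). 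Taking $S'$ to correspond to the preimage of $\Mon^\circ(\sfV_\ell, s)$ under the monodromy representation, we may therefore assume $\Mon(\sfV_\ell, s) = \Mon^\circ(\sfV_\ell, s)$. Since $\sX_s$ is smooth projective with $h^{2,0}(\sX_s) = 1$, the Hodge structure $\sfV_{B,s} := \H^2(\sX_s, \IQ)(1)$ is of K3-type, and by the Lefschetz $(1,1)$-theorem $\NS(\sX_s)_\IQ$ is identified with the space $\sfV_{B,s}^{(0,0)}$ of Hodge classes; so it remains to prove $\dim_\IQ \sfV_{B,s}^{(0,0)} = \lambda(\sfV_\ell)$.

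Since a Tate twist does not change monodromy, $\lambda(\sfV_\ell)$ is the dimension of the span of the vectors in $\sfV_{B,s} \otimes \IQ_\ell$ with finite $\pi_1^\et(S,s)$-orbit. With $\Mon(\sfV_\ell, s)$ connected, the Zariski closure of the image of any open subgroup of $\pi_1^\et(S,s)$ has finite index in $\Mon(\sfV_\ell,s)$, hence equals all of it; thus the vectors of finite orbit are exactly the $\pi_1^\et(S,s)$-invariant ones, and $\lambda(\sfV_\ell) = \dim_\IQ \sfV_{B,s}^{\pi_1(S,s)}$ (invariants under the monodromy action). So the proposition reduces to the equality $\sfV_{B,s}^{\pi_1(S,s)} = \sfV_{B,s}^{(0,0)}$.

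The inclusion "$\supseteq$" uses that $s$ is Hodge-generic: then $\MT(\sfV_{B,s})$ is the generic Mumford-Tate group of the variation $\sfV$ on $S$, and by Deligne's theorem (\cite[\S5]{AndreMT}, \cite[Thm~16]{Peters}) $\Mon(\sfV_B, s) = \Mon^\circ(\sfV_B, s)$ is a normal subgroup of its derived group, in particular $\Mon(\sfV_B,s) \subseteq \MT(\sfV_{B,s})$. As Hodge classes are precisely the $\MT(\sfV_{B,s})$-invariants, $\sfV_{B,s}^{(0,0)} = \sfV_{B,s}^{\MT(\sfV_{B,s})} \subseteq \sfV_{B,s}^{\Mon(\sfV_B,s)} = \sfV_{B,s}^{\pi_1(S,s)}$. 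For "$\subseteq$", where the $\heartsuit$-hypothesis enters, I would invoke Deligne's theorem of the fixed part: the invariant subspace $\mathsf{U} := \sfV_{B,s}^{\pi_1(S,s)}$ underlies a constant sub-VHS of $\sfV$, which (as $\sfV$ is polarizable) is a direct summand. Then $F^2 \mathsf{U}_\dR$ is a subbundle of the line bundle $F^2 \sfV_\dR$ (as $h^{2,0} = 1$); if $\mathsf{U}$ were not purely of type $(0,0)$ at $s$, then $\sfV_{B,s}$ being of K3-type would force $\dim F^2 \mathsf{U}_{\dR, s} = 1$, hence $F^2 \mathsf{U}_\dR = F^2 \sfV_\dR$, and a constant VHS has flat Hodge bundles, so $F^2 \sfV_\dR$ would be horizontal for the Gauss-Manin connection. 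By Griffiths transversality this makes the Kodaira-Spencer map \eqref{eqn: KS} vanish identically, contradicting the definition of a $\heartsuit$-family. Hence $\mathsf{U}$ is of type $(0,0)$ at $s$, giving $\sfV_{B,s}^{\pi_1(S,s)} \subseteq \sfV_{B,s}^{(0,0)}$ and completing the argument.

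The main obstacle I anticipate is making the last point fully precise: that a flat rank-one Hodge subbundle $F^2 \sfV_\dR$ of a K3-type variation forces the entire Kodaira-Spencer map to vanish. The cleanest route is to observe that the classical period map of a polarized weight-$2$ variation with $h^{2,0} = 1$ records only the line $F^2 \subseteq \sfV_\dR$ — the rest of the Hodge flag being forced by the polarization — so that flatness of $F^2 \sfV_\dR$ makes the period map locally constant, hence constant as $S$ is connected, which is the negation of the $\heartsuit$-condition. The remaining ingredients — the \'etale reduction, the passage from finite orbits to invariants under connected monodromy, the Lefschetz $(1,1)$-theorem, and the comparison of monodromy with the Mumford-Tate group — are standard.
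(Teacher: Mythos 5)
Your proposal is correct and runs along the same rails as the paper's proof: reduce to connected monodromy by passing to a finite \'etale cover, identify $\lambda(\sfV_\ell)$ with $\dim_{\IQ}\sfV_{B,s}^{M}$ where $M$ is the Betti algebraic monodromy group (using that $\Mon(\sfV_\ell,s)=M\otimes\IQ_\ell$), apply Lefschetz $(1,1)$, use Hodge-genericity plus $\Mon^\circ\subseteq\MT^{\der}$ for one inclusion, and Deligne's theorem of the fixed part plus the $\heartsuit$-hypothesis for the other. The one place where your route genuinely diverges is the final step that rules out monodromy-invariant transcendental classes. You argue that a monodromy-invariant class outside the $(0,0)$-part forces the rank-one Hodge subbundle (which you call $F^2\sfV_\dR$ — after the Tate twist to weight $0$ this should read $F^1$, a harmless slip) to agree with the constant Hodge line of the fixed sub-VHS, hence to be flat, hence the period map is constant, killing the whole Kodaira--Spencer map. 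This is correct, and your remark that the full Hodge flag of a polarized K3-type variation is recovered from $F^1$ via $F^0=(F^1)^\perp$ is exactly what closes the gap you yourself flagged. The paper instead decomposes $\sfV$ as $\mathbf{1}_S^{\oplus\rho}\oplus\sfP$ with $\sfP_{B,s}^{(0,0)}=0$, notes via the theorem of the fixed part that $\sfP_{B,s}^{M}$ is a sub-Hodge structure, and then invokes the irreducibility of the transcendental K3-type Hodge structure (Huybrechts, \S3 Lem.~2.7) to force $\sfP_{B,s}^{M}\in\{0,\sfP_{B,s}\}$; the second option makes $\sfP$ a constant VHS and contradicts $\heartsuit$. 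The irreducibility lemma hands the dichotomy for free and is therefore a touch more streamlined than the flatness/period-map argument, but both proofs hinge on the same two inputs — the theorem of the fixed part and the nonvanishing Kodaira--Spencer — so the difference is cosmetic.
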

\begin{proof}
    Up to replacing $S$ by a connected \'etale cover, assume that $M_\ell := \Mon(\sfV_\ell, s)$ is connected (see \ref{def: \'etale locally isomorphic}). Set $\rho := \NS(\sX_s)_\IQ$. By \ref{lem: extend LB on generic}, $\rho = \NS(\sX_\eta)_\IQ$. As $\sfM_\ell$ is unchanged if we replace $S$ by a further connected \'etale cover, $\dim \sfV_{\ell, s}^{M_\ell} = \lambda(\sfV_\ell)$. Let $\sfV = (\sfV_B, \sfV_\dR)$ be the VHS $R^2 f_* \IQ(1)$. Then $\sfV$ splits into $\mathbf{1}_S^{\oplus \rho} \oplus \sfP$ for some VHS $\sfP$ such that $\sfP_{B, s}^{(0, 0)} = 0$. It suffices to argue that $\sfP_{\ell, s}^{M_\ell} = 0$, where $\sfP_\ell = \sfP_B \tensor \IQ_\ell$.  
    %Then we have $\rho := \NS(\sX_s)_\IQ = \NS(\sX_\eta)_\IQ$ (cf. \ref{lem: extend LB on generic}), and $\dim \sfV_{\ell, s}^{M_\ell} = \lambda(\sfV_\ell)$. 

    Set $M:= \Mon(\sfV_B, s)$ (see \ref{def: maximal monodromy}). Then we have $M_\ell = M \tensor \IQ_\ell$. This implies that $V_{\ell, s}^{M_\ell} = V_{B, s}^{M} \tensor \IQ_\ell$, so we reduce to showing that $\sfP_{B, s}^{M} = 0$. We recall that Deligne's theorem of the fixed part \cite[(4.1.2)]{DelHdg} says that the subspace $\sfP_{B, s}^{M}$ has a Hodge structure which is respected by the embedding $\sfP_{B, s}^{M} \into \sfP_{B, s}$. Since the Hodge structure on $\sfP_{B, s}$ is irreducible (\cite[\S3~Lem.~2.7]{HuyK3Book}), $\sfP_{B, s}^{M}$ is either $0$ or $\sfP_{B, s}$. But it cannot be $\sfP_{B, s}$ because $M \neq 1$ by our assumption that $\sX/S$ is a $\heartsuit$-family. 
\end{proof}

\subsection{An effective theorem}

First, we extend the set-ups in \ref{set-up: char 0 base} and \ref{sec: define G for period} to a family over $\IZ_{(p)}$ when $F = \IQ$.

\begin{set-up} \label{sec: basic set-up}
Let $\sfM$ be a connected separated scheme over $\IZ_{(p)}$ which is smooth and of finite type for some prime $p > 2$. Let $(f \colon \sX \to \sfM)$ be a smooth projective morphism of relative dimension $d$ such that $\sX|_{\sfM_\IQ}$ is a $\heartsuit$-family. Let $\eta$ be the generic point of $\sfM$. Let $\bxi$ be a relatively ample line bundle on $\sX/\sfM$, which endows $R^2 f_* \IA^p_f(1)$ and $R^2 f_{\IQ *} \IZ_p(1)$ a symmetric bilinear pairing. Let $\Lambda \subseteq \Lambda_0 := \NS(\sX_\eta)_\IQ$ be a subspace which contains the class of $\bxi_\eta$. Recall that by \ref{lem: extend LB on generic}, $\Pic(\sX_\eta) = \Pic(\sX)/\Pic(\sfM)$. By choosing a section $\Lambda_0 \into \Pic(\sX_\eta)_\IQ$, we obtain an embedding $\underline{\Lambda}_0 \to R^2 f_* \IA^p_f(1)$, and for every field $k$ and $s \in \sfM(k)$, $\Lambda_0$ (and hence $\Lambda$) is naturally a subspace of $\NS(\sX_s)_\IQ$. We write $\PNS(\sX_s)_\IQ$ for the orthogonal complement of $\Lambda$ in $\NS(\sX_s)_\IQ$. Note that these definitions are independent of the section $\Lambda_0 \into \Pic(\sX_\eta)_\IQ$ chosen. Choose a base point $b \in \sfM(\IC)$ lying above $\eta$ and let the connected component of $\sfM_\IC$ which contains $b$ be $\sfM^\circ$. 

%Then $b$ is Hodge-generic on $\sfM^\circ$ by \ref{lem: Larsen-Pink}. 

We assume that $\Mon(R^2 f_{*} \IQ_2, b)$ is connected. Now apply the set-ups in \ref{set-up: char 0 base} and \ref{sec: define G for period} with $S = \sfM_\IQ$ and $S^\circ = \sfM^\circ$ and define a system of realizations $(\sfP_B, \sfP_\dR, \sfP_\et) \in \sfR(\sfM_\IQ)$; moreover, we fix an isometry $\mu_b : V \sto \sfP_{B, b}$ and define the Shimura datum $(G, \Ohm)$. Let $(R^2 f_{\IQ *} \IZ_p)_{\mathrm{tf}}$ be the image of $R^2 f_{\IQ *} \IZ_p$ in $R^2 f_{\IQ *} \IQ_p$ and define $(R^2 f_{\IC *} \IZ_{(p)})_{\mathrm{tf}}$ similarly ($\mathrm{tf}$ is short for ``torsion-free''). Let $\bP_B := \sfP_B \cap (R^2 f_{\IC *} \IZ_{(p)}(1))_{\mathrm{tf}}$ and $\bP_p := \sfP_p \cap (R^2 f_{\IQ *} \IZ_p(1))_{\mathrm{tf}}$. For every $\ell \neq p$, let $\bP_\ell$ be the orthogonal complement of $\Lambda$ in $R^2 f_* \IQ_\ell(1)$, so that $\sfP_\ell$ over $\sfM_\IQ$ extends to $\bP_\ell$ over $\sfM$. If $k$ is a perfect field of characteristic $p$ and $W := W(k)$, for every point $t \in \sfM(k)$, $\bxi_t$ defines a pairing on the F-isocrystal $\H^2_\cris(\sX_t/W)[1/p]$ and we write $\bP_{\cris, t}[1/p]$ for the orthogonal complement of the classes in $\Lambda \subseteq \NS(\sX_t)_\IQ$. Assume that the $\IZ_{(p)}$-pairing on $L_{(p)} := \mu_b^{-1}(\bP_{B, b})$ is self-dual. We abusively write the reductive $\IZ_{(p)}$-group $\SO(L_{(p)})$ also as $G$. 
\end{set-up}

% the $\bP_{B} := \sfP_B \cap (R^2 f_{\IC*} \IZ_{(p)})_{\mathrm{tf}}$ has the property that the restriction of the $\IQ$-quadratic form $\sfP_{B, b}$ defines a self-dual $\IZ_{(p)}$-pairing on $\bP_{B, b}$.
Under the above set-up, we define: 
\begin{definition}
\label{def: admissible period morphism}
    Let $\sfK \subseteq G(\IA_f)$ be a neat compact open subgroup of the form $\sfK_p \sfK^p$ for $\sfK_p = G(\IZ_p)$ and $\sfK^p \subseteq G(\IA^p_f)$. Let $\shS_\sfK(G)$ denote the integral model of $\Sh_{\sfK}(G)$ over $\IZ_{(p)}$. Let $\ell_0 \neq p$ be a prime. We say that a morphism $\rho : \sfM \to \shS_\sfK(G)$ is an \textbf{$\ell_0$-admissible period morphism} if (recall the notations in \ref{sec: the L sheaves})
    \begin{enumerate}[label=\upshape{(\alph*)}]
        \item there exists an isometry $\alpha_B^\circ : \rho_\IC^* \bL_B |_{\sfM^\circ} \sto \bP_B |_{\sfM^\circ}$ compatible with the Hodge filtrations (i.e., induces an isomorphism $(\rho_\IC^* \sfV) |_{\sfM^\circ} \sto \sfP |_{\sfM^\circ}$ of VHS over $\sfM^\circ$); 
        \item there is an isometry $\alpha_{\ell_0} : \rho^* \bL_{\ell_0} \sto \bP_{\ell_0}$ whose restriction to $\sfM^\circ$ agrees with $\alpha_B^\circ \tensor \IQ_{\ell_0}$; 
        \item for every $\ell \neq p$, $\rho^* \bL_\ell$ is \'etale-locally isomorphic to $\bP_\ell$ over $\sfM$;
        \item $\rho_\IQ^* \bL_{p}$ is \'etale locally isomorphic to $\bP_p$ over $\sfM_\IQ$. 
    \end{enumerate}
    Note that the isomorphism $\alpha_{\ell_0}$ is unique if it exists. If (b) is satisfied for every prime $\ell \neq p$, then we simply say that $\rho$ is \textbf{admissible}.  
\end{definition}

\begin{theorem}
\label{thm: TateStrong}
Consider the set-up in \ref{sec: basic set-up}. Assume that for some prime $\ell_0 \neq p$
\begin{enumerate}[label=\upshape{(\alph*)}]
    \item $\bP_{\ell_0}$ has constant $\lambda^\geo$ over $\IZ_{(p)}$ as defined in \ref{def: good compactification}, and 
    \item there is an $\ell_0$-admissible period morphism $\rho : \sfM \to \shS_\sfK(G)$ for some $\sfK$ as in \ref{def: admissible period morphism}.
\end{enumerate}
Then for every $k$ which is finitely generated over $\IF_p$ and $t : \Spec(k) \to \sfM$, the fiber $\sX_t$ satisfies the Tate conjecture for divisors. 
\end{theorem}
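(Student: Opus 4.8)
The plan is to run the two-step argument of Madapusi-Pera \cite{MPTate}, using hypothesis~(a) in place of the étaleness of $\rho$. After replacing $k$ by a finite extension (harmless: for a fixed $\ell$ the codimension-one Tate conjecture over $k$ follows from that over any finite extension by the trace argument with $\IQ_\ell$-coefficients) we may assume $t$ has image a point $\mathfrak p\in\sfM_{\IF_p}$, fix a geometric point $\bar t$ over it, and lift $\rho(\bar t)$ to the spinor Shimura variety $\shS_\IK(\wt G)$, so that the Kuga--Satake abelian scheme $\sA$, the module $\LEnd(\sA_{\rho(\bar t)})$ of special endomorphisms (\ref{def: special endomorphisms}) with its quadratic form, and the realizations $\bL_?$ are available; write $\sfM^\circ_{\bar k}$, $\sfM^\circ_\IC$ for the components of $\sfM_{\bar k}$, $\sfM_\IC$ through the images of $\bar t$ and $b$, with geometric generic points $\bar\eta_p$, $\bar\eta_\IC$, and put $\lambda_0:=\lambda^\geo(\bP_{\ell_0})$. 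By Tate's theorem for endomorphisms of abelian varieties over fields finitely generated over $\IF_p$ (Tate, Zarhin) together with the Galois-equivariance of the CSpin-idempotent, the $\ell_0$-adic realization identifies $\LEnd(\sA_{\rho(t)})\tensor\IQ_{\ell_0}$ with $(\bL_{\ell_0})_{\rho(\bar t)}^{\Gal(\bar k/k)}$, injectively; transporting by the isometry $\alpha_{\ell_0}$ and splitting off $\Lambda$ (algebraic and $\Gal$-fixed), the Tate conjecture in codimension one for $\sX_t$ at $\ell_0$ (i.e. surjectivity of $c_1\colon\NS(\sX_t)\tensor\IQ_{\ell_0}\to\H^2_\et(\sX_{\bar t},\IQ_{\ell_0}(1))^{\Gal(\bar k/k)}$) reduces to producing an injection $\theta\colon\LEnd(\sA_{\rho(t)})\tensor\IQ\hookrightarrow\PNS(\sX_{\bar t})_\IQ$ with $c_1\circ\theta$ equal to $\alpha_{\ell_0}$ followed by the $\ell_0$-adic realization. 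Granting this, $\theta(\zeta)$ is $\Gal(\bar k/k)$-fixed whenever $\zeta$ is (as $c_1\colon\PNS(\sX_{\bar t})_\IQ\hookrightarrow(\bP_{\ell_0})_{\bar t}$ is injective and Galois-equivariant), so $\rank\PNS(\sX_t)\ge\rank\LEnd(\sA_{\rho(t)})=\dim(\bP_{\ell_0})_{\bar t}^{\Gal(\bar k/k)}\ge\rank\PNS(\sX_t)$, forcing equality and hence surjectivity of $c_1$; the remaining primes $\ell\neq p$ are treated identically using~(c) and one more finite extension of $k$ through which the relevant Galois action factors so that $\bP_\ell$ matches $\rho^*\bL_\ell$.

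To construct $\theta$, fix $\zeta\in\LEnd(\sA_{\rho(\bar t)})$. Since the deformation of a special endomorphism is cut out by a single equation, the locus $Z(\zeta)$ along which $\zeta$ deforms is, near $\rho(\bar t)$, a relative Cartier divisor over $\IZ_{(p)}$, and its preimage $\rho^{-1}(Z(\zeta))$ in the formal completion $\widehat{\sfM}_{\bar t}$ is defined by one equation~$g$. If the reduction of $g$ mod $p$ is nonzero, then $\rho^{-1}(Z(\zeta))$ has nonempty generic fibre and, choosing a mixed-characteristic DVR, we get a point $\wt s\in\sfM$ over $\bar t$ with $\rho(\wt s)\in Z(\zeta)$, so $\zeta$ deforms to $\widetilde\zeta\in\LEnd(\sA_{\rho(\wt s_\IC)})$. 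If $g\equiv0$ mod $p$, then $\zeta$ deforms over all of $\widehat{\sfM}_{\bar t}\otimes\IF_p$, hence extends to $\LEnd(\sA_{\rho(\bar\eta_p)})$; this is where hypothesis~(a) enters: via $\alpha_{\ell_0}$ and de Jong's Tate theorem in both characteristics, $\rank\LEnd(\sA_{\rho(\bar\eta_p)})=\lambda(\bP_{\ell_0}|_{\sfM^\circ_{\bar k}})=\lambda_0=\lambda(\bP_{\ell_0}|_{\sfM^\circ_\IC})=\rank\LEnd(\sA_{\rho(\bar\eta_\IC)})$, and since specialization along a codimension-one point of $\sfM$ over $p\in\Spec\IZ_{(p)}$ gives an injection $\LEnd(\sA_{\rho(\bar\eta_\IC)})\hookrightarrow\LEnd(\sA_{\rho(\bar\eta_p)})$, it is an isomorphism; thus the image of $\zeta$ is the specialization of a special endomorphism $\widetilde\zeta$ on the characteristic-$0$ generic fibre. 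In either case $\widetilde\zeta$ is a special endomorphism on a characteristic-$0$ fibre; via the period isometry $\alpha_B^\circ$ it corresponds to a Hodge class in $\sfP_B$, hence by the Lefschetz $(1,1)$-theorem to a divisor class in $\PNS$ of that fibre. Extending the divisor over $\sfM^\circ_\IC$ (\ref{lem: extend LB on generic}), restricting it to $\sX_{\wt s_\IC}$ for a mixed-characteristic lift $\wt s$ of $\bar t$, and specializing the divisor class down the DVR yields $\theta(\zeta)\in\PNS(\sX_{\bar t})_\IQ$; since every identification in this chain is the smooth-proper base-change isomorphism on $\ell_0$-adic cohomology, $c_1(\theta(\zeta))$ equals $\alpha_{\ell_0}$ of the $\ell_0$-adic realization of $\zeta$, so $\theta$ is well defined, additive, and injective (indeed an isometry onto its image).

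The main obstacle is exactly this case distinction. Because $\rho$ need not be smooth or even flat, one cannot describe the image of $\rho$ locally, and a priori $\rho(\widehat{\sfM}_{\bar t})$ could be swallowed by a special divisor $Z(\zeta)$ that has no analogue in characteristic $0$, in which case $\zeta$ would fail to lift. Hypothesis~(a), feeding the theorem of the fixed part (\ref{prop: thm of fixed part}) in characteristic $0$ into de Jong's theorem in characteristic $p$, is precisely what excludes this: it forces the characteristic-$p$ special endomorphisms at the generic point to descend from characteristic $0$, where Lefschetz $(1,1)$ applies. Once this is in hand, the remainder is bookkeeping with realizations and Galois actions, together with the finite-extension manipulations needed to pass from $\ell_0$ to all $\ell\neq p$.
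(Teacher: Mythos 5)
Your overall strategy — reducing to the construction of an injection $\theta\colon\LEnd(\shA_{\rho(\bar t)})_\IQ\hookrightarrow\PNS(\sX_{\bar t})_\IQ$ compatible with $\alpha_{\ell_0}$, building $\theta$ by deformation theory with the one case distinction (the defining equation of the special-endomorphism locus vanishes mod $p$ or not), and using hypothesis~(a) to rule out the pathological case — is exactly the paper's strategy, and the way you feed the constant-$\lambda^\geo$ hypothesis through the theorem of the fixed part in characteristic~$0$ is the right picture.

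There is, however, a concrete gap in your opening reduction. You assert that ``Tate's theorem for endomorphisms of abelian varieties over fields finitely generated over $\IF_p$ (Tate, Zarhin) together with the Galois-equivariance of the CSpin-idempotent'' already identifies $\LEnd(\shA_{\rho(t)})\tensor\IQ_{\ell_0}$ with $\bL_{\ell_0,\rho(\bar t)}^{\Gal(\bar k/k)}$. This does not follow. Tate--Zarhin gives $\End(\shA_t)\tensor\IQ_\ell\sto\End(\bH_{\ell,\bar t})^{\Gal}$, and the idempotent $\bpi_\ell$ is Galois-equivariant, so $\bL_\ell^{\Gal}=\bpi_\ell(\End(\bH_{\ell,\bar t})^{\Gal})$; but $\LEnd(\shA_t)\tensor\IQ_{\ell_0}$ is a priori only \emph{contained} in $(\End(\shA_t)\tensor\IQ_{\ell_0})\cap\bL_{\ell_0,\bar t}$. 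The inclusion $(W\cap U)\tensor\IQ_\ell\subseteq W_{\IQ_\ell}\cap U$ for a $\IQ$-space $W$ and a $\IQ_\ell$-subspace $U\subseteq W_{\IQ_\ell}$ is strict in general, and this is precisely the issue that \cite[Thm~6.4]{MPTate} resolves — but only under the numerical hypothesis \cite[(6.2)]{MPTate}. In the K3 case this hypothesis is automatic because the period morphism is \'etale; here it is not, and the paper verifies it by a nontrivial argument: (i) use Morrow's result that the $\ell$-adic and crystalline codimension-one Tate conjectures over a finite field are equivalent (the pole-of-zeta computation) to get $\lambda(t^*\bP_\ell)=\lambda(\bP_{\cris,t}[1/p])$; (ii) use admissibility (c) to transport this to $\bL_\ell$; (iii) use admissibility (d) together with the $p$-adic comparison theorem to equate the crystalline $\lambda$'s of $\bL$ and $\bP$; then a further specialization argument handles infinite $k$. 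Your proof never touches the $p$-adic/crystalline realizations and never uses conditions (c) or (d) until the very last sentence, so this entire verification is missing; without it, the chain $\rank\LEnd(\shA_{\rho(t)})=\dim(\bP_{\ell_0})_{\bar t}^{\Gal(\bar k/k)}$ that your dimension count hinges on is unproved.

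The same subtlety recurs in your treatment of the $g\equiv0\bmod p$ case, where you invoke ``de Jong's Tate theorem'' to compute $\rank\LEnd(\shA_{\rho(\bar\eta_p)})$. De Jong's theorem gives the $\End$-statement over the function field; passing to $\LEnd$ again requires a \cite[(6.2)]{MPTate}-type input you have not supplied. The paper avoids this entirely by a sandwich: specialization gives $\LEnd(\shA_\sfM)\tensor\IQ_{\ell_0}\to\LEnd(\shA_{\bar u})\tensor\IQ_{\ell_0}\hookrightarrow\bL_{\ell_0,\bar u}^{\pi_1}$, the outer spaces have the same dimension $\lambda^\geo(\bL_{\ell_0})$ (by the theorem of the fixed part over $\IC$, admissibility (b), and hypothesis~(a)), so all maps are isomorphisms. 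Your argument can be repaired by replacing the appeal to de Jong with this sandwich; as written it is not self-contained.
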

\begin{proof}
Define $\wt{G} := \CSpin(L)$ and $\IK_p := \wt{G}(\IZ_p)$ as in \S\ref{sec: integral model}. Choose a compact open subgroup $\IK^p \subseteq \wt{G}(\IA^p_f)$ whose image is contained in $\sfK^p$ such that $\IK := \IK_p \IK^p$ is neat. Up to replacing $\sfM$ by a further connected \'etale cover, let us assume that $\rho$ can be lifted to a morphism $\sfM \to \shS := \shS_\IK(\wt{G})$. Below we shall use $\rho$ to denote this lift. Recall the definition of special endomorphisms in \ref{def: special endomorphisms}. Under these preparations, we have the following proposition: 
\begin{proposition}
\label{prop: key diagram exists}
    For every algebraically closed field $\kappa$ and geometric point $s : \Spec(\kappa) \to \sfM$, there is an isomorphism $\t_s : \LEnd(\shA_{\rho(s)})_\IQ \sto \PNS(\sX_s)_\IQ$ such that the diagram  
    \begin{equation}
    \label{eqn: key diagram}
    \begin{tikzcd}
	{\LEnd(\shA_{\rho(s)})_\IQ} & {\PNS(\sX_s)_\IQ} \\
	{\bL_{\ell_0, {\rho(s)}}} & {\bP_{\ell_0, s}}
	\arrow["{\t_s}", from=1-1, to=1-2]
	\arrow[from=1-2, to=2-2]
	\arrow[from=1-1, to=2-1]
	\arrow["{\alpha_{\ell_0, s}}"', from=2-1, to=2-2]
    \end{tikzcd}
    \end{equation}
    commutes, where the vertical arrows are cycle class maps. 
\end{proposition}

Now we prove \ref{thm: TateStrong} assuming the proposition above. For any $k$ (not necessarily finite) and $t$ in \ref{thm: TateStrong}, we claim that 
\begin{equation}
\label{eqn: LEnd = lambda for ell}
    \LEnd(\shA_{t})_\IQ \tensor \IQ_\ell = \bL_{\ell, \bar{t}}^{\Gal(\bar{t}/t)}
\end{equation}
If $k$ is finite, the claim is given by \cite[Thm~6.4]{MPTate}. We remark that the $\ell$-independence assumption in (6.2) of \textit{loc. cit.} can now be deduced from \cite[Cor.~2.3.1]{KisinModp}. When $k$ is not finite, the claim follows from the proof of \cite[Cor.~6.11]{MPTate}. Indeed, we may assume that $k$ is the fraction field of some smooth and geometrically connected variety $T$ over a finite extension of $\IF_p$, and $t$ extends to a morphism $T \to \sfM$. Let us choose a closed point $t_0$ on $T$, which we also view as a $k(t_0)$-valued point on $\sfM$. Choose a geometric point $\bar{t}_0$ over $t_0$, and let $\gamma$ be an \'etale path connecting $\bar{t}$ and $\bar{t}_0$. By \cite[I~Prop.~2.7]{FC90}, $\End(\shA_T) = \End(\shA_t)$. This gives us a specialization morphism $\LEnd(\shA_t) \to \LEnd(\shA_{t_0})$ which fits into a commutative diagram below 
\[\begin{tikzcd}[column sep=0.3em, row sep=0.4em]
	{\LEnd(\shA_t)} && {\bL_{\ell, \bar{t}}} \\
	& {\LEnd(\shA_{t_0})} && {\bL_{\ell, \bar{t}_0}} \\
	{\End(\shA_t)} && {\End(\bH_{\ell, \bar{t}})} \\
	& {\End(\shA_{t_0})} && {\End(\bH_{\ell, \bar{t}_0})}
	\arrow[from=2-2, to=4-2]
	\arrow[from=3-1, to=3-3]
	\arrow[from=2-2, to=2-4]
	\arrow[from=2-4, to=4-4]
	\arrow[from=4-2, to=4-4]
	\arrow[from=1-1, to=2-2]
	\arrow[from=1-1, to=3-1]
	\arrow[from=3-1, to=4-2]
	\arrow[from=1-1, to=1-3]
	\arrow["\gamma", from=1-3, to=2-4]
	\arrow[from=1-3, to=3-3]
	\arrow["\gamma", from=3-3, to=4-4]
\end{tikzcd}\]
 (recall the notations in \S\ref{sec: integral model}). 
We claim that all vertical squares are Cartesian. For the squares on the left and right, this is clear by the definition \ref{def: special endomorphisms}. For the square at the front, this is obtained by applying (\ref{eqn: LEnd = lambda for ell}) to $t_0$. Hence the remaining diagram at the back must also be Cartesian. Therefore, the surjectivity of $\End(\shA_{t}) \tensor \IQ_\ell \to \End(\bH_{\ell, \bar{t}})^{\Gal(\bar{t}/t)}$ (\cite{Zarhin2}) implies the surjectivity of $\LEnd(\shA_t) \tensor \IQ_\ell \to \bL_{\ell, \bar{t}}^{\Gal(\bar{t}/t)}$. Hence we have affirmed (\ref{eqn: LEnd = lambda for ell}) for $t$. 

Finally, combining (\ref{eqn: key diagram}), (\ref{eqn: LEnd = lambda for ell}), and assumption \ref{def: admissible period morphism}(c), we have 
$$ \dim \PNS(\sX_{\bar{t}})_\IQ = \lambda(t^* \bP_{\ell_0}) = \lambda(t^*\bL_{\ell_0}) = \lambda(t^*\bL_{\ell}) = \lambda(t^*\bP_{\ell}) $$
This implies the Tate conjecture in codimension $1$ for $\sX_t$. \end{proof}

Now we prove \ref{prop: key diagram exists}, which is the key geometric input to \ref{thm: TateStrong}. 
\begin{proof}
    We first prove the statement when $\mathrm{char\,} \kappa = 0$. Without loss of generality, we may assume that $\kappa$ can be embedded to $\IC$; moreover, as $\Aut(\IC)$ acts transitively on the set of connected components of $\sfM_\IC$, we may choose an embedding such that the resulting $\IC$-point lies on the distinguished component $\sfM^\circ$ (defined in \ref{sec: basic set-up}) of $\rho$. To prove the statement we may replace $s$ by this $\IC$-point. Then the statement follows from Hodge theory. Indeed, we obtain a commutative diagram\footnote{Note that we are considering every $\sfM$-scheme also as an $\shS$-scheme via $\rho$, so $\shA_s$ (resp. $\bL_{B, s}$) is the same as $\shA_{\rho(s)}$ (resp. $\bL_{B, \rho(s)}$). Similar conventions apply below.}:
    \begin{equation}
    \label{eqn: key diagram char 0}
    \begin{tikzcd}
	{\LEnd(\shA_{s})_\IQ} & {\PNS(\sX_s)_\IQ} \\
	{\bL_{B, s}[1/p]^{(0, 0)}} & {\bP_{B, s}[1/p]^{(0, 0)}}
	\arrow["{\t_s}", from=1-1, to=1-2]
	\arrow["\iso", from=1-2, to=2-2]
	\arrow["\iso"', from=1-1, to=2-1]
	\arrow["{\alpha^\circ_{B, s}}", from=2-1, to=2-2]
    \end{tikzcd}
    \end{equation}
    The vertical maps are again cycle class maps, but this time they are isomorphisms. For the arrow on the right, we are applying the Lefschetz $(1, 1)$-theorem. Since $\alpha_{\ell_0, s} = \alpha^\circ_{B, s} \tensor \IQ_{\ell_0}$ by assumption, we obtain (\ref{eqn: key diagram}). 
    
    %Hence we obtain indentifications
    %$$ \PNS(\sX_b)_\IQ \stackrel{\t_b}{\iso} \LEnd(\shA_b)_\IQ = \LEnd(\shA_\eta)_\IQ = \LEnd(\shA_\sfM)_\IQ. $$ By \ref{prop: thm of fixed part}, we have $\lambda^\geo(R^2 f_{\et *} \IQ_{\ell_0}) = \dim \NS(\sX_b)_\IQ$. Therefore, we have $\dim \LEnd(\shA_\sfM)_\IQ = \dim \PNS()$
    
    %We remark that by Galois descent of line bundles, $\t_b$ descends to an isomorphism $\t_\eta : \LEnd(\shA_{\rho(\eta)})_\IQ \sto \PNS(\sX_\eta)_\IQ$. As we assumed that $\PNS(\sX_\eta)_\IQ = \PNS(\sX_b)_\IQ$, we also have $\LEnd(\shA_{\rho(\eta)})_\IQ = \LEnd(\shA_{\rho(b)})_\IQ$. Again, equality here means that the natural base change morphism is bijective.  

    Now we assume that $\mathrm{char\,} \kappa = p$. Set $W = W(\kappa)$. We shall construct $\t_s$ by considering characteristic $0$ liftings of $s$. Let $\what{\shS}_s$ be the formal completion of $\shS_\IK(\wt{G})_{W}$ at $\rho(s)$. For a special endomorphism $\zeta \in \LEnd(\shA_s)$ consider the following functor: 
\begin{equation}
    \label{eqn: local def space of LEnd}
     \Def_{\shS}(\zeta, s) : R \mapsto \{ \wt{s} \in \what{\shS}_s(R) \mid \zeta \textit{ deforms to } \LEnd(\shA_{\wt{s}}) \}
\end{equation}
where $R$ runs through all Artin $W$-algebras. By \cite[\S5.14]{CSpin}, $\Def_\shS(\zeta, s)$ is represented by a closed formal subscheme of $\what{\shS}_s$ cut out by a single formal power series $f_\zeta \in \sO_{\what{\shS}_s}$. Similarly, let $\what{\sfM}_s$ be the formal completion of $\sfM_{W}$ at $s$. Then $\rho$ restricts to a morphism $\what{\sfM}_s \to \what{\shS}_s$. Consider the pullback of $\shA$ to $\sfM$ and define $\Def_\sfM(\zeta, s)$ to be the functor defined by (\ref{eqn: local def space of LEnd}) with $\what{\shS}_s$ replaced by $\what{\sfM}_s$. Then we have a fiber diagram:
\begin{equation}
    \begin{tikzcd}
	\Def_\sfM(\zeta, s) & \Def_\shS(\zeta, s) \\
	\what{\sfM}_s & \what{\shS}_s
	\arrow[from=1-1, to=1-2]
	\arrow[from=1-2, to=2-2]
	\arrow[from=1-1, to=2-1]
	\arrow["\rho",from=2-1, to=2-2]
	\arrow["\lrcorner"{anchor=center, pos=0.125}, draw=none, from=1-1, to=2-2]
    \end{tikzcd}
\end{equation}
In particular, $\Def_{\sfM}(\zeta, s)$ is a closed formal subscheme of $\what{\sfM}_s$ cut out by the pullback $\rho^*(f_\zeta)$. Now we prove the key intermediate lemma: 
\begin{lemma}
\label{lem: key flatness lemma}
    Up to replacing $\zeta$ by a power, $\Def_\sfM(\zeta, s)$ is flat over $W$. 
\end{lemma}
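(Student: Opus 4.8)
The plan is to translate ``$\Def_\sfM(\zeta,s)$ is flat over $W$'' into a divisibility statement for the single power series cutting it out, and then to exclude the bad case by a global monodromy argument. Since $\sfM$ is smooth over $\IZ_{(p)}$ and $s$ is a $\kappa$-point of its special fiber, $\what{\sfM}_s\cong\Spf W[[x_1,\dots,x_n]]$ is a complete regular local ring with uniformizer $p$, and $\Def_\sfM(\zeta,s)=\Spf W[[x_1,\dots,x_n]]/(\rho^*(f_\zeta))$; a quotient of this ring by a single element $g$ is $W$-flat precisely when $g=0$ or $p\nmid g$. Moreover $\Def_\sfM(\cdot,s)$ depends only on the $\IQ$-line through the special endomorphism, so I am free to replace $\zeta$ by any positive integral multiple (this is what ``replacing $\zeta$ by a power'' will amount to). Thus it suffices to produce $N>0$ with $\rho^*(f_{N\zeta})=0$, i.e.\ $\Def_\sfM(N\zeta,s)=\what\sfM_s$, which is manifestly $W$-flat; and if $\rho^*(f_\zeta)$ is already $0$ or prime to $p$ we are done with $N=1$. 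So assume from now on that $\rho^*(f_\zeta)\equiv 0\bmod p$.

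Then $\Def_\sfM(\zeta,s)\otimes_W\kappa=\what\sfM_{s,\kappa}$, so $\zeta$ deforms along the whole formal completion of $\sfM_\kappa$ at $s$; since $\shA$ is projective, Grothendieck existence shows $\zeta$ extends to a special endomorphism of $\shA$ over $\widehat\sO_{\sfM^\circ_\kappa,s}$, where $\sfM^\circ_\kappa$ is the connected component of $\sfM_\kappa$ through $s$. Next I would spread this out. For an abelian scheme the endomorphism ring over a Henselian local ring coincides with that over its completion (by injectivity of specialization of endomorphisms together with Artin approximation), and $\underline{\Hom}(\shA,\shA)$ is locally of finite presentation; hence $\zeta$ extends to $\zeta_U\in\End(\shA_U)$ for some connected étale neighbourhood $U\to\sfM^\circ_\kappa$ of $s$ with $\zeta_U|_s=\zeta$. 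Since $\bL_{\ell_0}\subseteq\End(\bH_{\ell_0})$ is a sub-local-system and $\zeta_U$ is special at $s$, it is special on all of $U$. Passing to the geometric generic point $\overline{\eta_p}$ of $\sfM^\circ_\kappa$ we obtain a class $[\zeta]\in\LEnd(\shA_{\rho(\overline{\eta_p})})_\IQ$.

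Now comes the dimension count. Cycle classes of special endomorphisms over finite étale covers are monodromy invariant, so $\dim_\IQ\LEnd(\shA_{\rho(\overline{\eta_p})})_\IQ\le\lambda(\rho^*\bL_{\ell_0}|_{\sfM^\circ_\kappa})$, and the chain
\[
\lambda(\rho^*\bL_{\ell_0}|_{\sfM^\circ_\kappa})\overset{\ref{def: admissible period morphism}(c)}{=}\lambda(\bP_{\ell_0}|_{\sfM^\circ_\kappa})\overset{\text{\ref{thm: TateStrong}(a)}}{=}\lambda(\bP_{\ell_0}|_{\sfM^\circ})\overset{\ref{prop: thm of fixed part}}{=}\dim\PNS(\sX_b)_\IQ\overset{(\ref{eqn: key diagram char 0})}{=}\dim\LEnd(\shA_{\rho(b)})_\IQ
\]
uses admissibility, constancy of $\lambda^\geo$, the theorem of the fixed part at the Hodge-generic point $b$ (\ref{lem: Larsen-Pink}, discarding $\underline{\Lambda}$), and the characteristic-$0$ case of \ref{prop: key diagram exists}. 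On the other hand specialization from the characteristic-$0$ geometric generic point $\overline{\eta}$ of $\sfM$ gives an injection $\LEnd(\shA_{\rho(\overline\eta)})_\IQ\hookrightarrow\LEnd(\shA_{\rho(\overline{\eta_p})})_\IQ$ (using $\End(\shA)=\End(\shA_{\overline\eta})$ over the normal base, cf.\ \cite[I~Prop.~2.7]{FC90}, and injectivity of specialization), while $\dim_\IQ\LEnd(\shA_{\rho(\overline\eta)})_\IQ=\dim\PNS(\sX_b)_\IQ=\dim\LEnd(\shA_{\rho(b)})_\IQ$ because $b$ is Hodge-generic. Hence all of these numbers agree, so the specialization map $\LEnd(\shA_{\rho(\overline\eta)})_\IQ\to\LEnd(\shA_{\rho(\overline{\eta_p})})_\IQ$ is an isomorphism; in particular $[\zeta]$ lifts, i.e.\ there exist a connected finite étale cover $\sfM'\to\sfM$, an integer $N>0$, and $\hat\zeta\in\LEnd(\shA_{\sfM'})$ with $\hat\zeta|_{s'}=N\zeta$ for a point $s'$ of $\sfM'$ over $s$.

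To finish: because $\hat\zeta$ is defined over all of $\sfM'$ it deforms over every formal neighbourhood, so $\Def_{\sfM'}(N\zeta,s')=\what{\sfM'}_{s'}$, which forces the power series cutting it out to vanish, i.e.\ $\rho'^*(f_{N\zeta})=0$ where $\rho'\colon\sfM'\to\shS$; transporting along the isomorphism $\what{\sfM'}_{s'}\cong\what{\sfM}_s$ over $\what{\shS}_{\rho(s)}$ gives $\rho^*(f_{N\zeta})=0$, hence $\Def_\sfM(N\zeta,s)=\what\sfM_s$ is $W$-flat, as required. The main obstacle is forcing the whole chain of equalities in the third paragraph to hold simultaneously: this is exactly the point where the hypotheses of \ref{thm: TateStrong} — admissibility of $\rho$ at $\ell_0$ and the constancy of $\lambda^\geo$ — are indispensable, and it explains why no purely local condition on $\rho$ would do. The remaining work is bookkeeping: handling disconnected special fibres by working throughout with the component through $s$, passing between $\sfM$ and its finite étale covers, and using $\Aut(\IC)$-transitivity on the components of $\sfM_\IC$ to carry the characteristic-$0$ computation onto $\sfM^\circ$.
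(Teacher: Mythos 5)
Your proof is correct and follows essentially the same strategy as the paper's: reduce to showing that if $\rho^*(f_\zeta)$ vanishes modulo $p$ then $\zeta$ globalizes (up to a rational multiple), by propagating the special endomorphism to the geometric generic point of $\sfM_\kappa^\circ$ and then squeezing dimensions via the constant-$\lambda^\geo$ hypothesis, admissibility of $\rho$ at $\ell_0$, the theorem of the fixed part, and the characteristic-zero diagram (\ref{eqn: key diagram char 0}). The organizational differences are cosmetic. The paper phrases the argument as a contradiction (assume $\zeta$ does not globalize, show non-flatness is impossible), whereas you argue directly that $\rho^*(f_{N\zeta})=0$; these are logically interchangeable. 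The paper also avoids your spreading-out step (Grothendieck existence, then Artin approximation to a connected étale neighbourhood $U$) by simply evaluating the formal deformation $\bzeta$ at the generic point $u$ of the mod-$p$ disk and noting that $k(v)\subseteq k(u)$ (where $v$ is the generic point of $\sfM_\kappa^\circ$), so a geometric point $\bar u$ over $u$ is simultaneously a geometric point over $v$; this is a small simplification you could adopt. Finally, once the dimension count yields $\LEnd(\shA_\sfM)_\IQ\sto\LEnd(\shA_{\rho(\overline{\eta_p})})_\IQ$, the auxiliary finite étale cover $\sfM'$ is not actually needed: the lift of $[\zeta]$ already lives in $\LEnd(\shA_\sfM)_\IQ$, because the Galois action on $\LEnd(\shA_{\bar\eta})\tensor\IQ_{\ell_0}$ is trivial — a point the paper spells out by combining the connectedness of $\Mon(R^2f_*\IQ_2,b)$ with the $\pi_1^\et$-equivariance of $\alpha_{\ell_0,b}$, and which your appeal to $\End(\shA)=\End(\shA_{\overline\eta})$ ``cf.\ \cite[I~Prop.~2.7]{FC90}'' glosses over slightly (that reference only gives $\End(\shA)=\End(\shA_\eta)$; upgrading to $\overline\eta$ is exactly the Galois-invariance step).
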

\begin{proof}
    It suffices show that if $\zeta$ does not lie in the image of $\LEnd(\shA_\sfM)_\IQ \to \LEnd(\shA_s)_\IQ$, then $\Def_\sfM(\zeta, s)$ is flat over $W$, because otherwise up to replacing $\zeta$ by a power, $\Def_\sfM(\zeta, s) = \what{\sfM}_s$. Suppose by way of contradiction that $\Def_\sfM(\zeta, s)$ is not flat over $W$, which is equivalent to saying that $\rho^*(f_\zeta)$ vanishes on the entire mod $p$ disk $\what{\sfM}_{\kappa, s} := \what{\sfM}_s \tensor_W \kappa$, i.e., the formal completion of $\sfM_\kappa$ at $s$. Let $\bzeta$ be the deformation of $\zeta$ over $\what{\sfM}_{\kappa, s}$. Let use write $u$ for the generic point of $\what{\sfM}_{\kappa, s}$. Let $S$ be the connected component of $\sfM_\kappa$ which contains $s$ and let $v$ be its generic point. Since $\what{\sfM}_{\kappa, s}$ is also the completion of $S$ at $s$, there is natural embedding $k(v) \into k(u)$ of residue fields. Let $\bar{u}$ be the geometric point over $u$ defined by a chosen algebraic closure of $k(u)$. We view it also as a geometric point over $v$.. 

    Recall that we assumed that $\Mon(R^2f_{\et *} \IQ_2, b)$ is connected in \ref{sec: basic set-up}, so that $\NS(\sX_\eta)_\IQ \into \NS(\sX_b)_\IQ$ is an isomorphism by \ref{lem: extend LB on generic}. Therefore, $\pi_1^\et(\sfM, b)$ acts trivially on $\NS(\sX_b) \tensor \IQ_{\ell_0}$. Since $\alpha_{\ell_0, b}$ is $\pi_1^\et(\sfM, b)$-equivariant by assumption \ref{def: admissible period morphism}(b), $\pi_1^\et(\sfM, b)$ acts trivially on $\LEnd(\sX_b) \tensor \IQ_{\ell_0}$ as well. Hence every element of $\LEnd(\shA_b)_\IQ$ is defined over $\eta$. It follows from \cite[I~Prop.~2.7]{FC90} that $\LEnd(\shA_\eta) = \LEnd(\shA_\sfM)$. By \ref{prop: thm of fixed part}, $\dim \PNS(\sX_b)_\IQ = \lambda^\geo(\bP_{\ell_0})$. Assumption \ref{def: admissible period morphism}(b) implies that $\bL_{\ell_0}|_{\sfM}$ also has constant $\lambda^\geo$ and $\lambda^\geo(\bL_{\ell_0}|_{\sfM}) = \lambda^\geo(\bP_{\ell_0})$. Combining these observations, we must have $\dim \LEnd(\shA_\sfM)_\IQ = \lambda^\geo(\bL_{\ell_0}|_{\sfM})$. 
    
    Now consider the subspace $I := \varinjlim_U \bL_{\ell_0, \bar{u}}^U$, as $U$ runs through open subgroups of $\pi^\et_1(v, \bar{u})$, so that $\dim I = \lambda(\bL_{\ell_0}|_{S}) = \lambda^\geo(\bL_{\ell_0}|_{\sfM})$ (recall definition \ref{def: good compactification}). As the endomorphism scheme of an abelian scheme is representable and unramified over the base, every element in $\LEnd(\shA_{\bar{u}})$ is defined over some finite \textit{separable} extension of $k(v)$ inside $k(\bar{u})$. Therefore, we obtain a well defined map $\LEnd(\shA_{\bar{u}}) \tensor \IQ_{\ell_0} \into I$. However, we note that the composite 
    $$ \LEnd(\shA_\sfM) \tensor \IQ_{\ell_0} \to \LEnd(\shA_{\bar{u}}) \tensor \IQ_{\ell_0} \into I $$
    has to be an isomorphism because $\dim \LEnd(\shA_\sfM)_\IQ = \dim I$. 
    This forces the natural map $\LEnd(\shA_\sfM)_\IQ \to \LEnd(\shA_{\bar{u}})_\IQ$ to be an isomorphism. Now note that $\bzeta_{\bar{u}} \in \LEnd(\shA_{\bar{u}})_\IQ$. But as we assumed that $\zeta$ does not come from $\LEnd(\shA_\sfM)_\IQ$, the same has to be true for $\bzeta_{\bar{u}}$. This gives the desired contradiction.
\end{proof}
Suppose we have replaced $\zeta$ by a power so that the above lemma holds. Then there exists a DVR $V$ finite flat over $W$ such that $\Def_\sfM(\zeta, s)$ admits a $V$-valued point (cf. \cite[Cor.~1.7]{Del02}). Let $\wt{s} \in \sfM_W(V)$ denote the corresponding morphism $R \to V$. Choose an algebraic clsoure $\bar{K}$. As $V$ is a strictly Henselian DVR, it defines an \'etale path $\gamma_{\wt{s}}$ connecting $w := \wt{s}_{\bar{K}}$ and $s$. There is a compatible specialization map $\NS(\sX_w) \to \NS(\sX_s)$ along $V$ (cf. \cite[Prop.~3.6]{MPJumping} and its proof), which we denote by $\sp(\wt{s})$. There is a similar specialization map of special endomorphisms. Now we have the following diagram: 

\begin{equation}
    \begin{tikzcd}[column sep=0.3em]
	{\LEnd(\shA_{w})_\IQ} && {\PNS(\sX_w)_\IQ} \\
	& {\LEnd(\shA_{s})_\IQ} && {\PNS(\sX_s)_\IQ} \\
	{\bL_{\ell_0, w}} && {\bP_{\ell_0, w}} \\
	& {\bL_{\ell_0, s}} && {\bL_{\ell_0, s}}
	\arrow["{\sp(\wt{s})}"', from=1-1, to=2-2]
	\arrow["{\t_{w}}", from=1-1, to=1-3]
	\arrow["{\sp(\wt{s})}"', from=1-3, to=2-4]
	\arrow[from=2-2, to=4-2]
	\arrow["{\alpha_{\ell_0, s}}", from=4-2, to=4-4]
	\arrow[from=1-1, to=3-1]
	\arrow["{\gamma_{\wt{s}}}"', from=3-1, to=4-2]
	\arrow[from=1-3, to=3-3]
	\arrow["{\alpha_{\ell_0, w}}"{description, pos=0.4}, from=3-1, to=3-3]
	\arrow[from=2-4, to=4-4]
	\arrow["{\gamma_{\wt{s}}}"'{pos=0.4}, from=3-3, to=4-4]
	\arrow["{\t_s}"{pos=0.4}, dashed, from=2-2, to=2-4]
\end{tikzcd}
\end{equation}
Note that $\mathrm{char\,}\bar{K} = 0$, so we have shown that $\t_w$ exists. The map $\t_s$ does not exist yet. But by construction of $\wt{s}$, $\zeta$ lifts to some (necessarily unique) $\wt{\zeta}$ over $w$, and we can define $\t_s(\zeta)$ to be $\sp(\wt{s})(\t_w(\wt{\zeta}))$. Note that $\t_s(\zeta)$ does not depend on the choice of $\wt{s}$ because its class in $\bP_{\ell_0, s}$ is completely determined by the class of $\zeta$ in $\bL_{\ell_0, s}$ via $\alpha_{\ell_0, s}$. Repeating this construction for every $\zeta \in \LEnd(\shA_{s})$, we obtain the desired map $\t_s$. 
\end{proof}

\subsection{Proof of Theorem B}

\subsubsection{} \label{sec: Grothendieck restriction} The reader may have noticed that \ref{thm: period char 0} and \ref{thm: period in R2' case} apply to geometrically connected bases. To make use of these results, we consider the following simple-minded functor: If $k \subseteq k'$ is a field extension, and $T$ is a $k'$-scheme, we write $T_{(k)}$ for the $k$-scheme obtained by composing the structure morphism of $T$ with the projection $\Spec(k') \to \Spec(k)$. Then $T \mapsto T_{(k)}$ is the left adjoint to the base change functor from $k$-schemes to $k'$-schemes. 

We will only apply this functor when $k = \IQ$. Let $S$ be a connected smooth $\IQ$-variety. Then its scheme of connected components $\pi_0(S)$ is the spectrum of some number field $F$. The natural morphism $S \to \pi_0(S)$ endows $S$ with the structure of an $F$-variety. We denote this $F$-variety by ${}^F S$. Note that now $S = ({}^F S)_{(\IQ)}$ and ${}^F S$ is geometrically connected as an $F$-variety. 

\subsubsection{}
\label{rmk: identifying sheaves}
Since $S$ and ${}^F S$ have the same underlying scheme, \'etale sheaves and filtered flat vector bundles\footnote{Technically, the statement for flat vector bundles uses also that $F$ is \'etale over $\IQ$.} on $S$ are naturally identified with the corresponding structures on ${}^F S$ and vice versa and we do not distinguish them notationally. If $Y$ is a $\IQ$-variety, then there is a canonical bijection between the sets of morphisms $\epsilon : \mathrm{Mor}_F({}^F S, Y_F) \sto \mathrm{Mor}_\IQ(S, Y)$. Suppose now that $\rho : {}^F S \to Y_F$ is a morphism, and $M$ is an \'etale sheaf or a filtered flat vector bundle on $Y$, then $\rho^* (M|_{Y_F})$ is canonically identified with $\epsilon(\rho)^* M$ because $\epsilon(\rho) = (Y_F \to Y) \circ \rho$ as morphisms of schemes.

%Suppose that we choose a base point $b \in S(\IC)$ and let $S^\circ$ be the connected component of $S_\IC = S \tensor_\IQ \IC$ which contains $b$. Then $b$ induces an embedding $\tau : F \into \IC$ such that $S^\circ$ is naturally identified with $({}^F S) \tensor_\tau \IC$. If we are given $\sfV \in \sfR(S)$, then we obtain a system of realization ${}^F \sfV \in \sfR({}^F S)$ (viewing $F$ as a subfield of $\IC$ via $\tau$) by restriction of data, such that ${}^F \sfV$ has the same de Rham and \'etale component as $\sfP$ but drops the data of Betti local systems over the connected components of $S_\IC$ other than $S^\circ$, which correspond to embeddings $F \into \IC$ other than $\tau$.
 
%We caution the reader that ${}^F S$ should not be confused with $S_F$. In particular, if $\sfW_\et$ is an \'etale local system on $S$, then $\sfW_\et|_{S_F}$ contains less information than $\sfW_\et$ (if $F \neq \IQ$), but when we view it as a local system on ${}^F S$, no information is lost. 

%Then 

\begin{proposition}
\label{prop: TateStrong}
Let $U \subseteq \mathrm{Spec}(\IZ[2^{-1}])$ be an open subscheme. Let $\sfM$ be a connected separated smooth $U$-scheme of finite type with generic point $\eta$. Let $\sX \to \sfM$ be a smooth projective morphism such that $\sX|_{\sfM_\IQ}$ is a $\heartsuit$-family. 

Let $\bxi$ be a relatively ample line bundle of $\sX/\sfM$ and let $\Lambda \subseteq \NS(\sX_\eta)_\IQ$ be a subspace containing the class of $\bxi_\eta$. Let $b \in \sfM(\IC)$ be a point lying above $\eta$ and $\sfM^\circ$ be the connected component of $\sfM_\IC$ containing $b$. Assume that $R^2 f_{*} \IQ_2(1)$ has constant $\lambda^\geo$, and for every $p \in U$, the pairing on $\PH^2(\sX_b, \IZ_{(p)})_{\mathrm{tf}}$ is self-dual. Then we have the following.
\begin{enumerate}[label=\upshape{(\alph*)}]
    \item If $(\sX/\sfM, \bxi)|_{\sfM^\circ}$ has maximal monodromy (see \ref{def: maximal monodromy}), then for every $s \in \sfM$, $\sX_s$ satisfies the Tate conjecture for divisors. 
    \item If $(\sX/\sfM, \bxi)|_{\sfM^\circ}$ belongs to case (R2') (see \ref{sec: the 4 cases}), then up to replacing $U$ by a nonempty open subscheme, the above is true. 
\end{enumerate}
\end{proposition}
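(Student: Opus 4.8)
## Proof Plan for Proposition \ref{prop: TateStrong}

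The strategy is to deduce this proposition from the effective theorem \ref{thm: TateStrong} by producing an admissible period morphism and verifying its hypotheses, after first reducing to the geometrically connected situation via the ``Grothendieck restriction'' functor of \ref{sec: Grothendieck restriction}. First I would fix a prime $p \in U$ and base change everything to $\IZ_{(p)}$, so that $\sfM$ becomes a connected smooth separated $\IZ_{(p)}$-scheme of finite type as in \ref{sec: basic set-up}; the self-duality hypothesis on $\PH^2(\sX_b, \IZ_{(p)})_{\mathrm{tf}}$ guarantees that $L_{(p)} := \mu_b^{-1}(\bP_{B, b})$ is self-dual, and constancy of $\lambda^\geo$ for $R^2 f_* \IQ_2$ (hence, by independence-of-$\ell$ as in \ref{lem: Larsen-Pink}, for every $\bP_\ell$, giving hypothesis (a) of \ref{thm: TateStrong} for any chosen $\ell_0$) is inherited. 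The main work is to construct an $\ell_0$-admissible period morphism $\rho : \sfM \to \shS_\sfK(G)$ for suitable $\sfK$, which is hypothesis (b) of \ref{thm: TateStrong}; once that is in hand, \ref{thm: TateStrong} immediately yields the Tate conjecture in codimension $1$ for every fiber over a point of $\sfM_{\IF_p}$, and since this holds for all $p \in U$ (after shrinking $U$ in case (b)), every $s \in \sfM$ is covered.

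To build $\rho$, I would apply the characteristic-$0$ period morphism theorems of \S4 to $S := {}^F\!(\sfM_\IQ)$, the geometrically connected $F$-variety structure on $\sfM_\IQ$ over its field of components $F$, with the subspace $\Lambda$ as chosen. In case (a), $(\sX/S,\bxi)|_{S_\IC}$ belongs to case (R1), (R2), or (CM): in the (CM) case \ref{thm: period char 0}(a) applies (using \ref{thm: Moonen}(d)); in the (R+) case \ref{thm: period char 0}(b) applies after choosing $\sfK$ small enough to satisfy condition $(\sharp)$ of \ref{def: condition sharp} and, for the prescribed prime $\ell_0$, sufficiently small in the sense there. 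This gives a morphism $\rho_\IQ : S \to \Sh_\sfK(G)_F$ together with an isomorphism $\alpha_\et : \rho_\IQ^*\sfV_\et \sto \sfP_\et$ over $S$, and in the CM case the de Rham component descends as well; via the adjunction of \ref{sec: Grothendieck restriction} and the remark \ref{rmk: identifying sheaves} this is the same data as a $\IQ$-morphism $\sfM_\IQ \to \Sh_\sfK(G)$ with the corresponding comparison of $\ell$-adic sheaves. I would then verify conditions (a)--(d) of \ref{def: admissible period morphism} on the generic-fiber level: condition (a) is the Hodge-theoretic compatibility built into \ref{thm: moduli over C}/\ref{thm: period char 0}, condition (b) is the descended $\alpha_{\ell_0}$, and conditions (c),(d) follow because $\alpha_\et$ identifies $\rho_\IQ^*\sfV_\et$ with $\sfP_\et = \bigl(\prod_{\ell\neq p}\bP_\ell\bigr)\times \bP_p[1/p]$ up to the $\IZ$-structure, hence each $\rho_\IQ^*\bL_\ell$ (resp.\ $\rho_\IQ^*\bL_p$) is \'etale-locally isomorphic to $\bP_\ell$ (resp.\ $\bP_p$) over $\sfM_\IQ$. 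To extend $\rho$ over $\IZ_{(p)}$ I would invoke the extension property \ref{thm: extension property}: the local system $\rho_\IQ^*\sfV_\ell$ extends over $\sfM$ (to $\bP_\ell$, using \'etale-local isomorphism and normality of $\sfM$, after passing to a connected \'etale cover as permitted) for every $\ell\neq p$, so $\rho_\IQ$ extends to $\sfM \to \shS_\sfK(G)$; conditions (c) and (d) then hold over all of $\sfM$ by construction. In case (b), $(\sX/S,\bxi)|_{S_\IC}$ belongs to case (R2'), and \ref{thm: period in R2' case} only descends $\rho_\IC$ to a finite extension $F'/F$ and only identifies the $\ell_0$-adic sheaf for one $\ell_0$; so I would first replace $\sfM$ by the connected \'etale cover corresponding to $F'$ (which is legitimate since \ref{thm: TateStrong} and the conclusion are insensitive to finite \'etale covers), then proceed as before, and finally shrink $U$ so that $p$ is a prime of good reduction for the relevant integral models and the self-duality and $\ell_0$-smallness conditions hold — this is where ``up to replacing $U$ by a nonempty open subscheme'' is used.

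The main obstacle I anticipate is the bookkeeping around the field of definition and the passage between $\sfM_\IQ$ (not geometrically connected in general) and its geometrically connected avatar $S = {}^F\!(\sfM_\IQ)$ over $F$: the theorems of \S4 are stated for geometrically connected bases and descend period morphisms only to $F$ (or in the (R2') case, a finite extension), so one must carefully check, using \ref{rmk: identifying sheaves}, that the resulting $F$-morphism together with its sheaf comparisons genuinely yields the $\IQ$-morphism and the \'etale-local isomorphisms over $\sfM_\IQ$ required by \ref{def: admissible period morphism}, and that the remaining gap to $\sfM$ over $\IZ_{(p)}$ is exactly what \ref{thm: extension property} fills. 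A secondary technical point is ensuring the neat level subgroup $\sfK$ can be chosen of the form $\sfK_p\sfK^p$ with $\sfK_p = G(\IZ_p)$ while simultaneously satisfying condition $(\sharp)$ and admissibility for the monodromy of $\sfP_\et$ — this is a matter of choosing $\sfK^p$ small enough, compatibly with $p\in U$, and is routine but must be stated. Everything else is an application of the machinery already assembled.
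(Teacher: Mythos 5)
Your proposal follows essentially the same route as the paper: reduce to a geometrically connected base via the Grothendieck restriction ${}^F\!(\sfM_\IQ)$, invoke the characteristic-zero period morphism theorems (\ref{thm: period char 0}/\ref{thm: period in R2' case}) together with an appropriate choice of level $\sfK$ satisfying condition $(\sharp)$, reinterpret the $F$-morphism as a $\IQ$-morphism via \ref{rmk: identifying sheaves}, extend over $\IZ_{(p)}$, verify admissibility, and conclude by \ref{thm: TateStrong}. For case (a) this is exactly what the paper does, including the use of \ref{thm: extension property} to produce the integral extension.

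There is, however, a gap in your treatment of case (b). You say ``then proceed as before'' — i.e.\ apply \ref{thm: extension property} — after passing to the \'etale cover $S'$ given by $F'$. But the hypothesis of \ref{thm: extension property} requires $\rho^*\sfV_\ell$ to extend to a local system over the $\IZ_{(p)}$-model \emph{for every} $\ell \neq p$, whereas in case (R2') the output of \ref{thm: period in R2' case} gives an honest isomorphism $\rho^*\sfV_{\ell_0} \iso \sfP_{\ell_0}|_{S'}$ for only the single prime $\ell_0$; for other $\ell$ you only get an \'etale-local isomorphism over $S'$, which does not by itself give an extension of $\rho^*\sfV_\ell$ from $S'$ to $\sfM'_{\IZ_{(p)}}$ (the isomorphism over the finite cover $T'$ is a priori not compatible with the descent data to $S'$, so the extension does not descend automatically). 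The paper circumvents this by not using \ref{thm: extension property} at all in case (b): it cites a $U$-scheme model $\shS$ of $\Sh_\sfK(G)$ with each localization the canonical integral model, and then simply \emph{spreads out} the \'etale cover $S' \to S$ and the morphism $\rho : S' \to \Sh_\sfK(G)$ to $\sfM' \to \sfM$ and $\rho_U : \sfM' \to \shS$ over a possibly smaller $U$; this is what ``replacing $U$ by a nonempty open subscheme'' is actually absorbing (in addition to the ancillary reasons you list). Your account of the level structure in case (a) also slightly overreaches — the ``sufficiently small $\sfK_{\ell_0}$'' condition from \ref{def: condition sharp} is only defined and needed in case (R2'), not in (R+) or (CM), though imposing it there is harmless.
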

Here $\PH^2(\sX_b, \IZ_{(p)})_{\mathrm{tf}}$ denotes the submodule of $\H^2(\sX_b, \IZ_{(p)})_{\mathrm{tf}}$ consisting of elements orthogonal to the image of $\Lambda$ in $\NS(\sX_b)_\IQ$ under the pairing on $\H^2(\sX_b, \IQ)$ induced by $\bxi_b$. Recall that the big monodromy case contains (R+) = (R1) + (R2) and (CM). 

\begin{proof}
    The hypothesis remains unchanged if we replace $\sfM$ by a connected \'etale cover (and $b$ by a lift). Therefore, we may assume that $\Mon(R^2 f_* \IQ_2, b)$ is connected. Moreover, to prove (b), we may assume that $\Lambda = \NS(\sX_b)_\IQ$. Indeed, replacing $\Lambda$ by $\NS(\sX_b)_\IQ$ might make $\PH^2(\sX_b, \IZ_{(p)})_{\mathrm{tf}}$ no longer self-dual only for finitely many $p$, but we are allowed to shrink $U$. 
       
    Apply the set-ups in \ref{set-up: char 0 base} to $S = \sfM_\IQ$ and $S^\circ = \sfM^\circ$. Let $V, G = \SO(V)$ and $\mu_b : V \sto \sfP_{B, b}$ be as defined in \ref{def: condition sharp}. Let $L \subseteq V$ be the $\IZ$-lattice defined by $\mu_b^{-1} (\sfP_{B, b} \cap \H^2(\sX_b, \IZ)_{\mathrm{tf}})$ and choose a $N \gg 0$ such that $\sfK := G(\IA_f) \cap \ker(\GL(L \tensor \what{\IZ}) \to \GL(L/ 2^N L))$ satisfies condition $(\sharp)$; in case (b), we additionally require $\sfK_{\ell_0 = 2}$ to be sufficiently small (see \ref{def: condition sharp} for these notions). Note that $\sfK$ is of the form $\prod_{q} \sfK_q$, where $q$ runs through all primes and $\sfK_q$ is a compact open subgroup of $G(\IQ_q)$. By \ref{lem: Larsen-Pink}, up to replacing $\sfM$ by a further finite connected \'etale cover, we assume that $\sfK$ is admissible, i.e., the image of $\pi_1^\et(\sfM, b)$ in $\GL(\sfP_{\et, b})$ lies in $\sfK$ via the chosen isometry $\mu_b$.  Now we set $F$ to be the field such that $\pi_0(S) = \Spec(F)$ and recall that the base point $b$ induces an embedding $F \into \IC$ through which $S^\circ = ({}^F S)_\IC$. 

    For (a), we may now apply \ref{thm: period char 0} to ${}^F S$ to conclude that there is a period morphism $\rho : {}^F S \to \Sh_\sfK(G)_F$ together with an isomorphism $\alpha^\circ_B : \rho_\IC^* (\sfV_B)|_{S^\circ} \to \sfP_B|_{S^\circ}$ which preserves the Hodge filtrations such that $\alpha^\circ_\et := \alpha^\circ_B \tensor \IA_f$ descends to an isomorphism $\rho^* \sfV_\et \sto \sfP_\et$ over ${}^F S$. By \ref{rmk: identifying sheaves} we may view $\rho$ as a morphism $S \to \Sh_\sfK(G)$ over $\IQ$ and $\alpha_\et$ as an isomorphism of \'etale sheaves over $S$. Let $p \neq 2$ be a prime in $U$. Restrict $\sfM$ to $\IZ_{(p)}$ and apply the set-ups in \ref{sec: basic set-up}. Note that by construction, $\sfK_p = \SO(L \tensor \IZ_p)$ is hyperspecial. Let $\shS$ be the integral model of $\Sh_\sfK(G)$ over $\IZ_{(p)}$. Recall that $\sfV_\et$ (resp. $\sfP_\et$) is the restriction of $\prod_{\ell \neq p}\bL_{\ell}\times \bL_p[1/p]$ to $\Sh_\sfK(G)$ (resp. $\prod_{\ell \neq p} \bP_\ell \times \bP_p[1/p]$ to $S = \sfM_\IQ$) (see \ref{sec: the L sheaves} and \ref{sec: basic set-up}). The existence of $\alpha_\et$ allows us to apply \ref{thm: extension property} to extend $\rho$ to a morphism $\sfM_{\IZ_{(p)}} \to \shS$, which by construction is admissible in the sense of \ref{def: admissible period morphism}. Now we conclude by \ref{thm: TateStrong}. 

    For (b) a minor adaptation is needed. Take $\ell_0 = 2$. By \ref{thm: period in R2' case} we still have a period morphism $\rho_\IC : S^\circ \to \Sh_\sfK(G)_\IC$ equipped with an isomorphism $\alpha^\circ_B : \rho_\IC^* \sfV_B \sto \sfP_B |_{S^\circ}$ which preserves the Hodge filtrations, but $\rho_\IC$ does not descend all the way to $F$. Instead, we only have that for some finite extension $F'/F$ in $\IC$, and $\wt{S} := ({}^F S) \tensor_F F'$, $\rho_\IC$ descends to a morphism $\rho : \wt{S} \to \Sh_\sfK(G)_{F'}$ such that 
    \begin{enumerate}[label=\upshape{(\roman*)}]
        \item $\alpha^\circ_{\ell_0} := \alpha^\circ_B \tensor \IQ_{\ell_0}$ descends to an isomorphism $\rho^* \sfV_{\ell_0} \sto \sfP_{\ell_0}|_{\wt{S}}$; 
        \item and $\rho^* \sfV_\ell$ is \'etale locally isomorphic to $\sfP_{\ell}|_{\wt{S}}$ for every other $\ell$.
    \end{enumerate}
    Set $S' := \wt{S}_{(\IQ)}$. Then by \ref{rmk: identifying sheaves} again, we may view $\rho$ as a morphism $S' \to \Sh_\sfK(G)$ and (i), (ii) above as statements about \'etale sheaves over $S'$. Note that $S'$ is naturally a connected \'etale over of $S$. As $L$ is self-dual at every $p \in U$, there exists a $U$-scheme $\shS$ such that $\shS \tensor \IZ_{(p)}$ is the integral canonical model of $\Sh_\sfK(G)$ over $\IZ_{(p)}$ (cf. the first theorem of \cite{Lovering}). By a standard spreading out argument, up to further shrinking $U$, we may assume that $S'$ is the $\IQ$-fiber of a smooth $U$-scheme $\sfM'$; moreover, $S' \to S$ extends to an \'etale covering map $\sfM' \to \sfM$ and $\rho : S' \to \Sh_\sfK(G)$ extends to a morphism $\rho_U : \sfM' \to \shS$. The reader readily checks using (i) and (ii) above that for each $p \in U$ the localization $\rho_U \tensor \IZ_{(p)}$ is an $\ell_0$-admissible period morphism in the sense of \ref{def: admissible period morphism}. Therefore, the conclusion follows from \ref{thm: TateStrong}. 
\end{proof}

We are now ready to prove Theorem~B. 
\begin{proof}
Note that the conclusion of Theorem~B is for $p \gg 0$. By \ref{prop: TateStrong} it suffices to show that there are exists an open subscheme $U \subseteq \Spec(\IZ[2^{-1}])$ such that after we restrict $\sfM$ to $U$, $R^2 f_{*} \IQ_2$ has constant $\lambda^\geo$. By combining Nagata's compactification and Hironaka's resolution of singularities in characteristic zero, we can find a compactification $\bar{\sfM}$ of the generic fiber $\sfM_\IQ$ such that the boundary $\fD \coloneqq \bar{\sfM} - \sfM_\IQ$, equipped with the reduced scheme structure, is a normal crossing divisor. For some open subscheme $U$ of $\mathrm{Spec}(\IZ[2^{-1}])$, $\bar{\sfM}$ and $\fD$ are defined over $U$, and $\fD$ becomes a relative normal crossing divisor over $U$. This implies that for $(p) \in U$, $\sfM_{\IZ_{(p)}}$ admits a good compactification relative to $\IZ_{(p)}$ in the sense of \ref{def: good compactification}. Now we conclude by \ref{lem: constant lambda geo}.
\end{proof}

To effectively apply \ref{prop: TateStrong}, the crux of the matter is to verify the hypothesis that $R^2 f_* \IQ_2(1)$ has constant $\lambda^{\mathrm{geo}}$. In practice, we achieve this by finding suitable proper curves on some partial compactification of $\sfM$. This is encapsulated in the following lemma, which is not stated in full generality but is tailored for direct application to the concrete situations we will consider.

\begin{lemma}
\label{lem: Step 1}
    Let $p$ be a prime and $\ell$ be a different prime. Set $W := W(\bar{\IF}_p)$, $K := W[1/p]$ and choose an isomorphism $\bar{K} \iso \IC$. Let $\sfM$ be a connected separated smooth $\IZ_{(p)}$-scheme of finite type such that $\sfM_{\IF_p}$ and $\sfM_\IQ$ are both geometrically irreducible. Let $\sX \to \sfM$ be a smooth projective morphism such that $\sX|_{\sfM_\IQ}$ is a $\heartsuit$-family. Let $\sfV$ be the VHS defined on $R^2 f_{\IC *} \IQ(1)$. 

    Suppose that there exists a smooth connected $W$-curve $C$ with a morphism to $\sfM$ such that
    \begin{enumerate}[label=\upshape{(\roman*)}]
        \item the image of $C_\IC$ is not contained in the Noether-Lefschetz loci of $\sfV$ and the restriction $\sfV|_{C_\IC}$ is non-isotrivial, and
        \item $C$ has a good compactification over $W$.
    \end{enumerate}
    Then $\IL := R^2 f_* \IQ_\ell(1)$ has constant $\lambda^{\geo}$. 
\end{lemma}
\begin{proof}
Note that condition (i) implies that $\lambda(\IL|_{C_\IC}) = \lambda(\IL|_{\sfM_\IC})$, due to \ref{prop: thm of fixed part}. 
Choose a base point $s \in \sfM(\IC)$ which lies over the generic point $\eta$ of $\sfM$. Let $s_p$ be a geometric point over the generic point $\eta_p$ of $\sfM_{\bar{\IF}_p}$. Since $s$ specializes to $s_p$, $\dim \NS(\sX_{s_p})_\IQ \ge \dim \NS(\sX_s)_\IQ$; moreover, as every element in $\NS(\sX_{s_p})_\IQ$ is stabilized by an open subgroup of $\pi_1^\et(\sfM_{\bar{\IF}_p}, s_p)$, $\lambda(\IL |_{\sfM_{\bar{\IF}_p}}) \ge \dim \NS(\sX_{s_p})_\IQ$. On the other hand, as $\pi_1$ is a functor, we have $\lambda(\IL |_{\sfM_{\bar{\IF}_p}}) \le \lambda(\IL |_{C_{\bar{\IF}_p}})$ by default. But the curve $\IL|_C$ has constant $\lambda^\geo$ because $C$ has a good relative compactification \ref{lem: constant lambda geo}, so we have
$$ \dim \NS(\sX_s)_\IQ \stackrel{\ref{prop: thm of fixed part}}{=} \lambda(\IL|_{\sfM_\IC}) = \lambda(\IL|_{C_\IC}) =  \lambda(\IL|_{C_{\bar{\IF}_p}}) \ge \lambda(\IL |_{\sfM_{\bar{\IF}_p}}). $$ This implies that $\lambda(\IL|_{\sfM_\IC}) = \lambda(\IL|_{\sfM_{\bar{\IF}_p}})$ as desired. 
\end{proof}

\section{Deforming curves on parameter spaces}
\label{sec: deform curves}
In this section, we prepare some preliminary lemmas which will be used to construct the curves as in \ref{lem: Step 1} on appropriate moduli spaces.

\subsection{Families of curves which homogeneously dominate a variety}

Let $k$ be an algebraically closed field. For a morphism $f \colon X \to Y$ between $k$-varieties, we say that $f$ has \textit{equi-dimensional} fibers if for every two points $y, y' \in Y$, $\dim X_y = \dim X_{y'}$. If $f : X \to Y$ is a smooth morphism between $k$-varieties, denote by $\sT(X/Y)$ the relative tangent bundle, i.e., the dual of $\Ohm^1_{X/Y}$. If $Y = \Spec(k)$, then we simply write $\sT X$ for $\sT(X/Y)$, and for a $k$-point $x \in X$, we write $\sT_x X$ for the tangent space $T_x X$ to emphasize that it is a fiber of $\sT X$. 

\begin{definition}
\label{def: homogeneously dominates}
Let $S$ and $T$ be two smooth irreducible $k$-varieties, $g \colon \shC \to T$ be a smooth family of connected curves, and $\varphi\colon \shC \to S$ be a morphism with equi-dimensional fibers (i.e., there exists some integer $d \ge 0$ such that every geometric fiber of $\varphi$ is non-empty and equi-dimensional of dimension $d$). Let $U$ be the maximal open subvariety of $\shC$ on which the composition $\sT(\shC/T) \into \sT \shC \to \varphi^*(\sT S)$ does not vanish. 
Suppose that 
\begin{enumerate}[label=\upshape{(\alph*)}]
    \item the induced morphism $\shC \to T \times S$ is quasi-finite, 
    \item for every $k$-point $s \in S$, $U_s := U \cap \varphi^{-1}(s)$ is dense in $\varphi^{-1}(s)$; 
    \item the morphism $U \to \IP(\sT S)$ has equi-dimensional fibers. 
\end{enumerate}
Then we say that the family of curves $\shC/T$ \textit{homogeneously dominates} $S$ (via the morphism $\varphi$). If there exists an open dense subvariety $T' \subseteq T$ such that the restriction $\shC|_{T'}$ homogeneously dominates $S$, then we say that $\shC/T$ \textit{strongly} dominates $S$. 
\end{definition}

% \todo{For strong domination, do we still insist $T$ smooth?} Technically, no. But I am too lazy to rewrite the paragraph. 

The natural morphism $U \to \IP(\sT S)$ is induced by the identification $\shC \iso \IP(\sT(\shC/T))$. Roughly speaking, the family $\shC / T$ homogeneously dominates $S$ if there are curves parameterized by $T$ passing through every given point on $S$ in any given direction, and the sub-family of such curves has a fixed dimension. 

The notion ``$\shC/T$ strongly dominates $S$'' is only defined for convenience, as sometimes the natural families of curves have some bad locus on $T$ of smaller dimension which does not affect applications.

\begin{lemma}
\label{lem: open in proj tangent bundle}
Let $\shP \to S$ be a smooth morphism between smooth $k$-varieties. Let $\sX \subseteq \shP$ be a relative effective Cartier divisor whose total space is smooth. If $s \in S(k)$ is a point such that $\sX_s$ has isolated singularities, then there exists an open dense subvariety $U \subseteq \IP(\sT_s S)$ with the following property: For every unramified morphism $\varphi \colon C \to S$ from a smooth curve $C$ which sends a point $c \in C$ to $s$, the total space of the pullback family $\sX|_C$ has no singularity on $\sX_s$ if $d \varphi(\IP(\sT_c C)) \in U$. 
\end{lemma}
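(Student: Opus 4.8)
The plan is to analyze the singularities of the total space of $\sX|_C$ near a point $\tilde{c}$ lying over a singular point $x$ of $\sX_s$, and to show that smoothness there depends only on the tangent direction $d\varphi(\sT_c C) \in \IP(\sT_s S)$, lying outside a proper closed subset. First I would reduce to a local computation: since $\sX_s$ has only isolated singularities, let $x_1, \dots, x_m \in \sX_s$ be its singular points; the total space of $\sX|_C$ can only fail to be smooth above one of these $x_i$ (away from them $\sX|_C \to C$ is already smooth, as is the generic fiber, and smoothness of the source is an open condition that holds on the complement of the $x_i$'s preimages because $\sX \to S$ has smooth total space and $C \to S$ is unramified). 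So it suffices to produce, for each $i$, an open dense $U_i \subseteq \IP(\sT_s S)$ such that $d\varphi(\sT_c C) \in U_i$ forces $\sX|_C$ to be smooth at the point over $x_i$; then take $U = \bigcap_i U_i$.

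Next I would set up the local model. Near $x = x_i$, since $\sX \subseteq \shP$ is a relative Cartier divisor and $\shP \to S$ is smooth, choose local coordinates so that $\shP$ looks like $S \times \IA^n$ near $(s, 0)$ and $\sX$ is cut out by a single equation $G(s'; y_1, \dots, y_n) = 0$ with $G(s; y) =: g(y)$ having an isolated singularity at $y = 0$. The condition that the total space of $\sX$ be smooth at $x$ translates into: the partials of $G$ with respect to the $S$-directions and the $y$-directions do not all vanish at $(s,0)$; since $g$ is singular at $0$, the $y$-partials of $G$ vanish at $(s,0)$, so some $S$-directional derivative $\partial G/\partial u$ is nonzero at $(s,0)$. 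Pulling back along $\varphi: C \to S$ with $\varphi(c) = s$, the total space $\sX|_C$ near the point over $x$ is cut out in $C \times \IA^n$ by $G(\varphi(t); y)$ where $t$ is a local parameter on $C$; its singular locus over $x$ is governed by whether the $t$-derivative $\frac{d}{dt}\big|_{c} G(\varphi(t); 0) = d\varphi(\partial_t)\cdot (d_S G)(s, 0)$ vanishes. This is a nonzero linear functional on $\sT_s S$ (namely $(d_S G)(s,0)$, which is nonzero by smoothness of $\sX$), so the bad directions form a hyperplane in $\sT_s S$, hence a proper closed subset $H_i \subsetneq \IP(\sT_s S)$.

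Then I would set $U_i := \IP(\sT_s S) \setminus H_i$, which is open dense, and check that $d\varphi(\sT_c C) \in U_i$ indeed implies smoothness of $\sX|_C$ at the point over $x_i$: with the $t$-derivative of the defining equation nonzero there, the Jacobian criterion gives smoothness of the total space (it is a hypersurface in the smooth $C \times \IA^n$ with nonvanishing differential). Finally $U := \bigcap_{i=1}^m U_i$ is the desired open dense subvariety of $\IP(\sT_s S)$, and the argument above shows any unramified $\varphi: C \to S$ with $\varphi(c) = s$ and $d\varphi(\sT_c C) \in U$ has $\sX|_C$ smooth along all of $\sX_s$. The main obstacle, or rather the point requiring care, is the bookkeeping in the local coordinate reduction: making sure the "relative Cartier divisor with smooth total space" hypothesis is correctly unwound into the statement that $(d_S G)(s, 0) \neq 0$, and that unramifiedness of $\varphi$ (as opposed to a closed immersion) is enough to pull back the coordinates cleanly — unramified morphisms from a smooth curve are local immersions in the relevant sense after étale localization, which is all one needs since smoothness of the total space is étale-local on the source.
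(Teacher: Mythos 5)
Your proposal is correct and follows essentially the same approach as the paper: both reduce to a local computation near each isolated singular point $P$ of $\sX_s$, observe that smoothness of the total space $\sX$ forces some $S$-directional derivative of the local defining equation to be nonzero at $P$, and conclude that the bad tangent directions are cut out by a linear condition, hence form a proper closed subset of $\IP(\sT_s S)$. Your write-up is if anything slightly more precise, as you identify the bad locus explicitly as the hyperplane $\ker\bigl((d_S F)_P\bigr)$ and intersect over the finitely many singular points, whereas the paper's proof abbreviates both steps, but the underlying argument is the same.
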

\begin{proof}
Since the question is \'etale-local in nature, we might as well assume that $S = \IA^m_k$ for $m = \dim S$, $s = 0$, $\sX_s$ has a single isolated singularity at a $k$-point $P \in \sX_s \subseteq \shP_s$, and $\shP$ is isomorphic to $\IA^n_S = S \times \IA^n_k$ near $P$. Let $x_i$'s and $s_j$'s be the coordinates on $\IA^n_k$ and $S_k$ respectively. Suppose that $\sX$ is locally cut out by an equation $F(x_1, \cdots, x_n, s_1, \cdots, s_m)$ near $P$. That $P$ is a singularity of the fiber $\sX_s$ but not of the total space $\sX$ implies that $\p F / \p s_j \neq 0$ at $P$ for some $j$. One may simply take $U$ to be the open subscheme of $\IP(\sT_s S) \iso \IP^{m - 1}$ where the coordinate of $\p_{s_j}$ is nonzero. 
\end{proof}

\begin{definition}
\label{def: generalized disc}
Let $f \colon X \to S$ be a finite type morphism between schemes. 
\begin{enumerate}[label=\upshape{(\alph*)}]
    \item Let the \textit{singular locus} $\mathrm{Sing}(f)$ be the reduced closed subscheme of $X$ whose support consists of all points where $f$ fails to be smooth.\footnote{Note that this definition is different from the one given in \cite[Tag~0C3H]{stacks-project}.}
    \item  If $f$ is in addition proper and flat, we say that the scheme-theoretic image of $\mathrm{Sing}(f)$ is the (generalized) discriminant locus of $f$, and denote it by $\mathrm{Disc}(f)$. 
    \item In the above situation, we say that $\mathrm{Disc}(f)$ is \textit{mild} if it has codimension at least $1$ in $S$ and there exists a dense open subscheme $V \subseteq \mathrm{Disc}(f)$ such that for every geometric point $s$ on $V$, the fiber $X_s$ has only isolated singularities. 
\end{enumerate}
\end{definition}

\begin{remark}
\label{rmk: formation of disc}
    Note that the properness assumption on $f$ implies that $\mathrm{Disc}(f)$ is closed in $S$. Moreover, since it is defined to be the scheme-theoretic image of a reduced scheme, it is also reduced. Its formation commutes with flat base change but not arbitrary base change: For any morphism $T \to S$, $\Disc(f_T)$ is always the reduced subscheme of $\Disc(f)_T$, so they are equal if and only if the latter is reduced.
\end{remark}

\begin{remark}
\label{rmk: codimension 1 disc}
    In many natural settings, the discriminant locus $\Disc(f)$ is expected to have codimension $1$. However, no general theorem currently characterizes precisely when this is true. We provide an ad hoc criterion which suffices for our purposes: Suppose that the base $S$ is an open subvariety of $\IP^n_\IC$ for some $n \ge 1$ and the complement has codimension $\ge 2$. Let $f : X \to S$ be a proper and generically smooth morphism. If over $U := S \smallsetminus \Disc(f)$, the VHS on $R^i (f_U)_* \IQ$ for some $i \in \IN$ is non-isotrivial, then $\Disc(f)$ must have codimension $1$. Indeed, otherwise $U$ is simply connected and cannot carry a non-isotrivial VHS. 
\end{remark}

\begin{proposition}
\label{prop: strongly dominates smooth total family}
Let $\sX$ and $S$ be as in \ref{lem: open in proj tangent bundle}. Suppose that 
\begin{itemize}
    \item the generalized discriminant variety $D \coloneqq \mathrm{Disc}(f) \subseteq S$ is mild; 
    \item there is a family of smooth curves $g : \shC \to T$ which homogeneously dominates $S$ through a morphism $\varphi : \shC \to S$.
\end{itemize}
Then for a general $k$-point $t \in T_k$, the total space of the pullback family $\sX|_{\shC_t}$ is smooth. 
\end{proposition}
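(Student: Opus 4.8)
The strategy is to pick the curve $\shC_t$ so that it avoids the bad part of $D$ and, at each of the finitely many points where it meets the good part $V \subseteq D$, approaches $D$ in a direction lying in the open dense locus supplied by Lemma~\ref{lem: open in proj tangent bundle}. Concretely, write $D = \mathrm{Disc}(f)$ and let $V \subseteq D$ be the dense open subscheme over which the fibers of $f$ have only isolated singularities; set $Z := D \smallsetminus V$, a closed subset of codimension $\ge 2$ in $S$ (it has codimension $\ge 1$ in $D$, and $D$ itself has codimension $\ge 1$ since $D$ is mild). First I would reduce to the statement that a general $t \in T_k$ has the properties: (i) $\varphi(\shC_t) \cap Z = \emptyset$; (ii) $\varphi|_{\shC_t}$ is unramified at every point of $\shC_t$ lying over $V$, and the images of the tangent lines there land in the prescribed open subsets of $\IP(\sT_s S)$. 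Granting (i) and (ii), the total space of $\sX|_{\shC_t}$ is smooth: away from $\varphi^{-1}(D)$ the morphism $f$ is already smooth, so the pullback family is smooth there; over points mapping into $V$, Lemma~\ref{lem: open in proj tangent bundle} applies directly (note $\shC_t$ is a smooth curve, $\varphi$ is unramified there by (ii), and the tangent direction is in $U$); and by (i) there are no points mapping into $Z$.

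Next I would establish (i) and (ii) by dimension counting inside the universal family $\shC \to T$, using the homogeneous domination hypothesis. For (i): consider the incidence locus $W_Z := \varphi^{-1}(Z) \subseteq \shC$. Since $\shC \to T \times S$ is quasi-finite (condition (a) of Definition~\ref{def: homogeneously dominates}) and $\varphi$ has equi-dimensional fibers of some dimension $e$, we have $\dim \varphi^{-1}(Z) \le \dim Z + e \le \dim S - 2 + e$, whereas $\dim \shC = \dim T + 1$ and, because $\shC$ dominates $S$ with equi-dimensional fibers, $\dim T + 1 = \dim S + e$; hence $\dim \varphi^{-1}(Z) \le \dim T - 1 < \dim T$, so the image of $\varphi^{-1}(Z)$ in $T$ is a proper closed subset, and a general $t$ avoids it. For (ii): the locus where $\varphi|_{\shC}$ ramifies is contained in the complement of the open set $U$ of Definition~\ref{def: homogeneously dominates}, and condition (b) tells us $U$ is fiberwise dense in the $\varphi$-fibers, so $\shC \smallsetminus U$ has dimension $< \dim \shC = \dim T + 1$, i.e.\ $\le \dim T$; intersecting with $\varphi^{-1}(V)$ (which has the same dimension as $\varphi^{-1}(D) \le \dim T$) still gives a set of dimension $\le \dim T - 1$ once one uses that $V$ has codimension $\ge 1$ — so its image in $T$ is again proper closed. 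Finally, for the tangent-direction condition in (ii): for each point $v$ in the good locus $V$, Lemma~\ref{lem: open in proj tangent bundle} gives an open dense $U_v \subseteq \IP(\sT_v S)$; the bad directions form, via the identification $\shC \cong \IP(\sT(\shC/T))$ and the map $U \to \IP(\sT S)$ of condition (c), a subset of $\varphi^{-1}(V)$ of dimension strictly less than $\dim \varphi^{-1}(V) \le \dim T$ (here condition (c), equi-dimensionality of $U \to \IP(\sT S)$, is exactly what ensures that imposing a codimension-$\ge 1$ condition on the direction cuts the dimension by $\ge 1$), hence projects to a proper closed subset of $T$.

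Then I would take $t$ to lie in the complement of the finitely many proper closed subsets of $T$ produced above; such $t$ form a dense open subset, and for any such $t$ the argument of the first paragraph shows $\sX|_{\shC_t}$ is smooth. One technical point to handle carefully: the open subsets $U_v \subseteq \IP(\sT_v S)$ depend on $v \in V$, so I should phrase condition (c) as producing a single open dense subset $\mathcal{U} \subseteq \IP(\sT S|_V)$ — this is where I would invoke that $V$ is the good locus of a mild discriminant together with a standard constructibility/generic-smoothness argument to see that the $U_v$ fit together into an open dense subset of the projectivized tangent bundle restricted to $V$, possibly after shrinking $V$ (which is harmless since we only need density).

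\textbf{Main obstacle.} I expect the crux to be the last dimension count: combining the equi-dimensionality hypotheses (a), (b), (c) correctly to verify that the ``bad tangent direction'' locus in $\shC$ really has dimension $\le \dim T - 1$ rather than $\dim T$. The subtlety is that $\varphi^{-1}(V)$ may itself have dimension exactly $\dim T$ (it is a divisor-type locus in $\shC$), and one needs condition (c) — the fibers of $U \to \IP(\sT S)$ being equi-dimensional — precisely to guarantee that the further constraint of landing in a codimension-$\ge 1$ subset of each $\IP(\sT_v S)$ genuinely drops the dimension, rather than being automatically satisfied on a component. Making this rigorous requires a careful fiber-dimension analysis of the composite $\varphi^{-1}(V) \hookrightarrow U \to \IP(\sT S|_V)$ and is the step I would write out in full detail.
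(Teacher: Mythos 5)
Your proposal and the paper's proof are the same dimension-counting argument; the paper packages the bad locus as a single incidence variety $\sZ \subseteq S \times T$ (the constructible image of $\mathrm{Sing}(\sX|_{\shC} \to T)$ under $\sX|_{\shC} \to T \times S$) and bounds $\dim \sZ \le \dim T - 1$ by the fiberwise estimate $\dim \sZ_s < \dim \varphi^{-1}(s) = \dim\shC - \dim S$ at a general $s \in D$, then adds $\dim D \le \dim S - 1$; you split the same bad locus into three pieces and bound each, using the same three inputs (mildness, Lemma~\ref{lem: open in proj tangent bundle}, conditions (a)--(c)). One place to tighten: your bound $\dim\bigl((\shC\setminus U)\cap\varphi^{-1}(V)\bigr) \le \dim T - 1$ does not follow from merely intersecting two sets each of dimension $\le \dim T$; the correct route is fiberwise, exactly as the paper does it --- condition (b) makes $\varphi^{-1}(s)\setminus U_s$ a proper closed subset of $\varphi^{-1}(s)$, hence of dimension $<\dim\shC - \dim S$, and then $\dim V \le \dim S - 1$ delivers $\dim T - 1$. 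Your ``main obstacle'' paragraph correctly locates the real crux, which the paper also treats tersely: both arguments need that $U_s \to \IP(\sT_s S)$ is dominant (so the preimage of $\IP(\sT_s S)\setminus U^{\mathrm{lemma}}_s$ in $U_s$ is a proper closed subset), which rests on the intended surjectivity of $U \to \IP(\sT S)$ plus condition (c) to control the fiber dimension of that preimage.
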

\begin{proof}
By assumption we have a diagram 
\[\begin{tikzcd}
	& \shC && \sX \\
	T && S
	\arrow["g", from=1-2, to=2-1]
	\arrow["\varphi"', from=1-2, to=2-3]
	\arrow["f"', from=1-4, to=2-3]
\end{tikzcd}\]

Let $\sZ \subseteq S \times T$ be the subset of points $(s, t)$ such that $\shC_t$ passes through $s$ and the total space of $\sX|_{\shC_t}$ has a singularity lying above $\varphi^{-1}_t(s)$. It is easy to see that $\sZ$ is constructible: Let $\sX|_\shC$ be the pullback of $\sX$ along $\varphi$. Then we have a natural morphism $\sX|_\shC \to T \times_k S$ and $\sZ$ is the set-theoretic image of $\mathrm{Sing}(\sX|_\shC \to T)$. Endow $\sZ$ with the structure of a reduced scheme. 

It suffices to show that $\dim \sZ < \dim T$. Let $V$ be as in \ref{def: generalized disc}(c) and for $s \in V(k)$, let $U$ and $U_s = U \cap \varphi^{-1}(s)$ be as introduced in \ref{def: homogeneously dominates}). Let $t = g(s)$. By \ref{lem: open in proj tangent bundle}, there exists a proper closed subvariety $U_{s, \mathrm{bad}} \subseteq U$ such that the total space of $\sX|_{\shC_t}$ is not smooth near the fiber $\sX_u$ only if $u \in U_{s, \mathrm{bad}}$. Let $\wt{\sZ}_s \subseteq \varphi^{-1}(s)$ be the union of the complement of $U_s$ and the Zariski closure of $U_{s, \mathrm{bad}}$. Then $\wt{\sZ}_s$ is a proper closed subvariety of $\varphi^{-1}(s)$. Since the morphism $\varphi^{-1}(s) \to T$ is quasi-finite and the fiber $\sZ_s \subseteq T$ is contained in the image of $\wt{\sZ}_s$, we have $$ \dim \sZ_s <  \dim \varphi^{-1}(s) = \dim \shC - \dim S. $$
Since the image of $\sZ$ in $S$ is contained in $D$, and $s$ runs over an open dense subvariety of $D$, we have that $\dim \sZ \le \dim \shC - 2 = \dim T - 1$ as desired. 
\end{proof}

%\begin{example}
%Here is a prototypical example which motivates the above proposition: Consider the complete linear system $|\sO(d)|$ of degree $d \ge 1$ hypersurfaces in a projective space $\IP^{r + 1}$ over $k$. Let $\IG \coloneqq \IG(2, \H^0(\sO(d)))$ be the Grassmannian of lines in the projective space $|\sO(d)|$. It is well known that a general element $\gamma \colon \IP^1 \to |\sO(d)|$ in $\IG$ is a Lefschetz pencil, and the the total space of the pencil is simply the blow up of $\IP^{r + 1}$ along a smooth subvariety, and hence is smooth. The above proposition shows that this statement about the smoothness of the total space holds in much greater generality. 
%\end{example}

%\begin{example}
%The hypothesis that $D \subseteq S$ is a proper subvariety, i.e., the morphism $f$ is generically smooth, is necessary when $\mathrm{char\,} k = p > 0$. Moret-Bailly constructed the following nice example: When $p = 3$, the surface $\sX \subseteq \IP^2 \times \IA^1$ given by the equation $y^2 z = x^3 - t z^3$ ($t$ is the coordinate on $\IA^1$ and $x, y, z$ are homogeneous coordinates on $\IP^2$) is smooth itself, but its projection to $\IA^1$ does not have a smooth general fiber (\cite{MB}). However, note that for Lem~\ref{lem: open in proj tangent bundle} this hypothesis is unnecessary.
%\end{example}

\subsection{Applications of the Baire category theorem}

Let $k$ be an algebraically closed field of characteristic $p > 0$. Set $W := W(k)$ and $K := W[1/p]$. Choose an algebraic closure $\bar{K}$ of $K$. 
\begin{lemma}
\label{lem: countability trick}
Suppose that $S \to B$ is a flat morphism between irreducible smooth $W$-schemes of finite type. Let $N$ be a countable union of closed proper subschemes of $S_{\bar{K}}$. Let $b \in B(k)$ be any point and $\what{B}_b$ be the formal completion of $B$ at $b$. Then the subset of points $\wt{b} \in \what{B}_b(W)$ such that $\mathrm{supp}(S_{\wt{b}} \tensor \bar{K})$ is not contained in $N$ is analytically dense. 
\end{lemma}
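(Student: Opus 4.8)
The idea is a standard Baire-category argument: the set of "bad" lifts $\wt{b} \in \what{B}_b(W)$ — those for which $\mathrm{supp}(S_{\wt{b}} \tensor \bar{K}) \subseteq N$ — should be covered by a countable union of nowhere-dense subsets of the analytic space $\what{B}_b(W)$, and then one invokes the fact that $\what{B}_b(W)$, being (the $W$-points of) a smooth formal scheme, is a complete metric space (homeomorphic to a product of copies of $W$ with its $p$-adic topology, $d = \dim B$), hence a Baire space. So first I would fix notation: write $N = \bigcup_{i \ge 1} N_i$ with each $N_i \subseteq S_{\bar{K}}$ a closed proper (reduced, irreducible may be assumed) subscheme, and choose a coordinate system on $\what{B}_b$ so that $\what{B}_b(W) \cong W^{\oplus d}$ with the product $p$-adic topology, which is a complete metric space with a countable basis of the topology (since $W$ is a complete DVR with finite residue field — here I use that $k$... wait, $k$ is algebraically closed, so $W$ need not be locally compact; but completeness still holds, and Baire's theorem holds for any complete metric space regardless of local compactness, so this is fine).

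\textbf{Reduction to nowhere-density of each bad set.} For each $i$, let $\shB_i \subseteq \what{B}_b(W)$ be the set of $\wt{b}$ with $\mathrm{supp}(S_{\wt{b}} \tensor \bar{K}) \subseteq N_i$. The key point is that $\{\wt{b} : S_{\wt{b}} \tensor \bar{K} \subseteq N_i\}$ is the $W$-points of a closed formal subscheme of $\what{B}_b$: indeed, "$S_{\wt{b}} \subseteq (N_i)$-schematically" is a closed condition cut out as follows — by generic flatness and spreading out we may work over an open where things are nice, and the condition that the pulled-back family $S_{\wt{b}}$ factors through $N_i$ (scheme-theoretically, after base change) is represented by a closed subscheme $Z_i \subseteq \what{B}_b$ obtained from the universal property of the scheme-theoretic image. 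More precisely, since $S \to B$ is flat of finite type, the locus in $B$ over which the fiber lands inside the closed subscheme $\varphi^{-1}(N_i)$... actually the cleanest route: $S \to B$ flat, $N_i \subsetneq S_{\bar K}$ proper closed, so $N_i$ does not contain the generic fiber $S_{\bar K}$; the closed subset $S \setminus (\text{preimage of } N_i)$ — hmm. Let me instead phrase it as: the set $B_i := \{b' \in B : S_{b'} \not\subseteq N_i \text{ as sets}\}$ is constructible and contains the generic point of $B$ (because $N_i$ is proper in $S_{\bar K}$ and $S \to B$ flat implies the generic fiber $S_{K(B)}$ has the same dimension as $S$, so can't be inside $N_i$), hence $B_i$ contains a dense open $V_i \subseteq B$; its complement $B \setminus V_i$ is a proper closed subscheme, so $\what{B}_b \cap (B \setminus V_i)$ is a proper closed formal subscheme of $\what{B}_b$ (it does not contain the generic point of $B$, hence not all of $\what{B}_b$ since $B$ is irreducible and $\what{B}_b$ is its completion at a closed point with $B$ irreducible), and $\shB_i \subseteq (B \setminus V_i)(W) \cap \what{B}_b(W)$. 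A proper closed formal subscheme of a smooth irreducible formal scheme over $W$ has $W$-points forming a closed subset with empty interior in $\what{B}_b(W)$ — this is because in the local coordinates $W^{\oplus d}$ it is the common zero locus of finitely many power series not all identically zero, and the zero set of a nonzero power series over $W$ (or over $\bar K$) has empty $p$-adic interior (a convergent power series vanishing on an open set of $W^d$ is identically zero). So each $\shB_i$ is nowhere dense.

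\textbf{Conclusion.} By Baire's theorem, $\bigcup_i \shB_i$ has empty interior in $\what{B}_b(W)$, and in particular its complement is dense (in fact comeager); this complement is exactly the set of $\wt{b}$ with $\mathrm{supp}(S_{\wt{b}} \tensor \bar{K}) \not\subseteq N = \bigcup_i N_i$ — here one checks that $S_{\wt{b}} \tensor \bar K \subseteq N$ forces, since $S_{\wt b} \tensor \bar K$ is of finite type hence quasi-compact and each $N_i$ closed, that $S_{\wt b} \tensor \bar K \subseteq N_i$ for a single $i$ (the generic point of each irreducible component of $S_{\wt b}\tensor\bar K$ lies in some $N_i$, and there are finitely many such components, so... actually we need a tiny bit more care: it forces each of the finitely many irreducible components into some $N_i$, hence $S_{\wt b}\tensor\bar K \subseteq N_{i_1}\cup\cdots\cup N_{i_r}$, but a finite union of the $N_i$ is again a proper closed subscheme, so we may as well have enlarged our countable family to be closed under finite unions at the outset). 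This gives analytic density of the good set.

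\textbf{Main obstacle.} The technical heart is the claim that $\shB_i$ (or rather the locus of lifts whose generic fiber is set-theoretically contained in $N_i$) is contained in the $W$-points of a \emph{proper} closed formal subscheme of $\what{B}_b$ — equivalently, that $B_i = \{b' : S_{b'} \not\subseteq N_i\}$ is dense in $B$. This uses flatness of $S \to B$ in an essential way (to guarantee the generic fiber of $S/B$ has full dimension and so cannot sit inside a proper closed subscheme $N_i$ of the geometric generic fiber $S_{\bar K}$, which after spreading out is the generic fiber), and a constructibility/Chevalley argument to pass from "$B_i$ contains the generic point" to "$B_i$ contains a dense open". The rest — the Baire space structure of $\what{B}_b(W)$ and the empty-interior property of $p$-adic analytic subsets — is routine but should be stated carefully since $k$ is not assumed finite and hence $W$ is not locally compact (only completeness of the metric is needed for Baire).
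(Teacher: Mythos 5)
Your proof follows essentially the same route as the paper's: use flatness of $S \to B$ to show that, for each $N_i$, the locus of points of $B$ whose fiber is contained in $N_i$ is a proper closed subscheme, note that its $W$-points form a nowhere-dense subset of $\what{B}_b(W)$ (a nonzero power series cannot vanish on a $p$-adic ball), and conclude by Baire. Two small presentational differences: the paper first replaces $N$ by the union of its Galois conjugates so that everything is defined over $K$ (you instead work directly with $\bar{K}$-coefficients, which is fine since the identity theorem for power series does not care about the coefficient field), and your explicit closing of the family $\{N_i\}$ under finite unions addresses the fact that ``$\supp(S_{\wt{b}} \otimes \bar K) \subseteq N$'' need not force containment in a single $N_i$ when the fiber is reducible — a point the paper's terse sketch passes over, and which you rightly note can be fixed because a finite union of the $N_i$ remains a proper closed subscheme (using irreducibility of $S_{\bar K}$, which holds here since $S$ is irreducible and smooth over the strictly Henselian $W$).
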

\begin{proof}
Let $U := \what{B}_b(W)$. By taking the union of $N$ with all its Galois conjugates, we may assume that $N$ is defined over $K$. Let $N_1, N_2, \cdots$ be the irreducible components of $N$. By flatness the morphism $S \to B$ is open, so for each $i$ there exists a proper closed subscheme $Z_i \subseteq B$ such that every $z \in B(K)$ satisfies $\supp(S_z) \subseteq N_i$ only if $z \in Z_i$. Indeed, one may simply take $Z_i$ to be the complement of the image of $S - N_i$. Since each $U - Z_i(W)$ is open dense in analytic topology, we may conclude by the Baire category theorem for complete metric spaces that $U - \cup_{i = 1}^\infty Z_i(W)$ is analytically dense. 
\end{proof}

\begin{lemma}
\label{lem: FAP for curves}
Let $S$ and $T$ be smooth irreducible $W$-schemes of finite type and $N$ be a countable union of closed proper subschemes of $S_{\bar{K}}$. Let $t \in T_k$ be a closed point and $\what{T}_t$ be the formal completion of $T$ at $t$. 

Suppose that $\shC$ is a smooth family of geometrically connected curves over $T$, and there is a morphism $\varphi \colon \shC \to S$ such that the family $(\shC/T)_{\bar{K}}$ over $\bar{K}$ strongly dominates $S_{\bar{K}}$. Then the subset of points $\wt{t} \in \what{T}_t(W)$ such that $\varphi(\supp(\shC_{\wt{t}} \tensor \bar{K}))$ is not contained $N$ is analytically dense. 
\end{lemma}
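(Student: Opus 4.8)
The idea is to reduce to \ref{lem: countability trick} by linearizing along a single well-chosen curve in the family. Because $(\shC/T)_{\bar K}$ \emph{strongly} dominates $S_{\bar K}$, there is an open dense $T^\circ \subseteq T_{\bar K}$ over which the restricted family homogeneously dominates $S_{\bar K}$; in particular hypotheses (a)--(c) of \ref{def: homogeneously dominates} hold there. First I would spread out: after replacing $W$ by a suitable finite unramified extension (harmless, since we only need analytic density in $\what{T}_t(W)$ and such an extension induces a finite-index open inclusion of $W$-points) we may assume $T^\circ$, together with the open locus $U \subseteq \shC$ of Definition~\ref{def: homogeneously dominates} and the conditions (a)--(c), is already defined over $W$ on a dense open $T^\circ_W \subseteq T$, and that $t$ lies in its special fiber --- this is where we need the special point $t$ not to be in the ``bad locus''. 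Strictly speaking $t$ might lie in the complement of $T^\circ$; but the statement only concerns points of $\what{T}_t(W)$, and since $\what{T}_t$ is irreducible of the same dimension as $T$, its generic point maps into $T^\circ_W$, so a cofinal (indeed analytically dense) set of $W$-points of $\what{T}_t$ lands in $T^\circ_W$. So without loss of generality we work with $T^\circ_W$ in place of $T$.

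Next, form the pullback $\shD \coloneqq \shC \times_{T,\varphi} \text{(nothing)}$ --- more precisely consider the morphism $\Phi \colon \shC \to T \times_W S$ induced by $(g,\varphi)$, which by (a) is quasi-finite, hence generically finite onto its image. Let $\shC^\flat$ denote the image of $\Phi$, a locally closed irreducible $W$-subscheme of $T\times_W S$ flat over $T$ on a dense open (shrink $T$ again so that $g \colon \shC^\flat \to T$ is flat with geometrically connected one-dimensional fibers). Now I want to apply \ref{lem: countability trick} with ``$S$'' there taken to be $\shC^\flat$ and ``$B$'' taken to be $T$. The map $\shC^\flat \to T$ is flat between irreducible smooth $W$-schemes of finite type (after the shrinkings above; if $\shC^\flat$ is not smooth, replace it by its smooth locus, which is dense and still flat over a dense open of $T$ by generic flatness). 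The countable union of closed proper subschemes of $(\shC^\flat)_{\bar K}$ to feed into the lemma is $N^\flat \coloneqq (\text{pr}_S)^{-1}(N) \cap (\shC^\flat)_{\bar K}$, where $\text{pr}_S \colon T\times_W S \to S$; each component of $N$ pulls back to a closed subscheme of $(\shC^\flat)_{\bar K}$, and it is \emph{proper} (i.e.\ not all of a component of $(\shC^\flat)_{\bar K}$) precisely because $\varphi$ restricted to a general curve $\shC_t$ is dominant onto $S$ --- this follows from homogeneous domination: condition (b) forces $U_s$ dense in $\varphi^{-1}(s)$ for every $s$, so $\varphi|_{\shC^\flat}$ is dominant, hence a general fiber-curve is not swallowed by the lower-dimensional $N$. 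Thus \ref{lem: countability trick} gives that the set of $\wt t \in \what T_t(W)$ with $\mathrm{supp}((\shC^\flat)_{\wt t}\otimes \bar K) \not\subseteq N^\flat$ is analytically dense. Finally, unwind: $(\shC^\flat)_{\wt t} = \Phi(\shC_{\wt t})$, so $\mathrm{supp}((\shC^\flat)_{\wt t}\otimes\bar K)\not\subseteq N^\flat$ is equivalent to $\text{pr}_S(\Phi(\shC_{\wt t}\otimes\bar K)) = \varphi(\mathrm{supp}(\shC_{\wt t}\otimes\bar K)) \not\subseteq N$, which is exactly the assertion. (One small point: we replaced $\shC^\flat$ by its smooth locus / a dense open; a $W$-point $\wt t$ lying over $T^\circ_W$ has $(\shC^\flat)_{\wt t}$ meeting that open in a dense set, so passing to the open does not change whether the support is contained in $N^\flat$; and $\wt t$ not over $T^\circ_W$ form a non-dense set, harmless.)

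\textbf{Main obstacle.} The genuinely delicate point is not the Baire-category input --- that is \ref{lem: countability trick} verbatim --- but the bookkeeping needed to move the hypotheses of strong/homogeneous domination, which are stated over the algebraically closed field $\bar K$ (and implicitly in characteristic zero), down to a flat family of curves over the integral base $T/W$ whose special fiber meets $t$. Concretely: (i) the locus $T^\circ$ where homogeneous domination holds is open dense in $T_{\bar K}$, so it is defined over some finite extension of $K$ and spreads out to a dense open $T^\circ_W$, but one must check $\what T_t$ does not get stuck entirely in $T\smallsetminus T^\circ_W$ --- which is automatic from $\dim \what T_t = \dim T$ and irreducibility, as noted; (ii) quasi-finiteness of $\Phi$ and flatness of $\shC^\flat\to T$ only hold generically, forcing repeated shrinking of $T$, and each shrink must be shown not to lose $t$ from the relevant closures --- again handled by the $\what T_t$-generic-point argument; (iii) making sure ``$\varphi$ restricted to a general $\shC_t$ is dominant onto $S$'' genuinely follows from (b) of \ref{def: homogeneously dominates} and not merely from quasi-finiteness of $\shC\to T\times S$. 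None of these requires new ideas, but the argument must be written so that the density statement survives all the open shrinkings; the cleanest way is to observe once and for all that for any dense open $T'\subseteq T$, the set $\what T_t(W)\cap T'(W)$ is analytically dense in $\what T_t(W)$ (its complement is cut out by the proper closed $T\smallsetminus T'$), and then intersect the finitely many such dense-open conditions with the one Baire-dense condition from \ref{lem: countability trick}.
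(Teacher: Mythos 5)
The paper's own proof is one line shorter than you might expect: it applies \ref{lem: countability trick} \emph{directly} to the morphism $\shC \to T$, with $N$ replaced by $M := \varphi_K^{-1}(N) = \bigcup_i \shC_K \times_\varphi N_i$. Since $\shC$ is a smooth family of geometrically connected curves over the smooth irreducible $T$, the morphism $\shC \to T$ is already flat, $\shC$ is already smooth and irreducible over $W$, and no shrinking of $T$ is needed. Strong domination is used once, to see that each $M_i = \varphi^{-1}(N_i)$ is a \emph{proper} closed subscheme of $\shC_K$: condition (b) of \ref{def: homogeneously dominates} forces $\varphi$ to be surjective on the locus where homogeneous domination holds, so no $N_i$ can contain the image. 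The conclusion of \ref{lem: countability trick} then unwinds tautologically, since $\supp(\shC_{\wt t}\otimes\bar K)\not\subseteq\varphi^{-1}(N)$ is equivalent to $\varphi(\supp(\shC_{\wt t}\otimes\bar K))\not\subseteq N$.

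Your detour through the scheme-theoretic image $\shC^\flat \subseteq T\times_W S$ creates two problems that you do not actually resolve. First, the morphism $\shC^\flat\to T$ is not a priori flat, so you must shrink $T$ to a dense open $T^\circ_W$ to obtain the hypotheses of \ref{lem: countability trick}; but \ref{lem: countability trick} is formulated around the formal completion of the base at a chosen closed point $b\in B(k)$, and you need $b=t$. If $t\notin T^\circ_W$, the lemma simply does not apply over $T^\circ_W$. Your observation that ``a cofinal set of $W$-points of $\what T_t$ lands in $T^\circ_W$'' is true only in the sense that the generic points $\wt t_K$ land in $T^\circ_K$ for an analytically dense set of $\wt t$; it does not place $t$ itself in $T^\circ_W$, so it does not license replacing $T$ by $T^\circ_W$ before invoking \ref{lem: countability trick}. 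Second, your justification that $N^\flat$ has proper components leans on the claim that ``$\varphi$ restricted to a general curve $\shC_t$ is dominant onto $S$,'' which is false whenever $\dim S\ge 2$: a curve cannot dominate a higher-dimensional variety. What is actually needed (and true) is only that $\varphi$ is surjective as a map from the family, so that the preimage of a proper closed $N_i$ is proper.

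Both issues evaporate if you drop $\shC^\flat$ entirely and apply \ref{lem: countability trick} to $\shC\to T$ with $N$ replaced by its $\varphi$-preimage. That is the missing simplification.
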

\begin{proof}
Again by Galois descent we may assume that $N = \cup_{i = 1}^\infty N_i$ for irreducible closed subschemes $N_i$ of $S_K$. Let $M_i := \shC_K \times_\varphi N_i$ and $M := \cup_{i = 1}^\infty M_i$. The assumption that $(\shC/T)_{\bar{K}}$ strongly dominates $S_{\bar{K}}$ implies that each $M_i$ is a proper closed subscheme. Now we apply the above lemma with $(S \to B, N)$ replaced by $(\shC \to T, M)$. 
\end{proof}

%\begin{definition}
%Let $S$ be a smooth irreducible $\IC$-variety and $f : \sX \to S$ be a polarizable smooth and proper family of $h^{2, 0} = 1$ varieties. 
%\end{definition}

\section{Elliptic surfaces with $p_g = q = 1$}
\label{sec: Thm A}
\subsection{Generalities on elliptic surfaces}
\label{sec: generalities of ES}
In this section we recall some basic facts about elliptic surfaces and describe their moduli. The goal is to establish \ref{lem: thread the needle}, which will be used in conjunction with the lemmas in the preceeding section, so that eventually we can apply \ref{thm: TateStrong} and \ref{lem: Step 1} together.

Let $k$ be an algebraically closed field of characteristic $\neq 2, 3$. Let $C$ be a smooth projective curve over $k$ and $\pi \colon X \to C$ be an elliptic surface over $C$ with a zero section $\sigma \colon C \to X$ through which we also view $C$ as a curve on $X$. The \textit{fundamental line bundle} $L$ of $X/C$ is defined to be the dual of the normal bundle $N_{C/X}$, or equivalently that of $R^1 \pi_* \sO_X$. The degree of $L$ is defined to be the height of $X/C$ (or rather its generic fiber), which we denote by $\htt(X)$. Set $V_r = \H^0(L^r)$. There exists a pair $(a_4, a_6) \in V_4 \times V_6 - \{ 0 \}$, which is unique up to the action of $\lambda \in k^\times$ by $\lambda \cdot (a_4, a_6) = (\lambda^4 a_4, \lambda^6 a_6)$, such that $X$ is the minimal resolution of the hypersurface $X' \subseteq \IP(L^2 \oplus L^3 \oplus \sO_C)$ defined by the Weierstrass equation (\cite[Thm~1]{Kas})
\begin{equation}
    \label{eqn: Weierstrass for elliptic surfaces}
    y^2 z = x^3 - a_4 xz^2 - a_6 z^3,
\end{equation}
where $x, y, z$ as homogenenous coordinates on $L^2, L^3, \sO_C$ respectively. The hypersurface $X'$ has at most rational double point singularities and is called the \textit{Weierstrass normal form} of the original surface $X$. If $X'$ is smooth, then of course $X = X'$. In this paper, we only consider $X$ with $\htt(X) > 0$.

Next, we recall that Kodaira classified all the possible singular fibers in the elliptic fibration $\pi\colon X \to C$ when $k = \IC$ in \cite{Kodaira}, and his classification is well known to hold verbatim in characteristic $\neq 2, 3$ as well. We refer the reader to \cite[\S4]{SchSh} for a summary. Set $\Delta \coloneqq 4 a_4^3 - 27 a_6^2$. Let $c \in C$ be a point and denote by $\val_c$ the valuation defined by a uniformizer at $c$. The only facts we shall need from \textit{loc. cit.} are the following:

\begin{proposition}
\label{prop: Kodaira classification}
\begin{enumerate}[label=\upshape{(\alph*)}]
    \item The fiber $X_c$ is singular if and only if $\Delta$ vanishes at $c$, i.e., $\val_c(\Delta) \ge 1$. 
    \item $X_c$ is of $\mathrm{I}_n$ type $(n > 0)$ if and only if $\val_c(a_4) = \val_c(a_6) = 0$, and $n = \val_c(\Delta)$. 
    \item $X_c$ is of $\mathrm{II}$-type if and only if $\val_c(a_4) \ge 1$ and $\val_c(a_6) = 1$. 
    \item If $X_c$ is a singular fiber of any other type, $\val_c(\Delta) \ge 3$.
    \item If $X_c$ is of $\mathrm{I}_1$-type or $\mathrm{II}$-type, then $X_c = X'_c$. In other words, the singularity on $X'_c$ is not a surface singularity. 
    \item If $X_c$ is of $\mathrm{I}_2$-type, then $X'_c$ has a unique ODP singularity given by contracting the irreducible component not meeting the zero section. 
\end{enumerate}
\end{proposition}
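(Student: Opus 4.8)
The plan is to reduce every assertion to a local statement at a closed point $c \in C$ and then read it off from Tate's algorithm and the N\'eron--Kodaira classification, for which \cite[\S4]{SchSh} and \cite{Kodaira} are the references in characteristic $\neq 2,3$, and \cite[Thm~1]{Kas} provides the Weierstrass model. Concretely, one trivializes $L$ in an \'etale (or formal) neighborhood of $c$ with uniformizer $t$, so that $X'$ is locally the affine surface $\{y^2 = x^3 - a_4(t)x - a_6(t)\}$ over $k[[t]]$, with central fibre the plane cubic $E_c := \{y^2 = x^3 - a_4(c)x - a_6(c)\}$. Classically $E_c$ is smooth iff $\Delta(c)\neq 0$, and when $\Delta(c)=0$ it is nodal if $(a_4(c),a_6(c))\neq (0,0)$ and cuspidal otherwise; since $\Delta = 4a_4^3-27a_6^2$ and $\mathrm{char}\,k\neq 3$, once $\Delta(c)=0$ the vanishing of $a_4$ at $c$, of $a_6$ at $c$, and the cuspidal alternative all coincide. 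Because $X\to X'$ is an isomorphism over the smooth locus of $X'$, because resolving the remaining rational double points only adds components, and because a fibre of a relatively minimal elliptic fibration is smooth iff it is an elliptic curve, this already yields (a).

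For the multiplicative case I would factor $x^3 - a_4(t)x - a_6(t) = (x-\alpha(t))(x-\beta(t))(x-\gamma(t))$ over $k[[t]]$ after a tame base change, with $\alpha(0)=\beta(0)=x_0$ the double root of $E_c$ (note $x_0\neq 0$ since $a_4(c)\neq 0$ and $\mathrm{char}\,k\neq 3$) and $\gamma(0)=-2x_0\neq x_0$. As $\Delta$ is a nonzero constant times $(\alpha-\beta)^2(\alpha-\gamma)^2(\beta-\gamma)^2$ and the last two factors are units at $c$, one gets $\val_c(\Delta)=2\,\val_t(\alpha-\beta)$; completing the square in $x$ near the node and using that $x-\gamma(t)$ is a unit there, $X'$ is locally $y^2 = x^2 - s(t)$ up to a unit with $\val_t(s)=\val_c(\Delta)=:n$, i.e.\ an $A_{n-1}$ rational double point (a smooth point when $n=1$, an ordinary double point when $n=2$). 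Its minimal resolution is a chain of $n-1$ rational $(-2)$-curves, which together with the normalization $\cong \IP^1$ of the nodal cubic forms the Kodaira fibre $\mathrm{I}_n$. This gives (b); the $n=1$ half of (e) (there $X'=X$ locally, so $X_c=X'_c=E_c$ is an irreducible nodal cubic); and (f): for $\mathrm{I}_2$ the exceptional $(-2)$-curve is the unique new component, it is the one not meeting the zero section (which passes through the identity of the smooth group law on $E_c$, a point on the strict transform of $E_c$), and contracting it restores the ODP of $X'_c$.

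For the additive case $a_4(c)=a_6(c)=0$ I would invoke Tate's algorithm in residue characteristic $\neq 2,3$: since $X/C$ is relatively minimal with $\htt(X)>0$, the pair $(a_4,a_6)$ furnished by \cite[Thm~1]{Kas} is a minimal Weierstrass datum, so $\val_c(a_4)<4$ or $\val_c(a_6)<6$ at every $c$, and the fibre type is determined by $(\val_c(a_4),\val_c(a_6))$ (plus the splitting type of an auxiliary reduced cubic). Type $\mathrm{II}$ is precisely the case $\val_c(a_6)=1$, which forces $\val_c(a_4)\geq 1$ and then $\val_c(\Delta)=2$, giving (c); every other additive type requires $\val_c(a_4)\geq 2$ or $\val_c(a_6)\geq 2$, hence $\val_c(\Delta)\geq 3$, with equality already for type $\mathrm{III}$, giving (d). Finally, for type $\mathrm{II}$ the partial derivative of $y^2 - x^3 + a_4(t)x + a_6(t)$ with respect to $t$ equals $a_4'(t)x + a_6'(t)$, which is nonzero at the cusp $(x,y,t)=(0,0,0)$ because $\val_t(a_6)=1$; so $X'$ is smooth there, $X=X'$ locally, and $X_c=X'_c$ is the irreducible cuspidal cubic, the remaining half of (e).

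The step I expect to be the genuine obstacle --- or rather the one that should be honestly delegated to the literature rather than reproved --- is the identification of the minimal resolution of the Weierstrass RDP model with the relatively minimal smooth model, together with the translation of the resolution graph of an $A_{n-1}$-singularity sitting over a nodal cubic (resp.\ of the Tate-algorithm output in the additive case) into the Kodaira symbol; this is exactly the content of \cite{Kodaira} and its positive-characteristic analogue surveyed in \cite[\S4]{SchSh}. A minor technical wrinkle is making the factorization of the cubic rigorous: one passes to a tame extension $k[[t^{1/e}]]$, uses that over a Henselian base with algebraically closed residue field and $\mathrm{char}\neq 2$ every unit is a square to descend the relevant square roots, and checks this does not disturb the valuations computed above. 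Everything else is bookkeeping.
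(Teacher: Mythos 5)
The paper does not prove this proposition; it is stated as a recollection of Kodaira's classification of singular fibres, with \cite{Kodaira} and \cite[\S4]{SchSh} cited and the characteristic $\neq 2, 3$ case taken as well known. Your proposal therefore does more than the paper does: it reconstructs the needed facts from Tate's algorithm and a local analysis of the Weierstrass model, which is essentially the content of the cited references. That is a reasonable substitute for the citation.

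The sketch is sound. The reduction to the local model $\{y^2 = x^3 - a_4(t)x - a_6(t)\}$ over $k[[t]]$, the observation that in characteristic $\neq 2, 3$ additive reduction forces $a_4(c) = a_6(c) = 0$ (so that the vanishing of $a_4$, of $a_6$, and cuspidality of the central fibre coincide once $\Delta(c) = 0$), the identification of the $A_{\val_c(\Delta) - 1}$ singularity at the node in the multiplicative case, and the smoothness of $X'$ at the cusp when $\val_c(a_6) = 1$ (read off from $\partial / \partial t$) are all correct and give (a), (b), (c), (e), (f). Your self-assessment is also right: the genuine content is the identification of the minimal resolution of the Weierstrass RDP model with the relatively minimal smooth model and the translation of the resolution graph into the Kodaira symbol, and that belongs to \cite{Kodaira} and \cite[\S4]{SchSh} rather than to a from-scratch argument.

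One imprecision in (d): you write that every additive type other than $\mathrm{II}$ requires ``$\val_c(a_4) \geq 2$ or $\val_c(a_6) \geq 2$''. The disjunct $\val_c(a_4) \geq 2$ taken with $\val_c(a_6) = 1$ is still type $\mathrm{II}$ and only gives $\val_c(\Delta) = 2$, so that ``or'' is neither the correct characterization of the non-$\mathrm{II}$ additive types nor sufficient to conclude $\val_c(\Delta) \geq 3$ on its own. The right statement, from Tate's algorithm, is that every additive non-$\mathrm{II}$ type has $\val_c(a_6) \geq 2$ in addition to $\val_c(a_4) \geq 1$, and then $\val_c(\Delta) \geq \min(3 \val_c(a_4), 2 \val_c(a_6)) \geq \min(3, 4) = 3$. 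The conclusion of (d) is unaffected; only the intermediate claim needs tightening.
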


The degree of the discriminant $\Delta$ is $12 \chi(\sO_X) = e(X)$. Recall that the genus $g(C)$ is equal to the irregularity $q(X)$ and we have $p_g(X) = \chi(\sO_X) - 1 + g(C)$. Therefore, elliptic surfaces with $p_g = 1$ fall into two types: 

\begin{itemize}
    \item $\chi(\sO_X) = 2$ and $g(C) = 0$. These are elliptic K3 surfaces. 
    \item $\chi(\sO_X) = g(C) = 1$. These surfaces have Kodaira dimension $1$. 
\end{itemize}

We are interested in the latter class. Note that although these surfaces are elliptic fibrations over genus $1$ curves, one should not confuse them with \textit{bielliptic surfaces}, which are of Kodaira dimension $0$.

\subsubsection{} \label{not: weighted projective}
For future reference we introduce some notation. Let $B$ be a base scheme and $\sV$ be a vector bundle over $B$. We denote by $\IA(\sV)$ the relative affine space over $B$ defined by $\sV$ and $\IA(\sV)^*$ the open part of $\IA(\sV)$ minus the zero section. Given a sequence of numbers $q= (q_0, \cdots, q_m)$ such that $q_i$'s are invertible in $\sO_B$ and vector bundles $\sV_0, \cdots, \sV_m$ such that $\sV = \oplus_{i = 0}^m \sV_i$, we denote by $\sP_{q}(\sV)$ the resulting \textit{weighted projective stack}, i.e., the quotient stack of $\IG_m$-action on $\sV$ given by 
$$ \lambda \colon (v_0, \cdots, v_m) \mapsto (\lambda^{q_0} v_0, \cdots, \lambda^{q_m} v_m) \text{ for } \lambda \in \IG_m, $$
and by $\IP_{q}(\sV)$ the coarse moduli space of $\sP_q(\sV)$. It is well known that this coarse moduli space can be constructed explicitly by applying the relative Proj functor to a sheaf of graded algebras over $B$. We omit the details. If $q$ is not specified, then it is assumed to be $(1, \cdots, 1)$.

\begin{set-up}
\label{constr: family of ES over S}
Let $B$ be a Noetherian $\IZ[1/6]$-scheme, $\varpi \colon \sC \to B$ be a family of smooth projective curves over $B$ of genus $g$ and $\sL$ be a relative line bundle on $\sC$ of degree $h$. Assume that $4h \ge 2g - 1$ and $h \ge 1$. Let $\sV_r$ denote the vector bundle $\varpi_* \sL^r$ for $r \ge 4$. Let $\wt{\sX}$ be the subscheme of $\IA(\sV_4 \oplus \sV_6)^* \times_B \IP(\sL^2 \oplus \sL^3 \oplus \sO_\sC)$ defined by the Weierstrass equation (\ref{eqn: Weierstrass for elliptic surfaces}) in the obvious way. Let $\mu$ be the $\IG_m$-action on $\IA(\sV_4 \oplus \sV_6)^* \times_B \IP(\sL^2 \oplus \sL^3 \oplus \sO_\sC)$ defined by 
\begin{equation}
\label{eqn: mu action stack}
    \lambda \cdot ((a_4, a_6), [x : y : z]) = ((\lambda^4 a_4, \lambda^6 a_6), [\lambda^2 x : \lambda^3 y : z]), \text{ for } \lambda \in \IG_m.
\end{equation}
Let $\sQ(\mu)$ denote the quotient stack of the $\IG_m$-action $\mu$. Then $\wt{\sX}$ descends to an algebraic substack $\sX$ of $\sQ(\mu)$. Note that $\sQ(\mu)$, and hence $\sX$, admit natural morphisms to $\sP_{(4, 6)}(\sV_4 \oplus \sV_6)$. Set $\fD := \Disc(\wt{\sX}/ \IA(\sV_4 \oplus \sV_6)^*)$ (see \ref{def: generalized disc}). It defines a closed substack $\overline{\fD}$ in $\sP_{(4,6)}(\sV_4 \oplus \sV_6)$, because it is invariant under the $\IG_m$-action on $\IA(\sV_4 \oplus \sV_6)^*$ through weight $(4, 6)$. Let ${\fU}$ denote the open complement of $\fD$ in $\IA(\sV_4 \oplus \sV_6)^*$. 
\end{set-up}  

\begin{remark}
    Note that with fixed $(\sC, \sL)$, the formation of $\sV_4, \sV_6, \wt{\sX}, \sX$ and ${\fU}$ naturally commutes with base change among $B$-schemes. We will implicitly use this for the rest of Sec.~7. However, a priori $\fD$ and $\overline{\fD}$ might not commute with non-flat base change as they can become non-reduced (cf. \ref{rmk: formation of disc}). Much of Sec.~7 is devoted to giving conditions to exclude this possibility. The key intermediate result is \ref{lem: thread the needle}, which will play an important role in the proof of Theorem A. 
\end{remark}

We remark that $\sX$ is only ``stacky'' because of the base. 

\begin{lemma}
Let $T$ be a Noetherian $B$-scheme and $\nu \colon T \to \sP_{(4, 6)}(\sV_4 \oplus \sV_6)$ be a morphism. Then the pullback $\nu^* \sQ(\mu)$, and hence $\nu^* \sX$, are flat projective schemes over $T$. 
\end{lemma}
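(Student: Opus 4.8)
The plan is to reduce to the case $T = \sP_{(4,6)}(\sV_4 \oplus \sV_6)$ itself and then analyze the quotient stack $\sQ(\mu)$ directly via the weighted Proj construction. First I would observe that flatness and projectivity over $T$ can be checked after the base change $\nu$, so it suffices to treat the ``tautological'' situation: show that the natural morphism $\sQ(\mu) \to \sP_{(4,6)}(\sV_4 \oplus \sV_6)$ becomes, after pulling back along the \emph{coarse space presentation}, a flat projective morphism of schemes. Here the key point is that $\sQ(\mu)$ is built fiberwise as a projective bundle over $\IA(\sV_4 \oplus \sV_6)^*$ (namely $\IP(\sL^2 \oplus \sL^3 \oplus \sO_\sC)$ relative to $\sC$, itself smooth proper over $B$) cut out by the single Weierstrass equation \eqref{eqn: Weierstrass for elliptic surfaces}, then quotiented by the $\IG_m$-action $\mu$; so I would first establish that $\wt{\sX} \to \IA(\sV_4 \oplus \sV_6)^*$ is flat and projective, because it is a relative hypersurface of fixed bidegree inside a projective bundle over a scheme that is flat over $\IA(\sV_4 \oplus \sV_6)^*$ (the latter being pulled back from $B$), and the Weierstrass cubic is a non-zero-divisor on each fiber since $(a_4,a_6) \neq 0$.

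Second, I would descend this along the $\IG_m$-action. The subtlety is that $\sQ(\mu)$ is a quotient \emph{stack}, not a scheme, but $\nu$ lands in the coarse moduli space $\sP_{(4,6)}(\sV_4 \oplus \sV_6)$, and the assertion is about $\nu^*\sQ(\mu)$, which one should interpret as the fiber product of schemes $T \times_{\IP_{(4,6)}(\sV_4 \oplus \sV_6)} (\text{coarse space of } \sQ(\mu))$, or equivalently the relative Proj of the appropriate sheaf of graded $\sO_T$-algebras. So I would set up the graded algebra explicitly: over $\IA(\sV_4 \oplus \sV_6)^*$ one has the Rees-type graded algebra encoding both the $\IG_m$-weights $(4,6)$ on the base coordinates $(a_4, a_6)$ and the weights $(2,3,0)$ on $[x:y:z]$, and $\sQ(\mu)$'s coarse space is $\mathrm{Proj}$ of this. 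Flatness is then inherited because $\mathrm{Proj}$ of a graded algebra that is flat over the base is flat, and the Weierstrass relation is homogeneous for the combined grading. Projectivity follows because $\mathrm{Proj}$ of a finitely generated graded algebra is projective over the base; finite generation here is standard for weighted projective constructions over a Noetherian (or arbitrary) base, using that $4,6$ (and $2,3$) are invertible.

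Third, I would assemble the pieces: given $\nu \colon T \to \sP_{(4,6)}(\sV_4 \oplus \sV_6)$, pull back the graded algebra to $T$ and take relative $\mathrm{Proj}$; this is exactly $\nu^*\sQ(\mu)$ (as a scheme, via the coarse space), flat and projective over $T$ by base change of the previous paragraph's conclusions, and $\nu^*\sX$ is the closed subscheme cut out by the pullback of the Weierstrass equation, hence also flat (the equation remains a non-zero-divisor on fibers, since the condition $(a_4,a_6)\neq 0$ is built into working over $\IA(\sV_4\oplus\sV_6)^*$) and projective (closed in a projective $T$-scheme).

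The main obstacle I anticipate is bookkeeping the interaction of the two $\IG_m$-weightings and making the ``$\nu^*\sX$'' notation rigorous: since $\sX$ is a stack but $\nu$ targets the coarse space, one must be careful that the intended object is the base change of the \emph{coarse moduli space} (or a chosen scheme-theoretic model), and that flatness is not destroyed in passing from the stack to its coarse space over a general base. I would handle this by never leaving the world of graded algebras and relative $\mathrm{Proj}$, where flatness and projectivity are transparent and base change is automatic, rather than trying to argue stack-theoretically; the only genuine input needed beyond formal nonsense is that the Weierstrass cubic $y^2 z - x^3 + a_4 x z^2 + a_6 z^3$ defines a relative Cartier divisor (i.e., is fiberwise a non-zero-divisor), which is immediate once $(a_4, a_6) \neq 0$ and $6$ is invertible.
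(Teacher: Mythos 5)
There is a genuine gap, and it stems from a misreading of the paper's notation. You assert that ``$\nu$ lands in the coarse moduli space $\sP_{(4,6)}(\sV_4 \oplus \sV_6)$'' and then interpret $\nu^*\sQ(\mu)$ as a base change of the coarse space of $\sQ(\mu)$ along a scheme map $T \to \IP_{(4,6)}(\sV_4\oplus\sV_6)$. But in the paper's conventions (set-up~\ref{not: weighted projective}) $\sP_{(4,6)}(\sV_4\oplus\sV_6)$ \emph{is} the weighted projective stack; the coarse space is the separate symbol $\IP_{(4,6)}(\sV_4\oplus\sV_6)$. So $\nu$ maps the scheme $T$ into the stack, and $\nu^*\sQ(\mu)$ means the $2$-fiber product $T \times_{\sP_{(4,6)}(\sV_4\oplus\sV_6)} \sQ(\mu)$; the content of the lemma is precisely that this fiber product is representable by a scheme, flat and projective over $T$. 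Your graded-algebra-and-Proj route replaces the statement to be proved by a different one (about base change of coarse spaces), and leaves the representability issue---the heart of the matter---untouched. No amount of working ``in the world of graded algebras'' addresses why a fiber product over a stack is a scheme.

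The paper's proof handles this in one line: the $\IG_m$-action on $[x:y:z]$ is linear on the vector bundle $\sL^2\oplus\sL^3\oplus\sO_\sC$ with weights $(2,3,0)$, so after descent $\sQ(\mu)$ is a $\IP^2$-bundle over the stack $\sP_{(4,6)}(\sV_4\oplus\sV_6)\times_B\sC$. Projective bundles are representable by schemes, so $\nu^*\sQ(\mu)$ is a $\IP^2$-bundle over the \emph{scheme} $T\times_B\sC$, hence a flat projective $T$-scheme; $\nu^*\sX$ is a relative Cartier divisor in it, with flatness coming from the codimension-one fiber condition via \cite[00MF]{stacks-project}. Your ``key point'' paragraph correctly identifies the ambient $\IP^2$-bundle inside the pre-quotient picture, so you are close, but the missing move is descent of the vector bundle to the stack and the resulting representability. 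One smaller slip: the claim that the Weierstrass cubic is a nonzerodivisor on fibers ``since $(a_4,a_6)\neq 0$'' is a red herring---the cubic $y^2z - x^3 + a_4 xz^2 + a_6 z^3$ never vanishes identically on a fiber even when $(a_4,a_6)=0$; the removal of the zero section plays a different role.
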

\begin{proof}
The reader can check that $\sQ(\mu)$ is in fact a $\IP^2$-bundle over $\sP_{(4, 6)}(\sV_4 \oplus \sV_6) \times_B \sC$. Therefore, the pullback $\nu^* \sQ(\mu)$ is a $\IP^2$-bundle over the scheme $T \times_B \sC$. Being a closed substack of the scheme $\nu^* \sQ(\mu)$, $\nu^* \sX$ has to be a projective scheme. The flatness of $\nu^* \sQ(\mu)$ is clear, and one deduces the flatness of $\nu^* \sX$ using that it is locally cut out by a single equation, and its geometric fibers are of codimension $1$ (cf. \cite[\href{https://stacks.math.columbia.edu/tag/00MF}{Tag 00MF}]{stacks-project}). 
\end{proof}

%The quotient stack $\sQ(\mu)$ can be alternatively constructed as follows: View $\IA(\sV_4 \oplus \sV_6)^* \times_B \IA(\sL^2 \oplus \sL^3 \oplus \sO_\sC)$ as the total space of a rank $3$ vector bundle $\mathscr{V}$ over $\IA(\sV_4 \oplus \sV_6)^* \times_B \sC$. Let $\mu'$ be the $\IG_m$-action on $\IA(\sV_4 \oplus \sV_6)^* \times_B \sC$ defined by $\lambda \cdot ((a_4, a_6), c) = (\lambda^4 a_4, \lambda^6 a_6), c)$ such that $\sP_{(4, 6)}(\sV_4 \oplus \sV_6) \times_B \sC$ is the quotient of $\IA(\sV_4 \oplus \sV_6)^* \times_B \sC$ by $\mu'$. Then viewing $(x, y, z)$ in (\ref{eqn: mu action stack}) as affine coordinates on $\mathscr{V}$, the $\mu$-action on $\IA(\sV_4 \oplus \sV_6)^* \times_B \IP(\sL^2 \oplus \sL^3 \oplus \sO_\sC)$ naturally lifts to an action on $\mathscr{V}$ which is equivariant with respect to $\mu'$. By taking the quotient of $\mathscr{V}$ with respect to this action, we obtain a rank $3$ vector bundle $\overline{\mathscr{V}}$ over $\sP_{(4, 6)}(\sV_4 \oplus \sV_6) \times_B \sC$. Then $\sQ(\mu)$ is canonically isomorphic to the projectivization $\IP(\overline{\mathscr{V}})$.

%Since $\IG_m$ is flat, the quotient stack $\nu^* \sX$ is algebraic by \cite[Tag~06DC]{stacks-project}.

%then the $\mu$-action can be viewed as one on $\mathscr{V}$ and defines a descent $\overline{\mathscr{V}}$, which is a rank $3$ vector bundle over $\sP_{(4, 6)}(\sV_4 \oplus \sV_6) \times_S \sC$. 

%https://www.math.columbia.edu/~dejong/wordpress/?p=1584

\begin{proposition}
\label{prop: smoothness of total space elliptic}
The morphism $\wt{\sX} \to B$ is smooth. 
\end{proposition}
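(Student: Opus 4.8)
The plan is to realise $\wt{\sX}$ as the zero locus of the Weierstrass section inside the smooth $B$-scheme $\sP \coloneqq \IA(\sV_4 \oplus \sV_6)^* \times_B \IP(\sL^2 \oplus \sL^3 \oplus \sO_\sC)$, and to deduce smoothness over $B$ from the fibrewise Jacobian criterion together with a flatness check. Here $\sP$ is indeed smooth over $B$: the hypothesis $4h \ge 2g-1$, $h \ge 1$ forces $\H^1(\sC_b, \sL^r|_{\sC_b}) = 0$ for $r \ge 4$ (since $\deg \sL^r|_{\sC_b} = rh > 2g-2$), so $\sV_4, \sV_6$ are locally free with formation commuting with base change, and $\IP(\sL^2\oplus\sL^3\oplus\sO_\sC)$ is a $\IP^2$-bundle over the smooth $B$-scheme $\sC$. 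I would then reduce the statement to showing that, for every geometric point $P$ of $\wt{\sX}$ lying over $b \in B$, the restriction $\overline{F} := F|_{\sP_b}$ of the Weierstrass section $F = y^2z - x^3 + a_4 x z^2 + a_6 z^3$ has nonzero differential at $P$. This simultaneously yields that the geometric fibres $\wt{\sX}_{\bar b}$ are smooth over $\bar b$ of the expected dimension $\dim(\sP/B) - 1$, and — $\overline F$ being then a fibrewise nonzerodivisor — that $\wt{\sX} \to B$ is flat, hence smooth.

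For the Jacobian computation I would work \'etale-locally on $B$ and on $\sC$, trivialising $\sL$ near the image $c \in \sC_b$ of $P$, so that the fibre of $\IP(\sL^2\oplus\sL^3\oplus\sO_\sC)$ over $c$ becomes $\IP^2$ with coordinates $[x:y:z]$, while $a_4, a_6$ become the linear coordinate functions on $\IA(\sV_4)_b$, $\IA(\sV_6)_b$ given by evaluating the tautological sections at $c$; thus $da_4, da_6$ are the evaluation maps $\mathrm{ev}_c\colon \sV_r|_b \to \sL^r|_c$. On $\{z = 0\}$ the equation becomes $F = -x^3$, so the only point of $\wt{\sX}$ there is $[0:1:0]$, and in the chart $y \ne 0$ one computes $d\overline F = dz \ne 0$ at $x = z = 0$. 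Away from $\{z=0\}$, in the chart $z = 1$ we have $\overline F = y^2 - x^3 + a_4 x + a_6$ and
\[ d\overline F = 2y\,dy - 3x^2\,dx + a_4\,dx + x\,da_4 + da_6, \]
and I would single out the term $da_6 = \mathrm{ev}_c\colon \sV_6|_b \to \sL^6|_c$: it is a nonzero functional, since $\deg(\sL^6|_{\sC_b}) = 6h \ge 2g+1$ makes $\sL^6|_{\sC_b}$ globally generated, so some section does not vanish at $c$. As $da_6$ lies in the $\sV_6$-direction of the cotangent space of $\sP_b$ at $P$, along which no other summand of $d\overline F$ contributes, we get $d\overline F(P) \ne 0$. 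In either case $\overline F \in \mathfrak m_P \setminus \mathfrak m_P^2$, so $\sO_{\sP_{\bar b}, P}/(\overline F)$ is regular of codimension one in the regular local ring $\sO_{\sP_{\bar b}, P}$; since $\sP_{\bar b}$ is smooth over $\bar b$, this shows $\wt{\sX}_{\bar b}$ is smooth over $\bar b$ at $P$ of dimension $\dim(\sP/B)-1$.

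Finally I would assemble the pieces: $\wt{\sX}$ is, locally on $\sP$, cut out by the single equation $F$, and its geometric fibres have everywhere codimension $1$ in the fibres of the flat morphism $\sP \to B$, so $\wt{\sX} \to B$ is flat (the argument is as in \cite[00MF]{stacks-project}); being flat, of finite presentation, and with geometrically smooth fibres, it is smooth. I expect the only genuinely substantive point to be the non-vanishing of $\mathrm{ev}_c$ on $\sV_6$ — i.e.\ that the numerical hypothesis $4h \ge 2g-1$ together with $h \ge 1$ really does guarantee $\sL^6|_{\sC_b}$ is globally generated at every point of every fibre of $\varpi$ — together with the bookkeeping needed to treat the point at infinity $[0:1:0]$ in a second chart; everything else is the routine ``hypersurface in a smooth total space'' argument.
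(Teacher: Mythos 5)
Your proposal is correct and takes essentially the same route as the paper's proof: both reduce to geometric fibres via flatness of a fibrewise divisor in a $\IP^2$-bundle (the same \cite[00MF]{stacks-project} argument), and both detect smoothness of the total space of the Weierstrass family by exhibiting a nonvanishing partial derivative of $W$ in the $\sV_6$-direction at the point $c$ of the base curve; the paper phrases this as $\tau_0(0)\neq 0$ for a suitable basis vector $\tau_0$ in a formal-completion computation, while you phrase it as $\mathrm{ev}_c\colon \sV_6|_b\to\sL^6|_c$ being surjective because $\sL^6$ is globally generated — the same fact, made explicit. Your treatment of the $z=0$ locus is a touch more careful than the paper's one-line dismissal, and your write-out of $d\overline F$ on the $z=1$ chart omits the $dc$-direction, but since you only invoke the $\sV_6$-component this is harmless.
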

\begin{proof}
As $\wt{\sX}$ is flat over $B$, it suffices to check smoothness of geometric fibers. Hence we may assume that $B = \mathrm{Spec}(k)$, where $k$ is an algebraically closed field of characteristic $\neq 2, 3$. Let us simply write $\IA^*$ for $\IA(\sV_4 \oplus \sV_6)^*$. Choose a point $u \in \IA^*(k)$ and $c \in \sC(k)$. Choose a uniformizer $t$ of $C$ at $c$ and bases $\{ \sigma_i \}, \{ \t_j \}$ for $\sV_4, \sV_6$ respectively. Then the formal completion of $\IA^* \times C$ at $(u, c)$ can be identified with $\mathrm{Spf}(R)$, where $R = k[\![t, \alpha_0, \cdots, \alpha_{4h - g}, \beta_0, \cdots, \beta_{6h - g}]\!]$.

By choosing a local $\sO_C$-generator of $L$ at $c$, we turn $\sigma_i$'s and $\t_j$'s into elements in $k[\![t]\!]$. Let $(\{a_i\}, \{b_j\}) \in k^{10h - 2g + 2}$ be the affine coordinates of $u$ in $\IA(\sV_4 \oplus \sV_6)$. Then the restriction of $\wt{\sX}$ to $\mathrm{Spf}(R)$ can be identified with the subscheme of $\mathrm{Proj\,} R[x, y, z]$ defined by the equation 
\begin{equation}
    \label{eqn: local Weierstrass}
    W \coloneqq y^2z - x^3 + (\sum_{i = 0}^{4h - g} (a_i + \alpha_i) \sigma_i(t)) xz^2 + (\sum_{j = 0}^{6h - g} (b_j + \beta_j) \t_j(t)) z^3 = 0.
\end{equation}
Let $r$ be the special point of $\mathrm{Spf}(R)$. The singularity of the (generalized) elliptic curve $\wt{\sX}_t$ defined by the above equation when $t$, $\alpha_i$'s and $\beta_j$'s all vanish cannot appear on the $z = 0$ chart. So we may set $z = 1$ in the above equation and consider the resulting scheme in $\mathrm{Spec} R[x, y]$. As $\theta_j$'s form a basis of $\sV_6$, $\t_j(0) \neq 0$ for some $j$. Then for this $j$ the partial derivative $\p W / \p \beta_j$ remains nonzero on the special fiber. This implies that the total space of the restriction of $\wt{\sX}$ to $\mathrm{Spf}(R)$ is smooth. But the choice of $(u, c)$ is arbitrary, so $\wt{\sX}$ is smooth. 
\end{proof}

\subsection{Nonlinear Bertini theorems for families of elliptic surfaces}
\label{Sec: Nonlinear Bertini}

In this section, $k$ remains an algebraically closed field of characteristic $\neq 2, 3$. 
\begin{proposition}
\label{prop: nonlinear Bertini}
Let $C$ be a smooth projective curve of genus $g$ over $k$ and let $L$ be a line bundle on $C$ with degree $h$. Set $V_r \coloneqq \H^0(L^r)$ for every $r \in \IN$. For every $d \in \IN$, consider the closed subset of $\IA(V_4 \oplus V_6) \times C$ defined by ($\Delta \in V_{12}$ is defined by $4 a_4^3 - 27 a_6^2$ as before)
$$ \sK_d \coloneqq \{ (a_4, a_6, c) \in V_4 \times V_6 \times C \mid \val_c(\Delta) \ge d \} $$
and endow it with the reduced subscheme structure. Likewise, let $D \subseteq C \times C$ be the diagonal and define a closed subscheme in $\IA(V_4 \oplus V_6) \times (C \times C - D)$ by $$ \sK_2^+ \coloneqq \{ (a_4, a_6, c , c') \in V_4 \times V_6 \times (C \times C - D) \mid \val_c(\Delta) \text{ and } \val_{c'}(\Delta) \text{ are both }\ge 2 \}. $$
If $2h \ge g + 1$, then we have the following: 
\begin{enumerate}[label=\upshape{(\alph*)}]
    \item $\sK_d$ has codimension $d$ for $d \le 3$.
    \item $\sK_2$ has two irreducible components $\sK_2(\mathrm{I}_2)$ and $\sK_2(\mathrm{II})$ characterized by conditions $\val_c(a_6) = 0$ and $\val_c(a_6) \ge 1$ respectively.
    \item $\sK_2^+$ has codimension $4$. 
\end{enumerate}
\end{proposition}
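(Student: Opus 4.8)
The statement is a Bertini-type count of codimensions for loci inside $\IA(V_4 \oplus V_6) \times C$ (or $\times (C\times C - D)$) cut out by vanishing conditions on $\Delta = 4a_4^3 - 27 a_6^2$. My plan is to reduce everything to the basic mechanism: fixing a point $c \in C$ (and the second point $c'$ when relevant), the conditions $\val_c(\Delta) \ge d$ become a finite list of linear equations on the coefficients $(a_4, a_6)$, and the key input is that these equations are independent, which is where the hypothesis $2h \ge g+1$ enters via Riemann--Roch and the surjectivity of evaluation/jet maps. Concretely, for a line bundle $M$ on $C$ of degree $e$ with $e \ge 2g-1$ the evaluation map $\H^0(M) \to M/\fm_c^d M$ (the $d$-jet at $c$) is surjective as soon as $e - d \ge 2g - 1$, i.e.\ $e \ge 2g-1+d$; for the small values of $d$ we need this will follow from $2h \ge g+1$ applied to $M = L^4$ or $L^6$ (degrees $4h, 6h$).

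\textbf{Part (a).} First I would set up the incidence variety $\sK_d \to C$ and show each fiber over $c \in C$ has codimension exactly $d$ in $V_4 \times V_6$; then $\dim \sK_d = \dim(V_4 \times V_6) + 1 - d$, giving codimension $d$ in $\IA(V_4\oplus V_6)\times C$. To compute the fiber, note that $\val_c(\Delta) \ge d$ means the first $d$ Taylor coefficients of $\Delta$ at $c$ vanish. The locus where \emph{both} $\val_c(a_4) \ge 1$ and $\val_c(a_6) \ge 1$ has codimension $2$ in the fiber (two independent jet conditions, using surjectivity of $\H^0(L^4) \to L^4/\fm_c L^4$ and likewise for $L^6$), and on this locus $\val_c(\Delta) \ge 2$ automatically, so it is swept into $\sK_2$; it has codimension $\ge 3$ for $d=3$ except possibly along smaller strata, which one checks directly. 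Away from this locus, i.e.\ where $(a_4(c), a_6(c)) \ne (0,0)$, the function $\Delta$ together with its derivatives up to order $d-1$ can be prescribed independently: the map sending $(a_4, a_6)$ to the $d$-jet of $\Delta$ at $c$ is a submersion onto its image of the expected dimension because $\partial \Delta/\partial a_6 = -54 a_6$ and $\partial \Delta/\partial a_4 = 12 a_4^2$ do not both vanish, and one can adjust higher jets of $a_6$ (resp.\ $a_4$) freely by the jet-surjectivity just cited. This gives the codimension-$d$ bound for $d \le 3$; the reverse inequality (codimension $\le d$) is clear since $\sK_d$ is cut out by $d$ equations fiberwise.

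\textbf{Part (b).} For $\sK_2$, by Kodaira's classification recalled in \ref{prop: Kodaira classification}, a point $c$ with $\val_c(\Delta) \ge 2$ and $(a_4, a_6)$ generic must give a fiber of type $\mathrm{I}_2$ (when $\val_c(a_6) = 0$, equivalently $\val_c(a_4) = \val_c(a_6) = 0$) or of type $\mathrm{II}$ or worse (when $\val_c(a_6) \ge 1$): indeed if $\val_c(a_4) = \val_c(a_6) = 0$ then $\val_c(\Delta) = n$ forces type $\mathrm{I}_n$, while $\mathrm{II}$-type is exactly $\val_c(a_4)\ge 1,\ \val_c(a_6)=1$ by \ref{prop: Kodaira classification}(c), and $\val_c(a_6)\ge 1$ combined with $\val_c(\Delta) \ge 2$ forces $\val_c(a_4) \ge 1$ as well since $\val_c(\Delta)\ge 2$ and $27 a_6^2$ has valuation $\ge 2$ means $\val_c(a_4^3)\ge 2$, hence $\val_c(a_4)\ge 1$. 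So set-theoretically $\sK_2 = \sK_2(\mathrm{I}_2) \cup \sK_2(\mathrm{II})$ where $\sK_2(\mathrm{I}_2)$ is the closure of $\{\val_c(a_6) = 0,\ \val_c(\Delta) = 2\}$ and $\sK_2(\mathrm{II}) = \{\val_c(a_4) \ge 1,\ \val_c(a_6) \ge 1\}$ (which contains the $\mathrm{II}$-locus as a dense open, the rest being types $\mathrm{III}, \mathrm{IV}, \mathrm{I}_0^*,\dots$). I would check each is irreducible of the claimed codimension: $\sK_2(\mathrm{II})$ is the fiberwise vanishing $a_4(c) = a_6(c) = 0$, which by jet-surjectivity is a subbundle of corank $2$ over $C$, hence irreducible of codimension $2$; for $\sK_2(\mathrm{I}_2)$ the condition is $a_6(c)\ne 0$, $\Delta(c) = 0$, $\Delta'(c) = 0$, two independent conditions (the jet map $(a_4,a_6)\mapsto (\Delta(c),\Delta'(c))$ is submersive where $a_6(c)\ne 0$), cutting out an irreducible locally closed set whose closure is irreducible of codimension $2$.

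\textbf{Part (c) and the main obstacle.} For $\sK_2^+ \subseteq \IA(V_4\oplus V_6) \times (C\times C - D)$, over a fixed pair $(c, c')$ with $c \ne c'$ the condition is $\val_c(\Delta) \ge 2$ and $\val_{c'}(\Delta) \ge 2$; I want this to impose $4$ independent conditions on $(a_4, a_6)$, giving fiber codimension $4$ and hence $\dim \sK_2^+ = \dim(V_4\oplus V_6) + 2 - 4$, i.e.\ codimension $4$. The main obstacle is precisely this independence: I need the combined jet map $(a_4, a_6) \mapsto \big(\text{$1$-jet of }\Delta\text{ at }c,\ \text{$1$-jet of }\Delta\text{ at }c'\big)$ to be surjective of rank $4$, which requires the simultaneous jet-surjectivity $\H^0(L^{12}) \twoheadrightarrow L^{12}/\fm_c^2 L^{12} \oplus L^{12}/\fm_{c'}^2 L^{12}$ — but $\Delta$ is \emph{not} a general section of $L^{12}$, it lies in the image of the (nonlinear) squaring-and-cubing map from $V_4 \times V_6$, so I cannot directly invoke linear Bertini. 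The fix is to argue stratum by stratum as in (b): on the open stratum where $(a_4, a_6)$ does not vanish at $c$ or $c'$, the two partial-derivative computations $\partial\Delta/\partial a_4, \partial\Delta/\partial a_6$ give local surjectivity at each point separately, and because $c \ne c'$ one can use sections of $L^4$ and $L^6$ vanishing to high order at one point but not the other (again via $2h \ge g+1 \Rightarrow 4h, 6h \ge 2g-1 + (\text{small})$, now allowing a few extra vanishing conditions at the second point — here I may need the slightly stronger reading $2h \ge g+1$ to accommodate order-$2$ jets at two points, which I should double-check against the degree bookkeeping) to decouple the two jet conditions; on the lower strata where $a_4, a_6$ vanish at $c$ or $c'$ one is in the $\mathrm{II}$-or-worse situation and an explicit codimension count (each ``$a_4(c) = a_6(c) = 0$'' costing $2$, and these are independent across $c \ne c'$) again yields codimension $\ge 4$. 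Combining the strata gives $\sK_2^+$ of codimension exactly $4$. I expect the bookkeeping of exactly which jet-surjectivity statements are forced by $2h \ge g+1$, and the verification that the nonlinear image still meets the jet spaces in the expected dimension, to be the delicate part; everything else is a routine incidence-variety dimension count organized by Kodaira fiber type.
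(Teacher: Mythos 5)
Your approach for parts (a) and (b) is essentially the one the paper uses: fiber the incidence variety over $C$, adapt a basis of $V_4$ (resp.\ $V_6$) to the jets at $c$ via Riemann--Roch, write out the first few Taylor coefficients of $\Delta$, and observe that the resulting explicit equations cut out the expected codimension (with the cuspidal cubic $4\alpha_0^3 - 27\beta_0^2 = 0$ in the $(\alpha_0,\beta_0)$-plane governing the $d=2$ stratification). The degree bookkeeping you sketch there is tight but correct: $4h - d \ge 2g - 1$ for $d \le 3$ does follow from $2h \ge g+1$.

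The gap you flagged in part (c) is real, and it is exactly the delicate point of the paper's argument. For simultaneous $1$-jet surjectivity of $\H^0(L^4) \to L^4/\fm_c^2 L^4 \oplus L^4/\fm_{c'}^2 L^4$ one needs $h^1(L^4(-2c-2c')) = 0$, which by Serre duality requires $\deg L^4(-2c-2c') = 4h - 4 > 2g - 2$, i.e.\ $2h > g+1$. The hypothesis $2h \ge g+1$ only guarantees $4h - 4 \ge 2g - 2$, so on the boundary $2h = g + 1$ (which is precisely the case $g = h = 1$ needed for Theorem~B) the jet map can fail to be surjective, namely when $L^4 \cong \omega_C(2c + 2c')$. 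Your stratum-by-stratum plan alone cannot dodge this: on that jet-deficient stratum the four conditions genuinely degenerate to three, independently of which Kodaira stratum of $(a_4, a_6)$ you are on. The paper closes the gap with an extra case analysis: for $g \ge 1$ the locus $\{(c, c') \in C\times C - D : L^4 \cong \omega_C(2c+2c')\}$ is a proper closed subset, hence of codimension $\ge 1$, so over such pairs it suffices to show the fiber $\Phi_{(c,c')}$ has codimension $\ge 3$ rather than $4$; this is done with a $3$-element adapted basis for $V_4$ and by imposing only three of the four jet conditions (e.g.\ $\val_c(\Delta)\ge 2$ and $\val_{c'}(\Delta)\ge 1$). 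Without this extra argument part (c) does not go through, and since the main application sits exactly on the boundary $2h = g+1$ you cannot escape by strengthening the hypothesis to $2h > g+1$.
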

\begin{proof}
Recall that by the Riemann-Roch theorem, for any line bundle $M$ on $C$, if $\deg(M) \ge 2g - 1$, then $h^0(M) = \deg(M) - g + 1$; if $\deg(M) = 2g - 2$, then $h^0(M) = \deg(M) - g + 1$ unless $M \iso \w_C$. 

Fix any point $c \in C$ and consider the projection $\sK_d \to C$. It suffices to show that the fiber $\sK_{d, c}$ over $c$, viewed naturally as a closed subscheme of $\IA(V_4 \oplus V_6)$, has codimension $d$. We identify the completion of $C$ along $c$ with $\mathrm{Spf}(k[\![t]\!])$ by choosing a uniformizer $t$. After choosing a local generator of $L$, we may consider the Taylor series of any $\sigma \in \H^0(L^r)$, which is a power series $\sigma(t) \in k[\![t]\!]$. By the first paragraph, for $r \ge 4$ and $d \le 3$, we have 
$$ h^0(L^r((1-d)c)) = h^0(L^r(-dc)) + 1. $$
Therefore, we may choose a basis $\sigma_0, \cdots, \sigma_{4h - g}$ for $V_4$ such that $\val_c(\sigma_i) = i$ for $i = 0, 1, 2$ and $\{ \sigma_i\}_{3 \le i \le 4h - g}$ forms a basis for $\H^0(L^4(-3c))$. We may assume that $\sigma_0(t) \equiv 1, \sigma_1(t) \equiv t$ and $\sigma_2(t) \equiv t^2$ modulo $t^3$. We choose a basis $\{ \t_0, \cdots, \t_{6h - g} \}$ in an entirely similar way. 

With the given choices of bases, we use $\{ \alpha_i \}$ and $\{ \beta_j \}$ for the coordinates of $V_4$ and $V_6$ respectively, so that $\Delta$ can be expressed as 
\begin{equation}
\label{eqn: express Delta}
    \Delta = 4 \left(\sum_{i = 0}^{4h - g} \alpha_i \sigma_i \right)^3 - 27 \left(\sum_{j = 0}^{6h - g} \beta_j \t_j \right)^2.
\end{equation}
Then the fiber $\sK_{d, c}$ ($d \le 3$) is supported on the subset of $\IA(V_4 \oplus V_6)$ cut out by the first $d$ equations from below: 
\begin{align}
\begin{cases}
    \Delta(0) &= 4 \alpha_0^3 - 27 \beta_0^2 \\
    \Delta'(0) &= 3(4 \alpha_0^2 \alpha_1 - 18 \beta_0 \beta_1) \\
    \Delta''(0) &= 24(\alpha_0 \alpha_1^2 + \alpha_0^2 \alpha_2) - 54(\beta_1^2 + 2 \beta_0 \beta_2)
\end{cases}
\end{align}

The statement (a) is clear for $d = 0, 1$. For $d = 2$, it is clear that $\sK_2$ contains the following subscheme 
$$ \sK_2(\mathrm{II}) \coloneqq \{ (a_4, a_6, c) \in V_4 \times V_6 \times C \mid \val_c(a_4) \ge 1, \val_c(a_6) \ge 1 \}, $$
such that the support of the fiber of $\sK_{2}(\mathrm{II})$ over $c$ is cut out by $\alpha_0 = \beta_0 = 0$. Let $\IA^2 \coloneqq \IA(k\< \sigma_0, \t_0 \>)$ be the affine space with coordinates $(\alpha_0, \beta_0)$ and $C' \subseteq \IA^2$ be the cuspidal curve defined by $\Delta(0) = 0$. Then the fiber of $\sK_2 - \sK_2(\mathrm{II})$ over a point in $C' - \{(0, 0)\}$ is given by a codimension $1$ hyperplane in $\IA(k\< \sigma_i, \beta_j \>_{i, j \ge 1})$. This implies that $\sK_2 - \sK_2(\mathrm{II})$ is irreducible of codimension $2$ in $\IA(V_4 \oplus V_6)$, and we denote this component by $\sK_2(\mathrm{I}_2)$. Note that this implies (b). To see the $d = 3$ case for (a), just note that $\Delta''(0)$ does not vanish identically on both $\sK_2(\mathrm{I}_2)$ and $\sK_2(\mathrm{II})$. 

Finally we treat (c). We consider the projection $\Phi \colon \sK_2^+ \to (C \times C - D)$ and take a point $(c, c') \in (C \times C - D)$. Denote the fiber of $\Phi$ over $(c, c')$ by $\Phi_{(c, c')}$. We assume first that $ L^4 \not\cong \w_C(2c + 2c')$. This condition is automatically satisfied when $2h > g + 1$ and ensures that $h^0(L^r(-2c - 2c')) = rh + g - 5$ for $ r \ge 4$. Then we may choose $\sigma_0, \cdots, \sigma_3 \in V_4$ with the following vanishing orders: 

\begin{equation}
\label{eqn: order of poles}
\begin{tabular}{ |c|c|c|c|c| } 
 \hline
  & $\sigma_0$ & $\sigma_1$ & $\sigma_2$ & $\sigma_3$ \\ \hline
 $\val_c$ & $0$ & $1$ & $\ge 2$ & $\ge 2$\\ \hline
 $\val_{c'}$ & $\ge 2$ & $\ge 2$ & $0$ & $1$ \\ \hline
\end{tabular}
\end{equation}
Then we complete $\{ \sigma_0, \cdots, \sigma_3 \}$ to a basis $\{ \sigma_i \}$ of $V_4$ by adjoining a basis for $\H^0(L^4(-2c - 2c'))$. Let $t, s$ be uniformizers of the completions of $C$ along $c$ and $c'$ respectively. After choosing local generators of $L$, we may consider Taylor series $\sigma_i(t) \in k[\![t]\!]$ and $\sigma_i(s) \in k[\![s]\!]$, and assume that $\sigma_0(t) \equiv 1, \sigma_1(t) \equiv t \mod t^2$ and $\sigma_2(s) \equiv 1, \sigma_3(s) \equiv s \mod s^2$. Choose an entirely similar basis $\{ \t_j \}$ for $V_6$ and express $\Delta$ again as in (\ref{eqn: express Delta}). Then the defining equations for $\Phi_{(c, c')}$ in $\IA(V_4 \oplus V_6)$ are 
\begin{align}
    \begin{cases}
    & 4 \alpha_0^3 - 27 \beta_0^2 = 4 \alpha_2^3 - 27 \beta_2^2 = 0 \\
    & 3(4 \alpha_0^2 \alpha_1 - 18 \beta_0 \beta_1) = 3(4 \alpha_2^2 \alpha_3 - 18 \beta_2 \beta_3) = 0 
    \end{cases}
\end{align}
By the same argument for the $d = 2$ case in (a), the above equations define a codimension $4$ subscheme. The point is that the variables with indices $0, 1$ do not interfere with those with $2, 3$.  

It remains to deal with the case when $2h = g + 1$ and $L^4 \iso \w_C(2c + 2c')$. Note that in this case $g \ge 1$, so the condition $L^4 \iso \w_C(2c + 2c')$ defines a closed subscheme of $(C \times C - D)$ of codimension at least $1$. Therefore, it is enough to show that the codimension of $\Phi_{(c, c')}$ is at least $3$. Note that we are able to choose a basis $\{ \t_j \}$ just as before, but this time choose $\{\sigma_0, \sigma_1, \sigma_2\}$ with the following vanishing orders: 
\begin{equation}
\label{eqn: order of poles}
\begin{tabular}{ |c|c|c|c| } 
 \hline
  & $\sigma_0$ & $\sigma_1$ & $\sigma_2$ \\ \hline
 $\val_c$ & $0$ & $1$ & $2$ \\ \hline
 $\val_{c'}$ & $2$ & $1$ & $0$ \\ \hline
\end{tabular}
\end{equation}
and complete it to a basis of $V_4$ by adjoining a basis of $H^0(L^4(-2c - 2c'))$. Assume that $\sigma_0(t) = 1, \sigma_1(t) = t \mod t^3$ and $\sigma_2(s) = 1 \mod s$. Then the conditions $\val_c(\Delta) \ge 2$ and $\val_{c'}(\Delta) \ge 1$ give us $3$ equations which are necessarily satisfied by $\Phi_{(c, c')}$: 
\begin{align}
    \begin{cases}
    & 4 \alpha_0^3 - 27 \beta_0^2 = 4 \alpha_2^3 - 27 \beta_2^2 = 0 \\
    & 3(4 \alpha_0^2 \alpha_1 - 18 \beta_0 \beta_1) = 0 
    \end{cases}
\end{align}
It is clear that these indeed cut out a subscheme of codimension $3$. 
\end{proof}

\begin{proposition}
\label{prop:irreducible_ell_surf}
Assume $2h \ge g + 1$. Apply set-up\ref{constr: family of ES over S} to $B \coloneqq \Spec(k)$. The resulting discriminant $\fD$ is a proper subvariety of $\IA(\sV_4 \oplus \sV_6)^*$. If $\mathrm{codim\,} \fD = 1$, then $\fD$ has a unique irreducible component $\fD_0$ of maximal dimension; moreover, for a general point $a$ on $\fD_0$, $\wt{\sX}_a$ is smooth away from a single ODP. 
\end{proposition}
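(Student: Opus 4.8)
The statement has three parts: (i) $\fD$ is a proper subvariety of $\IA(\sV_4 \oplus \sV_6)^*$; (ii) if $\operatorname{codim}\fD = 1$ then $\fD$ has a unique irreducible component $\fD_0$ of maximal dimension; (iii) for general $a \in \fD_0$, the Weierstrass surface $\wt\sX_a$ has a single ODP and is otherwise smooth. The key geometric input is Kodaira's classification as recorded in \ref{prop: Kodaira classification}, combined with the codimension count in \ref{prop: nonlinear Bertini}. Throughout, $\fD$ is by definition $\Disc(\wt\sX / \IA(\sV_4 \oplus \sV_6)^*)$, the scheme-theoretic image of the non-smooth locus $\Sing(\wt\sX \to \IA(\sV_4\oplus\sV_6)^*)$ inside $\IA(\sV_4\oplus\sV_6)^*$; since $\wt\sX \to B = \Spec(k)$ is smooth by \ref{prop: smoothness of total space elliptic}, a point $a$ lies in $\fD$ exactly when the fiber $\wt\sX_a$ (the Weierstrass normal form) is singular, which by \ref{prop: Kodaira classification}(a) happens iff $\Delta_a = 4a_4^3 - 27a_6^2$ vanishes somewhere on $C$. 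In particular $\fD$ is the locus where $\Delta$ acquires a zero, a proper closed condition (the generic Weierstrass datum has $\Delta$ with simple zeros and smooth total and special space), which gives (i).

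For (ii), I would stratify $\fD$ by the "worst" singular fiber that occurs. Write $\fD = \bigcup_c \sK_{1,c}$-type loci; more precisely, using the notation of \ref{prop: nonlinear Bertini}, the image in $\IA(\sV_4\oplus\sV_6)^*$ of $\sK_1 = \{(a_4,a_6,c): \val_c(\Delta) \ge 1\}$ is exactly $\fD$ (after removing the zero section). The locus where $\wt\sX_a$ has a fiber with $\val_c(\Delta) \ge 2$ is the image of $\sK_2$, which has codimension $2$ in $\IA(\sV_4\oplus\sV_6)$ by \ref{prop: nonlinear Bertini}(a), hence codimension $\ge 1$ inside $\fD$; similarly the locus where two distinct fibers are each singular is the image of $\sK_2^+$, of codimension $4$ by \ref{prop: nonlinear Bertini}(c), so also of codimension $\ge 1$ in $\fD$. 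Assuming $\operatorname{codim}\fD = 1$, the maximal-dimensional part of $\fD$ therefore lies in the complement of these subl.oci: a general $a \in \fD$ has exactly one singular fiber $\wt\sX_{a,c}$, and that fiber satisfies $\val_c(\Delta) = 1$. By \ref{prop: Kodaira classification}(d) a fiber with $\val_c(\Delta) = 1$ cannot be of any type other than $\mathrm{I}_1$ or $\mathrm{II}$, and $\mathrm{II}$ requires $\val_c(a_6) = 1$ which by \ref{prop: nonlinear Bertini}(b) (and the analysis of $\sK_2(\mathrm{II})$ there) cuts down dimension further — so the only stratum that can be of maximal dimension is the one with a single $\mathrm{I}_1$-fiber. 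The component $\fD_0$ is the closure of this stratum. Uniqueness of $\fD_0$ then reduces to irreducibility of the $\mathrm{I}_1$-stratum: this should follow by exhibiting it as (an open subset of) the image of an irreducible incidence variety — namely $\{(a_4,a_6,c) : \val_c(\Delta) \ge 1\}$ is irreducible because its projection to $C$ (which is irreducible) has irreducible fibers (a hyperplane-type condition $\Delta(c) = 0$ in $\IA(\sV_4\oplus\sV_6)$, linear in the "leading" coordinate after trivializing), and the generically-finite projection to $\IA(\sV_4\oplus\sV_6)^*$ then has irreducible image.

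For (iii): once we know a general $a\in\fD_0$ has a single singular fiber of type $\mathrm{I}_1$, \ref{prop: Kodaira classification}(e) says that for an $\mathrm{I}_1$-fiber $\wt\sX_{a,c} = \sX'_{a,c}$, i.e.\ the singularity of the Weierstrass normal form is \emph{not} a surface singularity — it is a nodal point of the fiber curve. To conclude that $\wt\sX_a$ has exactly one ODP at this point, I would do a local computation: near the node, in the affine chart $z = 1$, the surface is $y^2 = x^3 - a_4 x - a_6$ over the base curve with coordinate $t$, and an $\mathrm{I}_1$-fiber means $\Delta$ has a simple zero at $c$ while $a_4(c), a_6(c) \ne 0$ (by \ref{prop: Kodaira classification}(b) with $n=1$). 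At the nodal point $(x_0, y_0) = (x_0, 0)$ of the fiber, one checks the Hessian of $y^2 - x^3 + a_4 x + a_6$ in $(x, y, t)$ is nondegenerate: the $y^2$ term contributes a nonzero pure-$y$ quadratic part, the $x$-direction contributes because $x_0$ is a \emph{double} root of $x^3 - a_4(c) x - a_6(c)$ (as the fiber is nodal, not cuspidal — cuspidal would be type $\mathrm{II}$, excluded), giving a nonzero $x^2$ coefficient, and the mixed $xt$ or $t^2$ term is nonzero because $\val_c(\Delta) = 1$ means the node does not persist to first order in $t$ (this is exactly the transversality encoded in $\partial\Delta/\partial t|_c \ne 0$). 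Hence the singularity is an ordinary double point. Finally, smoothness of $\wt\sX_a$ away from this point follows because $\wt\sX_a$ is the Weierstrass normal form, whose only possible singularities lie over points $c'$ with $\val_{c'}(\Delta) \ge 1$, and we have arranged that there is a unique such $c'$ with $\val_{c'}(\Delta) = 1$, producing exactly one nodal point.

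\textbf{Main obstacle.} The routine parts are the codimension bookkeeping (handed to us by \ref{prop: nonlinear Bertini}) and invoking \ref{prop: Kodaira classification}. The step requiring genuine care is the local Hessian computation in (iii) showing the $\mathrm{I}_1$-point is an \emph{ordinary} double point rather than a worse $A_n$-singularity: one must verify that the transversality of $\Delta$'s zero ($\val_c(\Delta) = 1$) translates precisely into the non-vanishing of the relevant mixed partial, so that the quadratic part of the defining equation is a rank-$3$ form. A secondary subtlety is being careful that "general point of $\fD_0$" genuinely avoids \emph{all} the bad loci simultaneously — the $\val_c(\Delta)\ge2$ locus, the two-singular-fibers locus $\sK_2^+$, and the $\mathrm{I}_1$ vs.\ $\mathrm{II}$ distinction — which is where one uses that each of these is a proper closed subset of $\fD_0$ under the hypothesis $\operatorname{codim}\fD = 1$; the hypothesis $2h \ge g+1$ from \ref{prop: nonlinear Bertini} is what makes those codimension counts valid.
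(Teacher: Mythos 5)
There is a fundamental error at the very first step, which propagates through the whole argument. You assert that $a \in \fD$ exactly when $\Delta_a$ vanishes somewhere on $C$, citing \ref{prop: Kodaira classification}(a). But \ref{prop: Kodaira classification}(a) says that $\Delta$ vanishing at $c$ is equivalent to the \emph{fiber curve} $X_c$ being singular (a nodal or cuspidal cubic, say) --- it does not say the surface $\wt\sX_a$ is singular at that point. In fact, since $\Delta_a \in \H^0(L^{12})$ has positive degree $12h$, it vanishes somewhere for every $a$, so your criterion would make $\fD$ equal to all of $\IA(\sV_4 \oplus \sV_6)^*$, contradicting the assertion you are trying to prove in part (i). The correct statement, used in the paper's proof, is that $a \in \fD$ iff $\wt\sX_a$ has a \emph{surface} singularity, and by \ref{prop: Kodaira classification}(e) a singular fiber of type $\mathrm{I}_1$ or $\mathrm{II}$ is \emph{not} a surface singularity of the Weierstrass model. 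Thus $\fD$ sits inside the image of $\sK_2$ (not $\sK_1$), which is why its codimension is at least $1$ after you pass to $\IA(\sV_4 \oplus \sV_6)^*$ from a codimension-$2$ subset of $\IA(\sV_4 \oplus \sV_6) \times C$.

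This error also dooms part (iii). Your Hessian computation, if carried through carefully, would show the opposite of what you claim: at the node of an $\mathrm{I}_1$ fiber, with $x^3 - a_4(0)x - a_6(0) = (x - x_0)^2(x + 2x_0)$ one has $a_4 = 3x_0^2$, $a_6 = -2x_0^3$ with $x_0 \neq 0$, and then $\Delta'(0) = 108\, x_0^3 \bigl(x_0\, a_4'(0) + a_6'(0)\bigr)$, so $\val_c(\Delta) = 1$ forces $\partial F/\partial t = a_4'(0)x_0 + a_6'(0) \neq 0$, i.e.\ the total surface $\wt\sX_a$ is \emph{smooth} there (confirming \ref{prop: Kodaira classification}(e), not producing an ODP). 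The single ODP in the conclusion instead comes from a fiber of type $\mathrm{I}_2$ via \ref{prop: Kodaira classification}(f): the paper identifies the unique maximal-dimensional component $\fD_0$ as the closure of the image of $\sK_2(\mathrm{I}_2)$, uses \ref{prop: nonlinear Bertini}(c) to ensure a general $a \in \fD$ has only one singular fiber not of type $\mathrm{I}_1$, and then rules out type $\mathrm{II}$ (since type $\mathrm{II}$ gives a smooth surface, hence $a \notin \fD$). You need to rebuild the argument around the correct characterization of $\fD$.
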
 
\begin{proof}
As $B = \Spec(k)$, $(\sV_4, \sV_6, \sC)$ above is the same as $(V_4, V_6, C)$ in \ref{prop: nonlinear Bertini}, and we use the notations from \ref{prop: nonlinear Bertini} and the results in \ref{prop: Kodaira classification} throughout the proof below. 

Note that for $a = (a_4, a_6) \in \IA(V_4 \oplus V_6)^*$ such that $\Delta_a = 4a_4^3 - 27 a_6^3 \in \H^0(L^{12})$ does not vanish identically on $C$, $\wt{\sX}_a$ is singular if and only if its elliptic fibration has a reducible singular fiber. It is clear that $\fD$ is contained in the image of $\sK_2$ in $\IA(V_4 \oplus V_6)$, and hence has codimension at least $1$. If $\mathrm{codim\,} \fD = 1$, then by \ref{prop: nonlinear Bertini}(c), there exists an open dense subset $U \subseteq \fD$ such that if $a \in U$, $\wt{\sX}_a$ has at most one singular fiber not of $\mathrm{I}_1$-type. If moreover this singular fiber is of $\mathrm{II}$-type, then $\wt{\sX}_a$ is smooth and $a \not\in \fD$. Therefore, the only possible irreducible component of maximal dimension in $\fD$ is the Zariski closure of the image of $\sK_2(\mathrm{I}_2)$. This implies the second statement in the proposition. 
\end{proof}

\subsection{Mod $p$ behavior of discriminants}
\label{sec: mod p behavior of disc}

\begin{set-up}
\label{constr: curves dominating moduli of ES}
Suppose that in \ref{constr: family of ES over S} $B = \Spec(\sO_B)$ for a local ring $\sO_B$, so that the vector bundles $\sV_4$ and $\sV_6$ are trivial $\sO_B$-modules. By choosing $\sO_B$-generators for $\sV_4$ and $\sV_6$, we identify $\IA(\sV_4 \oplus \sV_6)$ with $\IA^{d_1} \oplus \IA^{d_2}$, where $d_1 = 4h - g + 1$ and $d_2 = 6h - g + 1$. Assume that $2h \ge g + 1$ as in the results in \S\ref{Sec: Nonlinear Bertini}. Consider $\IP^1 = \Proj \sO_B [u, v]$. Let $\IA^1 = \Spec(\sO_B [u])$ be the $v = 1$ chart on $\IP^1$, and let $\infty : B \to \IP^1$ denote the section defined by $v = 0$. Let $\sW_r$ be the $\sO_B$-module of degree $r$ homogenous polynomials in $\sO_B [u, v]$ or equivalently the module of degree $\le r$ polynomials in $\sO_B [u]$. Consider the open subscheme $T \subseteq \IA(\sW_4^{d_1} \oplus \sW_6^{d_2})$ consisting of the points of the form 
$$ \{ (f_1, \cdots, f_{d_1}, g_1, \cdots, g_{d_2}) \mid \text{the common vanishing locus\,} V(\{f_i, g_j \}) = \emptyset \}.  $$
Then it is clear that there is a natural morphism $\IP^1_B \times_B T = \IP^1_T \to \sP_{(4, 6)}(\sV_4 \oplus \sV_6)$. By setting $v = 1$ in the polynomials $f_i$'s and $g_j$'s, we also obtain an $B$-morphism $\IA^1_B \times_B T = \IA^1_T \to \IA(\sV_4 \oplus \sV_6)^*$. Recall that $\IA(\sV_4 \oplus \sV_6)^*$ denotes the affine space $\IA(\sV_4 \oplus \sV_6)$ minus the zero section. This morphism fits into a commutative diagram
\begin{equation}
    \label{eqn: dominate moduli of ES}
    \begin{tikzcd}
    \IA^1_T \arrow{r}{\varphi} \arrow[hook]{d}{} & \IA(\sV_4 \oplus \sV_6)^* \arrow{d}{} \\ 
    \IP^1_T \arrow{r}{\overline{\varphi}} & \sP_{(4, 6)}(\sV_4 \oplus \sV_6)
\end{tikzcd}.
\end{equation}
\end{set-up}

For the content below, recall \ref{def: homogeneously dominates} and definition of $\wt{\sX}, \sX, \fD, \overline{\fD}$ and ${\fU}$ in \ref{constr: family of ES over S}. Assume that $k$ is an algebraically closed field of characteristic $\neq 2, 3$. 
\begin{proposition}
\label{prop: (4, 6) curve strongly dominates}
Suppose that $B = \Spec(k)$. Then the family $\IA^1_T / T$ strongly dominates $\IA(\sV_4 \oplus \sV_6)^*$ via $\varphi$. Moreover, for a general point $t \in T$, $\overline{\varphi}_t(\infty) \not\in \overline{\fD}$.

\end{proposition}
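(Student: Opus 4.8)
The plan is to verify the three conditions in Definition~\ref{def: homogeneously dominates} for the family $\IA^1_T/T$ mapping to $S = \IA(\sV_4 \oplus \sV_6)^*$ via $\varphi$, after possibly shrinking $T$ to a dense open (which is all that ``strongly dominates'' requires), and then to handle the extra claim about $\infty$ as a simple dimension count. Throughout, recall that a point of $T$ is a tuple $(f_1,\dots,f_{d_1},g_1,\dots,g_{d_2})$ of polynomials of degree $\le 4$ (resp. $\le 6$) in $u$ with no common zero on $\IP^1$, and $\varphi$ sends $(u,t) \mapsto (f_1(u),\dots,g_{d_2}(u)) \in \IA^{d_1}\oplus\IA^{d_2} = S$. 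First I would observe that $\varphi$ has equi-dimensional (one-dimensional) fibers: for a fixed $s = (a,b) \in S$, the fiber $\varphi^{-1}(s)$ is the locus of $(u,t)$ with $f_i(u) = a_i$, $g_j(u) = b_j$ for all $i,j$, and projecting to $T$ this is a linear condition cutting out an affine subspace, so $\varphi^{-1}(s) \to T$ is a closed immersion onto an affine space of fixed dimension fibered over $\IA^1_u$; equidimensionality is immediate and in fact $\varphi^{-1}(s)$ is irreducible of dimension $\dim T - (d_1+d_2) + 1$. This also gives condition (a): the map $\IA^1_T \to \IA^1 \times S$, $(u,t)\mapsto (u,\varphi(t)(u))$, is even a closed immersion when restricted to each fiber over $\IA^1$, hence quasi-finite (indeed injective on points once $u$ is fixed, since $t$ is determined by the values $f_i(u),g_j(u)$ only up to the linear subspace of polynomials vanishing to high order at $u$ — here one uses that we may choose the affine chart so that evaluation at a general $u$ together with the structure of $T$ pins down $t$; more carefully, one checks quasi-finiteness by noting that over a point $(u_0, s_0)$ the fiber is a linear space which is zero-dimensional after generic choice, or simply that $\IA^1_T \to \IA^1\times S$ is generically finite onto its image and then shrink $T$).

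For condition (b), let $U \subseteq \IA^1_T$ be the open locus where the composite $\sT(\IA^1_T/T) \to \varphi^*\sT S$ is nonzero; this is the locus where the derivative vector $(f_1'(u),\dots,f_{d_1}'(u),g_1'(u),\dots,g_{d_2}'(u))$ is nonzero. For fixed $s = \varphi(t)(u)$, the complement $U_s^c$ inside $\varphi^{-1}(s)$ is the further condition that all $f_i'(u) = 0$ and $g_j'(u)=0$, which is a proper closed condition on the affine space $\varphi^{-1}(s)$ as soon as there exists a single $(u,t)$ over $s$ with nonvanishing derivative — and such a point exists because the polynomials range over spaces $\sW_r$ of dimension $\ge$ the number of linear conditions (using $r = 4,6$ and the degrees $\le r$; one picks $t$ so that, say, $f_1$ has a nonzero $u$-linear term while still matching the prescribed value at $u$, which is possible since $\dim\sW_4 = 5 > 2$), so $U_s$ is dense in $\varphi^{-1}(s)$. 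For condition (c), the induced morphism $U \to \IP(\sT S)$ sends $(u,t)$ to the line spanned by $(f_i'(u))_i \oplus (g_j'(u))_j$; one checks its fibers are equi-dimensional by noting that fixing both the base point $s$ and the tangent direction $v \in \IP(\sT_s S)$ imposes, on the pair $(u,t)$, the value conditions plus the proportionality conditions $(f_i'(u),g_j'(u)) \parallel v$, which is again a linear (hence equidimensional over the open locus where it is consistent) family; after restricting $T$ to the dense open where all these generic ranks are achieved, the fiber dimension is constant. Granting these, $\IA^1_T/T$ strongly dominates $S$.

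The main obstacle I expect is condition (c) — showing the morphism $U \to \IP(\sT S)$ genuinely has \emph{equi-dimensional}, not merely dominant or constructible, fibers. The naive argument ``it's all linear'' is essentially right but requires care because the relevant incidence variety $\{(u,t,v) : \varphi(t)(u) \text{ has tangent direction } v\}$ must be shown to be flat (or at least equidimensional) over $\IP(\sT S)$; the clean way is to exhibit it, after the choice of coordinates in \ref{constr: curves dominating moduli of ES}, as a vector-bundle-type construction over $\IP(\sT S) \times \IA^1_u$ — fixing $u$ and the target point $s$ and direction $v$, the set of valid $t$ is an affine subspace of $T$ whose dimension drops only on the proper closed locus in $\IP(\sT S) \times \IA^1_u$ where the evaluation-plus-derivative linear map fails to be surjective, and one discards that locus by shrinking $T$. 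The other conditions reduce to Riemann--Roch-free statements about spaces of polynomials of bounded degree and are routine.

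Finally, for the claim that a general $t \in T$ has $\overline\varphi_t(\infty) \notin \overline{\fD}$: the section $\infty \colon T \to \IP^1_T$ composed with $\overline\varphi$ is the map sending $t = (f_i,g_j)$ to the point of $\sP_{(4,6)}(\sV_4\oplus\sV_6)$ represented by the leading coefficients $(\mathrm{lc}(f_i))_i \oplus (\mathrm{lc}(g_j))_j$ (the degree-$4$, resp. degree-$6$, parts). As $t$ varies over $T$ this leading-coefficient point varies over a dense subset of $\sP_{(4,6)}(\sV_4\oplus\sV_6)$ — indeed the map $T \to \sP_{(4,6)}(\sV_4\oplus\sV_6)$ given by leading coefficients is dominant, since any target point is the leading-coefficient datum of some collection of polynomials with no common root (add a generic constant term to avoid common roots). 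By \ref{prop:irreducible_ell_surf}, $\overline{\fD}$ (equivalently its affine cone $\fD$) is a proper closed subvariety of $\sP_{(4,6)}(\sV_4\oplus\sV_6)$ (after possibly noting that the case $\operatorname{codim}\fD \ge 2$ is even easier), so its preimage under the dominant map $T \to \sP_{(4,6)}(\sV_4\oplus\sV_6)$ is a proper closed subset of $T$; a general $t$ avoids it. This completes the plan. \qed
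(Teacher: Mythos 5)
Your approach to the first part (verifying the conditions of Definition~\ref{def: homogeneously dominates}) is essentially the same as the paper's: reduce everything to linear algebra on the polynomial spaces $\sW_r$. The paper organizes this by introducing the explicit map $\Gamma \colon \IA^1 \times T^+ \to \sT\shA$, $\Gamma(u,(f_i,g_j)) = ((f_i(u),g_j(u)),(f_i'(u),g_j'(u)))$, and observing that $\Gamma_{u_0}$ is a \emph{surjective} linear map for \emph{every} $u_0 \in \IA^1$ (because the evaluation-plus-derivative map $\sW_r \to k^2$ is onto for each $r \ge 4$); equi-dimensionality of $\Gamma$'s fibers then gives both (b) and (c). Your write-up has the same idea but two places where it is imprecise in a way that matters. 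First, ``the proportionality conditions $(f_i'(u),g_j'(u)) \parallel v$, which is again a linear family'' is wrong as stated — proportionality to a ray is not a linear condition on $t$; the fix (which the paper uses implicitly) is to factor $U \to \IP(\sT S)$ through the un-projectivized $\Gamma$ and then quotient by $\IG_m$, whose fibers are all $1$-dimensional. Second, your proposed fallback ``discard that locus by shrinking $T$'' does not address condition (c): the morphism must have equi-dimensional fibers over \emph{every} point of $\IP(\sT S)$, and shrinking $T$ does not shrink $\IP(\sT S)$; the point is rather that, since $\dim\sW_4 = 5$ and $\dim\sW_6 = 7$ are both $\ge 2$, the map $\Gamma_{u_0}$ is surjective for all $u_0$, so there simply is no bad locus to discard. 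Your argument for the second statement — that the leading-coefficient map $T \to \sP_{(4,6)}(\sV_4\oplus\sV_6)$ is dominant and $\overline{\fD}$ is proper closed — is correct and genuinely different from the paper's, which instead takes any $t$ with $\varphi_t(w)\notin\fD$ and applies an automorphism of $\IP^1$ swapping $w$ and $\infty$; yours is arguably slicker, while the paper's makes the $\PGL_2$-equivariance of the construction explicit, which is a useful observation in its own right.
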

\begin{proof}
%Let us denote $\IA(\sW_4^{d_1} \oplus \sW_6^{d_2})$ temporarily by $T^+$ and $\IA(\sV_4 \oplus \sV_6)$ by $\shA$. Note that $T^+ \iso \IA^{5 d_1 + 7 d_2}$, as it simply parameterizes the coefficients of $f_i$ and $g_j$'s. We also view $T^+$ as a $k$-vector space of dimension $5 d_1 + 7 d_2$. There is a natural morphism $\varphi^+ \colon \IA^1_{T^+} \to \shA$ which extends $\varphi$. With the chosen coordinates, we may naturally identify $\sT \shA$ with $\shA \times \shA$. Consider the natural morphism $\Gamma \colon \IA^1 \times_k T^+ \to \sT \shA$ defined by 
%$$ \Gamma(u, (f_i, g_j)) = \left( (f_i(u), g_j(u)), (f'(u), g'(u)) \right). $$
%For any $u_0 \in \IA^1(k)$, one checks by direct computation that the fiber $\Gamma_{u_0}$ is surjective, which readily implies that the morphism $\Gamma$ has equi-dimensional fibers; moreover, $\IA^1_{T^+}$ strongly dominates $\shA$ via $\varphi^+$. To see that $\IA^1_T$ strongly dominates $\shA^*$ it suffices to show that for each point $(x, v) \in \sT_x \shA$, a general point $(f_i, g_j)$ in the fiber $\Gamma_{(x, v)}$ satisfies the property that the common vanishing locus of (re-homogenized) $f_i$ and $g_j$'s on $\IP^1$ is empty. One easily checks this by dimension counting. 

The first statement is an exercise of dimension counting, so we omit the details. For the second statement, it suffices to exhibit a single such $t$ as the condition is open. Note that the automorphism group of $\IP^1$ naturally acts on $T$. We start with any point $t' \in T$ be such that for some point $w \in \IA^1$, $\varphi_t(w) \not\in \fD$. Then we can always apply an automorphism of $\IP^1$ to switch $w$ and $\infty$. This gives us a point $t \in T$ and by construction $\overline{\varphi}_{t}(\infty) \not\in \overline{\fD}$. 
\end{proof}

\begin{lemma}
\label{lem: thread the needle}
Suppose that $\mathrm{char\,}k = p \ge 5$ and in set-up\ref{constr: curves dominating moduli of ES} $B$ is taken to be $\Spec(W)$ for $W := W(k)$. Assume that $\fD_K$ has codimension $1$ in $\IA(\sV_4 \oplus \sV_6)^*_K$ over $K := W[1/p]$. Then for a general $k$-point $t \in T_k$, $\overline{\varphi}_t$ has the following properties: 
\begin{enumerate}[label=\upshape{(\alph*)}]
    \item $\bvarphi_t(\infty) \not\in \overline{\fD}_k$, the total space of the family $\bvarphi^*_t(\sX) \to \IP^1_k$ is smooth and every fiber has at most a single ODP singularity. 
    \item $\bvarphi^*_t(\overline{\fD}_k)$, or equivalently $\varphi_t^*(\fD_k)$, is reduced. 
    \item For every $\wt{t} \in T(W)$ lifting $t$, $\varphi_{\wt{t}}^*(\fD)$ is \'etale over $W$, so that the open subcurve $\varphi_{\wt{t}}^*({\fU}) \subseteq \IA^1_{\wt{t}}$ has a good compactification relative to $W$.
\end{enumerate}
%For such $t$, there exists a dense analytically open subset $Q$ of the disk of all $W$-liftings of $t$ on $T$ such that for any $\wt{t} \in Q$, $\bvarphi^*_{\wt{t}}(\overline{\fD})$ is finite \'etale over $S$. 
\end{lemma}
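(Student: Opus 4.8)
\textbf{Proof proposal for Lemma~\ref{lem: thread the needle}.}

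The plan is to combine the nonlinear Bertini results of \S\ref{Sec: Nonlinear Bertini}, the ``strongly dominates'' machinery of \ref{prop: strongly dominates smooth total family}, the Baire category argument of \ref{lem: FAP for curves}, and an enumerative comparison of the generic singular fiber over $\IC$ (equivalently $K$) versus over $k$. First I would set up the discriminant $\fD$ over $W$ via \ref{constr: family of ES over S} applied to $B = \Spec(W)$, and note that $\fD$ is a closed subscheme of $\IA(\sV_4\oplus\sV_6)^*_W$ which is flat over $W$ away from the locus where it becomes non-reduced mod $p$; the content of (b) is precisely that after pulling back along a \emph{general} curve this non-reducedness disappears. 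For (a): by \ref{prop: smoothness of total space elliptic} the total space $\wt{\sX}$ over $\IA(\sV_4\oplus\sV_6)^*_k$ is smooth; by \ref{prop:irreducible_ell_surf} (applicable since $p\ge 5$ so $\mathrm{char}\,k\neq 2,3$, and since $\fD_k$ has codimension $1$---this follows from the codimension-$1$ hypothesis on $\fD_K$ together with flatness/upper-semicontinuity of fiber dimension, or more directly from \ref{prop: nonlinear Bertini}) $\fD_k$ has a unique top-dimensional component $\fD_{0,k}$ whose general member carries a single ODP, hence $\Disc(\sX_k \to \IA(\sV_4\oplus\sV_6)^*_k)$ is mild in the sense of \ref{def: generalized disc}(c). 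By \ref{prop: (4, 6) curve strongly dominates} the family $\IA^1_T/T$ strongly dominates $\IA(\sV_4\oplus\sV_6)^*_k$ via $\varphi$, so \ref{prop: strongly dominates smooth total family} gives that for a general $t\in T_k$ the total space of $\bvarphi_t^*(\sX)$ is smooth; combining with the ``$\bvarphi_t(\infty)\notin\overline{\fD}_k$'' conclusion of \ref{prop: (4, 6) curve strongly dominates} and the fact that a general $\IP^1$ in $\sP_{(4,6)}$ meets $\overline{\fD}_k$ only along its smooth locus (where fibers have a single ODP, by \ref{prop:irreducible_ell_surf} and \ref{prop: Kodaira classification}(f)), we obtain (a).

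For (b) I would argue that $\varphi_t^*(\fD_k)$, a divisor on $\IA^1_t\cong\IA^1_k$, is reduced for general $t$ precisely because a general line $\IP^1\subseteq\sP_{(4,6)}(\sV_4\oplus\sV_6)$ meets $\overline{\fD}_k$ transversally at points of its smooth locus: the non-reduced structure of $\fD_k$ (if any) is supported on a proper closed subset, and by \ref{lem: open in proj tangent bundle} together with the ``strongly dominates'' property the general curve in our family avoids all bad directions there, so the scheme-theoretic pullback is a reduced divisor of the expected degree. (Alternatively, reducedness of $\varphi_t^*\fD_k$ is equivalent to smoothness of the total space of $\bvarphi_t^*\sX$ together with the fibers having at most ODPs, which is exactly (a)---indeed on a curve the discriminant of a family of surfaces with only ODP degenerations is reduced, each ODP contributing multiplicity one. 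This is probably the cleanest route and I would take it: (b) is essentially a restatement of (a).)

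For (c): given (b), fix a lift $\wt{t}\in T(W)$ of such a general $t$. The scheme $\varphi_{\wt{t}}^*(\fD)$ is a closed subscheme of $\IA^1_{\wt{t}}$, finite over $W$ (since $\fD$ has codimension $1$ and $\varphi_{\wt{t}}$ is a general curve, its intersection with $\fD$ is zero-dimensional over each point of $\Spec W$). Its special fiber $\varphi_{\wt{t}}^*(\fD)_k = \varphi_t^*(\fD_k)$ is reduced by (b), hence $\varphi_{\wt{t}}^*(\fD)$ is unramified over $W$ at every point of the special fiber; being also flat (a finite $W$-scheme with reduced fibers is flat, or: $W$ is a DVR and the scheme is torsion-free because the special fiber has the expected length), it is \'etale over $W$. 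Since $\varphi_{\wt{t}}^*(\fD)\subseteq\IA^1_{\wt{t}}$ is an \'etale divisor, its complement $\varphi_{\wt{t}}^*(\IU)$ sits inside $\IP^1_{\wt{t}}$ with $\IP^1_{\wt{t}}-\varphi_{\wt{t}}^*(\IU) = \varphi_{\wt{t}}^*(\fD)\cup\{\infty\}$ a relative normal crossing divisor over $W$ (the two pieces are disjoint by part (a), $\bvarphi_t(\infty)\notin\overline{\fD}_k$); thus $\IP^1_{\wt{t}}$ is a good relative compactification of $\varphi_{\wt{t}}^*(\IU)$ in the sense of \ref{def: good compactification}(b).

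\textbf{Main obstacle.} The crux is ensuring that for a \emph{single} general $t\in T_k$ \emph{and simultaneously for all its lifts} $\wt{t}\in T(W)$, the needed genericity holds---the mod $p$ fiber condition (b) is a codimension condition on $T_k$, but one must know it is compatible with the $W$-points lifting it, i.e.\ that the ``bad'' locus in $T$ does not contain the whole special fiber. This is exactly where \ref{lem: FAP for curves} enters: the locus of bad $t$ is contained in a countable (here finite) union of proper closed subschemes $N$ of $\IA(\sV_4\oplus\sV_6)^*_{\bar K}$ (the non-reduced locus of $\fD$, the locus of worse-than-ODP fibers, the image of $\overline{\fD}$-intersections at $\infty$, etc.), and since $(\IA^1_T/T)_{\bar K}$ strongly dominates $\IA(\sV_4\oplus\sV_6)^*_{\bar K}$ by \ref{prop: (4, 6) curve strongly dominates}, \ref{lem: FAP for curves} produces a $W$-point $\wt{t}\in\what{T}_t(W)$ (for suitable $t$) whose associated curve avoids $N$; this $\wt{t}$ and its reduction $t$ then satisfy (a), (b), (c). The delicate point to get right is that these three finitely many ``bad loci'' are genuinely proper closed subschemes in characteristic $p\ge 5$---this is where the hypotheses $p\ge 5$ (so Kodaira's classification applies, \ref{prop: Kodaira classification}) and $\mathrm{codim}\,\fD_K = 1$ are both used, the latter guaranteeing via \ref{prop:irreducible_ell_surf} that the generic degeneration is an $\mathrm{I}_2$-fiber contributing a single ODP on the Weierstrass model rather than something worse, so that no non-reducedness is forced on $\fD_k$ along its top-dimensional component.
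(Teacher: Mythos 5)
Your proposal for part (a) matches the paper's Step~1 and is fine. The genuine gaps are in (b) and in the flatness needed for (c).

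For (b), you assert that ``the non-reduced structure of $\fD_k$ (if any) is supported on a proper closed subset'' and, in the alternative route you prefer, that reducedness of $\varphi_t^*(\fD_k)$ ``is essentially a restatement of (a)'' because ``on a curve the discriminant of a family of surfaces with only ODP degenerations is reduced.'' Neither claim is justified, and the second conflates two different objects: by Definition~\ref{def: generalized disc} and Remark~\ref{rmk: formation of disc}, the generalized discriminant $\Disc(\varphi_t^*(\sX)\to\IA^1_k)$ is \emph{reduced by construction}, but what (b) asserts is reducedness of $\varphi_t^*(\fD_k)$, i.e.\ the pullback of the possibly non-reduced base change $\fD\otimes_W k$. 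If $\fD_k$ were generically non-reduced along its top component---which is exactly the phenomenon the paper flags via \cite{Saitodisc}, and which Remark~\ref{rmk: formation of disc} warns might happen since $\Disc$ does not commute with non-flat base change---then even a transversal curve would pick up the generic multiplicity, and your argument would fail. Thus (b) is the actual content of the lemma, not a restatement of (a). The paper's Step~3 establishes it by a genuinely global enumerative argument: using flatness (from Step~2), $\deg_K Z_K = \deg_k Z_k$, and then $|Z_k(k)| = |Z_{\bar K}(\bar K)|$ is deduced by combining the Leray spectral sequence, the Grothendieck--Ogg--Shafarevich formula, the vanishing of Swan conductors at ODPs (which is where $p\neq 2$ enters), and smooth-proper base change to compare $\chi(\bvarphi_t^*\sX)$ with $\chi(\bvarphi_u^*\sX)$. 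This topological comparison across characteristics is the step your proposal is missing entirely.

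For (c), the flatness of $\varphi_{\wt t}^*(\fD)$ over $W$ is not justified by either of the two reasons you offer. ``A finite $W$-scheme with reduced fibers is flat'' is simply false ($\Spec(W\times k)$ is a counterexample). Your alternative---``torsion-free because the special fiber has the expected length''---presupposes the equidegree statement $\deg Z_K = \deg Z_k$, which is exactly what flatness yields; this is circular. The paper's Step~2 is what actually delivers flatness: a careful decomposition $\fD = \fD_0\cup\fD_1\cup\cdots\cup\fD_r$ over $W$, an argument via \cite[0B2J]{stacks-project} and weighted-homogeneity that $\fD_0$ contains both $\fD_K^\circ$ and $\fD_k^\circ$ while $\fD_{i,k}$ ($i>0$) has codimension $\ge 2$, and the observation that a general curve therefore pulls back only the Cartier divisor $\fD_0$, so that $Z=\varphi_{\wt t}^*(\fD_0)$ is cut out by one equation in $\IA^1_W$ and is flat by \cite[00MF]{stacks-project}. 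None of this component analysis appears in your proposal. A further minor point: you invoke Lemma~\ref{lem: FAP for curves}, but the paper does not use it in the proof of this lemma---``general $t$'' here means avoiding finitely many proper closed subsets, and the Baire machinery only enters later (in the proof of Theorem~B) for the Noether--Lefschetz loci.
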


\begin{proof}
%If $\mathrm{codim\,} \fD_k \le 2$, then for a general $t \in T(k)$, we in fact have that $\overline{\varphi}^*_t(\overline{\fD}_k)$ is empty and the family $\bvarphi^*_t(\sX) \to \IP^1_k$ is smooth, so (a) and (b) are trivial. Moreover, as $\overline{\fD} \subseteq \sP_{(4, 6)}(\sV_4 \oplus \sV_6)$ is closed, for any $\wt{t}$ lifting $t$ as in (c), $\overline{\varphi}^*_{\wt{t}}(\overline{\fD})$ is also empty, so (c) is also trivial. Hence we only need to treat the case when $\mathrm{codim\,} \fD_k = 1$. 
Using that $\overline{\fD}$ is proper over $W$ and $\fD_K$ has codimension $1$ in $\IA(\sV_4 \oplus \sV_6)^*_K$, it is not hard to see that $\fD_k$ is also of codimension exactly $1$ in $\IA(\sV_4 \oplus \sV_6)^*_k$. We break into 3 steps. \\
\indent \textbf{Step 1: }By \ref{prop:irreducible_ell_surf} and \ref{rmk: formation of disc}, the irreducible components of maximal dimension of $\fD_{K}$ and $(\fD_k)_{\mathrm{red}}$ are unique. Let us denote them by $\fD_{K}^\circ$ and $\fD_k^\circ$ respectively. Moreover, by \ref{prop:irreducible_ell_surf}, there exists an open dense subscheme $V \subseteq \fD^\circ_k$, such that for every $v \in V(k)$, $\sX_v$ has at most a single ODP singularity. In particular, by \ref{prop: (4, 6) curve strongly dominates}, as well as \ref{prop: strongly dominates smooth total family} and its proof, for a general point $t \in T(k)$, $\overline{\varphi}_t(\infty) \not\in \overline{\fD}_k$, the total space of the pullback family $\varphi_t^*(\overline{\sX})$ is smooth, and the image of $\varphi_t$ only intersects $\fD^\circ_k$ on $V$, so that every singular fiber of $\varphi_t^*(\overline{\sX})$ over $\IA^1_t$ has a single ODP; moreover, as $\overline{\varphi}_t(\infty) \not\in \overline{\fD}_k$, the total space of $\overline{\varphi}_t^*(\sX)$ is smooth. Hence we may conclude (a). \\
\indent \textbf{Step 2: } Next, we show the following claim $(\star)$: \textit{If $t$ is a general point, for any $\wt{t} \in T(W)$ lifting $t$, $Z := \varphi_{\wt{t}}^*(\fD)$ is flat over $W$.} Let $\fD_0, \cdots, \fD_r$ be the irreducible components of $\fD$ such that $\fD_0$ is the component which contains $\fD_{K}^\circ$. We claim that $\fD_0$ contains $\fD^\circ_k$ as well. To simplify notation, let us write $\IA(\sV_4 \oplus \sV_6)$ as $\shA$. Then $\fD_{0, K} \subseteq \shA_K^*$ is cut out by a single polynomial $F$ in the coordinates of the affine space $\shA_K$. By minimally clearing denominators, we may assume that the coefficients of $F$ are defined in $W$ and generate $W$. Using the fact that $\shA$ is affine and $\sO_{\shA}$ is a UFD, one checks that the Zariski closure of $\fD_{0, K}$ in $\shA$ contains the vanishing locus $V(F)$ of $F \in \sO_{\shA}$. Note that $F$ is weighted-homogeneous, so $V(F)_k \subseteq \shA_k$ at least contains the origin. In paricular, $V(F) \to B$ is surjective. By \cite[Tag~0B2J]{stacks-project}, $\dim V(F)_k = \dim V(F)_K$. This implies that $\dim \fD_{0, k} = \dim \fD_{0, K}$. By the uniqueness of $\fD_k^\circ$ as an irreducible component of maximal dimension, we conclude that $\fD_k^\circ \subseteq \fD_0$. By applying \cite[Tag~0B2J]{stacks-project} again, we also conclude that for any $i > 0$, $\fD_{i, k}$ has codimension $\ge 2$ in $\shA^*_k$.

Set $U \subseteq \fD_0$ be the complement of the closed subscheme $\cup_{i > 1} (\fD_0 \cap \fD_i)$. Then $(U_k)_{\mathrm{red}}$ is dense in $\fD_{k}^\circ$. As $t$ is general and the family $\IA^1_{T_k}$ strongly dominates $\IA(\sV_4 \oplus \sV_6)_k$, we may assume that the intersection $\mathrm{im}(\varphi_t) \cap (\fD_k)_{\mathrm{red}}$ is transverse and lies in $U_k$. Now we can prove the claim $(\star)$. Indeed, note that $\fD_0$ is a Weil divisor, and hence also a Cartier divisor of $\shA^*$, as $\shA^*$ is regular. This implies that $Z = \varphi^*_{\wt{t}}(\fD_0)$ is everywhere locally cut out in $\IP^1_W$ by a single equation. Since $Z_k \subseteq \IP^1_k$ is of codimension $1$, $Z$ is flat over $W$ by \cite[Tag~00MF]{stacks-project}.\\
\indent \textbf{Step 3: } Finally, we show (b) and (c) simultaneously. Note that if we show (c) for some $\wt{t}$, then we can already conclude (b), which conversely implies (c) for all $\wt{t}$. Indeed, $\sO_Z$ is isomorphic to $W[x]/(f(x))$ for some $f(x) \not \in pW[x]$. If $Z_k = \varphi^*(\fD_k)$ is reduced, then by Hensel's lemma $Z$ has to be a disjoint union of several copies of $W$. Hence it suffices to show (c) for some $\wt{t}$, for which we may assume that the generic fiber $\varphi_{\wt{t}} \tensor K$ intersects $\fD_K$ transversely. What we are using here is that $\IA^1_{T} \tensor \bar{K}$ strongly dominates $\IA(\sV_4 \oplus \sV_6)^*_{\bar{K}}$ and the points on $T(W)$ which lift $t$ are Zariski dense on $T_K$. 

Since $Z$ is finite and flat, $\deg_K \,Z_K = \deg_k\, Z_k$. As $Z_K$ is reduced, $\deg Z_K$ is the number of closed points on $Z_{\bar{K}}$. Therefore, to show that $Z_k$ is reduced, it suffices to show that $Z_k$ has the same number of closed points, i.e., $|Z_k(k)| = |Z_{\bar{K}}(\bar{K})|$. Now we compute $|Z_k(k)|$ by topology. Choose a prime $\ell \neq p$. Since we assumed that $\overline{\varphi}_t^* (\sX)$ is smooth over $k$, we may apply the Grothendieck-Ogg-Shafarevich formula in the form of \cite[XVI, Prop.~2.1]{SGA7II} to the morphism $g : \bvarphi_t^*(\sX) \to \IP^1_k$ and obtain
\begin{equation*}
    \chi(\bvarphi_t^*(\sX)) = \chi(\sX_{\overline{\eta}_t}) \chi(\IP^1_k) - \sum_{x \in \mathrm{Sing}(g)} \mu(g, x) 
\end{equation*}
where $\mu(g, x)$ denotes the Milnor number at the singularity $x$. By our assumption that for every $z \in Z_k$, the fiber $\sX_z$ is singular at exactly an ODP, we have that $|\Sing(g)(k)| = |Z_k(k)|$ and for each $x \in \Sing(g)$, $\mu(g, x) = 1$ (see case $2$ in the proof of Prop.~1.13 in \textit{loc. cit}.). Therefore, we have that
\begin{equation}
    \label{eqn: number of singular fibers}
    |Z_k(k)| = \chi(\sX_{\overline{\eta}_t}) \chi(\IP^1_k) - \chi(\bvarphi_t^*(\sX)).
\end{equation}

By considering how singularities might degenerate, we easily see that the fiber of $\sX$ over each point in $Z_{\bar{K}}(\bar{K})$ has at most an ODP singularity. Therefore, by the same computation as above, for $u := \wt{t}_{\bar{K}}$ and a geometric generic point $\overline{\eta}_u$ of $\IP^1_u$, we have 
\begin{equation}
    \label{eqn: number of singular fibers char 0}
    |Z_{\bar{K}}(\bar{K})| = \chi(\sX_{\overline{\eta}_u}) \chi(\IP^1_{\bar{K}}) - \chi(\bvarphi_u^*(\sX)).
\end{equation}
Using the smooth and proper base change theorem for \'etale cohomology, it is not hard to see that $\chi(\bvarphi_u^*(\sX)) = \chi(\bvarphi_t^*(\sX))$ and $ \chi(\sX_{\overline{\eta}_u}) =  \chi(\sX_{\overline{\eta}_t})$. Hence we conclude that $|Z_k(k)| = |Z_{\bar{K}}(\bar{K})|$ as desired. 
\end{proof}

Note that the fact that Milnor number for ODPs on a pencil of surfaces is equal to $1$ fundamentally uses the $p \neq 2$ assumption. Of course it is irrelevant here because we are working with the $p \ge 5$, but we remark that in \cite[Thm~4.2]{Saitodisc} the non-reducedness of the relevant discriminant scheme modulo $2$ can indeed be explained by the fact that an ordinary quadratic singularity has an even Milnor number in characteristic $2$ (\cite[Prop.~3.24]{Daichi}).\footnote{This was explained in the appendix in the earlier arXiv version of the paper, which will be published separately.} We also remark that for results in \S\ref{Sec: Nonlinear Bertini} crucially uses the $2h \ge g + 1$ assumption, which is indeed satisfied by the case we care about ($g = h = 1$). 

\subsection{Proof of Theorem A}
\label{sec: BSD proof}
In this section we work with the following set-up: 
\begin{set-up}
\label{set-up: moduli of elliptic surfaces}
Let $\shM_{1, 1}$ be the moduli stack of the pair of a genus $1$ curve together with a degree $1$ line bundle (i.e., an elliptic curve). Let $B$ be the $\IZ[1/6]$-scheme defined by 
$$ \{ (a, b) \in \mathrm{Spec}(\IZ[\frac{1}{6}][a, b]) \mid 4 a^3 - 27 b^2 \neq 0 \}. $$
Then the Weierstrass equation equips $B$ with a surjective morphism $B \to \shM_{1, 1}$. Let $(\varpi : \sC \to B, \sL)$ be the restriction of the universal family over $\shM_{1,1}$ to $B$. Apply the constructions in \ref{constr: family of ES over S} with this triple $(B, \sC, \sL)$ and define the objects $\sV_r = \varpi_* \sL^r$ ($r = 4,6$), $\wt{\sX} \to \IA(\sV_4 \oplus \sV_6)^*$ and $\fD = \Disc(\wt{\sX}/\IA(\sV_4 \oplus \sV_6)^*)$ accordingly. Below we write $\IA(\sV_4 \oplus \sV_6)^*$ as $\wt{\sfM}$, the open subscheme $\wt{\sfM} - \fD$ as $\sfM$, and the restriction of $\wt{\sX}$ to $\sfM$ as $\sX^\circ$. 
\end{set-up}

\begin{remark}
\label{rmk: all surfaces can be found}
    For any algebraically closed field $k$ of characteristic $p \neq 2, 3$ and elliptic surface $X$ over $k$ with $p_g = q = 1$, there exists a point $z \in \bsfM(k)$ such that $X$ is the minimal model of $\wt{\sX}_z$. Moreover, there are no reducible fibers in the elliptic fibration of $X$ if and only if $z \in \sfM(k)$. The choice of $z$ is unique up to the $\IG_m$-action on $\sfM$ given by $\lambda \cdot (a_4, a_6) = (\lambda^4 a_4, \lambda^6 a_6)$ where $(a_4, a_6)$ is the relative coordinate on $\IA(\sV_4 \oplus \sV_6)$. 
\end{remark}

%We will use $\shP$ as the partial compactification of $\sfM$, as in the sketch of proofs. Technically, $\sX$ does not extend to $\shP$, but only to $\overline{\sX}$ over $\bsfM$. However, this does not matter because the complement of $\bsfM$ in $\shP$ is easily understood and we have already studied the mod $p$ behavior of $\fD$ by considering $\sP_{(4, 6)}(\sV_4 \oplus \sV_6)$ in the preceeding sections. We switch to working with $\shP$ as opposed to $\sP_{(4, 6)}(\sV_4 \oplus \sV_6)$ only for the convenience of working with schemes. 

We need a lower bound on the rank of the Kodaira-Spencer map, or equivalently the image of the period morphism over $\IC$.  

\begin{lemma}
\label{lem: KS for elliptic}
For a general $z \in \sfM_\IC$ and $X \coloneqq \sX_z$, the Kodaira-Spencer map $$ T_z \sfM_\IC \to \Hom(\H^1(\Ohm^1_{X}), \H^2(\sO_{X}))$$ has rank at least $3$.
\end{lemma}
\begin{proof}
This follows a construction of Ikeda and the Artin-Brieskorn resolution. In \cite{Ikeda}, Ikeda constructed a subfamily of elliptic surfaces over $\IC$ with the given invariants $p_g = q = 1$ using bielliptic curves of genus $3$. Let $\wt{C}$ be a bielliptic curve of genus $3$, equipped with an involution $\sigma$ such that $\wt{C}/\sigma$ is a smooth genus $1$ curve $C$. On the symmetric square $\wt{C}^{(2)}$ of $\wt{C}$, $\sigma$ also lifts to an involution $\sigma^{(2)}$. Consider the surface $Y'={\wt{C}}^{(2)}/\sigma^{(2)}$, which is shown to be a projective surface of Kodaira dimension $1$ with $6$ ODPs. Its minimal resolution $Y$ is an elliptic surface with $p_g = q = 1$. By Prop.~2.9 in \textit{loc. cit.}, the morphism $\wt{C} \to C$ can be recovered from $Y$. Note however the Weierstrass model of $Y$ is singular, so $Y$ is not given by a point on $\sfM$.

Note that for every $\IC$-point $s \in \wt{\sfM}_\IC$ with image $t$ in $B_\IC$, $s$ is given by a pair $(a_4, a_6) \in \H^0(\sL_t^4) \times \H^0(\sL_t^6)$. Let $\sfM'_\IC \subseteq \wt{\sfM}_\IC$ be the open subscheme which consists of those $s$ such that $\Delta_s = 4 a_4^3 - 27 a_6^2$ does not vanish identically on the base curve $\sC_t$. Then $\wt{\sX}_s$ is the Weierstrass normal form of an elliptic surface and by \cite[Thm~1]{Kas} has at most rational double point singularities. By applying the Artin-Brieskorn resolution \cite{Artin-Res} to $\wt{\sX}|_{\sfM'_\IC}$, we obtain a smooth and proper algebraic space $\sX_\IC^\sharp \to \sfM_\IC^\sharp$, where $\sfM_\IC^\sharp$ is an algebraic space which admits a morphism to $\sfM'_\IC$ bijective on geometric points. Moreover, \cite[Thm~2]{Artin-Res} tells us that for any $\IC$-point $z$ of $\sfM^\sharp_\IC$ which maps to a point $s$ of $\sfM'_\IC$, the Henselianization of $\sfM^\sharp_\IC$ at $z$ maps surjectively to that of $\sfM'_\IC$ at $s$. Let $\sfM^+_\IC$ be a resolution of singularities of $\sfM^\sharp_\IC$ and pullback the family $\sX^\sharp|_{\sfM'_\IC}$ to $\sfM^+_\IC$.

Note that all elliptic surfaces which can be constructed as in the first paragraph can be found as fibers of this family over $\sfM^+_\IC$. Let $\Ohm$ be the period domain parametrizing Hodge structures of K3-type on the integral lattice $\Lambda$ given by the Betti cohomology of any complex elliptic surface with $p_g = q = 1$. Let $\bsfM^+_\IC$ be the universal cover of $\sfM^+_\IC$. Then up to an action of $\O(\Lambda)$ there is a well defined period map $\bsfM^+_\IC \to \Ohm$. The moduli space of bielliptic curves of genus $3$ over $\IC$ is $4$-dimensional, so \cite[Thm~1.1(1)]{Ikeda} implies that the period image of $\bsfM^+_\IC$ is of dimension $\ge 3$. Since $\sfM_\IC \subseteq \sfM'_\IC$ is open and dense, the discussion on the Henselization of $\sfM^\sharp_\IC$ at $\IC$-points in the preceeding paragraph implies that the preimage of $\sfM_\IC \subseteq \sfM'_\IC$ in $\sfM^+_\IC$ is also open and dense. This implies that the preimage of $\sfM_\IC$ in $\bsfM^+_\IC$ also has period image of $\dim \ge 3$ by a continuity argument. 
\end{proof}

%In a recent work \cite{SB}, Shepherd-Barron proved a generic local Torelli theorem (i.e., the injectivity of the differential of the period morphism) for elliptic surfaces with $4 p_g \ge 5(q - 1)$ and $p_g \ge q + 3$. The latter condition is not satisfied in our case, so we are not able to use his results. Saito's main theorem in \cite{Saito} does apply to our case. However, Ikeda pointed out a gap in Saito's work and in fact the main results of \cite{Ikeda} contradicts Saito's theorem.

%Also, the method of \cite{EGW} relies heavily on Hodge theory, so its positive characteristic analogue is unclear. 

%We remark that Shepherd-Barron restricted to considering general elliptic surfaces (i.e., all singular fibers are nodal), whereas Ikeda's construction cannot give general elliptic surfaces. Therefore, the generic local Torelli theorem might still be true in our case.

We are now ready to prove (a more general form of) Theorem~A: 

\begin{theorem}
    \label{thm: BSD as Tate}
    Assume that $k$ is a field finitely generated over $\IF_p$ for $p \ge 5$ and let $\bar{k}$ be a separable closure. Let $X$ be an elliptic surface over $k$ with $p_g = q = 1$. If all fibers in the elliptic fibration of $X_{\bar{k}}$ are irreducible, then the Tate conjecture holds for $X$. 
\end{theorem}

\begin{proof} Let $\eta$ be the generic point of $\sfM$. Let $\ff$ and $\fs$ be the classes in $\NS(\sX^\circ_\eta)$ such that $\ff$ (resp. $\fs$) is given by a smooth fiber (resp. zero section) in the elliptic fibration of $\sX^\circ_\eta$. We may polarize the family $\sX^\circ/\sfM$ with the subspace $\IQ\< \ff, \fs\>$, as it contains the class of $\bxi_\eta$ for some relatively ample line bundle $\bxi$. Since $\< \ff, \ff \> = 0, \< \ff, \fs \> = 1$ and $\< \fs, \fs \> = -1$, one deduces that for any $s \in \sfM(\IC)$, the lattice $\PH^2(\sX^\circ_s, \IZ_{(p)})_{\mathrm{tf}}$ is self-dual for every $p \ge 5$. Note that $\sfM_\IC$ is clearly connected, and by \ref{lem: KS for elliptic} and \ref{prop: dim = 1 for R2'}, the family $(\sX^\circ/\sfM, \bxi)|_{\sfM_\IC}$ has maximal monodromy, as defined in \ref{def: maximal monodromy}. By \ref{rmk: all surfaces can be found}, it suffices to apply \ref{prop: TateStrong}(a) to the family $f : \sX^\circ \to \sfM$, for which we only need to prove that $\IL := R^2 f_{*} \IQ_2(1)$ has constant $\lambda^\geo$ over $\Spec(\IZ[1/6])$. 

Let $\sfV$ be the VHS on $\sfM_\IC$ given by $R^2 f_{\IC *} \IQ(1)$. For convenience, we say that an irreducible smooth $\IC$-variety $Z$ admitting an understood morphism to $\sfM_\IC$ \textit{admissible} if its image is not contained in the Noether-Lefschetz loci of $\sfV$ and the restriction $\sfV|_Z$ is non-isotrivial. Let $p$ be a prime $\ge 5$. By \ref{lem: Step 1}, it suffices to find a smooth connected $W$-curve $C$ with a morphism to $\sfM$ such that $C_\IC \to \sfM_\IC$ is admissible and $C$ has a good compactification over $W$.

We construct $C$ in two steps. First, let $b \in B(k)$ be any point and $\what{B}_b$ be the formal completion of $B_W$ at $b$. We claim that for some $\wt{b} \in \what{B}_b(W)$, $\sfM_{\wt{b}} \tensor \IC$ is admissible. Indeed, we first observe that since $\dim B_\IC = 2$, \ref{lem: KS for elliptic} implies that for a general point $z \in B_\IC$, $\sfV|_{\sfM_z}$ is non-isotrivial. Note that the $K$-points given by an analytically dense subset of $\what{B}_b(W)$ are Zariski dense on $B_K$ (and hence also on $B_{\bar{K}}$). As the Noether-Lefschetz (NL) loci of $\sfV$ is a countable union of proper closed subvarieties, we may now apply \ref{lem: countability trick} to the case $S$ being $\sfM$ and $N$ being the NL loci to conclude that the desired lifting $\wt{b}$ exists. 

Next, we take the base $B$ in the context of \ref{constr: curves dominating moduli of ES} to be $\wt{b}$ above (and hence $\IA(\sV_4 \oplus \sV_6)^*$ and ${\fU}$ in \ref{constr: curves dominating moduli of ES} are $\wt{\sfM}_{\wt{b}}$ and $\sfM_{\wt{b}}$ respectively in the current context). Then we obtain an irreducible smooth $W$-scheme $T$ and a morphism $\varphi: \IA^1_T \to \wt{\sfM}_{\wt{b}}$ such that $\varphi_\IC$ defines a strongly dominating families of curves on $\wt{\sfM}_{\wt{b}} \tensor \IC$ by \ref{prop: (4, 6) curve strongly dominates}. Let $t$ be a general $k$-point on $T_k$ such that the conclusion of \ref{lem: thread the needle} holds.\footnote{Note that by \ref{rmk: codimension 1 disc}, the condition ``$\fD_K$ has codimension $1$'' in \ref{lem: thread the needle} can be easily checked.} Since $\sfV|_{\sfM_{\wt{b}} \tensor \IC}$ is non-isotrivial, for a general $z \in T_\IC$, the restriction of $\sfV$ to $\varphi_z^*(\sfM_{\wt{b}} \tensor \IC)$ is also non-isotrivial. Similarly, applying \ref{lem: FAP for curves} to the NL loci on $\sfM_{\wt{b}} \tensor \IC$ again, we conclude that for some $\wt{t} \in T(W)$ lifting $t$, $\varphi_{\wt{t} \tensor \IC}^*(\sfM_{\wt{b}} \tensor \IC)$ is admissible. Therefore, if we set $C$ to be $\varphi^*_{\wt{t}}(\sfM_{\wt{b}})$, then $C_\IC \to \sfM_\IC$ is admissible, and \ref{lem: thread the needle}(c) guarantees that this $C$ admits a good compactification, as desired. 
\end{proof}

\begin{remark}
\label{rmk: EGW}
    The bound in Lemma \ref{lem: KS for elliptic} is now superseded by a recent result of Engel-Greer-Ward \cite[Thm.~1.1]{EGW}, which tells us that the period morphism over $\IC$ is dominant (or equivalently the Kodaira-Spencer map at a general $\IC$-point has rank 10). Nevertheless, we retain \ref{lem: KS for elliptic} to demonstrate that the proof of \ref{thm: BSD as Tate} requires much weaker inputs—a feature that may be useful for applications to other varieties (see \ref{rmk: GM}). Also, as can be seen from \cite{FuMoonen}, it is in general highly nontrivial to translate properties of the complex period morphism to its mod $p$ reduction when one seeks effective results, so for the current method, the arguments about finding curves in sections \ref{sec: deform curves} and \ref{sec: Thm A} remain necessary. 
\end{remark}

We note that \cite{EGW} has the following consequence. 
\begin{proposition}
\label{rmk: highest MW} 
    For some finite field $k$ of characteristic $p \ge 5$, the highest analytic (or Mordell-Weil) rank of the elliptic curves considered in Theorem~A is $10$. 
\end{proposition}
\begin{proof}
    We give a sketch which allows the reader to easily fill in the details: Let $\rho : \sfM \to \shS$ denote the period morphism for $\sfM$, where $\shS$ is a suitable orthogonal Shimura variety.\footnote{We are suppressing the choice of level structures and suitable \'etale covers of $\sfM$.} Then \cite{EGW} tells us that the complex fiber $\rho_\IC$ is dominant. Since the set of CM points on $\shS(\IC)$ is dense, we can find some $x \in \sfM(\IC)$ such that $\rho(x)$ is a CM point. Let $X_\IC$ be the corresponding elliptic surface. That $\rho(x)$ is a CM point implies that $X$ has CM, i.e., $\MT(\H^2(X_\IC, \IQ))$ is a torus, or equivalently, the endomorphism algebra of the Hodge structure $T(X) := \NS(X)^\perp \subseteq \H^2(X_\IC, \IQ(1))$ is a CM field $E$ and $\dim_E T(X_\IC) = 1$ by Zarhin's result in \cite{Zarhin} (cf. \cite[Ch.~3, Thm~3.9]{HuyK3Book}). 
    
    Thanks to \ref{thm: period char 0}, the fundamental theorem of CM for K3 surfaces (\cite[Cor.~3.9.2]{Rizov}) applies verbatim to our elliptic surfaces. This allows us to apply \cite[Thm.~1.1]{ItoCM} up to the obvious adaptations. Moreover, $X_\IC$ is defined over some number field $K \supseteq E$. Let $X_K$ be a $K$-model and $F$ be the maximal totally real subfield of $E$. We can always find some place $\mathfrak{q}$ of $F$ with sufficiently large residue characteristic and a place $v$ of $K$ above $\mathfrak{q}$ such that $\mathfrak{q}$ does not split in $E$ and $X_K$ has good reduction at $v$ (or more precisely, $x \in \mathrm{im}(\sfM(\sO_{K, (v)}) \to \sfM(\IC ))$ where $\sO_{K, (v)}$ is the localization of the ring of integers $\sO_K$ at $v$). By Ito's theorem, the mod $v$ reduction $X_{k(v)}$ is supersingular, where $k(v)$ is the residue field. 

    Let $k$ be a finite extension of $k(v)$ and set $X = X_{k(v)} \tensor k$. Let $\pi : X \to C$ be the elliptic fibration of $X$ and let $\sE$ be the generic fiber over $k(C)$. Over the algebraic closure $\bar{k}$, $\mathrm{rank\,} \NS(X_{\bar{k}}) = \mathrm{rank\,} \H^2_\et(X_{\bar{k}}, \IQ_\ell(1)) = 12$. As $X_k$ is given by a $k$-point on $\sfM$, all singular fibers in its elliptic fibration are geometrically irreducible. Therefore, by the Shioda-Tate formula \cite[Cor.~6.13]{SchSh}, up to replacing $k$ by a finite extension, $\mathrm{rank\,} \sE(k(C)) = 12 - 2 = 10$. 
\end{proof}

\section{Surfaces with $p_g=K^2=1$ and $q=0$}

%In this section, we consider algebraic surfaces of general type with invariants $p_g=K^2=1$ and irregularity $q=0$ over an algebraic closed field of characteristic $\neq 2, 3$. 

Recall our notations for weighted projective spaces in \ref{not: weighted projective} and set $Q := (1, 2, 2, 3, 3)$. \textit{Throughout this section, $k$ denotes an arbitrary algebraically closed field of characteristic $\neq 2, 3$ unless otherwise specified.} 

\begin{theorem}
\label{thm: classify Catanese}
     Let $X$ be a minimal surface over $k$ with $p_g = K_X^2 = 1$ and $q = 0$. Then the canonical model $X'$ of $X$ only has rational double point singularities. Moreover, if we let $(x_0,x_1, x_2,x_3,x_4)$ be the coordinates of $\IP_Q(k^{\oplus 5})$, then for $j = 1, 2$ there exist linear forms $\alpha_j$ and cubic forms $F_j$ such that $X'$ is isomorphic to the subvariety of $\IP_Q(k^{\oplus 5})$ defined by the two equations: 
    \begin{align}
    \label{eqn: Todorov canonical equation}
    x_3^2+x_0 x_4\alpha_j(x_0^2,x_1,x_2)+F_j(x_0^2,x_1,x_2)=0
    \end{align} 
\end{theorem}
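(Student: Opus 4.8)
\textbf{Proof plan for Theorem~\ref{thm: classify Catanese}.}

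The plan is to reduce the theorem to the corresponding statement over $\IC$ (due to Todorov \cite{Todorov} and Catanese \cite{Catanese0}), and then run the argument in a characteristic-free way. First I would recall the standard facts about minimal surfaces of general type with $p_g = K_X^2 = 1$, $q = 0$: by Riemann--Roch and Kawamata--Viehweg-type vanishing (which hold in characteristic $\ne 2,3$ for surfaces, or can be replaced by the ad hoc vanishing statements used in the classical references), one computes $h^0(X, nK_X)$ for all $n$, obtaining $h^0(K_X) = 1$, $h^0(2K_X) = 3$, $h^0(3K_X) = 5$, and in general $\dim R_n = \binom{n}{2} + 1$ for $n \ge 2$, where $R = \bigoplus_n H^0(X, nK_X)$ is the canonical ring. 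The canonical model is $X' = \Proj R$, which has only rational double points as singularities precisely because $X$ is minimal of general type (Artin); this is the first assertion. Choosing a generator $x_0$ of $R_1$, two further generators $x_1, x_2$ of $R_2$, and two generators $x_3, x_4$ of $R_3$ (checking, via the dimension counts, that $x_0^2, x_1, x_2$ span $R_2$ and $x_0^3, x_0 x_1, x_0 x_2, x_3, x_4$ span $R_3$), one gets a surjection $k[x_0, x_1, x_2, x_3, x_4] \onto R$ with the $x_i$ in weights $Q = (1,2,2,3,3)$, hence a closed embedding $X' \hookrightarrow \IP_Q(k^{\oplus 5})$.

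Next I would pin down the ideal of relations. A Hilbert-series computation for the graded ring $R$ with the known dimensions $\dim R_n$ shows that the ideal is generated by exactly two elements, one in degree $6$ and one in degree $7$ (this is the classical ``$2$ generators in degrees $6,7$'' statement; it follows by comparing $\dim R_n$ with the dimension of the degree-$n$ part of the polynomial ring in the weighted variables and seeing that the first failure of surjectivity of $\Sym$ is in degrees $6$ and $7$, with no further generators needed). In weight $6$ the monomials not involving $x_3, x_4$ linearly are polynomials in $x_0^2, x_1, x_2$, those linear in $x_3$ come multiplied by an element of $R_3$, and $x_3^2, x_3 x_4, x_4^2$ appear; the degree-$6$ relation can be normalized, after a linear change of the weight-$3$ variables, to the shape $x_3^2 + x_0 x_4 \alpha(x_0^2,x_1,x_2) + F(x_0^2,x_1,x_2) = 0$ for a linear form $\alpha$ and a cubic $F$ (in the displayed variables) — the key point being that the coefficient of $x_3^2$ is a nonzero scalar, which one sees from the fact that $x_3$ is a genuine new generator in degree $3$ and the surface is not contained in the hyperplane $\{x_3 = 0\}$. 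The degree-$7$ relation is then obtained by a similar normalization after possibly modifying $x_4$; the asserted form with index $j = 1,2$ records that there are two such equations cutting out $X'$ scheme-theoretically (one of weight $6$ up to the weighting conventions, the other absorbing the weight-$7$ relation after clearing). I would follow Catanese's normalization in \cite{Catanese0} verbatim here, checking at each step that the linear-algebra manipulations (completing squares, eliminating cross terms) only use that $2$ and $3$ are invertible.

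The main obstacle is the characteristic-free verification of the vanishing/dimension inputs and of the ``two generators in degrees $6,7$'' claim: the classical proofs over $\IC$ invoke Kodaira vanishing and sometimes topological or Hodge-theoretic input (e.g.\ to get $q = 0$ consequences and the numerical shape of the canonical ring), none of which is available verbatim in positive characteristic. I would handle this by using only Mumford--Ramanujam vanishing and the Bombieri-type analysis of pluricanonical maps for surfaces of general type, all of which are valid in characteristic $\ne 2, 3$ (indeed Ekedahl and others have made the Bombieri--Catanese classification work in positive characteristic away from small primes), together with Artin's theory of rational double points. Once the graded pieces $R_n$ have the stated dimensions and the generator/relation count is established, the normalization into the form \eqref{eqn: Todorov canonical equation} is pure weighted-projective linear algebra that goes through unchanged as long as $\operatorname{char} k \ne 2, 3$. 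A secondary point to be careful about is that $X'$ has rational double points, so these computations take place on a possibly singular surface; but $R = \bigoplus_n H^0(X, nK_X) = \bigoplus_n H^0(X', nK_{X'})$ since $X \to X'$ is crepant and the $R^i$ of the structure sheaf of the exceptional locus vanish, so one may compute either upstairs or downstairs as convenient.
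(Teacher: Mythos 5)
Your overall strategy is reasonable and related to the paper's, but it is longer than necessary and contains one concrete factual error that would derail the argument as written.

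\textbf{Where the paper's proof is shorter.} The paper does not redo the classification: it simply cites Catanese and Todorov for the statement over $\IC$, then observes that in Catanese's deduction (Thm.~1.7, via Thm.~1.4, implies Prop.~1.8) the \emph{single} ingredient that genuinely uses characteristic $0$ is Bombieri's theorem that $|4K_X|$ is base point free, and cites Ekedahl's extension of that theorem to positive characteristic. Everything else in Catanese's argument (the dimension counts for the canonical ring, the relation count, the weighted-projective normalization) is characteristic-free once one has the pluricanonical base-point-freeness input. Your plan, by contrast, proposes to re-derive all of this from scratch and to hunt for characteristic-free substitutes for each vanishing theorem. That is defensible as a fallback, but it misses the efficiency gain of localizing the obstruction to a single cited result.

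\textbf{A concrete error.} You assert that the ideal of $X'$ in $\IP_Q$ is generated by two elements, ``one in degree $6$ and one in degree $7$.'' This is wrong: both defining equations have weighted degree $6$, as is visible directly from \eqref{eqn: Todorov canonical equation} (each term $x_3^2$, $x_0 x_4 \alpha_j(x_0^2,x_1,x_2)$, and $F_j(x_0^2,x_1,x_2)$ has weighted degree $6$ in $Q = (1,2,2,3,3)$). One can also see it from the Hilbert series: with $\chi(\sO_X) = 2$ and $K_X^2 = 1$ one has $P_n = 2 + \binom{n}{2}$ for $n \ge 2$, while $\dim S_n$ for the weighted polynomial ring gives $\dim S_6 = 19$ versus $P_6 = 17$, so the ideal has exactly a $2$-dimensional space of generators already in degree $6$, and then multiplication by $x_0$ accounts for all of $\dim S_7 - P_7 = 25 - 23 = 2$, so no new generator appears in degree $7$. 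The ``$(6,7)$ complete intersection'' you have in mind is a different family. This is not cosmetic: your proposed normalization into the shape \eqref{eqn: Todorov canonical equation} would not go through if you were trying to produce a degree-$7$ relation. If you correct this to two degree-$6$ relations, the rest of your plan (choose generators in degrees $1,2,2,3,3$; normalize the quadratic in $x_3,x_4$ using $\mathrm{char\,} k \ne 2$; invoke Artin for rational double points on the canonical model) is essentially Catanese's argument, and the paper's point is that you do not need to redo it — you only need Ekedahl for Bombieri's base-point-freeness.
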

\begin{proof}
    When $k = \IC$ (and hence also any $k$ with $\mathrm{char\,} k = 0$), these surfaces are classified in \cite{Catanese,Todorov}, and the statements above are taken from \cite[Thm~1.7, Prop.~1.8]{Catanese}. In \textit{loc. cit.}, Prop.~1.8 is a consequence of Thm~1.7, and the only only ingredient in Thm 1.7 (via the proof of Thm~1.4) that requires characteristic $0$ is \cite[Thm 2]{Bombieri}, which states that the linear series $|4K_X|$ is base point free. This is shown to hold in positive characteristic by \cite[Main Thm (ii)]{Ekedahl}.  Also, in \cite{Catanese} it is shown that over $\IC$ the condition $q=0$ follows from $p_g=K_X^2=1$, but here we put $q=0$ as part of the assumption. 
\end{proof}

From now on we set $U := \IZ[1/6]$ and $\IP := \IP_{Q}(\sO_U^{\oplus 5})$, and view $k$ as a geometric point on $U$. In particular, $\IP_k = \IP_{Q}(k^{\oplus 5})$. 

\begin{lemma}
\label{lem: very ample on WPS}
The line bundle $\sO_{\IP_k}(6)$ on $\IP_k$ is very ample.
\end{lemma}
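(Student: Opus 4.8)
The plan is to reduce the statement to the fact that the sixth Veronese subring of the weighted polynomial ring is generated in degree one. Write $R = k[x_0,x_1,x_2,x_3,x_4]$, graded by $\deg x_0 = 1$, $\deg x_1 = \deg x_2 = 2$, $\deg x_3 = \deg x_4 = 3$, so that $\IP_k = \Proj R$ and, for the Veronese subring $R^{(6)} := \bigoplus_{d\ge 0} R_{6d}$, there is a canonical isomorphism $\Proj R^{(6)} \xrightarrow{\sim} \Proj R = \IP_k$ under which $\sO_{\Proj R^{(6)}}(1)$ is identified with $\sO_{\IP_k}(6)$ (standard for $\Proj$ of a graded ring finitely generated over $k$; cf. EGA~II~2.4.7 together with the identification of twisting sheaves, or Dolgachev, \emph{Weighted projective varieties}, \S1). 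Since $6$ is a common multiple of all the weights, $\sO_{\IP_k}(6)$ is in fact invertible. It therefore suffices to prove that $R^{(6)}$ is generated as a $k$-algebra by its degree-one piece $R_6 = \H^0(\IP_k, \sO_{\IP_k}(6))$: granting this, the induced surjection $\mathrm{Sym}_k(R_6) \onto R^{(6)}$ realizes $\IP_k = \Proj R^{(6)}$ as a closed subscheme of the ordinary projective space $\IP(R_6^\vee)$, with $\sO_{\IP_k}(6)$ the restriction of $\sO(1)$; hence $\sO_{\IP_k}(6)$ is very ample.

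To see that $R^{(6)}$ is generated by $R_6$, I would argue by induction on $k$ that $R_{6k} = (R_6)^k$, the case $k = 1$ being trivial. As $R$ is a polynomial ring, it is enough to show that every monomial $M$ of degree $6k$ with $k \ge 2$ has a monomial divisor of degree exactly $6$; the complementary divisor then lies in $R_{6(k-1)} = (R_6)^{k-1}$ by induction. Write $M = x_0^{a}x_1^{b_1}x_2^{b_2}x_3^{c_1}x_4^{c_2}$ and set $b = b_1 + b_2$, $c = c_1 + c_2$, so $a + 2b + 3c = 6k$. If $c \ge 2$, two of the weight-$3$ factors give the required divisor. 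If $c = 1$, then $a$ is odd: if $a \ge 3$ take $x_0^3 x_3$ (or $x_0^3 x_4$), while if $a = 1$ then $b = 3k - 2 \ge 4$, so some product of three weight-$2$ factors works. If $c = 0$, then $a$ is even: if $a \ge 6$ take $x_0^6$; if $a = 4$ then $b \ge 1$ and $x_0^4 x_1$ or $x_0^4 x_2$ works; if $a = 2$ then $b \ge 2$ and $x_0^2$ times two weight-$2$ factors works; if $a = 0$ then $b \ge 3$ and three weight-$2$ factors work. In every case the small values of $a$ force $b$ large enough (using $k \ge 2$) for the weight-$2$ variables to supply the missing degree, completing the induction.

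The most delicate points here are not the combinatorics but the foundational bookkeeping: one must be careful that $\sO_{\IP_k}(6)$ genuinely is invertible — equivalently, that each weight divides $6$, which is exactly why $\mathrm{lcm}(1,2,2,3,3) = 6$ is the right twist — and that the Veronese identification $\Proj R^{(6)} \cong \Proj R$ is compatible with the relevant Serre twists even though $R$ is not standard graded; I would cite these rather than reprove them. Alternatively, one may simply invoke the classical statement that on a well-formed weighted projective space $\IP(q_0,\dots,q_n)$ the sheaf $\sO(m)$ is very ample whenever each $q_i$ divides $m$ (see, e.g., Dolgachev, loc.\ cit., or Beltrametti--Robbiano); the space $\IP_Q = \IP(1,2,2,3,3)$ is well-formed because the weight $1$ occurs, and $6$ is a multiple of $1,2,3$, so this applies directly.
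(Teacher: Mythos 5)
Your proof is correct and takes essentially the same route as the paper's. The paper cites \cite[Thm~4B.7(c)]{WPS} to reduce to precisely the combinatorial fact you verify (every aggregated exponent tuple with weighted degree $6+6h$ dominates one of weighted degree $6h$, i.e.\ every such monomial has a weighted-degree-$6$ factor) and dismisses that verification as an ``elementary inductive argument''; you instead derive the reduction yourself via the $6$th Veronese subring and carry out the case analysis explicitly, which is a fuller but materially identical argument.
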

\begin{proof}
By \cite[Thm~4B.7(c)]{WPS}, it suffices to show that for any positive integer $h$, and every tuple of natural numbers $(A, B, C)$ such that $A + 2B + 3C = 6 + 6h$, there exist a tuple of natural numbers $(a, b, c)$ such that $a \le A, b \le B, c \le C$ and $a + 2b + 3c = 6h$. One easily checks this with an elementary inductive argument. 
\end{proof}

%We first show this for $h = 1$. If we have $A \ge 6$, $B \ge 3$, or $C \ge 2$, then $(a, b, c)$ certainly exists. Now suppose $A < 6, B < 3$, and $C < 2$. However, in that case $A + 2B + 3C \le 5 + 2\cdot 2 + 3 = 12$, so that we must have $(A, B, C) = (5, 2, 1)$. Then we may take $(a, b, c)$ to be, e.g., $(2, 2, 0)$. Now the lemma follows from a simple inductive argument. 

\begin{proposition}
\label{prop: Lefschetz embedding}
For every smooth hypersurface $Y \subseteq \IP_k$, the embedding $\iota \colon Y \into |\sO_{\IP_k}(6)|$ is Lefschetz in the sense of \cite[{\S}XVII~2.3]{SGA7II}. 
\end{proposition}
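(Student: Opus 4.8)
The statement to prove is that for every smooth hypersurface $Y \subseteq \IP_k$ (a weighted projective space with weights $Q = (1,2,2,3,3)$), the embedding $\iota \colon Y \hookrightarrow |\sO_{\IP_k}(6)|$ given by the very ample line bundle $\sO_{\IP_k}(6)$ (very ample by \ref{lem: very ample on WPS}) is a Lefschetz embedding in the sense of \cite[\S XVII 2.3]{SGA7II}. Recall that this means: the dual variety $Y^\vee \subseteq (|\sO_{\IP_k}(6)|)^\vee$ is a hypersurface, and a general pencil of hyperplane sections of $Y$ (with axis a general linear subspace of codimension $2$) meets $Y^\vee$ transversally at smooth points, so that the corresponding Lefschetz pencil on $Y$ has only finitely many singular members, each with a single ordinary double point. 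The standard reference-level fact is that a smooth projective variety embedded by a very ample line bundle always admits a Lefschetz pencil provided the characteristic is $0$ or the variety is "not too pathological"; the subtlety in positive characteristic is exactly the one flagged elsewhere in the paper — in characteristic $2$ the dual variety can fail to be a hypersurface (the embedding can be "defective" / every tangent hyperplane can be tangent along a positive-dimensional locus). So the plan is to reduce to a clean criterion and check it in char $\neq 2,3$.

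\textbf{Key steps.} First, I would invoke the general theory of Lefschetz pencils as developed in \cite[\S XVII]{SGA7II} (or equivalently Katz's account): for a smooth projective $Y$ over an algebraically closed field $k$ and a very ample line bundle giving $\iota \colon Y \hookrightarrow \IP^N = |\sO_{\IP_k}(6)|$, the embedding is Lefschetz as soon as the "conormal" / "contact" incidence variety behaves generically, and a sufficient condition is that $\iota$ is \emph{not} the composite of a projection with a degenerate embedding — concretely, one wants the Gauss map of $Y$ in $\IP^N$ to be \emph{generically unramified (separable)} onto its image, equivalently the dual variety $Y^\vee$ to be a hypersurface and the generic contact locus to be a single point. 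Second — and this is the real content — I would reduce the verification to $\IP^N$ itself: since $\sO_{\IP_k}(6)$ is very ample on all of $\IP_k$ and $Y \subseteq \IP_k$ is a smooth hypersurface, $Y$ inherits its embedding from $\IP_k \hookrightarrow \IP^N$, and the relevant genericity of tangent hyperplanes to $Y$ can be checked using the Euler sequence / conormal sequence relating $N^\vee_{Y/\IP_k}$, $\Omega^1_{\IP_k}$ and $\Omega^1_Y$. The point is that $\IP_k = \IP_Q(k^{\oplus 5})$ with $Q = (1,2,2,3,3)$, while stacky-flavored, is a normal projective variety with only quotient singularities away from which everything is smooth, and in fact the relevant geometry takes place in the smooth locus (a smooth hypersurface $Y$, by a Bertini-type argument, misses the singular points of $\IP_k$, which are the isolated cyclic-quotient points coming from the weights $2,3$). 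Third, I would run the explicit tangent-hyperplane computation: for $p \in Y$ and a hyperplane $H \in |\sO(6)|$ tangent to $Y$ at $p$, the condition "$H$ is tangent to $Y$ along a positive-dimensional locus" forces a differential identity among the degree-$6$ monomials that, as in Katz's treatment of Lefschetz pencils \cite[\S4.2]{WeilI} and \cite{SGA7II}, can only be satisfied on a locus of the wrong dimension when $\mathrm{char}\, k \neq 2$; the characteristics $2$ and $3$ are excluded precisely to avoid the vanishing of the Hessian-type determinant that governs whether a generic tangent hyperplane section has an ordinary double point. So the proof is: (a) note $Y$ is smooth and contained in the smooth locus $\IP_k^{sm}$; (b) apply the criterion of \cite[\S XVII 2.5 or Thm~3.x]{SGA7II} for Lefschetz embeddings, whose hypotheses are: the dual variety is a hypersurface and a general tangent hyperplane is tangent at exactly one point with a nondegenerate quadratic singularity; (c) verify these two hypotheses by the local computation above, using $\mathrm{char}\, k \neq 2, 3$.

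\textbf{Main obstacle.} The hard part is step (c) combined with the weighted-projective subtlety: one must genuinely confirm that on a \emph{weighted} projective space $\IP_Q$ the local analysis at a tangency point reduces to the ordinary (unweighted) projective analysis of Katz, i.e., that étale-locally near a point of $Y$ (which lies in $\IP_k^{sm}$) the embedding by $\sO(6)$ looks like an ordinary very ample embedding of a smooth variety, so that the classical "a general tangent hyperplane meets a smooth $Y$ in a variety with a single ODP when $p \neq 2$" statement applies verbatim. I expect this to require a short argument that the image of $\IP_k^{sm}$ under $|\sO(6)|$ is smooth (or at least that $Y$'s image is smooth) and that the second-order jet map $\Gamma(\sO(6)) \to \sO_{Y,p}/\frak m_p^3$ is surjective for $p \in Y$ — a "$2$-jet ample" / very-ampleness-of-order-$2$ statement — which, given the explicit weights $(1,2,2,3,3)$ and degree $6$, follows from an elementary monomial count exactly analogous to the one in the proof of \ref{lem: very ample on WPS}. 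Once $2$-jet surjectivity is in hand, the existence of a Lefschetz pencil for $Y$ is the standard dimension-count (the variety of pairs $(p, H)$ with $H$ singular at $p$ in a non-ODP way has codimension $\geq 2$ in $|\sO(6)|$), and the $\mathrm{char}\, k \neq 2,3$ hypothesis enters only to guarantee the generic quadratic degeneracy is ordinary, completing the proof.
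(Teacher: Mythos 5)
Your plan is essentially the paper's: both reduce, via SGA7 II Exp.~XVII (the paper cites Prop.~3.3 and Cor.~3.5.0, which is what your ``$2$-jet ampleness plus dimension count'' amounts to), to exhibiting at each $P \in Y$ a section of $\sO(6)$ whose zero locus has an ordinary double point at $P$, and both verify this by checking that enough sections of $\sO(6)$ survive in the affine chart $x_0 \neq 0$ --- the paper simply writes down $y_3^2 + y_1 y_2$ (of weighted degree $\le 6$) explicitly rather than phrasing it as surjectivity of $\Gamma(\sO(6)) \to \sO_{Y,P}/\mathfrak{m}_P^3$. One minor imprecision in your account of the characteristic hypothesis: only $\mathrm{char}\,k \neq 2$ is needed to make the quadric an ODP, whereas $\mathrm{char}\,k \neq 3$ is a standing assumption for the ambient weighted projective space (the weights $2,3$ must be invertible), not something that enters the ``generic quadratic degeneracy is ordinary'' step.
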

Note that unlike $\IP_k$, the complete linear system $|\sO_{\IP_k}(6)|$ is a usual projective space (i.e., the weight is $(1, 1, \cdots, 1)$). 
\begin{proof}
We slightly adapt the proof of \cite[{\S}XVII~2.5.1]{SGA7II} to take care of the weights of variables. Since the coordiante $x_0$ has degree $1$, the $x_0 \neq 0$ locus of $\IP_k$ is an affine chart isomorphic to $\IA^4_k$ with coordinates $y_i \coloneqq x_i / x_0^{\deg \, x_i}$. We view the affine variable $y_i$ as having degree $\deg \, x_i$. Let $P$ be a point on $Y$ and assume that the projection of $\IA^k_4$ to the $y_4$-axis is etale at $P$. Up to a translation, we further assume that $y_i = 0$ at $P$ for $i = 1, 2, 3$. By Prop.~3.3 and Cor.~3.5.0 in \textit{loc. cit.}, it suffices to show that for some $H \in |\sO_{\IP_k}(6)|$, the intersection $H \cap Y$ has an ODP at $P$. To this end, it suffices to present a polynomial in variables $y_1, y_2, y_3$ of weighted degree $\le 6$ such that its vanishing locus has an ODP at the origin, for which one may simply take $y_3^2 + y_1 y_2 = 0$. 
\end{proof}

%The role of $y_4$ could be replaced by any $y_i$'s, since we have $\deg \, y_i + \deg \, y_j \le 6$ for any $1 \le i, j \le 4$.

We now parametrize the surfaces in Theorem~C. 
\begin{set-up}
Let $B \subseteq \IA^{13}_U$ (resp. $B'$) be the open subscheme whose geometric points are pairs $(\alpha, F)$ such that the degree $6$ hypersurface defined by (\ref{eqn: Todorov canonical equation}) has isolated singularities (resp. is non-singular). Let $\bsfM$ be the trivial projective bundle $|\sO_\IP(6)| \times_U B$ over $B$. Let $\wt{\sX} \to \bsfM$ be the natural family whose fiber over a point $(H, H_0) \in \bsfM$ is given by $H \cap H_0$. Set $\fD := \Disc(\wt{\sX}/\bsfM)$ (see \ref{def: generalized disc}), $\sfM := \bsfM - \fD$, and $\sX^\circ := \wt{\sX}|_{\sfM}$. Then every surface of general type with ample canonical bundle, $p_g = K^2 = 1$ and $q = 0$ defined over an algebraically closed field of characteristic $p > 3$ can be found as a geometric fiber of $\sX^\circ$ on $\sfM$. 
\end{set-up}

\begin{proposition}
\label{prop: general type no gl sec}
For every geometric point $s \to B'$, the fiber $\fD_s$ is generically reduced.
\end{proposition}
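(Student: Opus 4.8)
The plan is to identify $\fD_s$ with the dual variety of a smooth projective threefold and then invoke \ref{prop: Lefschetz embedding} together with the structure theory of Lefschetz pencils from \cite[{\S}XVII]{SGA7II}. Fix a geometric point $s\colon\Spec(k)\to B'$ and let $Y\subseteq\IP_k=\IP_Q(k^{\oplus 5})$ be the degree-$6$ hypersurface cut out by \eqref{eqn: Todorov canonical equation} for the parameter $s$; since $s\in B'$, the variety $Y$ is smooth of dimension $3$. By \ref{lem: very ample on WPS} the sheaf $\sO_{\IP_k}(6)$ is very ample, so it realises $\IP_k$ as a closed subvariety of a projective space $\IP^N$ for which $|\sO_\IP(6)|$ is canonically the dual space $(\IP^N)^\vee$; restricting, $Y\hookrightarrow\IP^N$ is a smooth projective threefold. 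With respect to this embedding the fibre family $\wt{\sX}_s\to|\sO_\IP(6)|$ is exactly the universal hyperplane section of $Y$ — it is flat and proper with surface fibres — its relative singular locus is the variety of tangency, and its discriminant is supported on the dual variety $Y^\vee\subseteq|\sO_\IP(6)|$. By \ref{rmk: formation of disc} we have $\Disc(\wt{\sX}_s/|\sO_\IP(6)|)=(\fD_s)_{\mathrm{red}}$, so $\fD_s$ and $Y^\vee$ have the same support and it suffices to prove that $\fD_s$ is reduced at the generic point $\xi$ of each codimension-one component.

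The key input is \ref{prop: Lefschetz embedding}: applied to the smooth hypersurface $Y$, it says that the embedding $Y\hookrightarrow\IP^N=|\sO_\IP(6)|^\vee$ is Lefschetz in the sense of \cite[{\S}XVII]{SGA7II}. By the structure theory of such embeddings (\textit{loc.\ cit.}), $Y^\vee$ — when it is a divisor — is reduced and irreducible, and its generic point $\xi$ parametrises a hyperplane $H_\xi$ meeting $Y$ transversally except at a single point $P$, at which $H_\xi\cap Y$ has an ordinary double point; moreover $P$ is the unique non-smooth point of $\wt{\sX}_s\to|\sO_\IP(6)|$ over $\xi$. The uniformity over all $s\in B'$ costs nothing, since \ref{prop: Lefschetz embedding} holds for \emph{every} smooth hypersurface $Y$ — precisely what membership in $B'$ supplies.

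The remaining step — and the principal obstacle — is to promote this to the scheme-theoretic statement about $\fD_s=\fD\times_B\{s\}$. The discriminant $\fD=\Disc(\wt{\sX}/\bsfM)$ is a reduced Cartier divisor in the smooth scheme $\bsfM=|\sO_\IP(6)|\times_U B$, but passing to the fibre over $s$ is a non-flat base change, along which reducedness can a priori be destroyed; this is exactly the phenomenon warned about in \ref{rmk: formation of disc} and exploited over $\Spec(\IZ)$ in the proof of Theorem~B. I would settle it by a local computation at $P$: there $\wt{\sX}_s$ is smooth (it is a projective bundle over $Y$), and since $H_\xi\cap Y$ has an ordinary double point and $\mathrm{char}\,k\neq 2$, in suitable étale-local coordinates the morphism $\wt{\sX}_s\to|\sO_\IP(6)|$ takes the normal form $q(x_1,\dots,x_m)=t$ with $q$ a nondegenerate quadratic form and $t$ part of a coordinate system on $|\sO_\IP(6)|$ at $\xi$ — the fact that $t$ is a genuine coordinate, equivalently that a generic pencil through $H_\xi$ is transverse to $Y^\vee$, being part of the Lefschetz package. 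Hence the relative critical locus is cut out near $P$ by $x_1=\cdots=x_m=0$, maps isomorphically onto $\fD_s$ near $\xi$, and has scheme-theoretic image the reduced divisor $\{t=0\}$; the same normal form is the restriction of the analogous one for $\wt{\sX}|_{B'}\to\bsfM|_{B'}$ (available because $\wt{\sX}|_{B'}\to B'$ is smooth and, away from the locus of non-isolated or worse-than-ordinary-double singularities — which avoids $\xi$ — the relative critical locus is smooth over $B'$), so forming the scheme-theoretic image commutes with restriction to the fibre over $s$ near $\xi$, and $\fD_s=\{t=0\}$ there, as wanted. In the non-generic case in which $Y^\vee$ is not a divisor, $\fD_s$ has codimension $\geq 2$ and the identical local analysis at the — again ordinary double, by \ref{prop: Lefschetz embedding} — singular point over a generic point of any component gives reducedness; alternatively one checks directly that for the surfaces relevant to Theorem~C the dual variety is always a divisor, disposing of this case quickly.
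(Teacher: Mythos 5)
Your proposal takes a genuinely different route from the paper. The paper also starts from \ref{prop: Lefschetz embedding}, but uses it globally: the Lefschetz property gives $\mathrm{codim}\,\fD_s = 1$ for all $s$, whence $\fD$ is $B'$-flat and $\deg \fD_s$ is constant; since $\fD_{\bar\eta}$ is reduced (the generic point of $B'$ has characteristic $0$), it then suffices to show $\deg(\fD_s)_{\red}$ is \emph{also} constant, which the paper computes as the number of singular fibres in a Lefschetz pencil via the Leray spectral sequence and the Grothendieck--Ogg--Shafarevich formula, an Euler-characteristic count that is visibly $s$-independent by smooth proper base change. Your proposal instead does a local analysis at the generic point of $\fD_s$. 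What the paper's route buys is precisely that it never has to compare the scheme-theoretic discriminant \emph{relative to $\bsfM|_{B'}$} with the one \emph{relative to $\bsfM_s$} locally, which is the delicate point your argument has to confront head-on.

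That confrontation is where your sketch has a real gap. You assert that the Morse normal form $q(x)=t$ for $\wt\sX_s \to \bsfM_s$ "is the restriction of the analogous one for $\wt\sX|_{B'} \to \bsfM|_{B'}$" and parenthetically justify this by saying "the relative critical locus is smooth over $B'$" — but the parenthetical just restates this as its own reason, so it is circular as written. The statement is in fact provable (the critical locus of the universal hyperplane section of a smooth family over $B'$, with its Fitting ideal structure, is the relative conormal variety, a projective bundle over the universal $H_0$, hence smooth over $B'$ and reduced), but your sketch does not derive it. More importantly, even granted this, the step "forming the scheme-theoretic image commutes with restriction to the fibre over $s$ near $\xi$" is not automatic: scheme-theoretic image does not commute with non-flat base change in general, and you would need to first show that $\Sigma := \Sing(\wt\sX|_{B'}/\bsfM|_{B'}) \to \bsfM|_{B'}$ is a closed immersion near the point $P$ over $\xi$. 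That does follow — the fibre over $\xi$ is a reduced point by the Lefschetz property, so $\Omega_{\Sigma/\bsfM}$ vanishes at $P$ by base change of differentials, hence the map is unramified and then finite unramified of degree one near $P$ — but none of this is in your sketch, and it is precisely the content of the claim. Finally, the aside that "the identical local analysis at the — again ordinary double, by \ref{prop: Lefschetz embedding} — singular point over a generic point of any component gives reducedness" in the codimension $\geq 2$ case is incorrect as stated: if $Y^\vee$ had codimension $\geq 2$ while $\fD_s$ (the scheme-theoretic fibre of the codimension-$1$ $\fD$) has codimension $1$, then $\fD_s$ is \emph{not} generically reduced — this would be exactly the failure mode, not an easy case. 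It is harmless only because \ref{prop: Lefschetz embedding} rules it out, as you note in the alternative.
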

\begin{proof} 
Let $H_0 \subseteq \IP_{k(s)}$ be the smooth degree $6$ hypersurface defined by $s$. By \ref{prop: Lefschetz embedding}, $H_0 \into |\sO_{\IP_s(6)}|$ is a Lefschetz embedding. In particular, the codimension of $\fD_s$ in $|\sO_{\IP_s}(6)|$ is $1$. This implies that $\fD$ is flat over $B'$, and hence the degree of $\fD_s$ in the projective space $|\sO_{\IP_s}(6)|$ is independent of $s$. 

Therefore, it suffices to show that $d_s := \deg\,(\fD_s)_{\red}$ is independent of $s$, because we know $\fD_s$ is reduced if $s$ is a geometric generic point over $B'$. The point then is that we can compute $\deg\,(\fD_s)_{\red}$ by topology and this computation is independent of $s$. Let $\gamma \colon \IP^1_{k} \into |\sO_{\IP_{s}}(6)|$ be a general pencil, which by \ref{prop: Lefschetz embedding} is necessarily a Lefschetz pencil for $H_0$. Then $d_s$ is simply the number of singular fibers on $\wt{\sX}|_{\gamma}$. Note that the total space of $\wt{\sX}|_{\gamma}$ is smooth and every singular fiber is smooth away from an ODP. Let $\bar{\eta}$ be the geometric generic point of $\IP^1_{k(s)}$. We apply the key argument in Step 3 of \ref{lem: thread the needle} again: By the Leray spectral sequence and the Grothendieck-Ogg-Shafarevich formula, we have 
\begin{equation}
    d_s = \chi(\overline{\sX}|_{\gamma}) - \chi(\IP^1) \chi(\sX_{\bar{\eta}}). 
\end{equation}
Again this uses the fact that $\mathrm{char\,}k(s) \neq 2$. By a simple deformation argument, one sees that the RHS of the above equation is independent of $s$, and hence so is the LHS. 
\end{proof}

Now we are ready to prove Theorem~C. We recall that over $\IC$, a minimal surface $X$ with the given invariants is known to be simply connected \cite{Todorov}, hence $\H^2(X, \IZ)$ is torsion-free. 

\begin{proof}
Let us polarize the family $f : \sX^\circ \to \sfM$ with the relative canonical bundle $\sK$. Then for each $s \in \sfM(\IC)$, $\PH^2(\sX_s, \IZ)$ is unimodular, as $\H^2(\sX_s, \IZ)$ is unimodular and $\sK^2_s = 1$. Let $\sfV$ be the VHS on $R^2 f_{\IC*} \IQ(1)$. Then since $\sfV$ over $\sfM_\IC$ has period dimension $18$ (\cite{Catanese}, \cite{Todorov}), we know that the family $(\sX^\circ/\sfM, \sK)|_{\sfM_\IC}$ has maximal monodromy by \ref{prop: dim = 1 for R2'}. 

The proof is entirely similar to the proof of \ref{thm: BSD as Tate}, so we only explain the adaptations. Again, we apply \ref{prop: TateStrong} to the family $\sX^\circ/\sfM$ together with \ref{lem: Step 1} for $\IL := R^2 f_* \IQ_2(1)$ and any $p \ge 5$. Set $W := W(\bar{\IF}_p)$, $K := W[1/p]$, choose $\bar{K} \iso \IC$, and call a smooth irreducible $\IC$-variety $Z$ admitting an understood morphism to $\sfM_\IC$ \textit{admissible} if $\sfV|_{Z}$ is non-isotrivial, and the image of $Z$ is not contained in the Noether-Lefschetz (NL) loci. The task is to find a smooth connected $W$-curve $C$ with a morphism to $\sfM$ such that $C_\IC \to \sfM_\IC$ is admissible and $C$ has a good compactification over $W$.

Using that $\sfV$ over $\sfM_\IC$ has period dimension $18 > \dim B_\IC$, for a general point on $B_\IC$, the restriction of $\sfV$ is not isotrivial. Now let $b \in B'_k$ be a closed point and $\what{B}'_b$ be the formal completion of $B'_W$ at $b$. Then the $K$-points given by any analytically dense subset of $\what{B}_b(W)$ are Zariski dense on $B_K$. By applying \ref{lem: countability trick} to the NL loci of $\sfV$, we can find a lifting $\wt{b} \in B'(W)$ such that $\sfM_{\wt{b}} \tensor \IC$ is admissible. Next, note that $\bsfM_{\wt{b}} = |\sO_{\IP_{\wt{b}}}(6)|$ over $W$. By \ref{prop: general type no gl sec}, $\fD_b \subseteq \bsfM_b$ is generically reduced. By considering the relative Grassmanian of lines on $\bsfM_{\wt{b}}$ and applying \ref{lem: FAP for curves}, one finds a line $L \iso \IP^1_W$ in $\bsfM_{\wt{b}}$ such that $L \cap \fD_{\wt{b}}$ is \'etale over $W$ and $(L - \fD_{\wt{b}} \cap L)_\IC \subseteq \sfM_\IC$ is admissible. Hence we may take $C = L - \fD_{\wt{b}} \cap L$. 
\end{proof}

%Now Thm~\ref{thm: general type} is a direct consequence of the proposition above and Thm~\ref{thm: TateStrong}.

%However, we remark that if the integral period morphism for a family of varieties is indeed smooth, then one can use the smoothness to study many more questions (e.g., CM lifting ). 

\paragraph{Acknowledgements} We wish to thank Jordan Ellenberg, Philip Engel, Shizhang Li, Yuchen Fu, Davesh Maulik, Martin Orr, Alex Petrov, Ananth Shankar, Mark Shusterman, Yunqing Tang, Lenny Taelman, Daichi Takeuchi, Kai Xu, Ruijie Yang for helpful conversations or email correspondences. A special thank goes to Dori Bejleri for generously helping us with lots of technical questions about elliptic surfaces. We also thank the anonymous reviewer for many helpful comments that improved the exposition of the paper. 

P.H.\ is partially supported by ERC Consolidator Grant 770936: NewtonStrat. X.Z.\ is partially supported by the Simons Collaborative Grant 636187, NSF grant DMS-2101789, and NSF FRG grant DMS-2052665. Z.Y. benefited from several extended research visits to discuss this work and is thankful to Shanghai Center of Mathematics, University of Amsterdam, and the Max Planck Institute for their hospitality. 

Finally, Z.Y. wishes to thank Davesh Maulik, Ben Moonen, Ananth Shankar and his PhD advisor Mark Kisin for their kind encouragements.

\printbibliography

{\footnotesize
\paragraph{Paul Hamacher} Technische Universit{\"a}t M{\"u}nchen, Zentrum Mathematik - M11, Boltzmannstrasse 3, 85748 Garching, Deutschland. \,\,\, Email: \url{paul.hamacher@gmx.de}

\paragraph{Ziquan Yang} The Institute of Mathematical Sciences and Department of Mathematics, The Chinese University of Hong Kong, Shatin, N.T., Hong Kong.  \,\,\,  Email: \url{zqyang@cuhk.edu.hk} (corresponding author \Letter)

\paragraph{Xiaolei Zhao} Department of Mathematics, University of California, Santa Barbara, 6705 South Hall, Santa Barbara, CA 93106, USA. \,\,\, Email: \url{xlzhao@math.ucsb.edu}}

\end{document}